\documentclass[reqno,dvipsnames]{amsart}
\usepackage[utf8]{inputenc}

\usepackage{hyperref}
\usepackage{amsmath, amssymb}
\usepackage{amsthm}
\usepackage[font=footnotesize]{caption}
\usepackage{bbm}
\usepackage{diagbox, hhline, booktabs}
\usepackage{multicol}
\usepackage[bottom=1in]{geometry}
\usepackage{youngtab}
\usepackage{arydshln}
\usepackage[normalem]{ulem} 
\usepackage{enumitem}

\newtheorem{theorem}{Theorem}[section]
\newtheorem{proposition}[theorem]{Proposition}
\newtheorem{lemma}[theorem]{Lemma}
\newtheorem{corollary}[theorem]{Corollary}
\newtheorem{conjecture}[theorem]{Conjecture}
\newtheorem{definition}[theorem]{Definition}
\newtheorem{notation}[theorem]{Notation}

\newtheorem*{thmA}{Theorem A}
\newtheorem*{thmB}{Theorem B}
\newtheorem*{thmC}{Theorem C}

\theoremstyle{definition}
\newtheorem{example}[theorem]{Example}

\theoremstyle{remark}
\newtheorem{remark}[theorem]{Remark}

\numberwithin{equation}{section}

\usepackage{tikz}
\usetikzlibrary{patterns}

\newcommand{\triv}{\mathbbm{1}}
\newcommand{\Ch}{\operatorname{Char}}
\newcommand{\Infl}{\operatorname{Infl}}
\newcommand{\Irr}{\operatorname{Irr}}
\newcommand{\sgn}{\operatorname{sgn}}

\newcommand{\down}{\big\downarrow}
\newcommand{\up}{\big\uparrow}
\newcommand{\cB}{\mathcal{B}}
\newcommand{\cL}{\mathcal{L}}
\newcommand{\cP}{\mathcal{P}}
\newcommand{\cX}{\mathcal{X}}
\newcommand{\ff}{\mathsf{f}}
\newcommand{\fg}{\mathsf{g}}
\newcommand{\sk}{\mathsf{k}}
\newcommand{\sm}{\mathsf{m}}
\newcommand{\sn}{\mathsf{N}}
\newcommand{\C}{\mathbb{C}}
\newcommand{\N}{\mathbb{N}}
\newcommand{\Z}{\mathbb{Z}}

\overfullrule 5pt
\allowdisplaybreaks
\textwidth160mm
\oddsidemargin5mm
\evensidemargin5mm
\setlength\marginparwidth{1cm}

\raggedbottom

\title{On plethysms and Sylow branching coefficients}

\author{Stacey Law}
\address[S. Law]{Department of Pure Mathematics and Mathematical Statistics, University of Cambridge, Cambridge CB3 0WB, UK}
\email{swcl2@cam.ac.uk}

\author{Yuji Okitani}
\address[Y. Okitani]{Department of Mathematics, University of California, Berkeley, CA 94720, USA}
\email{yuji\_okitani@berkeley.edu}


\begin{document}
	
\begin{abstract}
	We prove a recursive formula for plethysm coefficients of the form $a^\mu_{\lambda,(m)}$, encompassing those which arise in a long-standing conjecture of Foulkes. This also generalises results on plethysms due to Bruns--Conca--Varbaro and de Boeck--Paget--Wildon. From this we deduce a stability result and resolve two conjectures of de Boeck concerning plethysms, as well as obtain new results on Sylow branching coefficients for symmetric groups for the prime 2. Further, letting $P_n$ denote a Sylow 2-subgroup of $S_n$, we show that almost all Sylow branching coefficients of $S_n$ corresponding to the trivial character of $P_n$ are positive.
\end{abstract}

\keywords{Plethysm, Sylow branching coefficients, character deflation}
\subjclass[2020]{20C15,20C30}

\maketitle
\baselineskip=15pt

\section{Introduction}

Symmetric groups lie at the intersection of a number of central topics of research in the representation theory of finite groups. In this article, we focus on two key themes: plethysms and Sylow branching coefficients.

Plethysm coefficients form an important family of numbers arising in the theory of symmetric functions as the multiplicities $a^\mu_{\lambda,\nu}$ appearing in the decompositions of plethystic products of Schur functions $s_\lambda\circ s_\nu$ into non-negative integral linear combinations of Schur functions $s_\mu$. The setting can be translated to the character theory of symmetric groups using the characteristic map: see Section~\ref{sec:prelims} below, for example. 

Finding a combinatorial rule for plethysm coefficients remains a major open problem in algebraic combinatorics \cite[Problem 9]{Sta-OP}, as does resolving the long-standing conjecture of Foulkes \cite{Foulkes} that the induced module $\triv_{S_n\wr S_m}\up^{S_{mn}}$ is a direct summand of $\triv_{S_m\wr S_n}\up^{S_{mn}}$ whenever $m\le n$. Here $\triv$ denotes the trivial representation, and Foulkes' Conjecture may equivalently be stated as $a^\mu_{(n),(m)}\ge a^\mu_{(m),(n)}$ for all partitions $\mu$.

Our first main result below is a recursive formula for plethysm coefficients of the form $a^\mu_{\lambda,(m)}$ for arbitrary partitions $\mu$ and $\lambda$. We extend the notation for plethysm coefficients from being indexed by partitions to being indexed by general skew shapes: see \eqref{eqn:skew-plethysm} for the full definition in terms of Littlewood--Richardson coefficients and plethysms indexed by bona fide partitions.
\begin{thmA}
	Fix $n\in\N$. Let $m\in\N$, $k\in\{0,1,\dotsc,n-1\}$ and $\lambda\vdash n$. Let $\mu\vdash mn$ with $l(\mu)=n-k$, and set $\hat{\mu}:=\mu-(1^{n-k})\vdash (m-1)n+k$. Then
	\[ a^\mu_{\lambda',(m)} = \sum_{i=0}^k (-1)^{k+i} \cdot \sum_{\substack{\alpha\vdash k+(m-1)i\\\beta\vdash i}} a^{\alpha/(k-i)}_{\beta',(m)}\cdot a^{\hat{\mu}/\alpha}_{\lambda/\beta,(m-1)}. \]
\end{thmA}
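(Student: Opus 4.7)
The plan is to proceed by induction on $k$, working in the ring of symmetric functions via the characteristic map: $a^\mu_{\lambda',(m)} = \langle s_\mu,\, s_{\lambda'} \circ s_{(m)}\rangle$, with the skew variant supplied by \eqref{eqn:skew-plethysm}. When $k = 0$ (so $l(\mu) = n$), the only surviving summand on the right-hand side of Theorem~A is $i = 0$ with $\alpha = \beta = \emptyset$, and the formula collapses to the column-removal identity
\[ a^\mu_{\lambda',(m)} \;=\; a^{\hat\mu}_{\lambda,(m-1)}. \]
This serves as the base case; for $\lambda = (n)$ it is the identity of Bruns--Conca--Varbaro and de Boeck--Paget--Wildon that Theorem~A generalises, and for general $\lambda$ it is either a known result or a short direct verification via the semistandard tableau model of plethysm that I would track down or prove separately.

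For the inductive step, I would exploit that $\mu$ is obtained from $\hat\mu$ by placing a vertical strip of length $n-k$ in the first column. Pieri's rule then gives
\[ s_{\hat\mu}\cdot e_{n-k} \;=\; s_\mu \;+\; \sum_{\kappa \neq \mu} s_\kappa, \]
where every other $\kappa$ must place at least one box in a row beyond row $n-k$, hence has $l(\kappa) > n-k$ and a strictly smaller parameter $k_\kappa < k$. Pairing with $s_{\lambda'} \circ s_{(m)}$ and using the Hopf-algebra adjointness of multiplication and comultiplication, one obtains
\[ a^\mu_{\lambda',(m)} \;=\; \bigl\langle s_{\hat\mu}\otimes e_{n-k},\; \Delta\bigl(s_{\lambda'} \circ s_{(m)}\bigr)\bigr\rangle \;-\; \sum_{\kappa \neq \mu} a^\kappa_{\lambda',(m)}, \]
and each correction $a^\kappa_{\lambda',(m)}$ is supplied by the inductive hypothesis at $k_\kappa < k$.

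The main term is then computed by combining $\Delta(s_{(m)}) = \sum_j s_{(j)} \otimes s_{(m-j)}$ with the Littlewood--Richardson expansion of $\Delta(s_{\lambda'})$ and the plethystic addition identity $f[g+h] = \sum f_{(1)}[g]\cdot f_{(2)}[h]$. The factor $a^{\alpha/(k-i)}_{\beta',(m)}$ arises as the $(m)$-plethysm piece that survives absorption of the $e_{n-k}$ factor (with the single-row shape $(k-i)$ appearing via the pairing of an $s_{(m-j)}$ term in $\Delta(s_{(m)})$ against the elementary symmetric function), while $a^{\hat\mu/\alpha}_{\lambda/\beta,(m-1)}$ records the complementary $(m-1)$-plethysm piece. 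Substituting the inductive formula into each $a^\kappa_{\lambda',(m)}$, rearranging the resulting nested sums, and invoking a telescoping identity of $\sum_r (-1)^r e_r h_{j-r} = \delta_{j,0}$ type should collapse the cumulative signs into the alternating pattern $(-1)^{k+i}$.

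The principal obstacle is executing this cancellation cleanly: the double sum on the right-hand side of Theorem~A must match a quadruple sum arising from the main term together with the inductive corrections, and the matching hinges on a delicate interplay between Pieri and signed Littlewood--Richardson identities. A secondary technical point is that the plethysm coproduct does not satisfy a naive Leibniz rule, so the addition identity applied to the expansion $\Delta(s_{(m)}) = \sum_j s_{(j)} \otimes s_{(m-j)}$ must be iterated with care to pin down the exact provenance of the single-row shape $(k-i)$ and of the conjugation $\beta \leftrightarrow \beta'$.
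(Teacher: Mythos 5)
Your outer frame---induction on $k$, with the correction terms $a^\kappa_{\lambda',(m)}$ coming from the Pieri product $s_{\hat\mu}\cdot e_{n-k}$ and handled by the inductive hypothesis---is essentially the manoeuvre the paper performs when deriving Theorem~A from Theorem~\ref{thm:14.6ii}. (The paper sets $\nu:=\mu\sqcup(1^k)$ and expands $\chi^{\nu/(1^k)}$ rather than $s_{\hat\mu}e_{n-k}$, but Lemma~\ref{lem:16.3} shows the resulting ``main terms'' $\langle\chi^{\hat\mu},e_{n-k}^\perp(s_{\lambda'}\circ s_{(m)})\rangle$ and $\langle\chi^{\nu/(1^k)},s_{\lambda'}\circ s_{(m)}\rangle$ are equal, and in both decompositions the remaining summands carry strictly smaller $k$-parameter.) Your observation that every other $\kappa$ in the Pieri expansion has $l(\kappa)>n-k$ is correct, since $l(\hat\mu)\le n-k$ and a vertical $(n-k)$-strip that does not fill the entire first column must drop a box below row $n-k$. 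The skeleton is therefore sound.

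The gap is in the computation of the main term, which is where all the content of the theorem lives. You assert that $\langle s_{\hat\mu}\otimes e_{n-k},\,\Delta(s_{\lambda'}\circ s_{(m)})\rangle$ can be resolved by combining $\Delta(h_m)=\sum_j h_j\otimes h_{m-j}$, the Littlewood--Richardson expansion of $\Delta(s_{\lambda'})$, and plethystic addition. But $s_{\lambda'}\big[\sum_j h_j(x)h_{m-j}(y)\big]$ has $m+1$ mixed cross terms, each contributing to every $y$-degree; extracting the degree-$(n-k)$ part is not a single application of $f[g+h]=\sum f_{(1)}[g]\,f_{(2)}[h]$, and nothing in your sketch explains why the outcome factorises into an $(m)$-plethysm skewed by a single-row shape times an $(m-1)$-plethysm---which is exactly the nontrivial structure the theorem asserts. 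The paper's key insight here is Lemma~\ref{lem:16.6}: $\rho^{(u)}_m/\chi^{(1^{u-t})}=\rho^{(t)}_m\boxtimes\rho^{(1^{u-t})}_{m-1}$, proved by a Mackey double-coset computation in which the sign character $\chi^{(1^{u-t})}$ annihilates all but a single $(K,H)$-double coset, and that surviving coset is precisely what produces the $(m)\boxtimes(m-1)$ factorisation. Proposition~\ref{prop:16.7} then extends this from $\lambda=(u)$ to arbitrary $\lambda$ by working with the permutation characters $\zeta^\lambda$ and inverting the Kostka matrix. Until you supply an argument of comparable force explaining how the mixed plethystic coproduct collapses, your main term is uncomputed and the proposed proof has a hole at its centre. (You also defer the $k=0$ base case $a^\mu_{\lambda',(m)}=a^{\hat\mu}_{\lambda,(m-1)}$; this is Theorem~\ref{thm:4.4.3}, proved via the Briand--Orellana--Rosas rectangular complement theorem, Theorem~\ref{thm:BOR}, and is a genuine input rather than a formality.)
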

Here $\lambda'$ denotes the conjugate partition of $\lambda$, and we note that Theorem A only concerns partitions $\mu\vdash mn$ with $l(\mu)\le n$, since $a^\mu_{\lambda',(m)}=0$ whenever $l(\mu)>n$ (Lemma~\ref{lem:tall-pleth}).
We further remark that the case of $k=0$ in Theorem A coincides with \cite[Proposition 1.16]{BCV}, and also with the $\mu=(1^m)$ and $r=1$ special case of \cite[Theorem 1.1]{dBPW}, although our full Theorem A is a generalisation in a different direction.

We prove Theorem A as a consequence of a striking factorisation result concerning characters of symmetric groups (Theorem B below). In order to state this, we note that the irreducible characters of the symmetric group $S_n$ are naturally indexed by partitions of $n$, and for a partition $\lambda$ the corresponding character will be denoted by $\chi^\lambda$. This extends more generally to a useful class of characters $\chi^{\lambda/\mu}$ indexed by skew shapes $\lambda/\mu$, whose decompositions into irreducible constituents gives the Littlewood--Richardson coefficients (see \eqref{eqn:skew}). For any partition $\beta$ and any character $\phi$ of a symmetric group $S_n$, we let $\phi/\chi^\beta = \sum_{\alpha\vdash n} \langle \phi,\chi^\alpha\rangle \cdot \chi^{\alpha/\beta}$. For $\theta$ a character of $S_m$ we also write $\phi\boxtimes\theta:=(\phi\times\theta)\up_{S_n\times S_m}^{S_{n+m}}$. Finally, for any skew shape $\alpha/\beta$ of size $n$ we let $\rho^{\alpha/\beta}_m:=\chi^{\alpha/\beta}\up_{S_m\wr S_n}^{S_{mn}}$, where here $\chi^{\alpha/\beta}$ also denotes its inflation from $S_n$ to the imprimitive wreath product $S_m\wr S_n$. Then we may factorise such characters as follows.

\begin{thmB}
	Let $m,n,k\in\N$ with $m\ge 2$ and $k\in\{0,1,\dotsc,n-1\}$. Let $\lambda\vdash n$. Then
	\[ \rho^\lambda_m/\chi^{(1^{n-k})} = \sum_{\beta\vdash k} \rho^\beta_m \boxtimes \rho^{\lambda'/\beta'}_{m-1}. \]
\end{thmB}

We apply our Theorem A to deduce a new stability result for plethysm coefficients investigated in \cite{BBP}, and in the course of our work also resolve two conjectures of de Boeck \cite{dB-thesis}. 	
In addition to applications to plethysm coefficients, Theorem A allows us to deduce several new results on Sylow branching coefficients for symmetric groups. 
Motivated by connections to the McKay Conjecture \cite{GKNT, INOT} and the study of the relationship between characters of a finite group and those of its Sylow subgroups \cite{N,GN}, Sylow branching coefficients describe the decomposition of irreducible characters restricted from a finite group to their Sylow subgroups. Specifically, let $\Irr(G)$ denote the set of (ordinary) irreducible characters of a finite group $G$. Then for $\chi\in\Irr(G)$ and $\phi\in\Irr(P)$, where $P$ is a Sylow $p$-subgroup of $G$ for some given prime $p$, the Sylow branching coefficient $Z^\chi_\phi$ denotes the multiplicity $\langle \chi\down_P,\phi\rangle$.

Divisibility properties of Sylow branching coefficients were recently shown in \cite[Theorem A]{GLLV} to characterise whether a Sylow subgroup of a finite group is normal. Furthermore, the positivity of Sylow branching coefficients $Z^\chi_\phi$ for symmetric groups and linear characters $\phi$ was determined in the case of odd primes $p$ in \cite{GL1,GL2}. However, relatively little is known about these coefficients when $p=2$.

Using Theorem A, we are able to explicitly calculate several families of Sylow branching coefficients when $p=2$. In fact, we show that when $p=2$, there are very few Sylow branching coefficients of the symmetric group $S_n$ which take value zero as $n$ tends to infinity, countering a prediction made in \cite{GL1}.
\begin{thmC}
	For $n$ a natural number, let $P_n$ denote a Sylow 2-subgroup of the symmetric group $S_n$. Then almost all irreducible characters $\chi$ of $S_n$ have positive Sylow branching coefficient $Z^\chi_{\triv_{P_n}}$. That is,
	\[ \lim_{n\to\infty} \frac{|\{ \chi\in\Irr(S_n) \mid Z^\chi_{\triv_{P_n}} > 0\}|}{|\Irr(S_n)|} = 1. \]
\end{thmC}

\medskip

The structure of the article is as follows. In Section~\ref{sec:prelims}, we record the necessary background and notation. In particular, letting $P_n$ denote a Sylow 2-subgroup of $S_n$, we abbreviate $Z^{\chi^\lambda}_{\triv_{P_n}}$ to $Z^\lambda$.
In Section~\ref{sec:sbc} we collect together a number of elementary results on Sylow branching coefficients for symmetric groups. Specifically, in Section~\ref{sec:special-shapes} we compute $Z^\lambda$ for various `special' shapes of partitions $\lambda$, and the primary tools in this section will be the Littlewood--Richardson rule and Mackey's theorem for induction and restriction between subgroups. Theorem C is then proved in Section~\ref{sec:limit}.

In the second half of the article (Sections~\ref{sec:pre-recursive-formula}--\ref{sec:13}), we focus primarily on plethysm coefficients, motivated by the connections between plethysms and Sylow branching coefficients via wreath product groups. In Section~\ref{sec:pre-recursive-formula} we recall some useful results on plethysms in the literature. In particular, we give a proof of the $k=0$ case of Theorem A in the language of character deflations introduced by Evseev--Paget--Wildon in \cite{EPW}, and resolve two conjectures of de Boeck on plethysm coefficients involved in Foulkes' Conjecture in Section~\ref{sec:resolve}. We then illustrate some applications of Theorem A to plethysms in Section~\ref{sec:14} before proving Theorems A and B in full in Section~\ref{sec:16}. For ease of reference, Theorem A is numbered as Theorem~\ref{thm:14.6i} and Theorem B as Theorem~\ref{thm:16.7} below. Finally, in Section~\ref{sec:13} we apply Theorem A to deduce further results on Sylow branching coefficients for the prime 2.

\medskip

\subsection*{Acknowledgements}
The first author was supported by Emmanuel College, Cambridge. The second author was supported by a Summer Studentship from Trinity College, Cambridge.
We thank Mark Wildon and Eugenio Giannelli for helpful comments on an earlier version of this article. We are also grateful to the anonymous reviewers for helpful corrections and comments that have improved the exposition of this article.

\bigskip
\section{Preliminaries}\label{sec:prelims}
Throughout, we use $\N$ to denote the set of natural numbers, and $\N_0$ for the set of non-negative integers. 
As stated in the introduction, for a finite group $G$ we use $\Irr(G)$ to denote a complete set of the ordinary irreducible characters of $G$. Further, we use $\Ch(G)$ to denote the set of all ordinary characters of $G$, and $\triv_G$ to mean the trivial character of $G$ (omitting the subscript when the meaning is clear from context).
For a subgroup $H\le G$ and $\phi\in\Irr(H)$, we let $\Irr(G\mid \phi)$ denote the set of $\chi\in\Irr(G)$ such that the restriction of $\chi$ to $H$ contains $\phi$ as a constituent. 

For $g\in G$ and $H\le G$, let $H^g:=gHg^{-1}$. Given $\phi\in\Ch(H)$, the character $\phi_g\in\Ch(H^g)$ is defined by $\phi_g(x)=\phi(g^{-1}xg)$. Mackey's Theorem allows us to describe restrictions and inductions between  subgroups of a finite group (see \cite[Chapter 5]{I}, for example).
\begin{theorem}[Mackey]\label{thm:Mackey}
	Let $G$ be a finite group and $H,K\le G$. Let $\phi\in\Ch(H)$. Then
	\[ \phi\up_H^G\down_K = \sum_{g\in K\setminus G/H} \phi_g \up_{H^g\cap K}^K, \]
	where the sum runs over a set of $(K,H)$-double coset representatives.
\end{theorem}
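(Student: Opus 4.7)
The plan is to prove Mackey's theorem by realising the induced character as a concrete $\mathbb{C}G$-module and then decomposing its restriction to $K$ via the $(K,H)$-double coset partition of $G$. Let $V$ be a $\mathbb{C}H$-module affording $\phi$, and set $W := \mathbb{C}G \otimes_{\mathbb{C}H} V$, which affords $\phi\up_H^G$. Fix a set $D$ of representatives for the $(K,H)$-double cosets so that $G = \bigsqcup_{g \in D} KgH$. For each $g \in D$, let $W_g \subseteq W$ denote the subspace spanned by $\{ x \otimes v : x \in KgH,\ v \in V \}$. Since $KgH$ is closed under left multiplication by $K$, each $W_g$ is a $\mathbb{C}K$-submodule, and $W\down_K = \bigoplus_{g \in D} W_g$.

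The heart of the argument is to identify $W_g \cong \phi_g\up_{H^g \cap K}^K$ as $\mathbb{C}K$-modules. For this I would let $K$ act on the set of left $H$-cosets contained in $KgH$: by orbit-stabiliser the action is transitive with point stabiliser $gHg^{-1} \cap K = H^g \cap K$ at the coset $gH$. A transversal $T$ for $H^g \cap K$ in $K$ therefore yields a decomposition $W_g = \bigoplus_{t \in T} t \cdot (g \otimes V)$ as a vector space, and on the distinguished fibre $g \otimes V$ an element $x = ghg^{-1} \in H^g \cap K$ acts by $x \cdot (g \otimes v) = xg \otimes v = g \otimes hv$, so this fibre affords the character $\phi_g$ restricted to $H^g \cap K$. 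The standard realisation of induction via a transversal then gives the required isomorphism $W_g \cong \phi_g\up_{H^g \cap K}^K$.

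Summing over $g \in D$ and taking characters yields the stated identity. The main technical point, though essentially bookkeeping, is the twist: one must confirm that $\phi_g$ on the conjugate subgroup $H^g$ (rather than $\phi$ on $H$ directly) is what emerges naturally from the action on the fibre, and that the summand $\phi_g\up_{H^g \cap K}^K$ is independent of the choice of representative $g$ — replacing $g$ by $kgh$ conjugates the stabiliser and twists the character compatibly, leaving the induced character in $\Ch(K)$ unchanged. Beyond this, the argument is a direct translation of the double coset decomposition into the language of $\mathbb{C}K$-modules.
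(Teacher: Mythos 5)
Your module-theoretic argument is correct and is the standard proof of Mackey's theorem. The paper itself does not prove this result; it is stated as a known theorem and the reader is referred to \cite[Chapter 5]{I}, so there is no in-paper proof to compare against. What you have written fills in that background cleanly: realising $\phi\up_H^G$ on $W=\mathbb{C}G\otimes_{\mathbb{C}H}V$, decomposing $W\down_K$ along the double-coset partition of $G$ into $K$-submodules $W_g$, identifying the $K$-stabiliser of the coset $gH$ as $H^g\cap K$, and checking that $x=ghg^{-1}$ sends $g\otimes v$ to $g\otimes hv$ so that the fibre affords $\phi_g$ restricted to $H^g\cap K$. One small remark: once you have established $W\down_K\cong\bigoplus_{g\in D}\phi_g\up_{H^g\cap K}^K$ for a fixed transversal $D$, the independence of the choice of representatives is automatic, since the left side is manifestly independent of $D$; so the final verification that replacing $g$ by $kgh$ leaves each summand's isomorphism class unchanged, while true and a reasonable sanity check, is not logically required for the identity as stated.
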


\subsection{Representation theory of symmetric groups}\label{sec:prelims-sn}
Next, we recall some key facts concerning the representation theory of symmetric groups, and refer the reader to \cite{J,JK} for further detail.
It is well known that $\Irr(S_n)$ is naturally in bijection with the set $\cP(n)$ of all partitions of $n$. By convention, $\cP(0)=\{\emptyset\}$ where $\emptyset$ denotes the empty partition. 
The irreducible character of $S_n$ corresponding to the partition $\lambda\vdash n$ will be denoted by $\chi^\lambda$. In particular, $\chi^{(n)}=\triv_{S_n}$, the trivial character of $S_n$, and $\chi^{(1^n)}=\sgn_{S_n}$, the sign character of $S_n$. If $\alpha$ is a (finite) sequence of integers but is not a partition, then we interpret $\chi^\alpha$ to be the zero function.

The Young diagram of a partition $\lambda$ will be denoted by $[\lambda]$, and that of a skew shape $\lambda/\mu$ by $[\lambda/\mu]:=[\lambda]\setminus[\mu]$ for $\mu$ a subpartition of $\lambda$ (written $\mu\subseteq\lambda$). The boxes in a Young diagram will sometimes be referred to as nodes, and we refer to skew shapes and skew diagrams interchangeably when the meaning is clear.
We denote the length of the partition $\lambda$ by $l(\lambda)$, and the conjugate partition of $\lambda$ by $\lambda'$. Note
\begin{equation}\label{eqn:sign}
	\chi^{\lambda'}=\sgn_{S_n}\cdot\ \chi^\lambda
\end{equation}
for all $\lambda\vdash n$ (see \cite[2.1.8]{JK}). 

We record some operations on partitions. Let $\lambda=(\lambda_1,\lambda_2,\dotsc)$ and $\mu=(\mu_1,\mu_2,\dotsc)$ be two partitions. Then $+$ denotes component-wise addition, i.e.~$\lambda+\mu=(\lambda_1+\mu_1,\lambda_2+\mu_2,\dotsc)$, 
and $\lambda\sqcup\mu$ denotes the partition obtained by taking the disjoint union of the parts of $\lambda$ and $\mu$ and reordering so that parts are in non-increasing order.
When clear from context, we abbreviate $(a^b):=(a,a,\dotsc,a)$ where there are $b$ parts of size $a$; in general we will specify $(a^b)\vdash ab$ to avoid confusion with the single part partition of $a^b$.

\medskip

We also make use of skew characters for symmetric groups, i.e.~those indexed by skew shapes. For partitions $\mu$ and $\lambda$ such that $|\mu|\le|\lambda|$, the skew character $\chi^{\lambda/\mu}$ of $S_{|\lambda|-|\mu|}$ satisfies
\begin{equation}\label{eqn:skew}
	\langle \chi^{\lambda/\mu},\chi^\nu\rangle = \langle \chi^\lambda, (\chi^\mu\times \chi^\nu)\up_{S_{|\mu|}\times S_{|\nu|}}^{S_{|\mu|+|\nu|}}\rangle\qquad\forall\ \nu\vdash |\lambda|-|\mu|.
\end{equation}
Note if $\mu\not\subseteq\lambda$ then $\chi^{\lambda/\mu}=0$. This can be seen from the Littlewood--Richardson rule, which gives an explicit combinatorial description of the decomposition into irreducibles of the induced character appearing in the above expression, with Littlewood--Richardson coefficients arising as the multiplicities. These appear in many contexts, so we shall now fix the notation which will be used throughout this article (see \cite{J}).

\begin{definition}
	Let $n\in\N$, $\lambda=(\lambda_1,\dotsc,\lambda_k)\vdash n$ and $\mathcal{C}=(c_1,c_2,\dotsc,c_n)$ be a sequence of positive integers. We say that $\mathcal{C}$ is of type $\lambda$ if $|\{i\in\{1,2,\dotsc,n\} \mid c_i=j\}|=\lambda_j$ for all $j\in\{1,2,\dotsc,k\}$. Moreover, we say that an element $c_j$ of $\mathcal{C}$ is good if $c_j=1$ or if
	\[ |\{i\in\{1,2,\dotsc,j-1\}\mid c_i=c_j-1\}| > |\{i\in\{1,2,\dotsc,j-1\}\mid c_i=c_j\}|. \]
	Finally, we say that $\mathcal{C}$ is good if $c_j$ is good for all $j\in\{1,\dotsc,n\}$.
\end{definition}

\begin{theorem}[Littlewood--Richardson rule]\label{thm:LR}
	Let $m,n\in\mathbb{N}_0$. Let $\mu\vdash m$ and $\nu\vdash n$. Then
	\[ (\chi^\mu\times\chi^\nu)\up^{S_{m+n}}_{S_m\times S_n} = \sum_{\lambda\vdash m+n} c^\lambda_{\mu,\nu}\cdot \chi^\lambda\]
	where the Littlewood--Richardson coefficient $c^\lambda_{\mu,\nu}$ equals the number of ways to replace the nodes of the skew Young diagram of $\lambda/\mu$ by natural numbers such that
	\begin{enumerate}[label=\textup{(\roman*)}]
		\item the sequence obtained by reading the numbers from right to left, top to bottom is good of type $\nu$;
		\item the numbers are non-decreasing (weakly increasing) left to right along rows; and
		\item the numbers are strictly increasing down columns.
	\end{enumerate}
\end{theorem}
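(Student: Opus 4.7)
The plan is to translate the character-theoretic statement into an identity of Schur functions via the Frobenius characteristic map, and then prove the resulting symmetric function identity combinatorially.

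Recall the characteristic map $\mathrm{ch} : \bigoplus_n \Ch(S_n) \to \Lambda$ from virtual characters of symmetric groups to the ring of symmetric functions, which sends $\chi^\lambda$ to the Schur function $s_\lambda$ and satisfies $\mathrm{ch}\bigl((\phi \times \psi)\uparrow^{S_{m+n}}_{S_m \times S_n}\bigr) = \mathrm{ch}(\phi)\,\mathrm{ch}(\psi)$. Since $\mathrm{ch}$ is an isometry, with $\{s_\lambda\}$ an orthonormal basis of $\Lambda$ under the Hall inner product (matching the orthonormality of the $\chi^\lambda$), Theorem~\ref{thm:LR} reduces to showing
\[ s_\mu s_\nu \;=\; \sum_{\lambda \vdash m+n} c^\lambda_{\mu,\nu}\, s_\lambda, \]
with $c^\lambda_{\mu,\nu}$ defined combinatorially as in the statement.

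To establish this Schur identity, I would use the combinatorial model $s_\lambda = \sum_T \mathbf{x}^T$ summed over semistandard Young tableaux of shape $\lambda$, so that $s_\mu s_\nu$ becomes a generating function over pairs $(S, T)$ of SSYT of shapes $\mu$ and $\nu$. The RSK row-insertion algorithm (equivalently Schützenberger's jeu de taquin) provides a weight-preserving bijection between such pairs and pairs consisting of an SSYT $U$ of some larger straight shape $\lambda \supseteq \mu$ together with a skew filling of $\lambda/\mu$ recording the insertion trajectory of $T$ into $S$. Fixing $\lambda$, the crucial claim is that the number of pairs $(S, T)$ producing shape $\lambda$ and a prescribed filling of $\lambda/\mu$ is exactly the number of fillings satisfying conditions~(i)--(iii).

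The main obstacle is the combinatorial equivalence between the "good sequence" condition on the reverse reading word and the condition that, under insertion, the $\nu$-tableau $T$ must be the unique Yamanouchi (column-superstandard) filling of $\nu$ -- namely row $i$ filled entirely with $i$'s. The key lemma here is that a word of type $\nu$ is good precisely when every prefix has letter-frequencies forming a partition, which is the classical reformulation of the lattice/ballot condition. Combined with the row-weak and column-strict increase conditions~(ii) and~(iii), this constraint forces the insertion bijection to restrict correctly. Once this equivalence is established, summing over $\lambda$ yields the Schur identity above, and pulling back through $\mathrm{ch}$ recovers the theorem. The full combinatorial verification, though standard, is lengthy; I would refer the reader to \cite{J,JK} rather than reproduce it in detail.
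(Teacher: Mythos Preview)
The paper does not actually prove Theorem~\ref{thm:LR}: it is stated as a classical background result and attributed to \cite{J}, with no argument given. So there is no ``paper's own proof'' to compare against here.

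Your sketch is a reasonable outline of one standard route (characteristic map to reduce to $s_\mu s_\nu = \sum_\lambda c^\lambda_{\mu,\nu} s_\lambda$, then a tableau/insertion argument), and you end by deferring to \cite{J,JK} --- which is exactly what the paper itself does. Since the paper treats this as a known preliminary rather than something to be proved, your proposal is fine in spirit; in the context of this paper the appropriate thing is simply to cite the result, as you ultimately do.
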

We call the order in Theorem~\ref{thm:LR}(i) the reading order of a skew shape. Let $\nu$ be a partition and $\gamma$ be a skew shape of size $|\nu|$. We call a way of replacing the nodes of $\gamma$ by numbers satisfying conditions Theorem~\ref{thm:LR}(i)--(iii) a Littlewood--Richardson (LR) filling of $\gamma$ of type $\nu$. 
Clearly $c^\lambda_{\mu,\nu}=c^\lambda_{\nu,\mu}$. Using Littlewood--Richardson coefficients, we can also rephrase \eqref{eqn:skew} as $\chi^{\lambda/\mu}=\sum_\nu c^\lambda_{\mu,\nu}\cdot \chi^\nu$. Moreover, we can extend this notation to `generalised' Littlewood--Richardson coefficients $c^\lambda_{\mu^1,\dotsc,\mu^r}$ describing the constituents of
\[ (\chi^{\mu^1}\times\cdots\times\chi^{\mu^r})\up_{S_{n_1}\times\cdots\times S_{n_r}}^{S_{n_1+\cdots+n_r}}, \]
for any $r\in\N$ and $n_i\in\N_0$, and partitions $\mu^i\vdash n_i$ and $\lambda\vdash n_1+\cdots+n_r$. Similarly, $c^\lambda_{\mu^1,\dotsc,\mu^r}=c^\lambda_{\mu^{\sigma(1)},\dotsc,\mu^{\sigma(r)}}$ for any $\sigma\in S_r$. Furthermore, for $A\subseteq\cP(n)$ and $B\subseteq\cP(m)$ we define the operation $\star$ as follows:
\[ A\star B:= \{\lambda\vdash m+n \mid \exists\ \mu\in A,\ \nu\in B\text{ s.t. }c^\lambda_{\mu,\nu}>0 \}. \]
We note that $\star$ is both commutative and associative.

Finally, for a partition $\lambda=(\lambda_1,\lambda_2,\dotsc,\lambda_k)\vdash n$, we let $S_\lambda\cong S_{\lambda_1}\times\cdots\times S_{\lambda_k}$ denote the corresponding Young subgroup of $S_n$. The permutation module $\triv_{S_\lambda}\up^{S_n}$ induced by the action of $S_n$ on the cosets of $S_\lambda$ will be denoted by $M^\lambda$. 
Young's Rule (see \cite[2.8.5]{JK}) tells us the decomposition of these permutation modules into irreducibles. Denoting the character of $M^\lambda$ by $\xi^\lambda$, we have that $\langle \xi^\lambda,\chi^\alpha\rangle$ equals the number of semistandard Young tableaux of shape $\alpha$ and content $\lambda$, for any $\alpha\vdash n$. Moreover, this multiplicity is positive if and only if $\alpha\trianglerighteq\lambda$, where $\trianglerighteq$ denotes the dominance partial order on partitions.

\medskip

\subsection{Wreath products and Sylow subgroups of symmetric groups}
In order to describe the Sylow subgroups of symmetric groups, we briefly introduce some notation for wreath products. Let $G$ be a finite group and let $H\le S_n$ for some $n\in\N$. The natural action of $S_n$ on the factors of the direct product $G^{\times n}$ induces an action of $S_n$ (and therefore of $H$) via automorphisms of $G^{\times n}$, giving the wreath product $G\wr H:= G^{\times n}\rtimes H$. 
As in \cite[Chapter 4]{JK}, we denote the elements of $G\wr H$ by $(g_1,\dotsc,g_n;h)$ for $g_i\in G$ and $h\in H$. Let $V$ be a $\C G$--module and suppose it affords the character $\phi$. Let $V^{\otimes n}$ be the corresponding $\C G^{\times n}$--module. The left action of $G\wr H$ on $V^{\otimes n}$ defined by linearly extending
\[ (g_1,\dotsc,g_n;h)\ :\quad v_1\otimes \cdots\otimes v_n \longmapsto g_1v_{h^{-1}(1)}\otimes\cdots\otimes g_nv_{h^{-1}(n)} \]
turns $V^{\otimes n}$ into a $\C(G\wr H)$--module, which we denote by $\widetilde{V^{\otimes n}}$ (see \cite[(4.3.7)]{JK}), and we denote its character by $\tilde{\phi}$. For any $\psi\in\Ch(H)$, we define $\cX(\phi;\psi)$ as follows:
\[ \cX(\phi;\psi):=\tilde{\phi}\cdot\Infl_H^{G\wr H}(\psi)\ \in\Ch(G\wr H). \]
The inflation $\Infl_H^{G\wr H}(\psi)$ of $\psi$ from $H$ to $G\wr H$ (identifying $H$ with the quotient $(G\wr H)/G^{\times n}$) is sometimes abbreviated to simply $\psi$, for convenience.

\begin{lemma}\label{lem:infl-ind}
	Let $G$ ,$H$ and $\phi$ be as above. Let $L\le H$ and $\tau\in\Ch(L)$. Then $\cX(\phi;\tau)\up_{G\wr L}^{G\wr H} = \cX(\phi;\tau\up_L^H)$.
\end{lemma}
\begin{proof}
	For any $\alpha\in\Ch(H)$ and $\beta\in\Ch(L)$, it is easy to check that $\alpha\cdot(\beta\up^H)=(\alpha\down_L\cdot \beta)\up^H$. Hence
	\[ \cX(\phi;\tau)\up_{G\wr L}^{G\wr H} := \big( \tilde{\phi}\down_{G\wr L}^{G\wr H}\cdot\Infl_L^{G\wr L}\tau\big)\up_{G\wr L}^{G\wr H} = \tilde{\phi} \cdot \big(\Infl_L^{G\wr L}(\tau)\big)\up_{G\wr L}^{G\wr H}. \]
	But induction and inflation of characters commute,
	so $\big(\Infl_L^{G\wr L}(\tau)\big)\up_{G\wr L}^{G\wr H} = \Infl_H^{G\wr H}(\tau\up_L^H)$. Therefore $\cX(\phi;\tau)\up_{G\wr L}^{G\wr H} = \tilde{\phi}\cdot \Infl_H^{G\wr H}(\tau\up_L^H) = \cX(\phi;\tau\up_L^H)$, as claimed.
\end{proof}

Furthermore, if $\phi\in\Irr(G)$ then Gallagher's Theorem \cite[Corollary 6.17]{I} gives $\Irr(G\wr H\mid \phi^{\times n}) = \{\cX(\phi;\psi)\mid \psi\in\Irr(H)\}$, where $\Irr(G\wr H\mid \phi^{\times n}):=\{\chi\in\Irr(G\wr H)\mid \langle \chi\down_{G^{\times n}}, \phi^{\times n}\rangle \ne 0 \}$.
For a full description of $\Irr(G\wr H)$, we refer the reader to \cite[Chapter 4]{JK}; in the case $H=S_2$, we use the notation below.

\begin{notation}\label{not:GwrS2}
	Let $G$ be a finite group and suppose $\Irr(G)=\{\chi^i \mid i\in I\}$. Then 
	\[ \Irr(G\wr S_2) = \{ \psi^{i,j} \mid i\ne j\in I \} \sqcup \{ \psi^{i,i}_+, \psi^{i,i}_- \mid i\in I \} \]
	where 
	\[ \psi^{i,j} := (\chi^i\times\chi^j)\up_{G\times G}^{G\wr S_2}\ = \psi^{j,i},\quad \psi^{i,i}_{+} := \cX(\chi^i;\triv_{S_2})\quad\text{and}\quad \psi^{i,i}_{-} := \cX(\chi^i;\sgn_{S_2}). \]
\end{notation}

It will be useful to describe the decomposition of the permutation character $\triv_{H\wr S_2}\up^{G\wr S_2}$, for finite groups $H\le G$.

\begin{lemma}\label{lem:Prop1.2}
	Let $G$ be a finite group and let $\Irr(G)$ and $\Irr(G\wr S_2)$ be as in Notation~\ref{not:GwrS2}.
	Let $H\leq G$ and let $\pi := \triv_{H}\up^{G}$. Then $\tilde{\pi} := \triv_{H\wr S_2}\up^{G\wr S_2}$ decomposes into irreducible constituents with multiplicities given by
	\[ \langle\tilde{\pi},\psi^{i,j}\rangle = \langle\pi,\chi^i\rangle \cdot \langle\pi,\chi^j\rangle \quad\text{and}\quad \langle\tilde{\pi},\psi^{i,i}_{\pm}\rangle = \tfrac{1}{2}\cdot\big( \langle\pi,\chi^i\rangle^2 \pm \langle\pi,\chi^i\rangle \big). \]
\end{lemma}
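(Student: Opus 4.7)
The plan is to apply Frobenius reciprocity: for each $\psi \in \Irr(G \wr S_2)$,
$$\langle \tilde{\pi}, \psi\rangle = \langle \triv_{H \wr S_2}, \psi\down_{H \wr S_2}\rangle,$$
and then to handle the two families of irreducibles in Notation~\ref{not:GwrS2} separately, using Mackey's theorem for $\psi^{i,j}$ with $i\ne j$ and the explicit wreath product character formula for $\psi^{i,i}_\pm$.

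For $\psi^{i,j}$ with $i \ne j$, since $\psi^{i,j} = (\chi^i \times \chi^j)\up_{G \times G}^{G \wr S_2}$, I would apply Mackey (Theorem~\ref{thm:Mackey}) to the pair of subgroups $G \times G$ and $H \wr S_2$ inside $G \wr S_2$. Because $(e,e;(12)) \in H \wr S_2$, the product $(H \wr S_2)(G \times G)$ meets both cosets of $G \times G$ in $G \wr S_2$, so there is a unique $(H \wr S_2, G \times G)$-double coset and Mackey collapses to
$$\psi^{i,j}\down_{H \wr S_2} = (\chi^i\down_H \times \chi^j\down_H)\up^{H \wr S_2}_{H \times H}.$$
A second application of Frobenius reciprocity, using $\triv_{H \wr S_2}\down_{H \times H} = \triv_{H \times H}$, identifies the multiplicity with $\langle \triv_H, \chi^i\down_H\rangle\langle \triv_H, \chi^j\down_H\rangle = \langle \pi, \chi^i\rangle \langle \pi, \chi^j\rangle$.

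For $\psi^{i,i}_\pm$, the restriction to $H \wr S_2$ is simply $\cX(\chi^i\down_H; \eta_\pm)$ with $\eta_+ = \triv_{S_2}$ and $\eta_- = \sgn_{S_2}$, so the Mackey reduction above cannot distinguish the two characters on its own. Instead I would compute the inner product with $\triv_{H \wr S_2}$ directly from the defining character formula for $\cX$. Writing $\phi := \chi^i\down_H$, one has $\cX(\phi;\eta_\pm)(h_1,h_2;e) = \phi(h_1)\phi(h_2)$ and $\cX(\phi;\eta_\pm)(h_1,h_2;(12)) = \pm \phi(h_1 h_2)$. Splitting the average over $H \wr S_2$ between the two cosets of $H \times H$, and making the substitution $g = h_1 h_2$ in the transposition coset (under which each $g \in H$ is hit $|H|$ times) yields
$$\langle \triv_{H \wr S_2}, \cX(\phi;\eta_\pm)\rangle = \tfrac{1}{2}\bigl(\langle \triv_H, \phi\rangle^2 \pm \langle \triv_H, \phi\rangle\bigr).$$
Since $\langle \triv_H, \chi^i\down_H\rangle = \langle \pi, \chi^i\rangle$ by Frobenius reciprocity, this is the desired formula.

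The $\psi^{i,i}_\pm$ case is the main subtlety, since the sign data is invisible to the restriction to $H \times H$ alone; separating the two multiplicities requires exploiting the values on the transposition coset together with the substitution trick above. The $i \ne j$ case, by contrast, reduces cleanly via Mackey plus Frobenius reciprocity to a product of two ordinary inner products over $H$.
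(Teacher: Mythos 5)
Your proof is correct and uses essentially the same approach as the paper: Mackey's theorem for the pair of subgroups $G\times G$ and $H\wr S_2$ (with a single double coset) to handle $\psi^{i,j}$, and a direct calculation from the wreath product character values on the two cosets of $H\times H$ to handle $\psi^{i,i}_\pm$. The only cosmetic difference is that in the first case you restrict $\psi^{i,j}$ down to $H\wr S_2$ and apply Frobenius reciprocity twice, while the paper restricts $\tilde\pi$ down to $G\times G$; these are dual applications of the same Mackey computation.
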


\begin{proof}
	The first part follows from Mackey's theorem applied to the subgroups $G\times G$ and $H\wr S_2$ of $G\wr S_2$: since $(G\times G)\cdot (H\wr S_2)=G\wr S_2$ and $(G\times G)\cap (H\wr S_2)=H\times H$, we have
	\[ \langle\tilde{\pi},\psi^{i,j}\rangle	= \langle \triv_{H\wr S_2}\up^{G\wr S_2}\down_{G\times G}, \chi^i\times\chi^j\rangle = \langle \triv_{H\times H}\up^{G\times G}, \chi^i\times \chi^j\rangle = \langle\pi,\chi^i\rangle \cdot \langle\pi,\chi^j\rangle. \]
	Next, we use the wreath product character formula \cite[Lemma 4.3.9]{JK} to obtain
	\[ \psi^{i,i}_{\pm}(g_1,g_2;1) = \chi^i(g_1)\cdot\chi^i(g_2)\quad\text{and}\quad\psi^{i,i}_{\pm}\big(g_1,g_2;(1,2)\big) = \pm\chi^i(g_1 g_2) \qquad\forall\ g_1, g_2\in G. \]
	Hence
	\[ \langle\tilde{\pi},\psi^{i,i}_{\pm}\rangle = \langle\triv_{H\wr S_2},\psi^{i,i}_{\pm}\down_{H\wr S_2}\rangle = \frac{1}{|H\wr S_2|}\sum_{h_1,h_2\in H}\big(\chi^i(h_1)\cdot\chi^i(h_2)\pm\chi^i(h_1 h_2)\big) = \tfrac{1}{2}\big(\langle\pi,\chi^i\rangle^2\pm \langle\pi,\chi^i\rangle\big) \]
	as claimed.
\end{proof}

To describe Sylow subgroups of symmetric groups, 
fix a prime $p$ and let $n\in\N$. Let $P_n$ denote a Sylow $p$-subgroup of $S_n$. Clearly $P_1$ is trivial while $P_p$ is cyclic of order $p$. More generally, $P_{p^k}= (P_{p^{k-1}})^{\times p}\rtimes P_p=P_{p^{k-1}}\wr P_p\cong P_p\wr \cdots \wr P_p$ ($k$-fold wreath product) for all $k\in\N$. Now suppose $n\in\N$ and let $n=\sum_{i=1}^k a_ip^{n_i}$ be its $p$-adic expansion, i.e.~$n_1>\cdots>n_k\ge 0$ and $a_i\in\{1,2,\dotsc,p-1\}$ for all $i$. Then $P_n\cong (P_{p^{n_1}})^{\times a_1}\times\cdots\times (P_{p^{n_k}})^{\times a_k}$. 

To fix a convention for denoting such wreath products involving Sylow subgroups of symmetric groups more generally, we have the following.

\begin{notation}\label{not:convention}
	Let $G$ be a finite group $G$ and $p$ a prime. We use the convention that $P_n$ will always be viewed as a subgroup of $S_n$ in the notation $G\wr P_n$, that is, $G\wr P_n$ is a semi-direct product $G^{\times n}\rtimes P_n$. 
\end{notation}

\begin{remark}\label{rem:convention}
	Suppose $p=2$ and $n=2^{n_1}+\cdots+2^{n_k}$ for some $n_1>\cdots>n_k\ge 0$. With the convention of Notation~\ref{not:convention}, we observe that 
	\[ P_{2n} \cong P_{2^{n_1+1}}\times\cdots\times P_{2^{n_k+1}} \cong (P_2\wr P_{2^{n_1}})\times\cdots\times (P_2\wr P_{2^{n_k}}) \cong P_2 \wr (P_{2^{n_1}}\times \cdots \times P_{2^{n_k}})\cong P_2 \wr P_n, \]
	viewing $P_{2^{n_1}}\times \cdots \times P_{2^{n_k}}\cong P_n$ naturally as a subgroup of $S_n$. Inductively, we also have $P_{2^tn}\cong P_{2^t}\wr P_n$ for all $t\in\N$.
	On the other hand, we clarify for example that $P_2\wr P_3 \not\cong P_2\wr P_2$ in this notation, even though $P_3\cong P_2$.\hfill$\lozenge$
\end{remark}

We now return to an arbitrary prime $p$. Following the notation introduced in \cite{GL2}, given $\chi\in\Irr(S_n)$ and $\phi\in\Irr(P_n)$, the \textit{Sylow branching coefficient} $Z^\chi_\phi$ denotes the non-negative integer
\[ Z^\chi_\phi:=\langle \chi\down_{P_n}, \phi\rangle. \]
In this article, we will be particularly interested in the case where $\phi=\triv_{P_n}$, and abbreviate $Z^\chi_{\triv_{P_n}}$ to $Z^\chi$. Moreover, if $\chi=\chi^\lambda$ for a partition $\lambda$, then we shorten $Z^{\chi^\lambda}$ to $Z^\lambda$. 

We record one more lemma which will be useful later.
\begin{lemma}\label{lem:X-induced}
	Let $A$ and $B$ be finite groups, and let $n\in\N$. Then
	\[ \triv\up_{(A\times B)\wr S_n}^{A\wr S_n \times B\wr S_n} = \sum_{\phi\in\Irr(S_n)} \cX(\triv_A;\phi)\cdot\cX(\triv_B;\phi). \]
\end{lemma}
\begin{proof}
	Let $\phi\in\Irr(S_n)$. By Frobenius reciprocity, 
	\begin{align*}
		\langle \triv\up_{(A\times B)\wr S_n}^{A\wr S_n \times B\wr S_n}, \cX(\triv_A;\phi)\cdot\cX(\triv_B;\phi)\rangle &= \langle \triv, \big( \cX(\triv_A;\phi)\cdot\cX(\triv_B;\phi) \big) \down_{(A\times B)\wr S_n}\rangle\\
		&= \tfrac{1}{|(A\times B)\wr S_n|}\sum_{x\in (A\times B)\wr S_n} \cX(\triv_A;\phi)(x)\cdot\cX(\triv_B;\phi)(x).
	\end{align*}
	But this equals $\tfrac{|A|^n\cdot|B|^n}{|(A\times B)\wr S_n|}\sum_{g\in S_n} \phi(g)^2$ by \cite[Lemma 4.3.9]{JK}. As symmetric group characters are real-valued (in fact, integer-valued), this simplifies to $\tfrac{1}{|S_n|}\sum_{g\in S_n}\overline{\phi(g)}\cdot\phi(g)=\langle \phi,\phi\rangle=1$.
\end{proof}

\medskip

\subsection{Plethysms and deflations}\label{sec:plethysm-deflation}
When $\phi$ and $\psi$ are characters of symmetric groups, the characters $\cX(\phi;\psi)$ introduced above are closely related to plethysms of Schur functions: we give a brief description here and refer the reader to \cite{dBPW, Mac, Stanley} for further detail. Let $s_\lambda$ denote the Schur function corresponding to the partition $\lambda$, and $\circ$ the plethystic product of symmetric functions. Using the characteristic map (see e.g.~\cite[Chapter 7]{Stanley}) between class functions of finite symmetric groups and the ring of symmetric functions, we have the correspondence
\[ \cX(\chi^\nu;\chi^\lambda)\up_{S_{|\nu|}\wr S_{|\lambda|}}^{S_{|\nu|\cdot|\lambda|}}\ \longleftrightarrow\ s_\lambda\circ s_\nu \]
for all partitions $\lambda$ and $\nu$. Therefore the plethysm coefficient $a^\mu_{\lambda,\nu}$ satisfies
\begin{equation}\label{eqn:a}
	a^\mu_{\lambda,\nu} = \langle s_\lambda\circ s_\nu, s_\mu\rangle = \langle \cX(\chi^\nu;\chi^\lambda)\up_{S_{|\nu|}\wr S_{|\lambda|}}^{S_{|\nu|\cdot|\lambda|}}, \chi^\mu \rangle	
\end{equation}
for all partitions $\mu$ (and note that this is zero if $|\mu|\ne|\nu|\cdot|\lambda|$). 
We also introduce plethysm coefficients indexed by skew shapes: for partitions $\beta\subseteq\alpha$, $\delta\subseteq\gamma$ and $\nu$,
\begin{equation}\label{eqn:skew-plethysm}
	a^{\alpha/\beta}_{\gamma/\delta,\nu}:= \sum_{\substack{\eta\vdash|\gamma|-|\delta|\\\zeta\vdash|\alpha|-|\beta|}} c^\gamma_{\eta,\delta}\cdot c^\alpha_{\zeta,\beta}\cdot a^\zeta_{\eta,\nu}.
\end{equation}
In other words, if $\phi$ and $\theta$ are skew shapes and $\nu$ is any partition, then $a^\phi_{\theta,\nu} = \langle \cX(\chi^\nu; \chi^\theta)\up_{S_{|\nu|}\wr S_{|\theta|}}^{S_{|\nu|\cdot|\theta|}}, \chi^\phi\rangle$, extending the equality in \eqref{eqn:a}. We also remark that if $\zeta$ and $\eta$ are partitions, then $a^\zeta_{\eta,\emptyset}=1$ if $\zeta=\emptyset$ and $\eta=(n)$ for some $n$, and $a^\zeta_{\eta,\emptyset}=0$ otherwise. 

A well known symmetry property of plethysm coefficients involving the conjugation involution is the following (see e.g.~\cite[Ex.~1, Ch. I.8]{Mac}):
\begin{lemma}\label{lem:conj-plethysm}
	Let $\lambda$, $\mu$ and $\nu$ be partitions. Then
	\[ a^\nu_{\lambda,\mu} = a^{\nu'}_{\lambda^\ast,\mu'},\quad\text{where}\quad \lambda^\ast:=\begin{cases} \lambda & \text{if $|\mu|$ is even},\\ \lambda' & \text{if $|\mu|$ is odd}.\end{cases} \]
\end{lemma}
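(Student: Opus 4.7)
The plan is to exploit the sign involution $\chi\mapsto \sgn_{S_{mn}}\cdot\chi$ on $\Ch(S_{mn})$, which sends $\chi^\nu$ to $\chi^{\nu'}$, and to track how it acts on the induced character $\cX(\chi^\mu;\chi^\lambda)\up_{S_m\wr S_n}^{S_{mn}}$ from which $a^\nu_{\lambda,\mu}$ is extracted via equation~\eqref{eqn:a}; here $m:=|\mu|$ and $n:=|\lambda|$.

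The first step is to compute $\sgn_{S_{mn}}\down_{S_m\wr S_n}$ explicitly. For $(g_1,\dotsc,g_n;h)\in S_m\wr S_n$ viewed as a permutation of $mn$ points, each $g_i$ contributes $\sgn_{S_m}(g_i)$, while a transposition in $S_n$ permuting two of the $n$ blocks of size $m$ decomposes into $m$ disjoint transpositions in $S_{mn}$. Thus
\[ \sgn_{S_{mn}}(g_1,\dotsc,g_n;h) \;=\; \prod_{i=1}^n \sgn_{S_m}(g_i)\cdot\bigl(\sgn_{S_n}(h)\bigr)^m. \]
Since the sign module of $S_m$ is one-dimensional, permutations of tensor factors act trivially on it, so $\widetilde{\sgn_{S_m}}(g_1,\dotsc,g_n;h)=\prod_i\sgn_{S_m}(g_i)$. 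Recalling $\cX(\phi;\psi)=\tilde\phi\cdot\psi$, this identifies
\[ \sgn_{S_{mn}}\down_{S_m\wr S_n} \;=\; \cX\bigl(\sgn_{S_m};\,(\sgn_{S_n})^m\bigr). \]

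The second step uses the multiplicativity $\cX(\phi_1;\psi_1)\cdot\cX(\phi_2;\psi_2)=\cX(\phi_1\phi_2;\psi_1\psi_2)$, which is immediate from $\widetilde{\phi_1\phi_2}=\tilde\phi_1\tilde\phi_2$, combined with the identities $\sgn_{S_m}\cdot\chi^\mu=\chi^{\mu'}$ and $(\sgn_{S_n})^m\cdot\chi^\lambda=\chi^{\lambda^\ast}$ recalled in Section~\ref{sec:prelims-sn}. These give
\[ \sgn_{S_{mn}}\down_{S_m\wr S_n}\cdot\,\cX(\chi^\mu;\chi^\lambda) \;=\; \cX(\chi^{\mu'};\chi^{\lambda^\ast}). \]

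Finally, I would apply the projection formula $(\chi\cdot\theta\down_{S_m\wr S_n})\up^{S_{mn}}=\chi\up^{S_{mn}}\cdot\theta$ with $\theta=\sgn_{S_{mn}}$, and combine with \eqref{eqn:a} to obtain
\[ a^\nu_{\lambda,\mu} = \langle \cX(\chi^\mu;\chi^\lambda)\up^{S_{mn}},\chi^\nu\rangle = \langle \sgn_{S_{mn}}\cdot\cX(\chi^\mu;\chi^\lambda)\up^{S_{mn}},\chi^{\nu'}\rangle = \langle \cX(\chi^{\mu'};\chi^{\lambda^\ast})\up^{S_{mn}},\chi^{\nu'}\rangle = a^{\nu'}_{\lambda^\ast,\mu'}. \]
The only bookkeeping subtlety is the exponent $m$ attached to $\sgn_{S_n}$: a block transposition in $S_n$ becomes $m$ disjoint transpositions in $S_{mn}$, and it is precisely this parity that produces the case distinction in the definition of $\lambda^\ast$.
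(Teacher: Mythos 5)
Your proof is correct. The paper offers no proof of Lemma~\ref{lem:conj-plethysm}, citing only \cite[Ex.~1, Ch.~I.8]{Mac}, where the identity is phrased in the symmetric-function language via the $\omega$ involution on the ring of symmetric functions. You instead give a self-contained character-theoretic derivation: you compute $\sgn_{S_{mn}}\down_{S_m\wr S_n}=\cX\bigl(\sgn_{S_m};(\sgn_{S_n})^m\bigr)$ by observing that a block transposition in $S_n$ contributes $m$ disjoint transpositions in $S_{mn}$, then use the multiplicativity of $\cX$ (here specialised to tensoring with the one-dimensional module $\sgn_{S_m}$, so the identity $\widetilde{\phi_1\phi_2}=\tilde\phi_1\tilde\phi_2$ is elementary) together with $\sgn_{S_m}\cdot\chi^\mu=\chi^{\mu'}$ and $(\sgn_{S_n})^m\cdot\chi^\lambda=\chi^{\lambda^\ast}$, and finally the projection formula and the fact that multiplication by $\sgn_{S_{mn}}$ is an isometry sending $\chi^\nu\mapsto\chi^{\nu'}$. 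This is a genuinely different route from simply invoking Macdonald: it stays entirely within the wreath-product/character framework that the paper has already set up (equation~\eqref{eqn:a}, the operator $\cX$), and it makes the parity case distinction in $\lambda^\ast$ emerge transparently from the exponent $m$ in $(\sgn_{S_n})^m$, rather than from the bookkeeping of how $\omega$ interacts with plethysm of symmetric functions. The trade-off is that the citation is shorter, while your argument is more illuminating in the present context and avoids any dependence on the symmetric-function formalism.
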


Character deflations were introduced in \cite[Definition 1.1]{EPW}, and used to prove results generalising the Murnaghan--Nakayama rule for computing symmetric group character values and (special cases of) the Littlewood--Richardson rule, as well as to verify new cases of the long-standing Foulkes' Conjecture. We observe that they give another language in which to describe certain plethysm coefficients. We first record the definition of these deflations in notation which we have introduced thus far.
\begin{definition}\label{def:deflation}
	Let $m,n\in\N$ and $\theta\in\Irr(S_m)$. Let $\xi\in\Irr(S_m\wr S_n)$. Then 
	\[ \operatorname{Def}^\theta_{S_n}(\xi) := \begin{cases}
		\chi^\nu & \text{ if }\xi = \cX(\theta;\chi^\nu)\text{ for some }\nu\vdash n,\\
		0 & \text{ otherwise},
	\end{cases} \]
	which then extends linearly to all class functions of $S_m\wr S_n$. If $\chi$ is a class function of $S_{mn}$ then set
	\[ \operatorname{Defres}^\theta_{S_n}(\chi) := \operatorname{Def}^\theta_{S_n}(\chi\down_{S_m\wr S_n}). \]
	When $n\in\N$ is fixed and $\gamma\vdash mn$ for some $m\in\N$, we use the notation
	\[ \delta^\gamma:=\operatorname{Defres}^{\triv_{S_m}}_{S_n}(\chi^\gamma) = \operatorname{Def}^{\triv_{S_m}}_{S_n}(\chi^\gamma\down^{S_{mn}}_{S_m\wr S_n}). \]
	In this article, we will sometimes refer to $\delta^\gamma$ as the deflation of $\gamma$ with respect to $S_n$ (where the $m$ is understood from $|\gamma|/n$ and we suppress $m$ from the notation). 
\end{definition}
In particular, if $\lambda\vdash n$ then we have that $a^\gamma_{\lambda,(m)} = \langle \delta^\gamma,\chi^\lambda \rangle$. This relation between plethysm coefficients and deflations can immediately be extended to skew shapes using Littlewood--Richardson coefficients. Namely, if $\alpha,\beta,\nu,\lambda$ are partitions such that $|\alpha|-|\beta|=n$ and $|\nu|-|\lambda|=mn$, then we may define $\delta^{\nu/\lambda}:=\operatorname{Defres}^{\triv_{S_m}}_{S_n}(\chi^{\nu/\lambda}) = \operatorname{Def}^{\triv_{S_m}}_{S_n}(\chi^{\nu/\lambda}\down^{S_{mn}}_{S_m\wr S_n})$. Then $\chi^{\alpha/\beta}=\sum_\sigma c^\alpha_{\beta,\sigma}\cdot \chi^\sigma$ and $\delta^{\nu/\lambda}=\sum_\tau c^\nu_{\lambda,\tau}\cdot \delta^\tau$ give
\begin{equation}\label{eqn:plethysm-deflation-skew}
	a^{\nu/\lambda}_{\alpha/\beta,(m)} = \langle \delta^{\nu/\lambda}, \chi^{\alpha/\beta}\rangle.
\end{equation}

The following is a straightforward result on plethysm coefficients involving `tall' partitions. We include a proof for convenience.
\begin{lemma}\label{lem:tall-pleth}
	Let $m,n\in\N$, $\lambda\vdash mn$ and $\nu\vdash n$. If $l(\lambda)>n$, then $a^\lambda_{\nu,(m)}=0$.
\end{lemma}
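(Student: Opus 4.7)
The plan is to express the plethysm coefficient via Frobenius reciprocity and then bound the possible $\lambda$ by comparison with a Young permutation module. By \eqref{eqn:a}, since $\chi^{(m)}=\triv_{S_m}$, we have
\[ a^\lambda_{\nu,(m)} = \bigl\langle \cX(\triv_{S_m};\chi^\nu)\up_{S_m\wr S_n}^{S_{mn}},\ \chi^\lambda \bigr\rangle. \]
The character $\cX(\triv_{S_m};\chi^\nu)$ is simply the inflation of $\chi^\nu$ along the quotient map $S_m\wr S_n\twoheadrightarrow S_n$, so I first observe that $\cX(\triv_{S_m};\chi^\nu)$ appears as a summand of $\triv_{S_m^{\times n}}\up^{S_m\wr S_n}$ with positive multiplicity.

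To see this, note that $S_m^{\times n}$ is normal in $S_m\wr S_n$, and the standard formula for induction from a normal subgroup shows that $\triv_{S_m^{\times n}}\up^{S_m\wr S_n}$ equals the inflation of the regular character of $S_n$. Decomposing the regular character in the usual way then gives
\[ \triv_{S_m^{\times n}}\up^{S_m\wr S_n} = \sum_{\nu\vdash n} \dim(\chi^\nu)\cdot \cX(\triv_{S_m};\chi^\nu), \]
so the coefficient of $\cX(\triv_{S_m};\chi^\nu)$ is $\dim(\chi^\nu)\ge 1$. Inducing up to $S_{mn}$, and using that $S_m^{\times n}$ is the Young subgroup corresponding to the composition $(m^n)$, we deduce that $\cX(\triv_{S_m};\chi^\nu)\up_{S_m\wr S_n}^{S_{mn}}$ is a summand of the Young permutation character $\xi^{(m^n)}=\triv_{S_m^{\times n}}\up^{S_{mn}}=M^{(m^n)}$.

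The conclusion is then immediate from Young's Rule: any irreducible constituent $\chi^\alpha$ of $M^{(m^n)}$ satisfies $\langle\xi^{(m^n)},\chi^\alpha\rangle = K_{\alpha,(m^n)}$, the number of semistandard Young tableaux of shape $\alpha$ and content $(m^n)$. Since the entries of such a tableau lie in $\{1,\dotsc,n\}$ and strictly increase down columns, every column of $\alpha$ has length at most $n$, i.e.\ $l(\alpha)\le n$. Therefore $l(\lambda)>n$ forces $\langle M^{(m^n)},\chi^\lambda\rangle=0$, and a fortiori $a^\lambda_{\nu,(m)}=0$. There is no real obstacle here; the only thing to be careful about is the identification of $\triv_{S_m^{\times n}}\up^{S_m\wr S_n}$ with the inflation of the regular character of $S_n$, which justifies containment of $\cX(\triv_{S_m};\chi^\nu)\up^{S_{mn}}$ inside $M^{(m^n)}$.
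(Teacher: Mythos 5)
Your proof is correct and arrives at the same underlying fact as the paper's: that $\langle\chi^\lambda, M^{(m^n)}\rangle = \langle\chi^\lambda\down_Y,\triv_Y\rangle=0$ when $l(\lambda)>n$, where $Y=(S_m)^{\times n}$. The paper's argument is the Frobenius-reciprocal dual of yours: it restricts $\chi^\lambda$ through $S_m\wr S_n$ down to $Y$, observes $\cX(\chi^{(m)};\chi^\nu)\down_Y = \chi^\nu(1)\cdot\triv_Y$, and deduces $\langle\chi^\lambda\down_Y,\triv_Y\rangle\ge a^\lambda_{\nu,(m)}\cdot\chi^\nu(1)$; you instead induce $\triv_Y$ up through $S_m\wr S_n$ to $S_{mn}$ and show that $\cX(\triv_{S_m};\chi^\nu)\up^{S_{mn}}$ is a summand of the permutation character $M^{(m^n)}$, by explicitly identifying the intermediate induction $\triv_Y\up^{S_m\wr S_n}$ as the inflated regular character of $S_n$. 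That identification is correct and pins down the precise multiplicity $\chi^\nu(1)$ of $\cX(\triv_{S_m};\chi^\nu)$ in the intermediate character, which is a little more than the paper needs (it only requires the one-sided bound). Both proofs then finish identically: by Young's rule or, equivalently, the Littlewood--Richardson rule, an irreducible $\chi^\lambda$ with $l(\lambda)>n$ cannot occur in $M^{(m^n)}$.
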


\begin{proof}
	Let $a:=a^\lambda_{\nu,(m)}= \langle \chi^\lambda\down^{S_{mn}}_{S_m\wr S_n}, \cX(\chi^{(m)};\chi^\nu)\rangle$. Let $Y:=(S_m)^{\times n}\le S_m\wr S_n$. 
	Let $\cX:=\cX(\chi^{(m)};\chi^\nu)$ and write $\chi^\lambda\down_{S_m\wr S_n} = a\cX + \Delta$ for some $\Delta\in\Ch(S_m\wr S_n)$. Since $\cX\down_Y = \chi^\nu(1)\cdot(\chi^{(m)})^n=\chi^\nu(1)\cdot\triv_Y$, it follows that $\langle \chi^\lambda\down_Y,\triv_Y\rangle \ge \langle a\cdot \cX\down_Y, \triv_Y\rangle = a\cdot\chi^\nu(1).$
	But by the Littlewood--Richardson rule, $l(\lambda)>n$ implies $\langle \chi^\lambda\down_Y,\triv_Y\rangle=0$ since $\triv_Y = (\chi^{(m)})^n$, which gives $a=0$.
\end{proof}

Finally, we record the following result of Thrall \cite{T}. We note that a partition is even if all of its parts are of even size.
\begin{proposition}\label{prop:thrall}
	Let $n\in\N$. Then 
	\begin{enumerate}[label=\textup{(\roman*)}]
		\item $s_{(n)}\circ s_{(2)} = \sum_\lambda s_\lambda$ where the sum runs over all even partitions $\lambda\vdash 2n$, and
		\item $s_{(2)}\circ s_{(n)} = \sum_\mu s_\mu$ where the sum runs over all even partitions $\mu\vdash 2n$ with at most two parts.
	\end{enumerate}
\end{proposition}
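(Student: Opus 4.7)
The plan is to translate both statements into Schur-function identities via \eqref{eqn:a}, and then evaluate the resulting plethystic products using a mix of Pieri's rule (from Theorem~\ref{thm:LR}), the Jacobi--Trudi identity, and generating-function manipulations. For brevity I write $h_m := s_{(m)}$ below, and $p_2 \circ f$ denotes the plethysm sending $f$ to $f(x_1^2,x_2^2,\dotsc)$.

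For part (ii), the power-sum expansion $s_{(2)} = \tfrac{1}{2}(p_1^2 + p_2)$ gives $s_{(2)} \circ s_{(n)} = \tfrac{1}{2}\big(s_{(n)}^2 + p_2 \circ s_{(n)}\big)$. Pieri's rule yields $s_{(n)}^2 = \sum_{k=0}^n s_{(2n-k,k)}$. For the second summand, setting $H(T) := \sum_{m\geq 0} h_m T^m$, the identity $H(T)H(-T) = \prod_i(1-x_i^2T^2)^{-1}$ extracts $p_2 \circ s_{(n)} = \sum_{a+b=2n}(-1)^b h_a h_b$. Combining with the Jacobi--Trudi expression $s_{(2n-k,k)} = h_{2n-k}h_k - h_{2n-k+1}h_{k-1}$ (with the convention $h_{-1}=0$), a short reindexing $k \mapsto k+1$ in the second term gives $p_2 \circ s_{(n)} = \sum_{k=0}^n (-1)^k s_{(2n-k,k)}$. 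Adding this to $s_{(n)}^2$ and halving leaves exactly the $k$-even contributions, which are precisely the even partitions of $2n$ with at most two parts.

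For part (i), the analogous generating-function argument starts from $\sum_{m\geq 0} h_m(y) = \prod_\kappa(1-y_\kappa)^{-1}$; substituting $\{y_\kappa\} = \{x_ix_j : i \leq j\}$ gives
\[ \sum_{n\geq 0}(s_{(n)} \circ s_{(2)})(x) = \prod_{i\leq j}(1 - x_ix_j)^{-1}. \]
By Littlewood's classical identity, the right-hand side equals $\sum_{\lambda \text{ even}} s_\lambda(x)$, and comparing Schur coefficients in degree $2n$ yields part (i). The main obstacle here is Littlewood's identity itself: although classical, it is not elementary, and a self-contained proof would require either an RSK-type bijection pairing contributions from non-even partitions, Schur--Weyl duality with the orthogonal group, or a direct alternating-sum manipulation analogous to the one carried out in part (ii). All remaining steps reduce to routine symmetric-function bookkeeping once Littlewood's identity is invoked.
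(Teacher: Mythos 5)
The paper gives no proof of this proposition; it is simply recorded with a citation to Thrall~\cite{T}, so there is no internal argument to compare against. Your derivation is correct and in fact supplies what the paper omits. For part (ii), every step checks out: the power-sum splitting $s_{(2)}=\tfrac12(p_1^2+p_2)$ together with the ring-homomorphism property of outer plethysm gives the initial decomposition, Pieri handles $h_n^2$, the generating-function identity $H(T)H(-T)=\prod_i(1-x_i^2T^2)^{-1}$ gives $p_2\circ h_n=\sum_{a+b=2n}(-1)^b h_a h_b$, and the Jacobi--Trudi reindexing converts this to $\sum_{k=0}^n(-1)^k s_{(2n-k,k)}$, so the sum halves to exactly the even two-row shapes. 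This is an entirely self-contained and elementary argument. For part (i), the reduction to $\prod_{i\leq j}(1-x_ix_j)^{-1}=\sum_{\lambda\ \mathrm{even}}s_\lambda$ is correct, and you are right to flag that this Littlewood identity is itself non-trivial: unlike part (ii) it does not follow from the short alternating-sum bookkeeping you performed there, and it does require one of the classical proofs (RSK-type involution, or the dual Cauchy/orthogonal-group route). So part (i) is sound as a proof-by-citation but is not at the same level of self-containedness as part (ii); if one wanted the whole proposition genuinely proved from scratch, part (i) would be where the real work lies. One small note: the conjugation symmetry of Lemma~\ref{lem:conj-plethysm} does not reduce (i) to (ii) or vice versa (it relates $(n)\circ(2)$ to $(n)\circ(1^2)$, a different pair), so the two parts really are independent statements.
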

In other words, $a^\lambda_{(n),(2)}=1$ if $\lambda\vdash 2n$ is even and $a^\lambda_{(n),(2)}=0$ otherwise, while $a^\mu_{(2),(n)}=1$ if $\mu\vdash 2n$ is even and $l(\mu)\le 2$, and $a^\mu_{(2),(n)}=0$ otherwise.

\bigskip
\section{Sylow branching coefficients for the prime 2}\label{sec:sbc}
Throughout Section~\ref{sec:sbc}, we fix $p=2$ and consider Sylow branching coefficients $Z^\lambda=Z^{\chi^\lambda}_{\triv_{P_n}}$ for symmetric groups for the prime 2.

\subsection{Special shapes}\label{sec:special-shapes}
In this section we provide a survey of facts involving $Z^\lambda$ for partitions $\lambda$ of various `special' shapes: namely when $\lambda$ is an even partition; when $l(\lambda)$ is large; when $\lambda$ has at most 2 columns; when $\lambda$ is a hook; and when $\lambda$ is of the form $(a,2,1^b)$. 

In general, the strategy is to first consider the case when $\lambda\vdash n=2^k$ and to induct on $k$, before considering the case of general $n\in\N$. The results follow from a combination of elementary applications of the Littlewood--Richardson rule, Mackey's theorem and known results on character restrictions for symmetric groups. We include full proofs for the convenience of the reader.

\begin{lemma}
	Let $\lambda$ be any partition. Then $Z^{2\lambda}>0$.
\end{lemma}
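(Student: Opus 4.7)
The plan is to exploit the inclusion $P_{2n} \le S_2 \wr S_n \le S_{2n}$ together with Thrall's formula, and show that $\triv_{P_{2n}}$ already appears in $\chi^{2\lambda} \downarrow_{S_2\wr S_n}$.

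First, let $n := |\lambda|$, so that $2\lambda \vdash 2n$ is by construction an even partition. By Proposition~\ref{prop:thrall}(i), this means $a^{2\lambda}_{(n),(2)} = 1$. Translating this via \eqref{eqn:a}, I would note that
\[ \langle \cX(\chi^{(2)};\chi^{(n)})\up_{S_2\wr S_n}^{S_{2n}}, \chi^{2\lambda}\rangle = 1. \]
The key observation is that $\cX(\chi^{(2)};\chi^{(n)}) = \triv_{S_2\wr S_n}$, since $\chi^{(2)} = \triv_{S_2}$ and $\chi^{(n)} = \triv_{S_n}$, so the tilde construction and the inflation both produce the trivial character of the wreath product.

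Thus by Frobenius reciprocity, $\chi^{2\lambda}\down_{S_2\wr S_n}$ contains $\triv_{S_2\wr S_n}$ as a constituent with multiplicity at least one. Now restricting further along $P_{2n}\le S_2\wr S_n$ (valid by Remark~\ref{rem:convention}, since $P_{2n}\cong P_2\wr P_n$ and $P_2\wr P_n \le S_2\wr S_n$ under the natural embedding $P_n\le S_n$), the trivial character of $S_2\wr S_n$ restricts to the trivial character of $P_{2n}$. Therefore
\[ Z^{2\lambda} = \langle \chi^{2\lambda}\down_{P_{2n}},\triv_{P_{2n}}\rangle \ge \langle \triv_{S_2\wr S_n}\down_{P_{2n}},\triv_{P_{2n}}\rangle = 1, \]
as required.

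There is essentially no obstacle here: the whole content is an unravelling of Thrall's identity into the language of characters and the observation that $P_{2n}$ sits inside the wreath product $S_2\wr S_n$. The only small point to be careful about is verifying the identification $\cX(\triv_{S_2};\triv_{S_n}) = \triv_{S_2\wr S_n}$, which follows directly from the wreath product character formula quoted in the proof of Lemma~\ref{lem:Prop1.2}.
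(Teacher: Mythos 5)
Your proof is correct and is essentially the same argument the paper intends: the lemma is stated as an immediate consequence of Proposition~\ref{prop:thrall} via the chain $P_{2n}\cong P_2\wr P_n\le S_2\wr S_n\le S_{2n}$, which is exactly what you have unpacked. The paper leaves the unravelling of Thrall's identity into $\langle\triv_{S_2\wr S_n},\chi^{2\lambda}\down_{S_2\wr S_n}\rangle=1$ implicit, whereas you spell it out, but the content is identical.
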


\begin{proof}
	Suppose $\lambda\vdash n$. Then $Z^{2\lambda} 
	\ge \langle \chi^{2\lambda}\down_{S_2\wr S_n},\triv_{S_2\wr S_n}\rangle=1$ since $P_{2n}\cong P_2\wr P_n\le S_2\wr S_n$, where the equality follows from $\triv\up_{S_2\wr S_n}^{S_{2n}} = \sum_{\mu\vdash n} \chi^{2\mu}$ by Proposition~\ref{prop:thrall}.
\end{proof}

\begin{lemma}\label{lem:tall}
	Let $n\in\N$ and $\lambda\vdash n$. If $n$ is even and $l(\lambda)>\frac{n}{2}$, then $Z^\lambda=0$. If $n$ is odd and $l(\lambda)>\frac{n+1}{2}$, then $Z^\lambda=0$.
\end{lemma}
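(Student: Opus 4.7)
The plan is to reduce the claim to Lemma~\ref{lem:tall-pleth} via a decomposition through the intermediate subgroup $S_2\wr S_m$. For $n = 2m$ even, Remark~\ref{rem:convention} gives $P_n \cong P_2\wr P_m$, so we have the chain $P_n \le S_2 \wr S_m \le S_n$. The key observation is that the base subgroup $S_2^{\times m}$ of $S_2\wr S_m$ coincides with the base $P_2^{\times m}$ of $P_n$, and $\triv_{P_n}$ restricts on this base to the all-trivial character. Hence in the decomposition of $\chi^\lambda\down_{S_2\wr S_m}$ into irreducibles of $S_2\wr S_m$, only those constituents whose restriction to $S_2^{\times m}$ contains the all-trivial character can contribute to $Z^\lambda$ upon further restriction. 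By Clifford theory these are precisely the irreducibles $\cX(\triv_{S_2};\chi^\alpha)$ for $\alpha \vdash m$, which occur in $\chi^\lambda\down_{S_2\wr S_m}$ with multiplicity $a^\lambda_{\alpha,(2)}$ by \eqref{eqn:a}.

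A direct computation using the wreath product character formula then shows that $\cX(\triv_{S_2};\chi^\alpha)\down_{P_n}$ equals $\cX(\triv_{P_2};\chi^\alpha\down_{P_m})$ as a character of $P_n = P_2 \wr P_m$, and so
\[ \langle \cX(\triv_{S_2};\chi^\alpha)\down_{P_n}, \triv_{P_n}\rangle = \langle \chi^\alpha\down_{P_m}, \triv_{P_m}\rangle = Z^\alpha. \]
Combining these observations yields the recursion
\[ Z^\lambda = \sum_{\alpha\vdash m} a^\lambda_{\alpha,(2)}\cdot Z^\alpha \qquad \text{for } \lambda\vdash n=2m. \]
If $l(\lambda) > m$ then Lemma~\ref{lem:tall-pleth} forces $a^\lambda_{\alpha,(2)} = 0$ for every $\alpha \vdash m$, and so $Z^\lambda = 0$.

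For $n$ odd, the 2-part of $|S_n|$ equals that of $|S_{n-1}|$ (as multiplication by the odd number $n$ does not affect the 2-part), so $P_n$ may be identified with $P_{n-1}$ sitting naturally inside $S_{n-1} \le S_n$. The classical branching rule for $S_n \ge S_{n-1}$ gives $\chi^\lambda\down_{S_{n-1}} = \sum_\mu \chi^\mu$, where $\mu$ runs over the partitions of $n-1$ obtained from $\lambda$ by removing a removable box. Hence
\[ Z^\lambda = \sum_\mu Z^\mu. \]
Each such $\mu$ satisfies $l(\mu) \ge l(\lambda)-1 > (n-1)/2$, so by the already-established even case applied to $n-1$, we have $Z^\mu = 0$ for every such $\mu$, and therefore $Z^\lambda = 0$.

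The main obstacle is deriving the recursion $Z^\lambda = \sum_{\alpha\vdash m} a^\lambda_{\alpha,(2)} Z^\alpha$: though conceptually straightforward, it requires careful bookkeeping with the wreath-product parametrisation of $\Irr(S_2\wr S_m)$ and with verifying that any irreducible constituent carrying a $\sgn_{S_2}$-twist on some base factor restricts trivially against $\triv_{P_n}$.
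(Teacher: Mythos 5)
Your proof is correct, but it takes a genuinely different route from the paper's. The paper's Lemma~\ref{lem:tall} is proved by first treating $n=2^k$ via an induction on $k$: restricting $\chi^\mu$ to $Q=P_{2^{k-1}}\times P_{2^{k-1}}$ through $Y=S_{2^{k-1}}\times S_{2^{k-1}}$, expanding $\langle\chi^\mu\down_Q,\triv_Q\rangle=\sum_{\alpha,\beta}c^\mu_{\alpha,\beta}Z^\alpha Z^\beta$, and using the Littlewood--Richardson length inequality $l(\mu)\le l(\alpha)+l(\beta)$ to force a contradiction; the general case then follows by decomposing across the binary expansion of $n$ with the generalised Littlewood--Richardson rule. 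Your argument instead establishes the recursion $Z^\lambda=\sum_{\alpha\vdash m}a^\lambda_{\alpha,(2)}Z^\alpha$ by Clifford-theoretic bookkeeping through $S_2\wr S_m\supseteq P_n$ and then invokes Lemma~\ref{lem:tall-pleth} to kill all plethysm coefficients when $l(\lambda)>m$; the odd case is cleanly reduced to $n-1$ by the branching rule and the identification $P_n\cong P_{n-1}$. Both work, and your recursion is precisely Lemma~\ref{lem:isotypic}(i), proved later in the paper. Indeed Remark~\ref{rem:10} explicitly observes that Lemmas~\ref{lem:tall-pleth} and~\ref{lem:isotypic}(i) recover the even case of Lemma~\ref{lem:tall}, so you have rediscovered that alternative and supplied the missing odd-case reduction. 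The paper's choice to give the LR-rule argument first is organisational: Lemma~\ref{lem:tall} sits in Section~\ref{sec:sbc} before the plethysm machinery is introduced, whereas your route depends on $\cX(\triv_{S_2};\chi^\alpha)$ and the vanishing criterion for plethysm coefficients. Your approach is shorter and avoids the induction on $k$; the paper's is more elementary (only the LR rule and Mackey) and self-contained at its place in the exposition.
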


\begin{proof}
	\textbf{(i)} First we consider the case $n=2^k$ and proceed by induction on $k$, noting that the claim is clear for small $k\in\N$. Suppose $\mu\vdash 2^k$ with $l(\mu)>2^{k-1}$ and $Z^\mu>0$. Let $S:=S_{2^k}$, $P:=P_{2^k}=P_{2^{k-1}}\wr P_2\le S$ and $Q=P_{2^{k-1}}\times P_{2^{k-1}}\le P$. Let $Y$ be the subgroup of $S$ isomorphic to $S_{2^{k-1}}\times S_{2^{k-1}}$ containing $Q$.
	
	Since $Z^\mu>0$, then considering $\chi^\mu\down_Q = (\chi^\mu\down_P)\down_Q$ gives $\langle \chi^\mu\down_Q,\triv_Q\rangle >0$. On the other hand, by considering $\chi^\mu\down_Q = (\chi^\mu\down_Y)\down_Q$ we obtain
	\[ \langle \chi^\mu\down_Q,\triv_Q\rangle = \sum_{\alpha,\beta\vdash 2^{k-1}} c^\mu_{\alpha,\beta}\cdot Z^\alpha\cdot Z^\beta. \]
	If $c^\mu_{\alpha,\beta}>0$, then by the Littlewood--Richardson rule we must have $l(\mu)\le l(\alpha)+l(\beta)$, and so either $l(\alpha)>2^{k-2}$ or $l(\beta)>2^{k-2}$. But then by the inductive hypothesis $Z^\alpha=0$ or $Z^\beta=0$ for each such $c^\mu_{\alpha,\beta}>0$, giving $\langle \chi^\mu\down_Q,\triv_Q\rangle=0$, a contradiction.
	
	\noindent\textbf{(ii)} Now consider general $n\in\N$. Suppose $n=2^{n_1}+\cdots+2^{n_k}$ with $n_1>\cdots>n_k\ge 0$ and $\lambda\vdash n$. Then 
	\[  Z^\lambda = \sum_{\mu^i\vdash 2^{n_i}} c^\lambda_{\mu^1,\dotsc,\mu^k}\cdot Z^{\mu^1}\cdots Z^{\mu^k}. \]
	But $c^\lambda_{\mu^1,\dotsc,\mu^k}>0$ implies $l(\lambda)\le l(\mu^1)+\cdots+l(\mu^k)$. If $n$ is even and $l(\lambda)>\frac{n}{2}$, then there exists $1\le i\le k$ such that $l(\mu^i)>2^{n_i-1}$, and so $Z^\lambda=0$ follows from case (i) as $Z^{\mu^i}=0$. If $n$ is odd and $l(\lambda)>\frac{n+1}{2}$, then $l(\mu^1)+\cdots+l(\mu^{k-1})+1\ge \frac{n+1}{2}+1$ and so there exists $1\le i\le k-1$ such that $l(\mu^i)>2^{n_i-1}$. That $Z^\lambda=0$ follows from case (i) similarly.
\end{proof}

\begin{remark}
	The bounds on the number of parts of $\lambda$ cannot be improved. For instance, from Lemma~\ref{lem:2col} below we see that $\lambda=(2,2,\dotsc,2,\varepsilon)\vdash n$ where $\varepsilon\in\{0,1\}$ satisfies $Z^\lambda=1$ and $l(\lambda)=\frac{n}{2}$ if $n$ is even, respectively $l(\lambda)=\frac{n+1}{2}$ if $n$ is odd.\hfill$\lozenge$
\end{remark}

\begin{lemma}\label{lem:2col}
	Let $\lambda$ be a partition with at most two columns. Then $Z^\lambda=0$ unless $\lambda=(2,2,\dotsc,2,\varepsilon)$ where $\varepsilon\in\{0,1\}$, in which case $Z^\lambda=1$.
\end{lemma}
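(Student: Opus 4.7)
The plan mirrors the strategy used in the proof of Lemma~\ref{lem:tall}: first prove the result for $n=2^k$ by induction on $k$, and then reduce the case of general $n$ via the decomposition $P_n \cong P_{2^{n_1}}\times\cdots\times P_{2^{n_k}}$ together with the (generalised) Littlewood--Richardson rule. Write $\lambda=(2^a,1^b)$ throughout, where $2a+b=n$.

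For $n=2^k$ (with $k\geq 2$), set $Q := P_{2^{k-1}}\times P_{2^{k-1}} \leq P_{2^k}$ and $Y := S_{2^{k-1}}\times S_{2^{k-1}}$. Since $\triv_{P_{2^k}}\down_Q = \triv_Q$, restricting through $Y$ yields the upper bound
\[ Z^\lambda \;\leq\; \langle \chi^\lambda\down_Q,\triv_Q\rangle \;=\; \sum_{\alpha,\beta\vdash 2^{k-1}} c^\lambda_{\alpha,\beta}\, Z^\alpha Z^\beta. \]
Because $\lambda$ has at most 2 columns, Theorem~\ref{thm:LR} forces $\alpha,\beta\subseteq\lambda$, so both also have at most 2 columns. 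The inductive hypothesis (at $2^{k-1}$, which is even for $k\geq 2$) then leaves only $\alpha=\beta=(2^{2^{k-2}})$ as a candidate pair, and for this pair $\lambda/\alpha$ is the straight shape $(2^{a-2^{k-2}},1^b)$. Since the LR count of a straight shape $\mu$ with content $\beta$ equals $1$ when $\mu=\beta$ and vanishes otherwise, the only $\lambda$ surviving is $\lambda=(2^{2^{k-1}})$ (i.e.\ $b=0$, $a=2^{k-1}$). Hence $Z^\lambda=0$ for all other 2-column $\lambda\vdash 2^k$, and $Z^{(2^{2^{k-1}})}\leq 1$. The matching lower bound follows from the corollary preceding Lemma~\ref{lem:tall}, since $(2^{2^{k-1}})$ is even.

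For general $n=\sum_i 2^{n_i}$, the analogous expansion is an exact equality
\[ Z^\lambda \;=\; \sum_{\mu^i\vdash 2^{n_i}} c^\lambda_{\mu^1,\ldots,\mu^k}\,\prod_i Z^{\mu^i}. \]
The same 2-column containment together with Step~1 forces each contributing $\mu^i$ to be the rectangle $(2^{2^{n_i-1}})$ when $n_i\geq 1$, together with $\mu^k=(1)$ when $n_k=0$. An iterated Littlewood--Richardson argument, based on the sub-claim that the only 2-column $\nu$ with $c^\nu_{(2^{m_1}),(2^{m_2})}>0$ is $\nu=(2^{m_1+m_2})$ (with coefficient $1$), then shows that for this specific tuple the coefficient $c^\lambda_{\mu^1,\ldots,\mu^k}$ equals $1$ exactly when $\lambda=(2^{\lfloor n/2\rfloor},1^{n\bmod 2})$ and vanishes otherwise; the single box from $\mu^k=(1)$ in the odd case is placed by a final Pieri step onto the rectangle $(2^{(n-1)/2})$.

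I anticipate that the main point requiring care will be coordinating the two induction layers --- on $k$ in Step~1 and on the number of 2-adic summands in Step~2 --- so that the 2-column containment and the sub-claim jointly collapse each LR sum to a single surviving term. The individual Littlewood--Richardson verifications are routine applications of Theorem~\ref{thm:LR}, but keeping track of which intermediate shapes and contents remain compatible with the 2-column constraint (and using this to pin the final value down to $Z^\lambda\in\{0,1\}$) is where the bookkeeping is concentrated.
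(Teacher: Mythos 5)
Your proposal is correct and follows essentially the same approach as the paper: induct on $k$ for $n=2^k$ using the restriction to $P_{2^{k-1}}\times P_{2^{k-1}}$ and the Littlewood--Richardson rule, then handle general $n$ via the $2$-adic decomposition of $P_n$. The only minor difference is that the paper dispatches all non-rectangular two-column $\lambda\vdash 2^{k+1}$ in one stroke by invoking Lemma~\ref{lem:tall}, whereas you rederive this vanishing directly from the LR collapse (and you spell out the sub-claim that $c^\nu_{(2^{m_1}),(2^{m_2})}$ concentrates on $\nu=(2^{m_1+m_2})$, which the paper leaves implicit); both routes land on the same argument.
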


\begin{proof}
	First suppose $|\lambda|=2^k$ where $k\in\N_0$ and proceed by induction on $k$. Clearly the claim holds for small $k$, so now suppose $\lambda\vdash 2^{k+1}$ and $\lambda_1\le 2$. We immediately deduce from Lemma~\ref{lem:tall} that if $\lambda\ne(2^{2^k})$ then $Z^\lambda=0$, so we may suppose $\lambda=(2^{2^k})$. We have that
	\[ Z^\lambda \le \langle \chi^\lambda\down_{P_{2^k}\times P_{2^k}}, \triv_{P_{2^k}\times P_{2^k}}\rangle = \sum_{\mu,\nu\vdash 2^k} c^\lambda_{\mu,\nu} \cdot Z^\mu\cdot Z^\nu. \]
	Let $\alpha:=(2^{2^{k-1}})\vdash 2^k$. If $c^\lambda_{\mu,\nu}>0$, then $\mu_1,\nu_1\le\lambda_1\le 2$. By the inductive hypothesis, $Z^\mu=\delta_{\mu,\alpha}$ and $Z^\nu=\delta_{\nu,\alpha}$ (here $\delta$ denotes the Kronecker delta). Also $c^\lambda_{\alpha,\alpha}=1$, 
	so we have shown that $Z^\lambda\le 1$. 
	On the other hand, $Z^\lambda>0$ by Proposition~\ref{prop:thrall}, so we conclude that $Z^\lambda=1$.
	
	Next, we consider the general case, i.e.~suppose $\lambda\vdash n=2^{n_1}+\cdots+2^{n_k}$ with $n_1>\cdots>n_k\ge 0$ and $\lambda_1\le 2$. Then
	\[ Z^\lambda = \sum_{\mu^i\vdash 2^{n_i}} c^\lambda_{\mu^1,\dotsc,\mu^k}\cdot Z^{\mu^1}\cdots Z^{\mu^k}. \]
	If $c^\lambda_{\mu^1,\dotsc,\mu^k}>0$ then each $\mu^i$ also has at most two columns, in which case $Z^{\mu^i}=1$ if $\mu^i=(2^{2^{n_i-1}})$, or $\mu^i=(1)$ if $n_i=0$, or $Z^{\mu^i}=0$ otherwise. Hence $Z^\lambda=1$ if $\lambda=(2,\dotsc,2,\varepsilon)$ where $\varepsilon=\delta_{n_k,0}$, and $Z^\lambda=0$ otherwise, as claimed.
\end{proof}

We deduce the values of $Z^\lambda$ for hooks $\lambda$ from \cite{G}.
\begin{proposition}\label{prop:hook}
	Let $n\in\N$ and let $\lambda=(n-t,1^t)$ for some $0\le t\le n-1$. Suppose $n=2^{n_1}+\cdots+2^{n_k}$ where $n_1>\cdots>n_k\ge 0$. Then $Z^\lambda=\binom{k-1}{t}$.
\end{proposition}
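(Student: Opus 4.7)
The plan is to use the classical identity $\chi^{(n-t,1^t)}=\Lambda^t V_n$ (valid for $0\le t\le n-1$), where $V_n:=\chi^{(n-1,1)}$ is the reflection character of $S_n$ and $\Lambda^t$ denotes the $t$-th exterior power; this reduces $Z^{(n-t,1^t)}$ to the dimension of the space of $P_n$-invariants in $\Lambda^t(V_n\down_{P_n})$. To understand $V_n\down_{P_n}$, I would first establish the restriction formula $V_{a+b}\down_{S_a\times S_b}\cong\triv\,\oplus\,(V_a\times\triv)\,\oplus\,(\triv\times V_b)$, which is immediate from writing $V_m$ as the permutation character on $m$ points minus the trivial character and splitting the restriction into the two blocks of $S_a\times S_b$-orbits. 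Iterating and then restricting further to $P_n\cong P_{2^{n_1}}\times\cdots\times P_{2^{n_k}}$ gives
\[ V_n\down_{P_n}\;\cong\;\triv_{P_n}^{\oplus(k-1)}\,\oplus\,\bigoplus_{i=1}^{k} V_{2^{n_i}}\down_{P_{2^{n_i}}}, \]
where each $V_{2^{n_i}}\down_{P_{2^{n_i}}}$ is viewed as a $P_n$-module via the $i$-th projection (with the convention $V_1=0$ whenever some $n_i=0$).

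The main obstacle is the base case $n=2^m$ (i.e.~$k=1$), for which one must show $Z^{(2^m-t,1^t)}=0$ for every $1\le t\le 2^m-1$. I would prove this by induction on $m$; the case $m=1$ is immediate. For the inductive step, set $Q:=P_{2^{m-1}}\times P_{2^{m-1}}\le P_{2^m}$ and $U:=V_{2^{m-1}}\down_{P_{2^{m-1}}}$, so that the previous display yields $V_{2^m}\down_Q\cong\triv_Q\oplus(U\times\triv)\oplus(\triv\times U)$. Combining the exterior-algebra identity $\Lambda^t(A\oplus B)=\bigoplus_{i+j=t}\Lambda^iA\otimes\Lambda^jB$ with the inductive hypothesis $\dim(\Lambda^jU)^{P_{2^{m-1}}}=0$ for all $j\ge 1$, a direct computation forces $\dim(\Lambda^tV_{2^m})^Q=0$ for every $t\ge 2$, and hence $\dim(\Lambda^tV_{2^m})^{P_{2^m}}=0$ for $t\ge 2$ as well. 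The remaining case $t=1$ amounts to $\langle V_{2^m}\down_{P_{2^m}},\triv_{P_{2^m}}\rangle=0$, which follows from $P_{2^m}$ being transitive on $\{1,\dotsc,2^m\}$. Alternatively, this base case can be extracted from \cite{G}.

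With the base case in hand, the general case assembles cleanly. Applying $\Lambda^t$ to the direct sum decomposition of $V_n\down_{P_n}$ and noting that $\Lambda^{i_0}\bigl(\triv_{P_n}^{\oplus(k-1)}\bigr)\cong\triv_{P_n}^{\oplus\binom{k-1}{i_0}}$, the dimension of the $P_n$-invariants in $\Lambda^t(V_n\down_{P_n})$ equals
\[ \sum_{i_0+i_1+\cdots+i_k=t}\binom{k-1}{i_0}\,\prod_{j=1}^{k}\dim\bigl(\Lambda^{i_j}(V_{2^{n_j}}\down_{P_{2^{n_j}}})\bigr)^{P_{2^{n_j}}}. \]
By the base case, every factor in the product vanishes unless $i_j=0$, so only the term with $i_0=t$ and $i_1=\cdots=i_k=0$ contributes, giving $Z^{(n-t,1^t)}=\binom{k-1}{t}$ as required.
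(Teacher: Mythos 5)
Your argument is correct, and it takes a genuinely different route from the paper. The paper proves the base case $n=2^j$ by quoting \cite[Theorem 1.1]{G}, which identifies the odd-degree irreducible characters of $S_{2^j}$ with hooks and pins down their restrictions to $P_{2^j}$; it then assembles the general case by the multiplicative identity $Z^\lambda = \sum_{\mu^i\vdash 2^{n_i}} c^\lambda_{\mu^1,\dotsc,\mu^k}\,Z^{\mu^1}\cdots Z^{\mu^k}$, which for hooks collapses to the single Kostka number $c^\lambda_{(2^{n_1}),\dotsc,(2^{n_k})}=\binom{k-1}{t}$. You instead exploit the identity $\chi^{(n-t,1^t)}=\Lambda^t V_n$ and the clean restriction rule $V_{a+b}\down_{S_a\times S_b}\cong\triv\oplus(V_a\times\triv)\oplus(\triv\times V_b)$, which makes the binomial coefficient appear transparently from $\Lambda^t\bigl(\triv^{\oplus(k-1)}\bigr)$ once the base case is in hand. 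The most valuable difference is your self-contained inductive proof of the base case $Z^{(2^m-t,1^t)}=0$ for $1\le t\le 2^m-1$: the $t\ge 2$ cases fall out of $\Lambda^t(A\oplus B)=\bigoplus\Lambda^iA\otimes\Lambda^jB$ applied at the subgroup $Q=P_{2^{m-1}}\times P_{2^{m-1}}$ together with the inductive hypothesis, while $t=1$ is a separate transitivity argument (correctly handled, and genuinely needed, since the $Q$-invariant space is nonzero when $t=1$). This replaces the black-box appeal to \cite{G} with an elementary argument. The trade-off is length: the paper's proof is two lines given the cited result, while yours is longer but more transparent about why the answer is a binomial coefficient and does not import machinery about odd-degree characters or the McKay bijection.
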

\begin{proof}
	The case of $n=2^j$ for $j\in\N_0$ follows immediately from \cite[Theorem 1.1]{G}, since $\Irr_{2'}(S_{2^j}):=\{\chi^\lambda\in\Irr(S_{2^j}) \mid 2\nmid\chi^\lambda(1) \}$ consists precisely of those $\chi^\lambda$ where $\lambda\vdash 2^j$ is a hook. In particular, $Z^{(n)}=1$ and $Z^\lambda=0$ for all hooks $\lambda\ne(n)$ when $n=2^j$. Thus for arbitrary $n\in\N$ and $\lambda=(n-t,1^t)$ we have that
	\[ Z^\lambda = \sum_{\mu^i\vdash 2^{n_i}} c^\lambda_{\mu^1,\dotsc,\mu^k}\cdot Z^{\mu^1}\cdots Z^{\mu^k} = c^\lambda_{(2^{n_1}),\dotsc,(2^{n_k})} = \tbinom{k-1}{t}. \]
	where the final equality follows from Theorem~\ref{thm:LR}.
\end{proof}

\begin{lemma}\label{lem:a-2-1b}
	Let $k\in\N_{\ge 2}$. If $\lambda=(2^k-i,2,1^{i-2})\vdash 2^k$ with $2\le i\le 2^k-2$, then $Z^\lambda = \binom{k-1}{k-i}$.
\end{lemma}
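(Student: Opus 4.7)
My plan is to proceed by induction on $k\ge 2$. The base case $k=2$ forces $i=2$, so $\lambda=(2,2)$, and Lemma~\ref{lem:2col} immediately gives $Z^{(2,2)}=1=\binom{1}{0}$.

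For the inductive step ($k\ge 3$), the engine will be a decomposition of $\pi_k:=\triv_{P_{2^k}}\uparrow^{S_{2^k}}$ derived from the chain $P_{2^k}=P_N\wr P_2\le S_N\wr S_2\le S_{2^k}$ (with $N:=2^{k-1}$): applying Lemma~\ref{lem:Prop1.2} to $(G,H)=(S_N,P_N)$ and then inducing to $S_{2^k}$ yields, via the characteristic map, the clean recursion $\operatorname{ch}(\pi_k)=s_{(2)}\circ\operatorname{ch}(\pi_{k-1})$. Expanding $s_{(2)}=\tfrac{1}{2}(p_1^2+p_2)$ and pairing with $\chi^\lambda$ gives
\[
Z^\lambda=\sum_{\substack{\mu,\nu\vdash N\\ \mu\ne\nu}}Z^\mu Z^\nu\,c^\lambda_{\mu,\nu}+\sum_{\mu\vdash N}\binom{Z^\mu+1}{2}\,a^\lambda_{(2),\mu}+\sum_{\mu\vdash N}\binom{Z^\mu}{2}\,a^\lambda_{(1,1),\mu}.
\]

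The next step is to cut this sum down sharply. Since $c^\lambda_{\mu,\nu}>0$ (and similarly for $a^\lambda_{(2),\mu}$, $a^\lambda_{(1,1),\mu}$) forces $\mu\subseteq\lambda$, and $\lambda=(2^k-i,2,1^{i-2})$ has $\lambda_2=2$ with $\lambda_j\le 1$ for $j\ge 3$, every contributing $\mu$ is either a hook, the one-row partition $(N)$, or a near-hook $(N-j,2,1^{j-2})$. Proposition~\ref{prop:hook} (with $N$ a power of $2$) kills every nontrivial hook, leaving $Z^{(N)}=1$, and the inductive hypothesis supplies $Z^{(N-j,2,1^{j-2})}=\binom{k-2}{k-1-j}$ for $2\le j\le k-1$. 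The Littlewood--Richardson coefficients $c^\lambda_{\mu,\nu}$ for these specific $\mu$ are then evaluated by a direct analysis of LR fillings of the narrow skew shape $\lambda/\mu$, and the plethysms involving $\mu=(N)$ follow from Proposition~\ref{prop:thrall} (with Lemma~\ref{lem:conj-plethysm} handling the $(1,1)$-case). Everything assembled, the sum should telescope via Pascal's identity $\binom{k-1}{k-i}=\binom{k-2}{k-i}+\binom{k-2}{k-i-1}$ to the claimed $\binom{k-1}{k-i}$.

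The main obstacle will be evaluating $a^\lambda_{(2),\mu}$ and $a^\lambda_{(1,1),\mu}$ when $\mu=(N-j,2,1^{j-2})$ is itself a near-hook---Proposition~\ref{prop:thrall} covers only the one-row case. My strategy there is to use the identity $s_{(2)}\circ s_\mu-s_{(1,1)}\circ s_\mu=p_2\circ s_\mu$ together with the signed $2$-rim-hook (Murnaghan--Nakayama-type) expansion of $p_2\circ s_\mu$, reducing each such coefficient to a sign-weighted count that combines cleanly with the LR terms. Should that prove intractable, an alternative is to apply the recursion $\operatorname{ch}(\pi_k)=s_{(2)}\circ\operatorname{ch}(\pi_{k-1})$ one level deeper and prove a refined near-hook recurrence in parallel with the one being established.
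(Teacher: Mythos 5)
Your framework — induction on $k$ via the chain $P_{2^k}\le S_{2^{k-1}}\wr S_2\le S_{2^k}$ and Lemma~\ref{lem:Prop1.2}, so that $Z^\lambda$ decomposes over $\Irr(S_{2^{k-1}}\wr S_2)$ — is precisely the paper's approach, and your pruning to hooks and near-hooks is also correct. Your treatment of the $\mu=(N)$ diagonal term via Thrall (Proposition~\ref{prop:thrall}) is a perfectly good substitute for the paper's citation of \cite[Theorem 1.2]{dBPW}. (One minor slip: your first sum should be over unordered pairs $\{\mu,\nu\}$, since Lemma~\ref{lem:Prop1.2} assigns $\langle\pi,\chi^i\rangle\langle\pi,\chi^j\rangle$ to the single character $\psi^{i,j}=\psi^{j,i}$.)

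However, you have misidentified the difficulty. The ``main obstacle'' you describe — evaluating $a^\lambda_{(2),\mu}$ and $a^\lambda_{(1,1),\mu}$ for near-hook $\mu=(N-j,2,1^{j-2})$ — does not exist: every one of those coefficients is zero. The reason is elementary. For $\lambda=(2N-i,2,1^{i-2})$ and any near-hook $\mu\subseteq\lambda$ with $\mu_2=2$, the skew shape $[\lambda/\mu]$ is a disjoint union of a horizontal strip sitting in row $1$ (columns $\ge\mu_1+1\ge 3$) and a vertical strip in column $1$ (rows $>l(\mu)$): the box $(2,2)$ is always inside $[\mu]$ and disconnects the two pieces. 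The reading word of an LR filling therefore begins with the reversed row $1$, and the lattice condition forces every entry there to be $1$. All remaining entries lie in a single strictly increasing column, which can hold at most one copy of any value. But any partition $\nu$ of the form $(N-j',2,1^{j'-2})$ — in particular $\nu=\mu$ itself — demands two $2$'s in its content, so the column cannot absorb them. Hence $c^\lambda_{\mu,\mu}=0$ for every near-hook $\mu$, whence $a^\lambda_{(2),\mu}=a^\lambda_{(1,1),\mu}=0$; and the same argument kills $c^\lambda_{\mu,\nu}$ whenever $\mu\ne\nu$ are \emph{both} near-hooks, which you also need (and which you don't address) to be sure your cross-term sum reduces to $\mu=(N)$ only. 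Your proposed detours through $p_2\circ s_\mu$, signed $2$-rim-hook expansions, or a two-level recursion are not only unnecessary but also unproven sketches; as written, your argument has a genuine gap there. Once these vanishings are in place, the recursion collapses exactly as you predict: for $i\ge 3$ only $c^\lambda_{(2^{k-1}),(2^{k-1}-j,2,1^{j-2})}$ with $j\in\{i-1,i\}$ survive and Pascal's identity finishes, while for $i=2$ the single extra contribution $a^\lambda_{(2),(2^{k-1})}=1$ (Thrall) accounts for the $+1$. This is the content of the paper's proof, which states the key vanishing as ``$c^\lambda_{\mu,\mu}\ne 0$ hence $\mu$ must be a hook'' and ``at least one of $\mu$ and $\nu$ must be a hook'' — claims you should recognise as the easy LR observation above rather than as an open subproblem.
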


\begin{proof}
	We proceed by induction on $k$, and observe that the assertion holds for small $k$ by direct computation. 
	Now suppose $k>2$ and let $\lambda=(2^{k+1}-i,2,1^{i-2})\vdash 2^{k+1}$ for some $2\le i\le 2^{k+1}-2$. Call $S:=S_{2^{k+1}}$ and $P:=P_{2^{k+1}}\le S$. Let $W:=S_{2^k}\wr S_2$ be such that $P\le W\le S$ and set $Y:=S_{2^k}\times S_{2^k}$ and $Q:=P_{2^k}\times P_{2^k}$ such that $Q\le Y\le W$ and $Q\le P$. Then
	\[ Z^\lambda 
	=\langle \chi^\lambda\down^S_W, \triv_P\up^W\rangle = \sum_{\psi\in\Irr(W)} \langle \chi^\lambda\down^S_W,\psi\rangle \cdot \langle \triv_P\up^W,\psi\rangle.
	\]
	Using Notation~\ref{not:GwrS2} with $G=S_{2^k}$ and $I=\cP(2^k)$, we have that 
	\[ \Irr(W) = \{\psi^{\mu,\nu}=\psi^{\nu,\mu} \mid \mu\ne\nu\in\cP(2^k) \} \sqcup \{ \psi^{\mu,\mu}_{\pm} \mid \mu\in\cP(2^k) \}. \]
	Suppose $\psi\in\Irr(W)$ is such that $\langle \chi^\lambda\down^S_W,\psi\rangle \cdot \langle \triv_P\up^W,\psi\rangle\ne 0$.
	\begin{itemize}
		\item If $\psi=\psi^{\mu,\nu}$ for some $\mu\ne\nu$, then $\langle \chi^\lambda\down^S_W,\psi\rangle 
		= \langle \chi^\lambda\down^S_Y, \chi^\mu\times\chi^\nu\rangle = c^\lambda_{\mu,\nu}$. Then $c^\lambda_{\mu,\nu}\ne 0$ implies that $\mu,\nu\subseteq\lambda$, and hence each of $\mu$ and $\nu$ is either a hook or of the form $(a,2,1^b)\vdash 2^k$ for some $a\ge 2$ and $b\ge 0$. Moreover, at least one of $\mu$ and $\nu$ must be a hook, so without loss of generality we may assume that $\mu$ is a hook.
		
		If $\nu=(2^k-j,2,1^{j-2})$ for some $2\le j\le 2^k-2$, then by Lemma~\ref{lem:Prop1.2} we have $\langle \triv_P\up^W,\psi\rangle = Z^\mu\cdot Z^\nu$. By assumption, $\langle \triv_P\up^W,\psi\rangle\ne 0$ and hence $Z^\mu\ne 0$, from which we deduce that $\mu$ is the trivial hook $(2^k)\vdash 2^k$ and $Z^\mu=1$ using Proposition~\ref{prop:hook}.		
		If $\nu$ and $\mu$ are both hooks then similarly we deduce from $Z^\mu\cdot Z^\nu\ne 0$ that $\mu=\nu=(2^k)\vdash 2^k$, a contradiction.
		
		\item If $\psi=\psi^{\mu,\mu}_{\pm}$ for some $\mu$ and choice of sign $\pm$, then $\langle \chi^\lambda\down^S_W,\psi\rangle = \langle \chi^\lambda\down^S_W, \cX(\chi^\mu;\theta)\rangle$ for some $\theta\in\{\chi^{(2)},\chi^{(1^2)}\}$. In either case $\langle \chi^\lambda\down^S_W,\cX(\chi^\mu;\theta)\rangle\ne 0$ implies that $c^\lambda_{\mu,\mu}\ne 0$, hence $\mu$ must be a hook. By Lemma~\ref{lem:Prop1.2}, $\langle \triv_P\up^W,\psi\rangle = \tfrac{1}{2}\cdot((Z^\mu)^2 \pm Z^\mu)$. By assumption, $\langle \triv_P\up^W,\psi\rangle \ne 0$ and hence $Z^\mu\ne 0$, from which we deduce that $\mu=(2^k)\vdash 2^k$ using Proposition~\ref{prop:hook}. Then from $c^\lambda_{(2^k),(2^k)}\ne 0$ we deduce that $i=2$, and also since $Z^{(2^k)}=1$ we must have $\theta=\chi^{(2)}$.
	\end{itemize}
	As a result, for $i\ge 3$ we observe from the inductive hypothesis that
	\[ Z^\lambda = \sum_{j=2}^{2^k-2} c^\lambda_{(2^k), (2^k-j,2,1^{j-2})}\cdot Z^{(2^k-j,2,1^{j-2})} = \sum_{j\in\{i-1,i\}} Z^{(2^k-j,2,1^{j-2})} = \tbinom{k-1}{k-(i-1)} + \tbinom{k-1}{i-1} = \tbinom{(k+1)-1}{(k+1)-i}. \]
	Finally, suppose $i=2$ so $\lambda=(2^{k+1}-2,2)$. From above, we observe that
	\[ Z^\lambda = \sum_{j=2}^{2^k-2} c^\lambda_{(2^k), (2^k-j,2,1^{j-2})}\cdot Z^{(2^k-j,2,1^{j-2})} + \langle \chi^\lambda\down^S_W, \cX(\chi^{(2^k)};\chi^{(2)})\rangle = Z^{(2^k-2,2)} + \langle \chi^\lambda\down^S_W, \cX(\chi^{(2^k)};\chi^{(2)})\rangle. \]
	By the inductive hypothesis, $Z^{(2^k-2,2)}=\binom{k-1}{k-2}=k-1$, while by \cite[Theorem 1.2]{dBPW},
	\[ \langle \chi^{(2^{k+1}-2,2)}\down^S_W, \cX(\chi^{(2^k)};\chi^{(2)}) \rangle = \langle \chi^{(2,2)}\down^{S_4}_{S_2\wr S_2}, \cX(\chi^{(2)};\chi^{(2)}) \rangle = 1, \]
	and so $Z^\lambda=k=\binom{(k+1)-1}{(k+1)-2}$ as required.
\end{proof}

\begin{remark}
	In Section~\ref{sec:13}, we give an alternative proof of Lemma~\ref{lem:a-2-1b} using plethysms (see Corollary~\ref{cor:14.9}), and discuss the case of general $|\lambda|\in\N$ in Remark~\ref{rem:general-a-2-1b}.\hfill$\lozenge$
\end{remark}

\bigskip
\subsection{Positivity of Sylow branching coefficients}\label{sec:limit}
The main aim of this section is to prove Theorem C. 
We recall the definition of the operation $\star$ was given in Section~\ref{sec:prelims-sn}.
\begin{lemma}\label{lem:SBC>=3}
	Let $n\in\N$. Suppose $A_1, A_2\subseteq\{\lambda\vdash n\mid Z^\lambda\ge 3 \}$. Then $A_1\star A_2\subseteq\{\mu\vdash 2n\mid Z^\mu\ge 3\}$.
\end{lemma}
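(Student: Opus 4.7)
The plan is to first establish a formula expressing $Z^\lambda$ for $\lambda\vdash 2n$ as a non-negative combination of $Z^\alpha$ for $\alpha\vdash n$. Since $P_2=S_2$, Remark~\ref{rem:convention} gives the chain $P_{2n}\cong P_2\wr P_n = S_2\wr P_n\le S_2\wr S_n\le S_{2n}$. The normal subgroup $S_2^n$ is common to $S_2\wr P_n$ and $S_2\wr S_n$, with respective quotients $P_n$ and $S_n$; hence $\triv_{P_{2n}}\up^{S_2\wr S_n}$ is the inflation of $\pi_n:=\triv_{P_n}\up^{S_n}$, which in the paper's notation is $\cX(\triv_{S_2};\pi_n)$. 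Writing $\pi_n = \sum_{\alpha\vdash n}Z^\alpha\chi^\alpha$, inducing further to $S_{2n}$, and using the characteristic-map identification~\eqref{eqn:a} of plethysm coefficients, Frobenius reciprocity yields
\[ Z^\lambda \;=\; \sum_{\alpha\vdash n}Z^\alpha\cdot a^\lambda_{\alpha,(2)} \qquad\text{for every }\lambda\vdash 2n. \tag{$\ast$} \]

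Now fix $\mu\in A_1$, $\nu\in A_2$ and $\lambda\vdash 2n$ with $c^\lambda_{\mu,\nu}>0$, so that $Z^\mu,Z^\nu\ge 3$ by hypothesis. Since every summand in $(\ast)$ is non-negative, to conclude $Z^\lambda\ge 3$ it suffices to exhibit one $\alpha\vdash n$ with $Z^\alpha\ge 3$ and $a^\lambda_{\alpha,(2)}\ge 1$. The natural candidates are $\alpha=\mu$ and $\alpha=\nu$, both already satisfying $Z^\alpha\ge 3$. Consequently the task reduces to establishing the plethysm positivity
\[ a^\lambda_{\mu,(2)}\;\ge\;1 \quad\text{or}\quad a^\lambda_{\nu,(2)}\;\ge\;1, \]
given $c^\lambda_{\mu,\nu}>0$ and that both $\mu,\nu$ lie in $\{\rho\vdash n : Z^\rho\ge 3\}$.

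The main obstacle is precisely this plethysm-positivity step, which converts positivity of a Littlewood--Richardson coefficient into that of a plethysm coefficient. My approach would be to lift an explicit LR filling of $\lambda/\mu$ of type $\nu$ (guaranteed by $c^\lambda_{\mu,\nu}>0$) to a constituent of $\cX(\triv_{S_2};\chi^\mu)\up^{S_{2n}}_{S_2\wr S_n}$ isomorphic to $\chi^\lambda$. The structural restrictions on $\mu,\nu$ imposed by $Z^\mu,Z^\nu\ge 3$ should be crucial: Lemma~\ref{lem:tall} forbids either partition from being too tall and Lemma~\ref{lem:2col} rules out essentially two-column shapes, so both are `fat' enough to support the required combinatorial construction. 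Should the candidate $\alpha\in\{\mu,\nu\}$ fail in some edge case, a fallback is to locate three distinct partitions $\alpha_1,\alpha_2,\alpha_3\vdash n$, each with $Z^{\alpha_i}\ge 1$ and $a^\lambda_{\alpha_i,(2)}\ge 1$, together contributing at least $3$ to $(\ast)$; Proposition~\ref{prop:thrall} provides one such $\alpha=(n)$ whenever $\lambda$ happens to be even, and further such $\alpha_i$ can be extracted from LR decompositions of $\mu$ and $\nu$ themselves.
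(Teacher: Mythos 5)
Your reduction to the formula $(\ast)$, namely $Z^\lambda = \sum_{\alpha\vdash n} Z^\alpha\cdot a^\lambda_{\alpha,(2)}$, is correct; this is exactly Lemma~\ref{lem:isotypic}(i) of the paper, and your derivation via the chain $P_{2n}\cong P_2\wr P_n\le S_2\wr S_n\le S_{2n}$ is sound. However, the proposal does not close the argument. The step you yourself flag as the ``main obstacle'' --- that $c^\lambda_{\mu,\nu}>0$ together with $Z^\mu,Z^\nu\ge 3$ forces $a^\lambda_{\mu,(2)}\ge 1$ or $a^\lambda_{\nu,(2)}\ge 1$ --- is left entirely unproved: you only sketch a hoped-for combinatorial lifting and offer a fallback, neither of which is carried out. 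No mechanism is given for converting an LR filling of $\lambda/\mu$ of type $\nu$ into a constituent of $\cX(\triv_{S_2};\chi^\mu)\up^{S_{2n}}_{S_2\wr S_n}$, and the structural facts you invoke (Lemmas~\ref{lem:tall} and~\ref{lem:2col}) exclude only hooks, near-columns and tall partitions, which is far from a description of $\{\rho\vdash n : Z^\rho\ge 3\}$. As it stands this is a genuine gap in the argument, and it is not clear the asserted plethysm positivity is even true in the required generality.

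The paper avoids this obstacle entirely by working with the \emph{other} maximal imprimitive wreath product, $S_n\wr S_2\le S_{2n}$, instead of $S_2\wr S_n$. There, $c^\mu_{\lambda_1,\lambda_2}>0$ translates immediately into $\psi^{\lambda_1,\lambda_2}$ (or, when $\lambda_1=\lambda_2=\lambda$, at least one of $\psi^{\lambda,\lambda}_{\pm}$) being a constituent of $\chi^\mu\down^{S_{2n}}_{S_n\wr S_2}$, and Lemma~\ref{lem:Prop1.2} computes the relevant inner products directly:
\[
\langle\tilde{\pi},\psi^{\lambda_1,\lambda_2}\rangle = Z^{\lambda_1}\cdot Z^{\lambda_2}\ge 9,
\qquad
\langle\tilde{\pi},\psi^{\lambda,\lambda}_{\pm}\rangle = \tfrac{1}{2}\big((Z^\lambda)^2\pm Z^\lambda\big)\ge 3,
\]
where $\tilde{\pi}=\triv_{P_{2n}}\up^{S_n\wr S_2}$. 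Frobenius reciprocity then gives $Z^\mu\ge 3$ in one step, with no plethysm-positivity required. You should switch to the $S_n\wr S_2$ side and use Lemma~\ref{lem:Prop1.2}, since the $S_2\wr S_n$ decomposition $(\ast)$ does not interface well with the Littlewood--Richardson hypothesis.
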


\begin{proof}
	We follow the notation of Lemma~\ref{lem:Prop1.2}, letting $G=S_n$, $H=P_n$, $\pi:=\triv_{P_n}\up^{S_n}$ and $\tilde{\pi}=\triv_{P_{2n}}\up^{S_n\wr S_2}$. The irreducible characters of $G$ are indexed by $I=\cP(n)$.
	Let $\mu\in A_1\star A_2$ and take $\lambda_1\in A_1$, $\lambda_2\in A_2$ such that $c^\mu_{\lambda_1,\lambda_2 }>0$. By Frobenius reciprocity, 
	\[ Z^\mu = \langle\tilde{\pi},\chi^\mu\down^{S_{2n}}_{S_n\wr S_2}\rangle. \]
	Suppose first that $\lambda_1\neq\lambda_2$. In this case, $\psi^{\lambda_1,\lambda_2}=(\chi^{\lambda_1}\times\chi^{\lambda_2})\up^{S_n\wr S_2}$ is an irreducible constituent of $\chi^\mu\down^{S_{2n}}_{S_n\wr S_2}$, because $\langle\psi^{\lambda_1,\lambda_2},\chi^\mu\down^{S_{2n}}_{S_n\wr S_2}\rangle=c^\mu_{\lambda_1,\lambda_2}$. Thus by Lemma~\ref{lem:Prop1.2},
	\[ Z^{\mu} = \langle\tilde{\pi},\chi^\mu\down^{S_{2n}}_{S_n\wr S_2}\rangle \geq \langle\tilde{\pi},\psi^{\lambda_1,\lambda_2}\rangle = \langle\pi,\chi^{\lambda_1}\rangle \cdot \langle\pi,\chi^{\lambda_2}\rangle = Z^{\lambda_1}\cdot Z^{\lambda_2} \geq 9>3. \]
	Now suppose that $\lambda_1=\lambda_2 =:\lambda$. Then at least one of $\psi^{\lambda,\lambda}_{+}$ and $\psi^{\lambda,\lambda}_{-}$ is a constituent of $\chi^\mu\down^{S_{2n}}_{S_n\wr S_2}$, since $\langle\psi^{\lambda,\lambda}_{+}+\psi^{\lambda,\lambda}_{-},\chi^\mu\down^{S_{2n}}_{S_n\wr S_2}\rangle = c^\mu_{\lambda,\lambda}$. Thus by Lemma~\ref{lem:Prop1.2},
	\[ Z^\mu \ge \langle \tilde{\pi}, \psi^{\lambda,\lambda}_+\rangle = \tfrac{1}{2}\big((Z^\lambda)^2+Z^\lambda\big) \quad\text{or}\quad Z^\mu \ge \langle \tilde{\pi}, \psi^{\lambda,\lambda}_-\rangle = \tfrac{1}{2}\big((Z^\lambda)^2-Z^\lambda\big) \]
	and so $Z^\mu\ge 3$ in either case as $Z^\lambda\ge 3$.
\end{proof}

\begin{definition}
	Let $n,w,h\in\N$. We define $\cB_{w,h}(n):=\{\lambda\vdash n\mid \lambda_1\le w,\ l(\lambda)\le h\}.$
\end{definition}

In other words, $\cB_{w,h}(n)$ consists of those partitions of $n$ whose Young diagram is contained inside a $w\times h$ rectangle, i.e.~the Young diagram of $(w^h)$, a rectangle of width $w$ and height $h$. Below, we let ${}^{\star k}$ denote a $k$-fold $\star$-product. That is, $A^{\star k} = A\star A\star \cdots \star A$ ($k$ times).

\begin{proposition}\label{prop:rectangle}
	Let $n\ge 4$ and $k\ge 2$ be natural numbers. Then
	\[ \{(2n-2,2)\}^{\star k} \supseteq \cB_{(k+1)n,k}(2kn). \]
\end{proposition}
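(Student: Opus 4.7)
We proceed by induction on $k \geq 2$. For the base case $k = 2$, given $\lambda = (\lambda_1, \lambda_2) \in \cB_{3n, 2}(4n)$, the constraints $\lambda_1 \leq 3n$ and $\lambda_1 \geq \lambda_2$ together with $\lambda_1 + \lambda_2 = 4n$ force $\lambda_1 \in [2n, 3n]$ and $\lambda_2 \in [n, 2n]$, so in particular $(2n - 2, 2) \subseteq \lambda$. I would construct an explicit Littlewood--Richardson filling of $\lambda/(2n - 2, 2)$ of content $(2n - 2, 2)$: fill row 1 of the skew entirely with 1's, and row 2 of the skew with $\lambda_2 - 4$ ones followed by two 2's (using $\lambda_2 \geq n \geq 4$). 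Column-strictness holds because the 1's in row 2 only reach column $\lambda_2 - 2 < 2n - 1$, so any overlap of the two rows has 1 above 2; and the reading-word condition of Theorem~\ref{thm:LR} holds because the $\lambda_1 - (2n - 2) \geq 2$ ones in row 1 precede both 2's in reverse reading order.

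For the inductive step, assume $\cB_{(k+1)n, k}(2kn) \subseteq \{(2n-2, 2)\}^{\star k}$ and let $\lambda \in \cB_{(k+2)n, k+1}(2(k+1)n)$. It suffices to exhibit $\mu \in \cB_{(k+1)n, k}(2kn)$ with $c^\lambda_{\mu, (2n-2, 2)} > 0$, because then the inductive hypothesis and associativity of $\star$ yield $\lambda \in \mu \star (2n - 2, 2) \subseteq \{(2n - 2, 2)\}^{\star(k+1)}$. The partition $\mu$ is obtained by removing $2n$ boxes from $\lambda$, with the specific choice depending on the shape of $\lambda$. The generic case is $l(\lambda) = k + 1$ with $2 \leq \lambda_{k+1} \leq n$: I would set $\mu = (\lambda_1 + \lambda_{k+1} - 2n, \lambda_2, \ldots, \lambda_k)$ so that $\lambda/\mu$ has $2n - \lambda_{k+1}$ boxes in row 1 and $\lambda_{k+1}$ boxes in row $k+1$, and fill row 1 with 1's and row $k+1$ with $\lambda_{k+1} - 2$ ones followed by two 2's. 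Edge cases (including $l(\lambda) \leq k$, small or large $\lambda_{k+1}$, and nearly rectangular $\lambda$) are handled by variant constructions that remove boxes from different rows of $\lambda$.

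The main obstacle is the combinatorial casework required to cover every $\lambda \in \cB_{(k+2)n, k+1}(2(k+1)n)$. In each configuration, one must verify that $\mu$ is a valid partition with $|\mu| = 2kn$, $l(\mu) \leq k$, and $\mu_1 \leq (k+1)n$, while confirming that the chosen filling of $\lambda/\mu$ satisfies column-strictness and the reading-word condition. In the generic case, the bound $\mu_1 \leq (k+1)n$ follows from the pigeonhole estimate $\lambda_1 + k\lambda_{k+1} \leq |\lambda| = 2(k+1)n$ combined with $\lambda_1 \leq (k+2)n$, which gives $\lambda_1 + \lambda_{k+1} \leq (k+3)n$. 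The most delicate cases are extreme shapes such as $\lambda = ((2n + 1)^k, 2n - k)$ for $k$ close to $2n$: here the generic construction fails one of the required conditions, and one must remove boxes from several rows (rather than just rows 1 and $k+1$) and carefully position the two 2's in the filling so that the reading word remains good. The individual LR-filling checks are routine; the effort lies in enumerating the constructions and verifying they cover all $\lambda$.
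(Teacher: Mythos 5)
Your base case $k=2$ is handled correctly, and the overall induction-on-$k$ architecture matches the paper's. However, there is a genuine gap in the inductive step: your plan is to always find a single $\mu\in\cB_{(k+1)n,k}(2kn)$ with $c^{\lambda}_{\mu,(2n-2,2)}>0$, and you expect the "delicate cases" to be absorbed by more careful placement of boxes and of the two $2$'s in the filling. This one-step reduction is not always possible, no matter how cleverly you choose $\mu$. Take $\lambda=\big((2n)^{k+1}\big)$. Any candidate $\mu\subseteq\lambda$ with $|\mu|=2kn$, $l(\mu)\le k$ and $\mu_1\le(k+1)n$ must satisfy $\mu\subseteq\big((2n)^{k}\big)$ and hence $\mu=\big((2n)^k\big)$; the skew shape $\lambda/\mu$ is then a single row of $2n$ boxes, whose only LR filling has type $(2n)$, not $(2n-2,2)$. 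The same obstruction appears for shapes like $\mu=(2n+1,(2n)^{k-2},2n-1)$, where $\mu_{k}\in\{2n-1,2n\}$: every admissible $\tilde{\mu}$ leads to a reading word that fails the lattice condition regardless of how the two $2$'s are positioned.

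The paper resolves this by allowing a two-step reduction when necessary: in the inductive step, besides option (a) of removing an LR-skew of type $(2n-2,2)$ landing in $\cB_{kn,k-1}(2(k-1)n)$, there is option (b), removing an LR-skew of some type $\nu\in\{(2n-2,2)\}\star\{(2n-2,2)\}$ (for example $(2n+1,2n-1)$ or $(2n,2n)$) and landing in $\cB_{(k-1)n,k-2}(2(k-2)n)$. Option (b) is exactly what handles the rectangular and near-rectangular cases above. Correspondingly, the paper needs two base cases $k=2$ and $k=3$ (and starts the inductive step at $k\ge 4$), since a two-step reduction can reach $k-2$. To repair your argument you would need to incorporate this second option and the additional base case; otherwise the approach is sound and the casework for option (a) is in the same spirit as the paper's cases (i), (ii) and (iv).
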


The following observation will be used throughout the proof of Proposition~\ref{prop:rectangle}.

\begin{lemma}\label{lem:skew}
	Let $n\ge 2$ be a natural number and suppose that a skew shape $\lambda/\mu$ of size $2n$ is such that
	\begin{itemize}
		\item no two nodes of $[\lambda/\mu]$ lie in the same column, and
		\item $[\lambda/\mu]$ is not (a translation of) $[(2n)]$, nor a disjoint union of $[(2n-1)]$ and $[(1)]$.
	\end{itemize}
	Then, $[\lambda/\mu]$ has a Littlewood--Richardson filling of type $(2n-2,2)$.
\end{lemma}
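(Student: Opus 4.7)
Plan: The key simplification is that the ``no two nodes share a column'' hypothesis makes condition (iii) of the Littlewood--Richardson rule (strict column-increases) vacuous for any filling of $\lambda/\mu$, so I only need to choose where the two 2's go and check conditions (i) (reading word good of type $(2n-2,2)$) and (ii) (weak row-increase). Condition (ii) immediately forces each 2 to sit at the rightmost cell of its row (or at the two rightmost cells of that row, if both 2's lie in a common row).

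First I would decompose $[\lambda/\mu]$ into its nonempty rows $R_1,\dots,R_k$ from top to bottom, with sizes $\ell_1,\dots,\ell_k$ summing to $2n$; the column-disjoint hypothesis implies the column ranges of the $R_i$ are pairwise disjoint and shift strictly leftward with increasing $i$, so the reading order is simply $R_1$ (right-to-left), then $R_2$, and so on. The case $k=1$ is exactly the excluded shape $[(2n)]$. For $k=2$, the excluded $(2n-1)\sqcup(1)$ case takes care of $\{\ell_1,\ell_2\}=\{1,2n-1\}$, and in the remaining subcase $\ell_1,\ell_2\ge 2$ I place both 2's at the two rightmost cells of $R_2$: in reading order one sees $\ell_1\ge 2$ ones, then the two 2's, then the remaining ones of $R_2$, so both 2's are good.

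For $k\ge 3$ I split on $\ell_k$. If $\ell_k\ge 2$, I place both 2's at the two rightmost cells of $R_k$; then the $\sum_{j<k}\ell_j\ge k-1\ge 2$ ones in earlier rows precede both 2's and make them good. If $\ell_k=1$, I place one 2 at the rightmost cell of $R_2$ and the other at the sole cell of $R_k$; the first 2 is preceded by $\ell_1\ge 1$ ones, and the second 2 is preceded by $\sum_{j<k}\ell_j-1=2n-2$ ones, which is at least $2$ since $n\ge 2$. Verifying the goodness condition at each 2 is then a direct count.

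The step I expect to be most delicate is the $\ell_k=1$ subcase, as it is the only one in which the 2's are distributed across two different rows, and the hypothesis $n\ge 2$ enters essentially (through $2n-2\ge 2$). Apart from that, the argument is a case-by-case bookkeeping exercise; the sharpness of the two excluded shapes is confirmed a posteriori by noting that in each excluded case the first 2 in reading order has no preceding 1 and so no valid filling exists.
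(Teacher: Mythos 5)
The paper states this lemma without proof, treating it as an observation to be used in the proof of Proposition~\ref{prop:rectangle}, so there is no paper argument to compare against. Your proof is correct and supplies the missing verification. The key reductions are all sound: column-disjointness makes condition~(iii) of the LR rule vacuous and forces the two rows carrying the $2$'s to receive them at their right ends; column-disjointness together with the partition structure of $\lambda$ forces lower nonempty rows strictly to the left of higher ones, so the reading word is simply the concatenation of the rows top to bottom. Your case split on the number $k$ of nonempty rows is exhaustive (for $k=2$ note that $\ell_1+\ell_2=2n$ forces $\{\ell_1,\ell_2\}=\{1,2n-1\}$ whenever either is $1$), and in each case the proposed placement yields a lattice word: in the $k\ge 3$, $\ell_k=1$ subcase the second $2$ is preceded by exactly $2n-2$ ones and one $2$, so the hypothesis $n\ge 2$ enters exactly where you say. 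The a posteriori check that the two excluded shapes admit no LR filling of type $(2n-2,2)$ is also correct.
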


\begin{proof}[Proof of Proposition~\ref{prop:rectangle}]
	We proceed by induction on $k$, with base cases $k=2$ and $k=3$ holding by direct application of the Littlewood--Richardson (LR) rule.
	For the inductive step, suppose $k\geq 4$ and let $\mu=(\mu_1,\mu_2,\dotsc,\mu_{l(\mu)})\in\cB_{(k+1)n,k}(2kn)$. It suffices to show that there exist either
	\begin{enumerate}[label=\textup{(\alph*)}]
		\item $\tilde{\mu}\in\cB_{kn,k-1}(2(k-1)n)$ such that $\tilde{\mu}\subseteq\mu$ and an LR filling of $[\mu/\tilde{\mu}]$ of type $(2n-2,2)$, or 
		\item $\tilde{\mu}\in\cB_{(k-1)n,k-2}(2(k-2)n)$ such that $\tilde{\mu}\subseteq\mu$ and an LR filling of $[\mu / \tilde{\mu}]$ of some type $\nu\in\{(2n-2,2)\}\star\{(2n-2,2)\}$.
	\end{enumerate}
	Observe first that $\mu$ cannot be a hook as $|\mu|=2kn$ and $\mu_1\le (k+1)n$, $l(\mu)\le k$. Moreover, we must have $\mu_2\le kn$ and so the number of nodes of $[\mu]$ lying outside of the rectangle $[(kn)^{k-1}]$ equals $\max\{\mu_1-kn,0\}+\mu_k$ and is at most $2n$. Indeed, this is immediate if $\mu_1\le kn$, and if $\mu_1>kn$ then we use the inequalities $\mu_1\leq(k+1)n$ and $\mu_1+(k-1)\mu_k\leq 2kn$ to obtain
	\[ \mu_1-kn+\mu_k=\tfrac{k-2}{k-1}\cdot\mu_1 + \tfrac{1}{k-1}(\mu_1+(k-1)\mu_k)-kn \leq \tfrac{k-2}{k-1}\cdot(k+1)n + \tfrac{2kn}{k-1} -kn = 2n. \]
	
	In order to show that there exist an appropriate partition $\tilde{\mu}$ and an LR filling as in either (a) or (b),  we consider the following four cases (i)--(iv), depending on the manner in which $[\mu]$ lies outside of the rectangle $[(kn)^{k-1}]$ (if at all). Examples of each of the four cases are illustrated in Figures~\ref{fig:1} to~\ref{fig:4}, for $n=k=4$. In each figure, the shaded nodes indicate $[\mu/\tilde{\mu}]$, the dotted lines outline the rectangle $[((k+1)n)^k]$, and the dashed lines outline the rectangle $[(kn)^{k-1}]$.
	
	\noindent\textbf{Case (i): $\mu_1\leq kn$ and $l(\mu)\leq k-1$.} The assumptions imply that $(2n-2,2)\subseteq \mu$, and so $[\mu/(2n-2,2)]$ has an LR filling of some type $\tilde{\mu}\in\cB_{kn,k-1}(2(k-1)n)$. Since $c^{\mu}_{\tilde{\mu},(2n-2,2)}=c^\mu_{(2n-2,2),\tilde{\mu}} > 0$, we conclude that (a) holds. An example of case (i) is illustrated in Figure~\ref{fig:1}.
	
	\begin{figure}
		\centering
		\begin{tikzpicture}[scale = 0.8]
			\draw [step=0.5] (0,0) grid (7,-0.5);
			\draw [step=0.5] (0,-0.5) grid (5.5,-1);
			\draw [step=0.5] (0,-1) grid (3.5,-1.5);
			
			\draw [dashed] (0,0) rectangle (8,-1.5);
			\draw [dotted] (0,0) rectangle (10,-2);
			
			\draw [pattern=north east lines] (4.5,0) rectangle (7,-0.5);
			\draw [pattern=north east lines] (4.5,-0.5) rectangle (5.5,-1);
			\draw [pattern=north east lines] (3,-1) rectangle (3.5,-1.5);
		\end{tikzpicture}
		\caption{Example of case (i) with $n=k=4$ and $\mu=(14,11,7)\vdash 32$. The dotted lines outline the rectangle $[((k+1)n)^k]=[(20)^4]$, the dashed lines outline the rectangle $[(kn)^{k-1}]=[(16)^3]$, and the shaded nodes indicate $[\mu / \tilde{\mu}]$ where we have chosen $\tilde{\mu}=(9,9,6)$.}
		\label{fig:1}
	\end{figure}
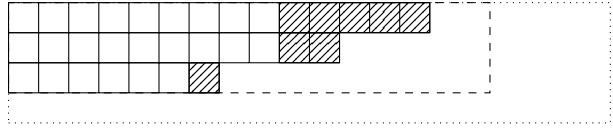
	
	\noindent\textbf{Case (ii): $\mu_1>kn$ and $l(\mu)\le k-1$.} Since it must be that $\mu_2<kn$, the nodes of $[\mu]$ lying outside of the rectangle $[(kn)^{k-1}]$ all lie in the first row. We choose $\tilde{\mu}\vdash |\mu|-2n$ with $\tilde{\mu}\subseteq\mu$ so that the skew shape $[\mu/\tilde{\mu}]$ contains these $\mu_1-kn$ nodes. It is clear that $\tilde{\mu}$ can be chosen such that the conditions of Lemma~\ref{lem:skew} are also satisfied, since $\mu$ is not a hook. Hence we obtain $\tilde{\mu}\in\cB_{kn,k-1}(2(k-1)n)$ as required in (a); see Figure~\ref{fig:2}, for example.
	
	\begin{figure}
		\centering
		\begin{tikzpicture}[scale = 0.8]
			\draw [step=0.5] grid (0,0) grid (9,-0.5);
			\draw [step=0.5] grid (0,-0.5) grid (4.5,-1);
			\draw [step=0.5] grid (0,-1) grid (2.5,-1.5);
			
			\draw [dashed] (0,0) rectangle (8,-1.5);
			\draw [dotted] (0,0) rectangle (10,-2);
			
			\draw [pattern=north east lines] (8,0) rectangle (9,-0.5);
			\draw [pattern=north east lines] (4,-0.5) rectangle (4.5,-1);
			\draw [pattern=north east lines] (0,-1) rectangle (2.5,-1.5);
		\end{tikzpicture}
		\caption{Example of case (ii) with $n=k=4$ and $\mu=(18,9,5)\vdash 32$, where we have chosen $\tilde{\mu}=(16,8)$.}
		\label{fig:2}
	\end{figure}
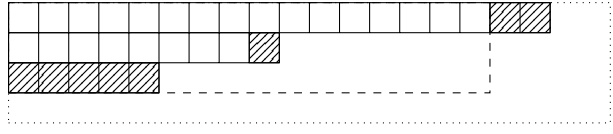
	
	\noindent\textbf{Case (iii): $\mu_1\leq kn$ and $l(\mu)> k-1$.} In this case, the nodes of $[\mu]$ outside of the rectangle $[(kn)^{k-1}]$ are precisely the $\mu_k$ nodes in the $k$th row. Letting $m=\mu_k$, we have that $1\le m\le 2n$ as $|\mu|=2kn$.
	
	We first assume that $m\leq 2n-2$. It is straightforward to choose $\tilde{\mu}\vdash |\mu|-2n$ with $\tilde{\mu}\subseteq\mu$ so that $[\mu/\tilde{\mu}]$ contains the $m$ nodes in the $k$th row and satisfies the conditions in Lemma~\ref{lem:skew}. We are then done as in case (ii), as this shows that (a) holds.
	
	Now suppose instead that $m=2n-1$. Then the nodes of $[\mu]$ outside of the rectangle $[((k-1)n)^{k-2}]$ are precisely the $x:=\mu_{k-1}+\mu_k$ nodes lying in the $k-1$ and $k$th rows. In particular, $\mu_{k-1}\in\{2n-1,2n\}$ since if $\mu_{k-1}\ge 2n+1$ then $|\mu|\ge (k-1)(2n+1)+2n-1>2kn$, a contradiction.
	If $\mu_{k-1}=2n-1$ then we can choose $\tilde{\mu}\vdash |\mu|-4n$ with $\tilde{\mu}\subseteq\mu$ so that $[\mu/\tilde{\mu}]$ consists of the $x=4n-2$ `outside' nodes, and two more nodes which can be chosen to lie in different columns to the right of column $2n-1$ as $\mu_1>2n$.
	If $\mu_{k-1}=2n$ then $\mu=(2n+1,(2n)^{k-2},2n-1)$, so we can take $\tilde{\mu} = ((2n)^{k-2})$. 
	In both instances we observe that $[\mu / \tilde{\mu}]$ has an LR filling of type $(2n+1,2n-1)\in\{(2n-2,2)\}\star\{(2n-2,2)\}$, and so (b) holds. An example is given in Figure~\ref{fig:3}.
	
	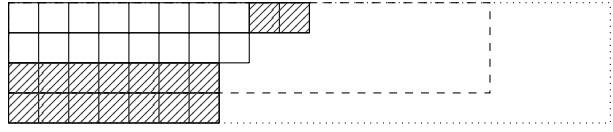
\begin{figure}
		\centering
		\begin{tikzpicture}[scale = 0.8]
			\draw [step=0.5] grid (0,0) grid (5,-0.5);
			\draw [step=0.5] grid (0,-0.5) grid (4,-1);
			\draw [step=0.5] grid (0,-1) grid (3.5,-1.5);
			\draw [step=0.5] grid (0,-1.5) grid (3.5,-2);
			
			\draw [dashed] (0,0) rectangle (8,-1.5);
			\draw [dotted] (0,0) rectangle (10,-2);
			
			\draw [pattern=north east lines, pattern color=darkgray] (4,0) rectangle (5,-0.5);
			\draw [pattern=north east lines, pattern color=darkgray] (0,-1) rectangle (3.5,-2);
		\end{tikzpicture}
		\caption{Example of case (iii) with $n=k=4$, $\mu=(10,8,7,7)\vdash 32$ and $m=\mu_k=2n-1$, where we have chosen $\tilde{\mu}=(8,8)$.}
		\label{fig:3}
	\end{figure}
	
	Finally, suppose $m=2n$. Then $\mu=((2n)^{k})$, for which we can take $\tilde{\mu}=((2n)^{k-2})$ and $[\mu / \tilde{\mu}]$ has a (unique) LR filling of type $(2n,2n)\in\{(2n-2,2)\}\star\{(2n-2,2)\}$, and so again (b) holds.
	
	\noindent\textbf{Case (iv): $\mu_1 > kn$ and $l(\mu)> k-1$.} The nodes of $[\mu]$ outside of the rectangle $[(kn)^{k-1}]$ lie in the first and $k$th rows. As in case (ii), we can choose $\tilde{\mu}\vdash|\mu|-2n$ with $\tilde{\mu}\subseteq\mu$ so that $[\mu / \tilde{\mu}]$ contains all $\mu_1-kn+\mu_k$ of these nodes and satisfies the conditions of Lemma~\ref{lem:skew}. Thus (a) holds, as desired (see Figure~\ref{fig:4}, for example).	
	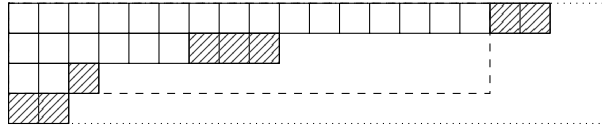
\begin{figure}
		\centering
		\begin{tikzpicture}[scale = 0.8]
			\draw [step=0.5] grid (0,0) grid (9,-0.5);
			\draw [step=0.5] grid (0,-0.5) grid (4.5,-1);
			\draw [step=0.5] grid (0,-1) grid (1.5,-1.5);
			\draw [step=0.5] grid (0,-1.5) grid (1,-2);
			
			\draw [dashed] (0,0) rectangle (8,-1.5);
			\draw [dotted] (0,0) rectangle (10,-2);
			
			\draw [pattern=north east lines, pattern color=darkgray] (8,0) rectangle (9,-0.5);
			\draw [pattern=north east lines, pattern color=darkgray] (3,-0.5) rectangle (4.5,-1);
			\draw [pattern=north east lines, pattern color=darkgray] (1,-1) rectangle (1.5,-1.5);
			\draw [pattern=north east lines, pattern color=darkgray] (0,-1.5) rectangle (1,-2);
		\end{tikzpicture}
		\caption{Example of case (iv) with $n=k=4$ and $\mu=(19,9,3,2)\vdash 32$, where we have chosen $\tilde{\mu}=(16,6,2)$.}
		\label{fig:4}
	\end{figure}
\end{proof}

\begin{corollary}\label{cor:(14,2)}
	For each $k\in\N_{\ge 3}$, we have $\cB_{8(k+1),k}(16k) \subseteq \{\mu\vdash 16k \mid Z^\mu\ge 3 \}$.
\end{corollary}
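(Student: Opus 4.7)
The plan is to combine Proposition~\ref{prop:rectangle}, Lemma~\ref{lem:a-2-1b}, and Lemma~\ref{lem:SBC>=3}. Applying Proposition~\ref{prop:rectangle} with $n = 8$ yields $\cB_{8(k+1), k}(16k) \subseteq \{(14, 2)\}^{\star k}$ for every $k \ge 2$. Since $(14, 2) = (2^4 - 2, 2) \vdash 2^4$, Lemma~\ref{lem:a-2-1b} (applied with its parameter $k$ equal to $4$ and $i = 2$) gives $Z^{(14, 2)} = \binom{3}{2} = 3$. It therefore suffices to prove that $\{(14, 2)\}^{\star k} \subseteq \{\mu \vdash 16k \mid Z^\mu \ge 3\}$ for every $k \ge 3$.

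A straightforward induction on $j \ge 0$, applying Lemma~\ref{lem:SBC>=3} with $A_1 = A_2 = \{(14,2)\}^{\star 2^j}$ at each step, establishes $\{(14,2)\}^{\star 2^j} \subseteq \{\mu \vdash 2^{j+4} \mid Z^\mu \ge 3\}$ for every $j \ge 0$. This settles the case when $k$ is a power of $2$.

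For a general $k \ge 3$, write the $2$-adic expansion $k = 2^{a_1} + \cdots + 2^{a_r}$ with $a_1 > \cdots > a_r \ge 0$. By associativity of $\star$,
\[ \{(14,2)\}^{\star k} = \{(14,2)\}^{\star 2^{a_1}} \star \cdots \star \{(14,2)\}^{\star 2^{a_r}}. \]
Thus, given $\mu \in \{(14,2)\}^{\star k}$, we may choose partitions $\mu^{(i)} \in \{(14,2)\}^{\star 2^{a_i}}$ with $c^\mu_{\mu^{(1)}, \ldots, \mu^{(r)}} > 0$, and the previous paragraph gives $Z^{\mu^{(i)}} \ge 3$ for each $i$. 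Since $16k = 2^{a_1 + 4} + \cdots + 2^{a_r + 4}$ is the $2$-adic expansion of $16k$, we have $P_{16k} \cong P_{16 \cdot 2^{a_1}} \times \cdots \times P_{16 \cdot 2^{a_r}}$, sitting inside the Young subgroup $Y := S_{16 \cdot 2^{a_1}} \times \cdots \times S_{16 \cdot 2^{a_r}} \le S_{16k}$. Expanding $\chi^\mu\down_Y$ via the Littlewood--Richardson rule and applying Frobenius reciprocity,
\[ Z^\mu = \sum_{\nu^{(1)},\ldots,\nu^{(r)}} c^\mu_{\nu^{(1)},\ldots,\nu^{(r)}} \prod_{i=1}^r Z^{\nu^{(i)}} \ge c^\mu_{\mu^{(1)}, \ldots, \mu^{(r)}} \prod_{i=1}^r Z^{\mu^{(i)}} \ge 3^r \ge 3. \]
The main technical step is the doubling induction in the second paragraph; once that is in place, the reduction to powers of $2$ via the $2$-adic expansion of $k$ and the Sylow decomposition of $P_{16k}$ is routine bookkeeping.
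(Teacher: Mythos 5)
Your proof is correct and follows essentially the same approach as the paper: establish $Z^{(14,2)}=3$, do the doubling induction via Lemma~\ref{lem:SBC>=3} to handle powers of $2$, and then reduce the general case to the power-of-$2$ case using the $2$-adic expansion of $k$ and the corresponding decomposition of $P_{16k}$. Your version is slightly more explicit in two places — citing Lemma~\ref{lem:a-2-1b} for the value $Z^{(14,2)}=3$, and writing out the Littlewood--Richardson/Frobenius reciprocity computation that the paper leaves implicit — but the argument is otherwise the same.
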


\begin{proof}
	We note that $Z^{(14,2)}=3$ and so iterated application of Lemma~\ref{lem:SBC>=3} shows that for each $r\in\N_0$, ${(14,2)}^{\star 2^r} \subseteq \{\mu\vdash 16\cdot 2^r \mid Z^\mu\ge 3 \}$.
	Now, suppose $k=2^{n_1}+\cdots+2^{n_t}$ where $n_1>\cdots>n_t\ge 0$. Since $P_{k} \cong P_{2^{n_1}}\times\cdots\times P_{2^{n_t}}$, we obtain
	\[ {(14,2)}^{\star k} \subseteq \{\mu\vdash 16\cdot2^{n_1} \mid Z^\mu\ge 3 \}\star \cdots \star\{\mu\vdash 16\cdot2^{n_t} \mid Z^\mu\ge 3 \} \subseteq \{\mu\vdash 16k \mid Z^\mu\ge 3 \}. \]
	The assertion then follows from Proposition~\ref{prop:rectangle}.
\end{proof}

\begin{proof}[Proof of Theorem C]
	By \cite[(1.4)]{EL}, we have
	\[ \lim_{k\to\infty}\frac{|\cB_{8(k+1),k}(16k)|}{|\cP(16k)|} = 1. \]
	Now, for each $n\in\N$, write $n=16k+r$ where $k:=\lfloor\tfrac{n}{16}\rfloor$ and $0\le r<16$. By the Littlewood--Richardson rule, we have $Z^{\mu+(r)}\geq Z^{\mu}$ for each $\mu\vdash 16k$. Hence $|\{\lambda\vdash n\mid Z^\lambda>0\}|\geq|\{\mu\vdash16k\mid Z^\mu\geq3\}|$.\\
	Moreover, the partition function $|\cP(n)|$ is subexponential
	, so 
	\[ \lim_{n\to\infty}\frac{|\cP(16k)|}{|\cP(n)|} = 1. \]
	Putting these together with Corollary~\ref{cor:(14,2)},
	\[ \frac{|\{\lambda\vdash n\mid Z^\lambda>0\}|}{|\cP(n)|} \geq \frac{|\cP(16k)|}{|\cP(n)|}\cdot \frac{|\{\mu\vdash16k\mid Z^\mu\geq3\}|}{|\cP(16k)|} \geq \frac{|\cP(16k)|}{|\cP(n)|}\cdot \frac{|\cB_{8(k+1),k}(16k)|}{|\cP(16k)|}\to 1 \]
	as $n\to\infty$.
\end{proof}

\begin{example}\label{ex:32-1}
	We consider those partitions $\mu\vdash 32$ such that $Z^\mu=0$. We note that $|\cP(32)|=8349$, and $|\{\mu\vdash 32\mid Z^\mu=0\}|=879$. Of these 879 partitions, we can identify the following using the results in Section~\ref{sec:special-shapes} (note the different properties are not mutually exclusive):
	
	\medskip
	
	\begin{tabular}{p{3.5cm}|p{1cm}p{5.5cm}|p{3.5cm}}
		\textit{Property} & \multicolumn{2}{l|}{\textit{\# of such $\mu\vdash 32$}} & \textit{$Z^\mu=0$} \\
		\hline
		$l(\mu)>16$ & 684 & & from Lemma~\ref{lem:tall}\\
		$\mu$ has $\le 2$ columns & 16 & i.e.~$\mu=(2^a,1^{32-2a})$ for $0\le a\le 15$ & from Lemma~\ref{lem:2col}\\
		$\mu$ is a non-trivial hook & 31 & i.e.~$\mu=(32-l,1^l)$ for $1\le l\le 31$ & from Proposition~\ref{prop:hook}\\
		$\mu$ is of form $(a,2,1^b)$ & 25 & for $2\le a\le 26$ & from Lemma~\ref{lem:a-2-1b}\\
	\end{tabular}
	
	\medskip
	
	\noindent In total, the above partitions cover 710 out of 879 of those $\mu\vdash 32$ such that $Z^\mu=0$. 
	
	In the next section, we move on to the second half of this article centering on plethysm coefficients. Using applications of plethysms to Sylow branching coefficients in Section~\ref{sec:13}, we will in fact be able to identify almost all of the remaining 169 partitions; see Example~\ref{ex:32-2}.\hfill$\lozenge$
\end{example}

\bigskip
\section{Plethysms and character deflations}\label{sec:pre-recursive-formula}
We record a result of Briand--Orellana--Rosas on plethysm coefficients from which we can deduce the $k=0$ case of Theorem A (Theorem~\ref{thm:4.4.3}), as well as resolve two conjectures of de Boeck in Section~\ref{sec:resolve} (Theorems~\ref{thm:6.5.1} and~\ref{thm:6.5.2}).

\begin{definition}
	Let $\lambda$ be a partition and $w,h\in\N_0$ such that $\lambda\subseteq (w^h)$. Then $\square_{w,h}(\lambda)$ denotes the partition of size $wh-|\lambda|$ whose Young diagram is the $180^\circ$ rotation of the complement of the Young diagram of $\lambda$ in a rectangle of width $w$ and height $h$.
\end{definition}

\begin{theorem}[{\cite[Theorem 12]{BOR}}]\label{thm:BOR}
	Let $m,n\in\N_0$ and let $\lambda$, $\mu$ and $\nu$ be partitions such that $\mu\subseteq(w^h)$ and $l(\nu)\le h$. If $\nu\subseteq \big((w|\lambda|)^h\big)$ then
	\[ a^\nu_{\lambda,\mu} = a^{\square_{w|\lambda|,h}(\nu)}_{\lambda,\square_{w,h}(\mu)}. \]
	Otherwise $a^\nu_{\lambda,\mu}=0$.
\end{theorem}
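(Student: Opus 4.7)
The plan is to work in the ring of symmetric polynomials in $h$ variables and exploit the classical rectangular duality
\[ s_\eta(x_1,\ldots,x_h) \;=\; (x_1\cdots x_h)^W\cdot s_{\square_{W,h}(\eta)}(x_1^{-1},\ldots,x_h^{-1}), \]
which holds for any partition $\eta\subseteq(W^h)$. This identity is a direct consequence of the Weyl (bialternant) formula $s_\eta(x) = \det(x_i^{\eta_j+h-j})/\det(x_i^{h-j})$: after inverting each variable on the right-hand side, the factor $(x_1\cdots x_h)^W$ can be absorbed row-wise into the numerator determinant and the column order reversed to match the numerator determinant for $s_\eta$, with corresponding sign changes cancelling against the normalised Vandermonde denominator.

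Setting $D:=x_1\cdots x_h$ and applying this duality to $\mu\subseteq(w^h)$, we get $s_\mu(x) = D^w\, s_{\square_{w,h}(\mu)}(x^{-1})$. Equivalently, the multiset of monomials of $s_{\square_{w,h}(\mu)}(x^{-1})$ is obtained from that of $s_\mu(x)$ by dividing each monomial by $D^w$. Using the substitution-of-monomials definition of plethysm, combined with the homogeneity $s_\lambda(cy_1,\ldots,cy_N) = c^{|\lambda|}\cdot s_\lambda(y_1,\ldots,y_N)$ applied with $c = D^{-w}$, I would deduce the key identity
\[ (s_\lambda\circ s_\mu)(x_1,\ldots,x_h) \;=\; D^{w|\lambda|}\cdot (s_\lambda\circ s_{\square_{w,h}(\mu)})(x_1^{-1},\ldots,x_h^{-1}). \]

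To finish, I would expand both sides in the Schur basis in $h$ variables: the left-hand side becomes $\sum_\nu a^\nu_{\lambda,\mu}\, s_\nu(x)$ and the right-hand side becomes $\sum_\tau a^\tau_{\lambda,\square_{w,h}(\mu)}\cdot D^{w|\lambda|}\, s_\tau(x^{-1})$, with both sums running over partitions of length at most $h$. Here the elementary bound that each monomial of $(s_\lambda\circ s_\eta)(x)$ has $x_1$-degree at most $|\lambda|\cdot\eta_1$ (since each of the $|\lambda|$ monomials of $s_\eta$ substituted contributes at most $\eta_1$ powers of $x_1$) plays a double role. First, it gives the ``otherwise'' vanishing $a^\nu_{\lambda,\mu}=0$ whenever $l(\nu)\le h$ and $\nu_1>w|\lambda|$, as the leading monomial $x^\nu$ of $s_\nu$ cannot then appear in $(s_\lambda\circ s_\mu)(x)$. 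Second, it shows that every $\tau$ on the right satisfies $\tau\subseteq((w|\lambda|)^h)$, so rectangular duality applies again in the form $D^{w|\lambda|}\, s_\tau(x^{-1}) = s_{\square_{w|\lambda|,h}(\tau)}(x)$. Reindexing via the involution $\nu=\square_{w|\lambda|,h}(\tau)$ and comparing Schur coefficients then yields $a^\nu_{\lambda,\mu} = a^{\square_{w|\lambda|,h}(\nu)}_{\lambda,\square_{w,h}(\mu)}$ for $\nu\subseteq((w|\lambda|)^h)$ with $l(\nu)\le h$.

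The main obstacle I anticipate is the bookkeeping in the derivation of the displayed plethystic identity, since it involves Laurent polynomials and requires the plethystic substitution to behave well when every argument is simultaneously scaled by $D^{-w}$; this works because $s_\lambda$ is homogeneous of degree $|\lambda|$ in its arguments, but one must carefully verify that the substitution-of-monomials definition of plethysm remains compatible with this Laurent rescaling. A secondary technicality is the sign and column-ordering bookkeeping in the bialternant-based derivation of the rectangular duality itself, which must be tracked to ensure the unsigned identity claimed.
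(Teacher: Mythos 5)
The paper does not prove this result: it is quoted verbatim as \cite[Theorem~12]{BOR} and used as a black box, so there is no ``paper's own proof'' to compare against. Your blind proof is, however, a correct and essentially self-contained argument, and it follows the standard line of reasoning (which is also the route taken in the BOR paper): specialise to $h$ variables, invoke the complementation identity $s_\eta(x_1,\dotsc,x_h)=(x_1\cdots x_h)^W s_{\square_{W,h}(\eta)}(x_1^{-1},\dotsc,x_h^{-1})$ for $\eta\subseteq(W^h)$, push the monomial factor through the plethysm using the degree-$|\lambda|$ homogeneity of $s_\lambda$ in its substituted arguments, and then compare Schur coefficients in $h$ variables. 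All three of the steps you flag as delicate do go through: (1) the sign and column-reversal bookkeeping in the bialternant derivation of the complementation identity cancels exactly (both numerator and denominator pick up $(-1)^{\binom{h}{2}}$ and the row-scalings contribute $D^{-(W+h-1)}$ and $D^{-(h-1)}$ respectively, leaving $D^{-W}$); (2) the identity $(s_\lambda\circ s_\mu)(x)=D^{w|\lambda|}(s_\lambda\circ s_{\square_{w,h}(\mu)})(x^{-1})$ is exactly the statement that $s_\lambda$, being homogeneous of degree $|\lambda|$, commutes with scaling every substituted monomial by the single monomial $D^w$; and (3) since $\square_{w,h}(\mu)_1\le w$, every $\tau$ with $a^\tau_{\lambda,\square_{w,h}(\mu)}>0$ and $l(\tau)\le h$ satisfies $\tau\subseteq\big((w|\lambda|)^h\big)$, so the complementation identity applies again on the right, the involution $\tau\mapsto\square_{w|\lambda|,h}(\tau)$ reindexes the sum, and comparison of coefficients of $s_\nu(x)$ with $l(\nu)\le h$ gives both the equality when $\nu\subseteq\big((w|\lambda|)^h\big)$ and the vanishing otherwise (the latter also follows independently, as you note, from nonnegativity of plethysm coefficients together with the $x_1$-degree bound $|\lambda|\cdot\mu_1\le w|\lambda|$). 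One small slip: the homogeneity rescaling you invoke goes with $c=D^{w}$ in the forward direction (or $c=D^{-w}$ in the reverse, as you wrote); either way the identity you display is correct. In short, this is a valid proof of a result the paper chose only to cite.
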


Let $m,n\in\N$ and $\mu\vdash mn$. We recall the definition of $\delta^\mu=\operatorname{Def}^{\triv_{S_m}}_{S_n}(\chi^\mu\down^{S_{mn}}_{S_m\wr S_n})$, the deflation of $\mu$ with respect to $S_n$, from Definition~\ref{def:deflation}.
\begin{proposition}\label{prop:12.4}
	Fix $n\in\N$ and $m_1,m_2\in\N$. Let $\mu_1\vdash m_1n$ and suppose $\mu_1\subseteq \big((m_1+m_2)^n\big)$. Set $\mu_2=\square_{m_1+m_2,n}(\mu_1)\vdash m_2n$. Then
	\[ \delta^{\mu_1} = \begin{cases}
		\delta^{\mu_2} & \text{if }m_1+m_2\text{ is even},\\
		\sgn_{S_n}\cdot\ \delta^{\mu_2} & \text{if }m_1+m_2\text{ is odd},
	\end{cases}
	\]
	where $\delta^{\mu_i}$ refers to the deflation of $\mu_i$ with respect to $S_n$, for $i\in\{1,2\}$.
\end{proposition}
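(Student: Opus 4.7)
My plan is to reformulate Proposition~\ref{prop:12.4} as an identity of plethysm coefficients and then establish it by sandwiching one application of Theorem~\ref{thm:BOR} between two uses of the conjugation symmetry in Lemma~\ref{lem:conj-plethysm}. Since $\delta^{\mu_i} = \sum_{\lambda \vdash n} a^{\mu_i}_{\lambda,(m_i)}\,\chi^\lambda$ and $\sgn_{S_n}\cdot\chi^\lambda = \chi^{\lambda'}$, the proposition is equivalent to the statement
\[
a^{\mu_1}_{\lambda,(m_1)} = a^{\mu_2}_{\lambda^{\diamond},(m_2)} \qquad\text{for every } \lambda\vdash n,
\]
where $\lambda^{\diamond} = \lambda$ if $m_1+m_2$ is even and $\lambda^{\diamond} = \lambda'$ if $m_1+m_2$ is odd.

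Lemma~\ref{lem:conj-plethysm} first rewrites $a^{\mu_1}_{\lambda,(m_1)} = a^{\mu_1'}_{\lambda^{\ast},(1^{m_1})}$, where $\lambda^{\ast} = \lambda$ if $m_1$ is even and $\lambda^{\ast} = \lambda'$ if $m_1$ is odd. I then apply Theorem~\ref{thm:BOR} to this coefficient with $w=1$ and $h=m_1+m_2$: the required hypotheses $(1^{m_1})\subseteq(1^{m_1+m_2})$, $l(\mu_1')\le m_1+m_2$ and $\mu_1'\subseteq(n^{m_1+m_2})$ all follow from $\mu_1\subseteq((m_1+m_2)^n)$, and a direct computation gives $\square_{1,m_1+m_2}((1^{m_1})) = (1^{m_2})$. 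For the outer partition I will use the elementary identity $(\square_{w,h}(\mu))' = \square_{h,w}(\mu')$, verified by comparing part multiplicities on both sides, to conclude that $\square_{n,m_1+m_2}(\mu_1') = (\square_{m_1+m_2,n}(\mu_1))' = \mu_2'$. Theorem~\ref{thm:BOR} then delivers
\[
a^{\mu_1'}_{\lambda^{\ast},(1^{m_1})} = a^{\mu_2'}_{\lambda^{\ast},(1^{m_2})}.
\]

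Finally, a second application of Lemma~\ref{lem:conj-plethysm} gives $a^{\mu_2'}_{\lambda^{\ast},(1^{m_2})} = a^{\mu_2}_{(\lambda^{\ast})^{\dagger},(m_2)}$, where $(\lambda^{\ast})^{\dagger} = \lambda^{\ast}$ if $m_2$ is even and $(\lambda^{\ast})^{\dagger} = (\lambda^{\ast})'$ if $m_2$ is odd. A four-case analysis on the parities of $m_1$ and $m_2$ shows that $(\lambda^{\ast})^{\dagger} = \lambda^{\diamond}$ in every case: when $m_1$ and $m_2$ have the same parity (so $m_1+m_2$ is even) the two conjugations cancel to yield $\lambda$, and otherwise exactly one conjugation is applied, yielding $\lambda'$. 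The whole argument is essentially bookkeeping once Theorem~\ref{thm:BOR} is invoked correctly; the only real care needed is in tracking the conjugations through the two applications of Lemma~\ref{lem:conj-plethysm} and in verifying the commutation identity between rectangular complementation and partition conjugation.
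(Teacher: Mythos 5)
Your argument is correct and follows essentially the same route as the paper's proof: apply the conjugation symmetry of Lemma~\ref{lem:conj-plethysm}, then Theorem~\ref{thm:BOR} with $w=1$, $h=m_1+m_2$ (using the commutation $\square_{n,m_1+m_2}(\mu_1') = (\square_{m_1+m_2,n}(\mu_1))' = \mu_2'$), then conjugation symmetry again, finally tracking parities. The paper compresses the parity bookkeeping by invoking that $\lambda\mapsto\lambda'$ is an involution on $\cP(n)$, but the underlying chain of identities is the same as yours.
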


\begin{proof}
	Since $(1^{m_1})\subseteq(1^{m_1+m_2})$ and $l(\mu'_1)\le m_1+m_2$, as well as $\mu'_1\subseteq \big((1\cdot|\lambda|)^{m_1+m_2}\big)$, we see from Theorem~\ref{thm:BOR} that
	\[ a^{\mu_1'}_{\lambda,(1^{m_1})} = a^{\square_{n,m_1+m_2}(\mu_1')}_{\lambda,\square_{1,m_1+m_2}\big((1^{m_1})\big)} = a^{\mu'_2}_{\lambda,(1^{m_2})}\qquad\forall\ \lambda\vdash n. \]
	Applying Lemma~\ref{lem:conj-plethysm} and observing that $\lambda\mapsto\lambda'$ is an involution on the set of partitions of $n$, we obtain
	\[ a^{\mu_1}_{\lambda,(m_1)} = a^{\mu_2}_{\lambda^\ast,(m_2)} \]
	for all $\lambda\vdash n$, where $\lambda^\ast$ denotes $\lambda$ (resp.~$\lambda'$) if $m_1+m_2$ is even (resp.~odd). 
	Since $\langle \delta^\mu, \chi^\lambda\rangle = a^\mu_{\lambda,(m)}$, we have shown that
	\[ \langle \delta^{\mu_1},\chi^\lambda\rangle = \begin{cases}
		\langle \delta^{\mu_2},\chi^\lambda\rangle & \text{if $m_1+m_2$ is even},\\ \langle \delta^{\mu_2}, \chi^{\lambda'}\rangle & \text{if $m_1+m_2$ is odd}.\end{cases} \]
	Since 
	$\chi^{\lambda'}=\sgn_{S_n}\cdot\ \chi^\lambda$, we have $\delta^{\mu_1}=\delta^{\mu_2}$ if $m_1+m_2$ is even (resp.~$\delta^{\mu_1}=\sgn_{S_n}\cdot\ \delta^{\mu_2}$ if $m_1+m_2$ is odd), as required.
\end{proof}

The following result was first proven in \cite[Proposition 1.16]{BCV}, and is a special case of \cite[Theorem 1.1]{dBPW}. It also forms the $k=0$ case of Theorem A, since $\sum_{\alpha,\beta\vdash 0} \big(a^{\alpha/\emptyset}_{\beta',(m)}\cdot a^{\hat{\mu}/\alpha}_{\lambda/\beta,(m-1)}\big) = a^{\hat{\mu}}_{\lambda,(m-1)}$. We give a short proof in the language of deflations.

\begin{theorem}\label{thm:4.4.3}
	Let $m,n\in\N$, $\lambda\vdash mn$ and $\nu\vdash n$. Then $a^\lambda_{\nu,(m)} = a^{\lambda+(1^n)}_{\nu',(m+1)}$.
\end{theorem}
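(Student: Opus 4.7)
The plan is to reduce the desired identity to the statement
\[ \delta^{\lambda + (1^n)} \;=\; \sgn_{S_n}\cdot\ \delta^\lambda \qquad (\ast) \]
of class functions on $S_n$, from which the conclusion follows immediately on pairing with $\chi^{\nu'} = \sgn_{S_n}\cdot\ \chi^\nu$:
\[ a^{\lambda + (1^n)}_{\nu',\,(m+1)} \;=\; \langle\delta^{\lambda+(1^n)},\chi^{\nu'}\rangle \;=\; \langle\sgn_{S_n}\cdot\ \delta^\lambda,\;\sgn_{S_n}\cdot\ \chi^\nu\rangle \;=\; \langle\delta^\lambda,\chi^\nu\rangle \;=\; a^\lambda_{\nu,(m)}. \]
We may assume $l(\lambda)\le n$, since otherwise both sides of the theorem vanish by Lemma~\ref{lem:tall-pleth} (the right-hand side because $l(\lambda+(1^n))=l(\lambda)>n$).

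To prove $(\ast)$, set $\epsilon_j := \sgn_{S_n}$ if $j$ is odd and $\epsilon_j := \triv_{S_n}$ if $j$ is even, and choose any integer $K \ge \max(\lambda_1+1,\,m+2)$. This ensures $\lambda \subseteq ((K-1)^n)$, $\lambda + (1^n) \subseteq (K^n)$, and that all positivity conditions on the parameters $m_1,m_2$ in Proposition~\ref{prop:12.4} are satisfied. Applying Proposition~\ref{prop:12.4} twice, first to $\mu_1 = \lambda$ with $m_1 = m$, $m_2 = K-1-m$, and then to $\mu_1 = \lambda + (1^n)$ with $m_1 = m+1$, $m_2 = K-m-1$, yields
\[ \delta^\lambda \;=\; \epsilon_{K-1}\cdot\ \delta^{\square_{K-1,n}(\lambda)} \qquad\text{and}\qquad \delta^{\lambda+(1^n)} \;=\; \epsilon_{K}\cdot\ \delta^{\square_{K,n}(\lambda+(1^n))}. \]

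The one combinatorial step is the rectangular-complement identity
\[ \square_{K,n}(\lambda + (1^n)) \;=\; \square_{K-1,n}(\lambda), \]
which follows at once from comparing row lengths: padding $\lambda$ to $n$ parts (with trailing zeros), both sides have $(K-1)-\lambda_{n+1-i}$ nodes in row $i$, since each part of $\lambda+(1^n)$ is one larger than the corresponding part of $\lambda$ while the ambient rectangle is also one column wider. Since exactly one of $K-1,K$ is odd, the product $\epsilon_{K-1}\cdot\epsilon_K = \sgn_{S_n}$, and combining the two deflation formulas yields $(\ast)$.

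The only potential obstacle is verifying the rectangular-complement identity, but this is a one-line check on row lengths; the rest of the argument is a formal consequence of Proposition~\ref{prop:12.4} together with the $\sgn$-twist identity $\chi^{\nu'} = \sgn_{S_n}\cdot\ \chi^\nu$.
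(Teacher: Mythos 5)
Your proof is correct and takes essentially the same route as the paper's: reduce to $\delta^{\lambda+(1^n)} = \sgn_{S_n}\cdot\ \delta^\lambda$, then obtain this by applying Proposition~\ref{prop:12.4} twice and exploiting the parity flip. The only (cosmetic) difference is that the paper iterates the box-complement on $\lambda$, computing $\square_{k+1,n}(\square_{k,n}(\lambda)) = \lambda+(1^n)$, whereas you apply the proposition once to $\lambda$ and once to $\lambda+(1^n)$ and identify the common complement $\square_{K,n}(\lambda+(1^n))=\square_{K-1,n}(\lambda)$; these are the same identity read in opposite directions. Your explicit choice $K\ge\max(\lambda_1+1,m+2)$ is in fact slightly more careful than the paper's ``say $k=mn$'', which would give $m_2=0$ when $n=1$.
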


\begin{proof}
	From Lemma~\ref{lem:tall-pleth}, $a^\lambda_{\nu,(m)} = a^{\lambda+(1^n)}_{\nu',(m+1)}=0$ if $l(\lambda)>n$, 
	so we may now assume $l(\lambda)\le n$. 
	Since $\chi^{\nu'}=\sgn_{S_n}\cdot\ \chi^\nu$, the assertion is equivalent to proving that $\delta^\lambda=\sgn_{S_n}\cdot\ \delta^{\lambda+(1^n)}$. 
	Choose $k\in\N$ such that $\lambda\subseteq(k^n)$, say $k=mn$. Then by Proposition~\ref{prop:12.4},
	\[ \delta^\lambda =s_1\cdot \ \delta^{\square_{k,n}(\lambda)} = s_1\cdot s_2\cdot\ \delta^{\square_{1+k,n}(\square_{k,n}(\lambda))} = s_1\cdot s_2\cdot\ \delta^{\lambda+(1^n)}, \]
	where $\{s_1,s_2\}=\{\triv_{S_n},\sgn_{S_n}\}$.
\end{proof}


\bigskip
\subsection{Resolving conjectures on plethysm coefficients}\label{sec:resolve}
In \cite{W}, Weintraub conjectured that if $m,n\in\N$ with $m$ even, and $\lambda\vdash mn$ is an even partition with $l(\lambda)\le n$, then $a^\lambda_{(n),(m)}>0$. An asymptotic version of the conjecture was proven in \cite{Man}, and the conjecture was first proven in full in \cite{BCI} using techniques from quantum information theory, and reproven in \cite{MM} by considering highest weight vectors. The following sharpening of Weintraub's conjecture was posed in \cite[Conjecture 6.5.1]{dB-thesis}.

\begin{theorem}\label{thm:6.5.1}
	Let $m,n\in\N$ and let $\lambda\vdash mn$ with $l(\lambda)\le n$ and $\lambda_1=m+2$.
	\begin{enumerate}[label=\textup{(\alph*)}]
		\item Suppose $m$ is even. If $\lambda$ has all parts even, then $a^\lambda_{(n),(m)}=1$. Otherwise, $a^\lambda_{(n),(m)}=0$.
		\item Suppose $m$ is odd. If $\lambda$ has all parts odd, then $a^\lambda_{(1^n),(m)}=1$. Otherwise, $a^\lambda_{(1^n),(m)}=0$.
	\end{enumerate}
\end{theorem}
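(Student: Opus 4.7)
The plan is to derive both statements from Proposition~\ref{prop:12.4} combined with Thrall's formula (Proposition~\ref{prop:thrall}(i)), via the deflation viewpoint.

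First, I would rewrite the coefficients as $a^\lambda_{(n),(m)} = \langle \delta^\lambda, \chi^{(n)}\rangle$ and $a^\lambda_{(1^n),(m)} = \langle \delta^\lambda, \chi^{(1^n)}\rangle$, where $\delta^\lambda$ denotes the deflation of $\chi^\lambda$ with respect to $S_n$. Since $l(\lambda)\le n$ and $\lambda_1 = m+2$, we have $\lambda \subseteq ((m+2)^n)$. Applying Proposition~\ref{prop:12.4} with $\mu_1 = \lambda$, $m_1 = m$, $m_2 = 2$ then gives
\[ \delta^\lambda = \begin{cases} \delta^{\mu_2} & \text{if $m$ is even,} \\ \sgn_{S_n}\cdot\ \delta^{\mu_2} & \text{if $m$ is odd,} \end{cases} \]
where $\mu_2 := \square_{m+2,n}(\lambda) \vdash 2n$. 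Explicitly, padding $\lambda$ with zeros to length $n$, one has $\mu_2 = (m+2-\lambda_n,\,m+2-\lambda_{n-1},\,\dotsc,\,m+2-\lambda_1)$.

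Taking the appropriate inner product in each part then reduces the computation to $\langle \delta^{\mu_2}, \triv_{S_n}\rangle = a^{\mu_2}_{(n),(2)}$. For part (a) this is immediate, while for part (b) I would use $\sgn_{S_n}^{\,2} = \triv_{S_n}$ so that $\langle \sgn_{S_n}\cdot\delta^{\mu_2},\,\chi^{(1^n)}\rangle = \langle \delta^{\mu_2},\,\triv_{S_n}\rangle$. By Proposition~\ref{prop:thrall}(i), this quantity equals $1$ if $\mu_2$ is an even partition and $0$ otherwise.

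It remains to check in each case exactly when $\mu_2$ is even, i.e.~when every part $m+2-\lambda_i$ is even. In case (a), $m+2$ is even, so this is equivalent to all $\lambda_i$ being even, which is precisely the condition that $\lambda$ is even. In case (b), $m+2$ is odd, so it is equivalent to every $\lambda_i$ (for $i=1,\dotsc,n$) being odd; note this forces $l(\lambda)=n$ since a zero part would be even, consistent with the stated hypothesis that $\lambda$ has all parts odd. Since Proposition~\ref{prop:12.4} and Proposition~\ref{prop:thrall}(i) are both already available, there is no substantive obstacle; the argument is essentially bookkeeping of the parity and the column-complement operation $\square_{m+2,n}$.
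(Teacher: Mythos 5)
Your proof is correct and is essentially the paper's own argument, just invoked at a slightly higher level: the paper applies Lemma~\ref{lem:conj-plethysm} and Theorem~\ref{thm:BOR} directly to reduce to $a^{\square_{m+2,n}(\lambda)}_{(n),(2)}$ and then to Thrall's formula, while you reach the same point via Proposition~\ref{prop:12.4}, which the paper derives from precisely those two results, so the content is identical. (Your observation that in case (b) the condition on $\mu_2$ being even effectively forces $l(\lambda)=n$ is a helpful clarification of a point the paper's proof leaves implicit.)
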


\begin{proof}
	Let $\nu:=\square_{m+2,n}(\lambda)$. 
	
	\noindent\textbf{(a)} By Lemma~\ref{lem:conj-plethysm} and Theorem~\ref{thm:BOR}, noting that $(1^m)\subseteq(1^{m+2})$, $l(\lambda')=m+2$ and $\lambda'\subseteq (n^{m+2})$,
	\[ a^\lambda_{(n),(m)} = a^{\lambda'}_{(n),(1^m)} = a^{\square_{n,m+2}(\lambda')}_{(n),\square_{1,m+2}((1^m))} = a^{\nu'}_{(n),(1^2)} = a^\nu_{(n),(2)}. \]
	Since $m$ is even then $\lambda$ is an even partition if and only if $\nu$ is an even partition. Moreover, $a^\gamma_{(n),(2)}=1$ when $\gamma\vdash 2n$ is an even partition by Proposition~\ref{prop:thrall} and $a^\gamma_{(n),(2)}=0$ otherwise. The assertion follows.
	
	\noindent\textbf{(b)} Similarly to case (a), we obtain from Lemma~\ref{lem:conj-plethysm} and Theorem~\ref{thm:BOR}
	\[ a^\lambda_{(1^n),(m)} = a^{\lambda'}_{(n),(1^m)} = a^{\square_{n,m+2}(\lambda')}_{(n),\square_{1,m+2}((1^m))} = a^{\nu'}_{(n),(1^2)} = a^\nu_{(n),(2)}. \]
	Since $m$ is odd, $\lambda$ has all parts odd if and only if $\nu$ has all parts even, whence the assertion follows again from Proposition~\ref{prop:thrall}.
\end{proof}

The maximal and minimal partitions $\lambda$ with respect to dominance labelling a Schur function $s_\lambda$ in a plethysm of two arbitrary Schur functions were determined combinatorially using certain collections of tableaux in \cite{PW} and \cite{dBPW}. Below, we prove a conjecture of de Boeck \cite[Conjecture 6.5.2]{dB-thesis} describing certain minimal constituents satisfying a parity restriction on the parts of the partition.

\begin{theorem}\label{thm:6.5.2}
	Let $m,n\in\N_{\ge 3}$.
	\begin{enumerate}[label=\textup{(\alph*)}]
		\item Suppose $m$ is even. Then the lexicographically smallest partition $\lambda\vdash mn$ such that $a^\lambda_{(n),(m)}>0$ and $\lambda$ has an odd part is $\lambda=(m+3,m^{n-2},m-3)$.
		\item Suppose $m$ is odd. Then the lexicographically smallest partition $\lambda\vdash mn$ such that $a^\lambda_{(1^n),(m)}>0$ and $\lambda$ has an even part is $\lambda=(m+3,m^{n-2},m-3)$.
	\end{enumerate}
\end{theorem}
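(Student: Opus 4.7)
The plan is to prove part (a) in detail and derive part (b) from Theorem~\ref{thm:4.4.3}: the map $\lambda \mapsto \lambda + (1^n)$ bijects partitions of $mn$ of length $n$ with partitions of $(m+1)n$ of length $n$, preserves lexicographic order, sends the $m$-even candidate $(m+3, m^{n-2}, m-3)$ to the $(m+1)$-odd candidate, and interchanges ``has an odd part'' with ``has an even part''. The finitely many length-less-than-$n$ partitions appearing in part (b) are treated using the same BOR-reduction developed for part (a).

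For part (a) with $m$ even, combining Lemma~\ref{lem:conj-plethysm} with Theorem~\ref{thm:BOR} (applied with $w=1$ and $h=\lambda_1$) yields
\[ a^\lambda_{(n),(m)} \;=\; a^\nu_{(n),\,(1^{\lambda_1-m})}, \qquad \nu\;:=\;\square_{n,\lambda_1}(\lambda'). \]
For $\lambda_1 \in \{m,m+1\}$ this is a Kronecker delta forcing $\lambda=(m^n)$, which is even; for $\lambda_1=m+2$, Theorem~\ref{thm:6.5.1}(a) shows $\lambda$ must be even. Hence any $\lambda$ with an odd part and $a^\lambda>0$ that is lex-smaller than the candidate must have $\lambda_1 = m+3$, and a short enumeration shows these are exactly
\[ \lambda^{(1)}=(m+3,m^{n-3},m-1,m-2)\ (n\ge3), \qquad \lambda^{(2)}=(m+3,m^{n-4},(m-1)^3)\ (n\ge4), \]
both of which possess odd parts when $m$ is even. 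A direct computation of the complements yields
\[ a^{\lambda^{(j)}}_{(n),(m)} \;=\; a^{\nu^{(j)}}_{(n),(1^3)}, \qquad \nu^{(0)}=((n-1)^3,1^3),\ \nu^{(1)}=((n-1)^3,2,1),\ \nu^{(2)}=((n-1)^3,3), \]
where $\lambda^{(0)}$ denotes the candidate.

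To show $a^{\nu^{(1)}} = a^{\nu^{(2)}} = 0$, I combine Lemma~\ref{lem:conj-plethysm} with Theorem~\ref{thm:4.4.3} to obtain
\[ a^{\nu^{(j)}}_{(n),(1^3)} \;=\; a^{(\nu^{(j)})'+(1^n)}_{(n),(4)}, \]
which is again a plethysm coefficient with base $(m')=(4)$ to which the first-step reduction applies. For $j=1$ the right-hand side is $a^{(6,5,4^{n-3},1)}_{(n),(4)}$ with first part $6 = 4+2$, and Theorem~\ref{thm:6.5.1}(a) gives $0$ since $(6,5,4^{n-3},1)$ is not even. For $j=2$ the right-hand side is $a^{(5,5,5,4^{n-4},1)}_{(n),(4)}$ with first part $5 = 4+1$, and the Kronecker-delta case of the first-step reduction gives $0$ since $\square_{n,5}((n,(n-1)^3,3)) = (n-3,1^3) \ne (n)$.

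The main obstacle is establishing positivity of $a^{\nu^{(0)}}_{(n),(1^3)}$, equivalently of $a^{(6,3^{n-2})}_{(1^n),(3)}$. The same iterated reduction is circular for $\nu^{(0)}$: it returns $a^{(7,4^{n-2},1)}_{(n),(4)}$, which has first part $7 = 4+3$ and so reduces back to $a^{\nu^{(0)}}_{(n),(1^3)}$. To break the circularity I plan to exhibit $s_{(6,3^{n-2})}$ as a constituent of $e_n[h_3]=s_{(1^n)}\circ s_{(3)} = \mathrm{ch}(\wedge^n \mathrm{Sym}^3 V)$ directly. Since $(6,3^{n-2})$ has length exactly $n-1$, the base case $n=3$ follows from $\wedge^3 \mathrm{Sym}^3 \mathbb{C}^2 \cong S^{(6,3)}\mathbb{C}^2$ (the one-dimensional top exterior power of the space of binary cubics). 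For general $n$ I would construct an explicit non-zero $GL_{n-1}$-highest-weight vector of weight $(6,3^{n-2})$ in $\wedge^n \mathrm{Sym}^3 \mathbb{C}^{n-1}$ as a wedge of $n$ distinct cubic monomials in $x_1,\ldots,x_{n-1}$ and verify non-vanishing, for example by induction on $n$ using the $GL_{n-1} \to GL_{n-2}$ branching inside $\mathrm{Sym}^3$. This explicit highest-weight construction is the technical heart of the remaining argument.
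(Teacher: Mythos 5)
Your reduction of part (a) is sound and runs parallel to the paper's: both start from $a^\lambda_{(n),(m)}=a^{\lambda'}_{(n),(1^m)}$, apply Theorem~\ref{thm:BOR} with $w=1$, $h=\lambda_1$, and rule out the lex-smaller competitors with $\lambda_1=m$, $m+1$, $m+2$, $m+3$. The paper disposes of $\nu^{(1)}=((n-1)^3,2,1)$ and $\nu^{(2)}=((n-1)^3,3)$ by a second direct application of Theorem~\ref{thm:BOR} landing in $a^{(5,2^{n-3},1)}_{(n),(2)}$ and $a^{(n-3,1^3)}_{(n),(1)}$; you instead iterate once via Theorem~\ref{thm:4.4.3} to land in $m=4$ base. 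Both are valid and I checked your rectangle-complement computations.

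There is, however, a genuine gap at exactly the point you flag: the positivity $a^{(6,3^{n-2})}_{(1^n),(3)}>0$. You correctly observe that the BOR/conjugation iteration is circular for the candidate $(m+3,m^{n-2},m-3)$, and you propose to break it by exhibiting an explicit $GL$-highest-weight vector in $\wedge^n\operatorname{Sym}^3$, but you do not carry the construction out beyond the base case $n=3$; "the technical heart of the remaining argument" is acknowledged to be unfinished. As it stands the proof is incomplete. The paper instead closes this gap by invoking the monotonicity $a^{(6,3^{n-2})}_{(1^n),(3)}\ge a^{(6,3^{n-3})}_{(1^{n-1}),(3)}\ge\cdots\ge a^{(6,3)}_{(1^3),(3)}=1$ from \cite[Theorem 5.1.1]{dB-thesis} (and, as a remark, alternatively derives the exact value via Theorem~\ref{thm:14.6i} in Example~\ref{ex:14.7}). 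If your highest-weight-vector argument were completed it would be a more self-contained route, avoiding the external citation, but at present it is a plan rather than a proof.

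A second, smaller issue: deriving part (b) from part (a) via $\lambda\mapsto\lambda+(1^n)$ works for odd $m\ge5$ (obtained from even $m-1\ge4$), but fails at $m=3$ since Theorem~\ref{thm:6.5.2}(a) is only stated for $m\ge3$ (indeed its candidate $(m+3,m^{n-2},m-3)$ has a negative part at $m=2$, and the statement is vacuous there by Proposition~\ref{prop:thrall}). The $m=3$ case of (b) would need a direct argument; as it happens this is again precisely the positivity of $a^{(6,3^{n-2})}_{(1^n),(3)}$, so it folds into the open step. You should also be more careful about partitions $\mu\vdash(m+1)n$ of length $<n$: such $\mu$ do not arise as $\lambda+(1^n)$, so they are outside the bijection, and "finitely many" undersells the work needed. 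In fact they are handled automatically once you note that the BOR reduction applies uniformly for $\mu_1\le m+4$ regardless of $l(\mu)$, but this should be said explicitly; the paper avoids the issue entirely by running the argument for (b) directly, in parallel with (a).
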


\begin{proof}
	We note by Lemma~\ref{lem:tall-pleth} that $a^\lambda_{(n),(m)}=a^\lambda_{(1^n),(m)}=0$ whenever $l(\lambda)>n$.
	
	\noindent\textbf{(a)} First, let $\lambda:=(m+3,m^{n-2},m-3)$. We show that $a^\lambda_{(n),(m)}>0$. Applying Lemma~\ref{lem:conj-plethysm} and Theorem~\ref{thm:BOR}, noting that $(1^m)\subseteq(1^{m+3})$, $l(\lambda')=m+3$ and $\lambda'\subseteq(n^{m+3})$, we obtain
	\[ a^\lambda_{(n),(m)} = a^{\lambda'}_{(n),(1^m)} = a^{\square_{n,m+3}(\lambda')}_{(n),\square_{1,m+3}((1^m))} = a^{((n-1)^3,1^3)}_{(n),(1^3)} = a^{(6,3^{n-2})}_{(1^n),(3)}. \]
	But by \cite[Theorem 5.1.1]{dB-thesis},
	\[ a^{(6,3^{n-2})}_{(1^n),(3)} \ge a^{(6,3^{n-3})}_{(1^{n-1}),(3)} \ge \cdots \ge a^{(6,3)}_{(1^3),(3)}=1. \]
	Next, since $(m^n)$ itself has no odd parts, it remains to show that $a^\lambda_{(n),(m)}=0$ for every $\lambda\vdash mn$ lying strictly between $(m^n)$ and $(m+3,m^{n-2},m-3)$ in lexicographical order and containing an odd part. We may further assume that $l(\lambda)\le n$, using Lemma~\ref{lem:tall-pleth} as noted above. Such $\lambda$ must satisfy one of:
	\[ \lambda_1=m+1;\quad \lambda_1=m+2;\quad \lambda=(m+3,m^{n-4},(m-1)^3)\ \ \text{where $n\ge 4$};\quad \lambda=(m+3,m^{n-3},m-1,m-2). \]
	We show that $a^\lambda_{(n),(m)}=0$ in each of these cases when $\lambda$ has an odd part:
	\begin{itemize}
		\item If $\lambda_1=m+1$, then similarly to above we have
		\[ a^\lambda_{(n),(m)} = a^{\lambda'}_{(n),(1^m)} = a^{\square_{n,m+1}(\lambda')}_{(n),\square_{1,m+1}((1^m))} = a^{\gamma'}_{(n),(1)}, \]
		where $\gamma:=\square_{m+1,n}(\lambda)$. But $\lambda_1=m+1$ so $\gamma\ne(1^n)$ and hence $\gamma'\ne(n)$, giving $a^{\gamma'}_{(n),(1)}=0$.
		
		\item If $\lambda_1=m+2$, then $a^\lambda_{(n),(m)}=0$ by Theorem~\ref{thm:6.5.1} since $\lambda$ has an odd part.
		
		\item If $n\ge 4$ and $\lambda=(m+3,m^{n-4},(m-1)^3)$, then
		\[ a^\lambda_{(n),(m)} = a^{\lambda'}_{(n),(1^m)} = a^{\square_{n,m+3}(\lambda')}_{(n),\square_{1,m+3}((1^m))} = a^{\nu}_{(n),(1^3)} \]
		where $\nu:=((n-1)^3,3)$. But then applying Theorem~\ref{thm:BOR} again gives
		\[ a^\nu_{(n),(1^3)} = a^{\square_{n,4}(\nu)}_{(n),\square_{1,4}((1^3))} = a^{(n-3,1^3)}_{(n),(1)} = 0. \]
		
		\item Finally, if $\lambda=(m+3,m^{n-3},m-1,m-2)$ then
		\[ a^\lambda_{(n),(m)} = a^{\lambda'}_{(n),(1^m)} = a^{\square_{n,m+3}(\lambda')}_{(n),\square_{1,m+3}((1^m))} = a^{\nu}_{(n),(1^3)} \]
		where $\nu:=((n-1)^3,2,1)$. But then
		\[ a^\nu_{(n),(1^3)} = a^{\square_{n,5}(\nu)}_{(n),\square_{1,5}((1^3))} = a^{(n-1,n-2,1^3)}_{(n),(1^2)} = a^{(5,2^{n-3},1)}_{(n),(2)}=0 \]
		where the final equality holds by Proposition~\ref{prop:thrall}.
	\end{itemize}
	
	\noindent\textbf{(b)} 
	Let $\lambda:=(m+3,m^{n-2},m-3)$. Similarly to case (a), we have
	\[ a^\lambda_{(1^n),(m)} = a^{\lambda'}_{(n),(1^m)} = a^{\square_{n,m+3}(\lambda')}_{(n),\square_{1,m+3}((1^m))} = a^{((n-1)^3,1^3)}_{(n),(1^3)} = a^{(6,3^{n-2})}_{(1^n),(3)} >0 \]
	where we note the first equality holds since $m$ is now odd.	
	Since $(m^n)$ itself does not contain an even part, it remains to consider all $\lambda\vdash mn$ strictly between $(m^n)$ and $(m+3,m^{n-2},m-3)$ in lexicographical order. By the same argument as in (a), we obtain $a^\lambda_{(1^n),(m)}=a^{\lambda'}_{(n),(1^m)}=0$ for such $\lambda$ when $\lambda$ has an even part, noting that $\lambda$ has an even part if and only if $\gamma:=\square_{m+2,n}(\lambda)$ has an odd part as $m$ is now odd.
\end{proof}

\begin{remark}
	In fact, we need not have used \cite[Theorem 5.1.1]{dB-thesis} in the proof of Theorem~\ref{thm:6.5.2} to deduce that $a^{(6,3^{n-2})}_{(1^n),(3)}>0$: we show that $a^{(6,3^{n-2})}_{(1^n),(3)}=1$ for all $n\ge 3$ in Example~\ref{ex:14.7} below.\hfill$\lozenge$
\end{remark}

\bigskip
\section{A recursive formula for plethysm coefficients}\label{sec:14}
The main result of this article is Theorem A, a recursive formula for plethysm coefficients of the form $a^\mu_{\lambda,(m)}$ for arbitrary $m\in\N$ and partitions $\mu$ and $\lambda$. Together with Lemma~\ref{lem:tall-pleth}, it describes the deflations $\delta^\mu$ of $\mu\vdash mn$ with respect to $S_n$, noting that $a^\mu_{\lambda',(m)}=\langle \sgn_{S_n}\cdot\ \delta^\mu,\chi^\lambda\rangle$ for $\lambda\vdash n$. We restate Theorem A here as Theorem~\ref{thm:14.6i} for ease of reference for the reader, and recall that plethysm coefficients indexed by skew shapes were defined in \eqref{eqn:skew-plethysm}:

\begin{theorem}[Theorem A]\label{thm:14.6i}
	Fix $n\in\N$. Let $m\in\N$, $k\in\{0,1,\dotsc,n-1\}$ and $\lambda\vdash n$. Let $\mu\vdash mn$ with $l(\mu)=n-k$, and set $\hat{\mu}:=\mu-(1^{n-k})\vdash (m-1)n+k$. Then
	\begin{equation}\label{eqn:A}
		a^\mu_{\lambda',(m)} = \sum_{i=0}^k (-1)^{k+i} \cdot \sum_{\substack{\alpha\vdash k+(m-1)i\\\beta\vdash i}} a^{\alpha/(k-i)}_{\beta',(m)}\cdot a^{\hat{\mu}/\alpha}_{\lambda/\beta,(m-1)}.
	\end{equation}
\end{theorem}

We first illustrate some of the uses of our main theorem in Example~\ref{ex:14.7} and in proving a stability result (Proposition~\ref{prop:17}), before proving Theorem~\ref{thm:14.6i} in full in Section~\ref{sec:16}. We present further applications to Sylow branching coefficients in Section~\ref{sec:13}.

\begin{example}\label{ex:14.7}
	We illustrate how to compute $a^{(6,3^{n-2})}_{\lambda,(3)}$ for all $n\ge 6$ and $\lambda\vdash n$ using Theorem~\ref{thm:14.6i}. 
	Let $\mu=(6,3^{n-2})$. Since $l(\mu)=n-1$, Theorem~\ref{thm:14.6i} gives
	\[ a^\mu_{\lambda,(3)} = - a^{\hat{\mu}/(1)}_{\lambda',(2)} + a^{\hat{\mu}/(3)}_{\lambda'/(1),(2)} = -a^{(4,2^{n-2})}_{\lambda',(2)} - a^{(5,2^{n-3},1)}_{\lambda',(2)} + a^{(2^{n-1})}_{\lambda'/(1),(2)} + a^{(3,2^{n-3},1)}_{\lambda'/(1),(2)} + a^{(4,2^{n-3})}_{\lambda'/(1),(2)}, \]
	since $a^\emptyset_{\emptyset,(3)}=1$ and $a^\alpha_{(1),(3)}=\delta_{\alpha,(3)}$. Applying Theorems~\ref{thm:14.6i} and~\ref{thm:LR}, we obtain
	\begin{align*}
		a^{(4,2^{n-2})}_{\lambda',(2)} &= \begin{cases} 1 & \text{if }\lambda'\in\{(n),(n-1,1),(n-2,2)\},\\ 0&\text{otherwise},\end{cases}\\
		a^{(5,2^{n-3},1)}_{\lambda',(2)} &= \begin{cases} 1 & \text{if }\lambda'\in\{(n-1,1),(n-2,2),(n-2,1^2),(n-3,2,1)\},\\ 0&\text{otherwise}.\end{cases}
	\end{align*}
	Similarly, since $a^\theta_{\lambda'/(1),(2)} = \sum_{\varepsilon\vdash n-1}c^{\lambda'}_{\varepsilon,(1)}\cdot a^\theta_{\varepsilon,(2)}$, we have that
	\begin{align*}
		a^{(2^{n-1})}_{\lambda'/(1),(2)} &= c^{\lambda'}_{(n-1),(1)} = \begin{cases}
			1 & \text{if }\lambda'\in\{(n),(n-1,1)\},\\ 0&\text{otherwise},\end{cases}\\
		a^{(3,2^{n-3},1)}_{\lambda'/(1),(2)} &= c^{\lambda'}_{(n-2,1),(1)} = \begin{cases}
			1 & \text{if }\lambda'\in\{(n-1,1),(n-2,2),(n-2,1^2)\},\\ 0&\text{otherwise},\end{cases}\\
		a^{(4,2^{n-3})}_{\lambda'/(1),(2)} &= c^{\lambda'}_{(n-2,1),(1)} + c^{\lambda'}_{(n-1),(1)} + c^{\lambda'}_{(n-3,2),(1)}\\
		&= \begin{cases}
			2 & \text{if }\lambda'\in\{(n-1,1),(n-2,2)\},\\
			1 & \text{if }\lambda'\in\{(n),(n-2,1^2),(n-3,3),(n-3,2,1)\},\\
			0 & \text{otherwise}.
		\end{cases}
	\end{align*}
	Putting this together, we obtain
	\[ a^\mu_{\lambda,(3)} = \begin{cases}
		2 & \text{if }\lambda'=(n-1,1),\\
		1 & \text{if }\lambda'\in\{(n),(n-2,2),(n-2,1^2),(n-3,3)\},\\
		0 & \text{otherwise}.
	\end{cases} \]
	In particular, this gives an alternative method for one of the steps in the proof of Theorem~\ref{thm:6.5.2} by showing that $a^{(6,3^{n-2})}_{(1^n),(3)}=1$ for all $n\ge 3$ (note when $n\le 5$ this follows by direct computation).\hfill$\lozenge$ 
\end{example}

A corollary of Theorem~\ref{thm:14.6i} is the stability of the following sequence of plethysm coefficients, whose monotonicity was predicted in \cite[Conjecture 1.2]{BBP}.

\begin{proposition}\label{prop:17}
	Let $\lambda$ and $\mu$ be partitions. For all $j\in\N_0$, define partitions $\lambda^j:=\lambda\sqcup(1^j)$ and $\mu^j:=(\mu+(j))\sqcup(1^j)$. Then the sequence $\big(a^{\mu^j}_{\lambda^j,(2)}\big)_{j\in\N_0}$ is eventually constant.
\end{proposition}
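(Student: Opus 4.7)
The plan is to apply Theorem~\ref{thm:14.6i} with $m=2$ and then reduce the problem to a stability statement for Littlewood--Richardson coefficients. Set $n=|\lambda|$, so that $|\mu|=2n$ and $|\mu^j|=2(n+j)=2|\lambda^j|$. Writing $l(\mu)=n-k_0$ with $k_0\in\{0,\dotsc,n-1\}$, we have $l(\mu^j)=l(\mu)+j=(n+j)-k_0$, so the parameter $k$ of Theorem~\ref{thm:14.6i} is the $j$-independent value $k_0$. A direct check shows $(\lambda^j)'=\lambda'+(j)$ and $\hat{\mu^j}:=\mu^j-(1^{l(\mu)+j})=\hat{\mu}+(j)$, where $\hat{\mu}:=\mu-(1^{l(\mu)})$ and $+(j)$ denotes adding $j$ to the first part. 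Applying Theorem~\ref{thm:14.6i} with $\lambda_{\mathrm{Thm\,A}}=\lambda'+(j)$ (so that $\lambda'_{\mathrm{Thm\,A}}=\lambda^j$) expresses $a^{\mu^j}_{\lambda^j,(2)}$ as a finite sum over $i\in\{0,\dotsc,k_0\}$, $\alpha\vdash k_0+i$, and $\beta\vdash i$ of products $a^{\alpha/(k_0-i)}_{\beta',(2)}\cdot a^{(\hat{\mu}+(j))/\alpha}_{(\lambda'+(j))/\beta,\,(1)}$; the index set and the first factor in each summand are independent of $j$.

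Since plethysm with the partition $(1)$ is trivial, formula~\eqref{eqn:skew-plethysm} gives $a^{(\hat{\mu}+(j))/\alpha}_{(\lambda'+(j))/\beta,\,(1)}=\sum_\nu c^{\lambda'+(j)}_{\beta,\nu}\cdot c^{\hat{\mu}+(j)}_{\alpha,\nu}$, the inner product of two skew characters. It therefore suffices to prove that, for any fixed partitions $\rho$ and $\sigma$, the LR coefficient $c^{\sigma+(j)}_{\rho,\nu}$ is controlled by a $j$-independent finite parameter as $j\to\infty$. To see this, I observe that for $j$ large, in any LR filling of the skew shape $(\sigma+(j))/\rho$ the entire first row must be filled with $1$'s: the rightmost row-$1$ entry is the first element in the reading order of Theorem~\ref{thm:LR}, so it must equal $1$ (the only way it can be good), and all of row $1$ is then forced to be $1$ by weak increasingness along rows. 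Consequently $c^{\sigma+(j)}_{\rho,\nu}\ne 0$ only when $\tilde{\nu}:=(\nu_2,\nu_3,\dotsc)$ lies in the $j$-independent finite set of partitions contained in $\sigma^-:=(\sigma_2,\sigma_3,\dotsc)$, and for each such $\tilde{\nu}$ the number of $1$'s in rows $\ge 2$ equals $|\sigma^-|-|\rho^-|-|\tilde{\nu}|$, which is also independent of $j$. The residual filling problem on rows $\ge 2$ thus depends only on $\rho$, $\sigma$ and $\tilde{\nu}$.

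Combining these observations, each $c^{\sigma+(j)}_{\rho,\nu}$ stabilizes in $j$, so the double sum indexing the skew inner product, which has $j$-independent finite support in $\tilde{\nu}$, also stabilizes; hence so does $a^{\mu^j}_{\lambda^j,(2)}$. The main obstacle is the final combinatorial bookkeeping: one must verify carefully that both the column-strict condition across the boundary between rows $1$ and $2$ (whose effect, via the row-$1$ ones, is to force certain row-$2$ entries to be $\ge 2$) and the lattice condition restricted to rows $\ge 2$ are genuinely $j$-independent once the first row has been filled with $1$'s.
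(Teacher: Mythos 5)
Your setup reproduces the paper's proof exactly: applying Theorem~\ref{thm:14.6i} with $k=k_0$ fixed, the only $j$-dependence sits in $a^{\hat{\mu^j}/\alpha}_{(\lambda^j)'/\beta,(1)}=\langle\chi^{(\lambda^j)'/\beta},\chi^{\hat{\mu^j}/\alpha}\rangle$, with $(\lambda^j)'=\lambda'+(j)$ and $\hat{\mu^j}=\hat{\mu}+(j)$, so everything reduces to showing this skew inner product stabilises. The paper records that as Lemma~\ref{lem:lr-stab} and proves it by a single bijection: the map $\ff_j:\cL(\gamma(j)/\alpha)\to\cL(\gamma(j+1)/\alpha)$ that fills the new box at the end of row $1$ with a $1$ is always injective and is surjective once row $1$ of $\gamma(j)/\alpha$ is longer than row $2$; pairing with $\fg_j$ gives a type-preserving bijection of pairs, hence both monotonicity and eventual constancy, with a single place where the LR condition must be checked. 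You instead expand the inner product into products of LR coefficients $c^{\sigma+(j)}_{\rho,\nu}$, use the forced all-$1$'s first row to confine $\tilde{\nu}=(\nu_2,\nu_3,\dots)$ to a $j$-independent finite set, and reduce to a residual filling count in rows $\ge 2$. This is the same combinatorial idea, handled term by term rather than by one global bijection; it is somewhat more work and does not recover the paper's monotonicity statement, though that is not needed for Proposition~\ref{prop:17}.

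The obstacle you flag at the end is a genuine gap, though small and fillable. The residual filling problem on rows $\ge 2$ is \emph{not} literally $j$-independent: the lattice condition at each entry equal to $2$ compares the running count of $1$'s in the reading word, which includes the $\sigma_1+j-\rho_1$ ones of row $1$, against the running count of $2$'s, and so this condition genuinely sees $j$. What closes the gap is that the count of $2$'s is at most the $j$-independent quantity $|\sigma^-|-|\rho^-|$ (the number of cells of the skew shape in rows $\ge 2$), while the row-$1$ contribution to the count of $1$'s grows without bound with $j$; hence for all sufficiently large $j$ the $1$-versus-$2$ lattice condition is automatically satisfied, and every remaining constraint on rows $\ge 2$ --- which cells of row $2$ are forced $\ge 2$ by column-strictness beneath row $1$ (those in columns $\rho_1+1,\dots,\sigma_2$), the lattice conditions among values $\ge 3$, column-strictness and weak row-increase within rows $\ge 2$ --- never involves $j$. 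You should say this explicitly; with it, your argument is complete.
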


To prove Proposition~\ref{prop:17}, we first record a stability property of Littlewood--Richardson coefficients. For convenience we include a proof in our present notation.
\begin{lemma}\label{lem:lr-stab}
	Let $\alpha,\beta,\gamma$ and $\delta$ be partitions. Define $\gamma(j):=\gamma+(j)$ and $\delta(j):=\delta+(j)$ for all $j\in\N_0$. Then the sequence $\big( \langle\chi^{\gamma(j)/\alpha},\chi^{\delta(j)/\beta}\rangle \big)_{j\in\N_0}$ is non-decreasing and eventually constant.
\end{lemma}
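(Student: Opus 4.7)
The plan is to use the Littlewood--Richardson rule to rewrite
\[ \langle \chi^{\gamma(j)/\alpha},\chi^{\delta(j)/\beta}\rangle = \sum_\nu c^{\gamma(j)}_{\alpha,\nu}\cdot c^{\delta(j)}_{\beta,\nu}, \]
and then build an explicit injection between the sets of LR fillings at consecutive values of $j$. If $|\gamma|-|\alpha|\ne|\delta|-|\beta|$ both sides vanish identically, so I may set $n:=|\gamma|-|\alpha|=|\delta|-|\beta|$.

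For the non-decreasing part I would use the standard fact that in any LR filling the topmost non-empty row consists only of $1$s (forced at the start of the reading word by Yamanouchi together with weak row-increase). Given an LR filling of $\gamma(j)/\alpha$ of type $\nu$, appending a box labelled $1$ at the end of row~$1$ yields an LR filling of $\gamma(j+1)/\alpha$ of type $\nu+(1)$: the row stays weakly increasing; no column conflict arises because $\gamma(j+1)_2=\gamma_2\le\gamma_1<\gamma_1+j+1$, so nothing lies below the new cell; and the reading word only gains one extra leading $1$, which tightens rather than violates Yamanouchi. Doing this simultaneously on the $\gamma$- and $\delta$-sides gives an injection on pairs of LR fillings with common content, shifting that content from $\nu$ to $\nu+(1)$, and so the inner product is non-decreasing.

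For the eventual constancy I would check that this injection becomes surjective once $j$ is sufficiently large, by verifying that the natural inverse---removing the rightmost $1$ of row~$1$ in each component---always produces a valid LR filling. The only condition that can fail is Yamanouchi for the pair $(1,2)$. If $T'$ is an LR filling of $\gamma(j+1)/\alpha$ with content $\mu$, then row~$1$ contains $\gamma_1+j+1-\alpha_1$ ones, so $\mu_1\ge\gamma_1+j+1-\alpha_1$ and hence $\mu_2\le n-\gamma_1+\alpha_1$. Reading in the standard order, after row~$1$ the Yamanouchi ``slack'' $\#1-\#2$ equals $\gamma_1+j+1-\alpha_1$, and during the reading of rows~$2$ and below it decreases by at most $\mu_2\le n-\gamma_1+\alpha_1$ in total; therefore dropping one $1$ from row~$1$ still leaves non-negative slack throughout as soon as $j\ge n-2\gamma_1+2\alpha_1$. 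The parallel $\delta$-side argument yields the bound $j\ge n-2\delta_1+2\beta_1$, and beyond both thresholds the map becomes a bijection on pairs, forcing the inner product to be constant. The main obstacle is precisely this slack bookkeeping for the inverse map; the reduction to Littlewood--Richardson coefficients and the forward injection are routine consequences of Theorem~\ref{thm:LR}.
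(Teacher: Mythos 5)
Your proof is correct and follows the paper's strategy exactly: express the inner product as the number of pairs of LR fillings with equal content, and use the map that appends a $1$-labelled box to the end of row~$1$ of each skew shape, first to deduce monotonicity and then bijectivity for large $j$. Where you go further is in justifying the bijectivity threshold, and your version is in fact the more careful one. The paper asserts that the appending map $\ff_j$ is a bijection as soon as the top row of $\gamma(j)/\alpha$ is strictly longer than its second row; as stated this criterion is not sufficient once $\alpha\neq\emptyset$ and $2$'s can appear in rows other than row~$2$. For instance, take $\gamma=(4,4,4)$, $\alpha=(3,3)$, $j=1$: the top row of $(5,4,4)/(3,3)$ has length $2>1$, yet the LR filling of $(6,4,4)/(3,3)$ whose rows are filled $(1,1,1)$, $(2)$, $(1,2,2,3)$ has no preimage, because deleting the final $1$ of row~$1$ produces the reading word $1,1,2,3,2,2,1$, which fails the lattice condition at its third $2$ (cumulative counts $(2,3,1)$). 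Your bound $j\geq n-2\gamma_1+2\alpha_1$, derived from $\mu_2\leq n-\gamma_1+\alpha_1$, controls \emph{all} the $2$'s in the filling rather than only those in the second row of the skew shape, so it does give a genuine threshold for the removal map to be well defined and closes this small gap in the published argument.
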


\begin{proof}
	For a skew shape $\rho$, let $\cL(\rho)$ denote the set of all Littlewood--Richardson fillings of $\rho$. Let $\ff_j:\cL(\gamma(j)/\alpha)\to \cL(\gamma(j+1)/\alpha)$ be the map given by filling the additional box in $\gamma(j+1)/\alpha - \gamma(j)/\alpha$ with the number 1. Clearly $\ff_j$ is well-defined and injective for all $j$, and furthermore, bijective for all sufficiently large $j$ (for example, $j\ge \alpha_1+\gamma_2$ will suffice).
	Similarly define $\fg_j:\cL(\delta(j)/\beta)\to\cL(\delta(j+1)/\beta)$. For $\mathsf{s}\in\cL(\gamma(j)/\alpha)$ and $\mathsf{t}\in\cL(\delta(j)/\beta)$, note that $\mathsf{s}$ and $\mathsf{t}$ have the same type if and only if $\ff_j(\mathsf{s})$ and $\fg_j(\mathsf{t})$ have the same type. The assertion of the lemma follows since $\langle\chi^{\gamma(j)/\alpha},\chi^{\delta(j)/\beta}\rangle = |\{(\mathsf{s},\mathsf{t})\in\cL(\gamma(j)/\alpha)\times\cL(\delta(j)/\beta) \mid \mathsf{s}, \mathsf{t}\text{ have the same type} \}|$.
\end{proof}

\begin{proof}[Proof of Proposition~\ref{prop:17}]
	We may assume that $\lambda\vdash n$, $\mu\vdash 2n$ and $l(\mu)=n-k$ for some $n$ and $k\in\N_0$, else $a^{\mu^j}_{\lambda^j,(2)}=0$ for all $j$ by Lemma~\ref{lem:tall-pleth}. Since $\mu^j\vdash 2n+2j$ and $l(\mu^j)=(n+j)-k$, we have from Theorem~\ref{thm:14.6i} that
	\[ a^{\mu^j}_{\lambda^j,(2)} = \sum_{i=0}^k (-1)^{k+i}\cdot \sum_{\substack{\alpha\vdash k+i\\\beta\vdash i}} a^{\alpha/(k-i)}_{\beta',(2)}\cdot a^{\hat{\mu^j}/\alpha}_{{\lambda^j}'/\beta,(1)} \]
	for all $j$, where $\hat{\mu^j}:=\mu^j-(1^{n+j-k})$. The proof is concluded by observing that $a^{\hat{\mu^j}/\alpha}_{{\lambda^j}'/\beta,(1)}=\langle \chi^{{\lambda^j}'/\beta},\chi^{\hat{\mu^j}/\alpha}\rangle$ and using Lemma~\ref{lem:lr-stab} with $\gamma:=\hat{\mu}$ and $\delta:=\lambda'$.
\end{proof}

\begin{remark}
	A similar argument can be used to give a new proof that the sequence $\big(a^{\mu^j}_{\lambda^j,(2)}\big)_j$ also stabilises where $\lambda^j:=\lambda+(j)$ and $\mu^j=\mu\sqcup(2^j)$
	; this sequence is already known to be both non-decreasing and eventually constant by \cite[\textsection 2.6 Corollary 1]{Brion}.\hfill$\lozenge$
\end{remark}

\bigskip
\subsection{Proof of Theorem~\ref{thm:14.6i}}\label{sec:16}
We first introduce some notation in preparation for the proofs to come.
\begin{notation}\label{not:rho}
	\begin{enumerate}[label=\textup{(\roman*)}]
		\item Let $\lambda$ be a partition, $n\in\N_0$ and $\phi$ be a virtual character of $S_n$.
		We define
		\[ \phi/\chi^\lambda:= \sum_{\mu\vdash n}\langle \phi,\chi^\mu\rangle\cdot\chi^{\mu/\lambda} \]
		where $\chi^{\mu/\lambda}=0$ if $\lambda\not\subseteq\mu$.
		
		\item For $m,n\in\N$ and $\alpha/\beta$ a skew shape of size $n$, define $\rho^{\alpha/\beta}_m := \cX(\triv_{S_m};\chi^{\alpha/\beta})\up_{S_m\wr S_n}^{S_{mn}}$. 
		
		\item For $\phi\in\Ch(S_{n_1})$ and $\theta\in\Ch(S_{n_2})$, define
		\[ \phi\boxtimes\theta:= (\phi\times\theta)\up_{S_{n_1}\times S_{n_2}}^{S_{n_1+n_2}}. \]
		
		\item Let $S_\lambda$ denote the Young subgroup $S_{\lambda_1}\times\cdots\times S_{\lambda_{l(\lambda)}}$ of $S_n$ and let
		\[ \zeta^\lambda:=\triv_{S_\lambda}\up^{S_n} = \chi^{(\lambda_1)}\boxtimes\cdots\boxtimes \chi^{(\lambda_{l(\lambda)})} \]
		denote the character of the permutation module $M^\lambda$. (Note $\zeta^\mu=\zeta^\lambda$ if $\mu$ is a composition of $n$ with the same parts as $\lambda$ but in a different order.) 
	\end{enumerate}
\end{notation}

Recall that the irreducible decomposition of such a permutation character $\zeta^\lambda$ is described by Young's Rule \cite[2.8.5]{JK}. Equivalently,
\begin{equation}\label{eqn:kostka}
	\zeta^\lambda = \sum_{\gamma\vdash n} K_{\gamma,\lambda}\cdot \chi^\gamma
\end{equation}
where the Kostka number $K_{\gamma,\lambda} = \langle \zeta^\lambda,\chi^\gamma\rangle=c^\gamma_{(\lambda_1),\dotsc,(\lambda_{l(\lambda)})}$ equals the number of semistandard Young tableaux of shape $\gamma$ and content $\lambda$.
In particular, we therefore have
\begin{equation}\label{eqn:15.2}
	\sum_{\gamma\vdash n} K_{\gamma,\lambda}\cdot \rho^\gamma_m = \cX(\triv_{S_m}; \zeta^\lambda)\up_{S_m\wr S_n}^{S_{mn}} = \rho^{(\lambda_1)}_m\boxtimes\cdots\boxtimes\rho^{(\lambda_{l(\lambda)})}_m.
\end{equation}

We now precisely identify the $i=k$ term of Theorem~\ref{thm:14.6i}, in Theorem~\ref{thm:14.6ii}. We then deduce Theorem~\ref{thm:14.6i} from Theorem~\ref{thm:14.6ii}. Following that, we prove Theorem~\ref{thm:14.6ii}, during the course of which we will also prove Theorem B, which has been numbered as Theorem~\ref{thm:16.7} in this section for ease of reference.

\begin{theorem}\label{thm:14.6ii}
	Fix $n\in\N$. Let $m\in\N$, $k\in\{0,1,\dotsc,n-1\}$ and $\lambda\vdash n$. Let $\nu\vdash mn+k$ with $l(\nu)=n$, and set $\hat{\nu}:=\nu-(1^n)\vdash (m-1)n+k$. Then
	\[ a^{\nu/(1^k)}_{\lambda',(m)} = \sum_{\substack{\alpha\vdash mk\\\beta\vdash k}} a^\alpha_{\beta',(m)}\cdot a^{\hat{\nu}/\alpha}_{\lambda/\beta,(m-1)}. \]
\end{theorem}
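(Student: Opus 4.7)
The plan is to prove Theorem~\ref{thm:14.6ii} by rewriting both sides of the identity as single inner products of symmetric group characters via Frobenius reciprocity, and then connecting them through Theorem~\ref{thm:4.4.3}.

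First, I would recast the LHS. Since $\mathcal{X}(\chi^{(m)};\chi^{\lambda'})$ equals the inflation of $\chi^{\lambda'}$ from $S_n$ to $S_m\wr S_n$ (as $\widetilde{\triv_{S_m}}=\triv_{S_m\wr S_n}$), and $\langle\chi^{\nu/(1^k)},\chi^\alpha\rangle=\langle\chi^\nu,(\chi^{(1^k)}\times\chi^\alpha)\up\rangle$ by definition of the skew character, Frobenius reciprocity gives
\[
a^{\nu/(1^k)}_{\lambda',(m)} \;=\; \langle\chi^\nu,\ [\chi^{(1^k)}\times\operatorname{Infl}_{S_n}^{S_m\wr S_n}(\chi^{\lambda'})]\up^{S_{mn+k}}_{S_k\times(S_m\wr S_n)}\rangle.
\]

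Second, I would unfold the RHS using~\eqref{eqn:skew-plethysm}. Writing each $a^\alpha_{\beta',(m)}$ and $a^{\hat\nu/\alpha}_{\lambda/\beta,(m-1)}$ as an inner product and collapsing the sums via $\sum_{\alpha\vdash mk}\chi^\alpha\times\chi^{\hat\nu/\alpha}=\chi^{\hat\nu}\down_{S_{mk}\times S_{(m-1)(n-k)}}$ and $\sum_{\beta\vdash k}\chi^{\beta'}\times\chi^{\lambda/\beta}=(\sgn_{S_k}\times\triv_{S_{n-k}})\cdot\chi^\lambda\down_{S_k\times S_{n-k}}$, followed by a single Frobenius reciprocity step, the RHS reduces to
\[
\langle\chi^{\hat\nu},\ \operatorname{Infl}\bigl((\sgn_{S_k}\times\triv_{S_{n-k}})\cdot\chi^\lambda\down\bigr)\up^{S_{(m-1)n+k}}_{(S_m\wr S_k)\times(S_{m-1}\wr S_{n-k})}\rangle.
\]

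Third, I would identify these two inner products via Theorem~\ref{thm:4.4.3}, which supplies the column-removal identity $a^\zeta_{\lambda',(m)}=a^{\hat\zeta}_{\lambda,(m-1)}$ for partitions $\zeta$ with $l(\zeta)=n$ (where $\hat\zeta=\zeta-(1^n)$), and is the bridge between $\chi^\nu$-multiplicities on $S_{mn+k}$ and $\chi^{\hat\nu}$-multiplicities on $S_{(m-1)n+k}$. Applying Pieri's column rule to expand $\chi^{\nu/(1^k)}=\sum_\zeta\chi^\zeta$ over $\zeta\vdash mn$ with $\nu/\zeta$ a vertical strip of size $k$, then invoking Theorem~\ref{thm:4.4.3} on each $\zeta$ with $l(\zeta)=n$, turns each $a^\zeta_{\lambda',(m)}$ into $a^{\hat\zeta}_{\lambda,(m-1)}$, where $\hat\nu/\hat\zeta$ is again a vertical strip of size $k$. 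Reorganising the resulting expression through the Littlewood--Richardson rule into Pieri-column expansions of $\chi^{\hat\nu/\alpha}$, I expect this to match the RHS term-by-term.

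The main obstacle will be handling the summands $\zeta$ in the Pieri expansion of $\chi^{\nu/(1^k)}$ for which $l(\zeta)<n$, since Theorem~\ref{thm:4.4.3} only applies when $\zeta$ contains a full column of height $n$. Such shorter partitions arise precisely when $\nu$ has parts equal to $1$; they will need to be matched to RHS terms separately, presumably by exploiting the sign twist $\sgn_{S_k}$ appearing after the $\beta$-summation together with Lemma~\ref{lem:tall-pleth} to control boundary contributions, or by invoking Theorem~\ref{thm:4.4.3} in a shifted form adapted to the actual length of $\zeta$.
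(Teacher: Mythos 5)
Your Steps 1 and 2 are sound reductions: rewriting each side as a single inner product via Frobenius reciprocity, and in particular reducing the right-hand side to $\big\langle \sum_{\beta\vdash k}\rho^{\beta'}_m\boxtimes\rho^{\lambda/\beta}_{m-1},\,\chi^{\hat\nu}\big\rangle$ in the paper's $\rho$-notation, is exactly the right starting point. The paper does the same thing, just packaged as Proposition~\ref{prop:16.7} evaluated against $\chi^{\hat\nu}$.

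The genuine gap is in Step 3. Expanding $\chi^{\nu/(1^k)}=\sum_\zeta\chi^\zeta$ over vertical $k$-strips and applying Theorem~\ref{thm:4.4.3} to the $\zeta$ with $l(\zeta)=n$ produces the single term $a^{\hat\nu/(1^k)}_{\lambda,(m-1)}$, which does \emph{not} appear in the target double sum: the right-hand side involves $a^\alpha_{\beta',(m)}$ with $\alpha\vdash mk$, not $\alpha\vdash k$, so for $m\geq 2$ your ``main term'' does not correspond to any $(\alpha,\beta)$ summand. Worse, the residual terms $a^\zeta_{\lambda',(m)}$ with $l(\zeta)<n$ are plethysm coefficients of exactly the same type as the one you started with, so the proposed ``reorganisation through the Littlewood--Richardson rule'' is circular unless you supply an entirely new identity to evaluate them. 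Your own hedging (``presumably by exploiting the sign twist\,\ldots\,or by invoking Theorem~\ref{thm:4.4.3} in a shifted form'') signals that the mechanism is missing, and indeed a ``shifted'' Theorem~\ref{thm:4.4.3} for $\zeta$ of length $n-j$ would require $\zeta\vdash m(n-j)$, which is false since $|\zeta|=mn$. In a small test case ($m=2$, $n=3$, $k=1$, $\nu=(3,3,1)$, $\lambda=(3)$) the main term vanishes while the correction from $\zeta=(3,3)$ carries the whole answer, so the corrections are not a tame boundary effect that $\sgn_{S_k}$ will cancel.

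The paper's argument never expands $\chi^{\nu/(1^k)}$ by Pieri at all. The two ingredients you are missing are: (i) Lemma~\ref{lem:16.3}, a consequence of the Jacobi--Trudi determinantal expansion, which converts $\langle\chi^\delta,\chi^{\nu/(1^k)}\rangle$ directly into $\langle\chi^{\delta/(1^{n-k})},\chi^{\hat\nu}\rangle$, thereby bridging $S_{mn+k}$ and $S_{(m-1)n+k}$ without case analysis on $l(\zeta)$; and (ii) Lemma~\ref{lem:16.6}, a Mackey-theoretic computation showing $\rho^{(u)}_m/\chi^{(1^{u-t})}=\rho^{(t)}_m\boxtimes\rho^{(1^{u-t})}_{m-1}$, which is where the factored shape $a^\alpha_{\beta',(m)}\cdot a^{\hat\nu/\alpha}_{\lambda/\beta,(m-1)}$ actually comes from. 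The passage from the one-row case to arbitrary $\lambda$ is then handled by Young's rule and inverting the Kostka matrix (Proposition~\ref{prop:16.7}), not by Littlewood--Richardson bookkeeping. Without analogues of (i) and (ii), Step 3 as written cannot close.
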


\begin{proof}[Proof of Theorem~\ref{thm:14.6i} from Theorem~\ref{thm:14.6ii}]
	We proceed by induction on $k$, observing that the $k=0$ case of Theorem~\ref{thm:14.6i} follows immediately from the $k=0$ case of Theorem~\ref{thm:14.6ii}. 
	Now assume $k>0$, and fix $\mu\vdash mn$ with $l(\mu)=n-k$. Let $\nu=\mu\sqcup(1^k)$, so $l(\nu)=n$ and $\hat{\nu}=\hat{\mu}$.
	
	We aim to evaluate $a^{\nu/(1^k)}_{\lambda',(m)}$, and compare it to Theorem~\ref{thm:14.6ii}. To do this, we will study the constituents in the skew character $\chi^{\nu/(1^k)}$. First, note that for any $\omega\vdash|\nu|-k$, by Theorem~\ref{thm:LR} we must have $c^\nu_{\omega,(1^k)}\in\{0,1\}$. Moreover, $c^\nu_{\omega,(1^k)}=1$ if and only if $\omega\subseteq\nu$ and all $k$ boxes of $[\nu/\omega]$ belong to different rows. We will denote by $\mathcal{A}$ the collection of $\omega$ with $c^\nu_{\omega,(1^k)}=1$. In particular,
	\[ \sum_{\omega\in\mathcal{A}}\chi^\omega = \chi^{{\nu}/(1^k)}. \]
	We partition $\mathcal{A}$ as a disjoint union, $\mathcal{A} = \coprod_{j=0}^{k}\mathcal{A}_j$, where $\mathcal{A}_j$ is the collection of $\omega\in\mathcal{A}$ for which $l(\omega)=n-k+j$. Notice that for each $j$, $\mathcal{A}_j$ bijects to $\mathcal{B}_j := \{\varpi\vdash|\hat{\mu}|-j \mid c^{\hat{\mu}}_{\varpi,(1^j)}\}$; the map is given by removal of the first column, $\omega\mapsto\hat{\omega}$, and this is seen to be a bijection by Theorem~\ref{thm:LR}. By a similar application of Theorem~\ref{thm:LR},
	
	\begin{equation}\label{eqn:B_j}
	\sum_{\varpi\in\mathcal{B}_j}\chi^\varpi = \chi^{\hat{\mu}/(1^j)}.
	\end{equation}
	Now observe that
	\[ a^{\nu/(1^k)}_{\lambda',(m)} = \sum_{\omega\in\mathcal{A}} a^{\omega}_{\lambda',(m)} = a^{\mu}_{\lambda',(m)} + \sum_{j=1}^k \sum_{\omega\in\mathcal{A}_j} a^{\omega}_{\lambda',(m)}. \]
	The idea here is that in our partition of $\mathcal{A}$, the $j=0$ term contributes precisely $a^{\mu}_{\lambda',(m)}$. Let us set $X := -\sum_{j=1}^k \sum_{\omega\in\mathcal{A}_j} a^{\omega}_{\lambda',(m)}$. Assuming Theorem~\ref{thm:14.6ii}, it suffices to show that $X$ equals the $\sum_{i=0}^{k-1}(\dotsc)$ part of the summation on the right hand side of \eqref{eqn:A}. For $0<j\leq k$, we have that
	\begin{align*}
	\sum_{\omega\in\mathcal{A}_j} a^{\omega}_{\lambda',(m)} &= \sum_{\omega\in\mathcal{A}_j} \sum_{i=0}^{k-j}(-1)^{k-j+i}\sum_{\substack{\gamma\vdash k-j + (m-1)i\\ \beta\vdash i}}a^{\gamma/(k-j-i)}_{\beta',(m)}\cdot a^{\hat{\omega}/\gamma}_{\lambda/\beta,(m-1)}
	\quad\text{by inductive hypothesis}\\
	&=\sum_{\varpi\in\mathcal{B}_j} \sum_{i=0}^{k-j}(-1)^{k-j+i}\sum_{\substack{\gamma\vdash k-j + (m-1)i\\ \beta\vdash i}}a^{\gamma/(k-j-i)}_{\beta',(m)}\cdot a^{\varpi/\gamma}_{\lambda/\beta,(m-1)}
	\quad\text{by bijection }\mathcal{A}_j\to\mathcal{B}_j\\
	&=\sum_{\varpi\in\mathcal{B}_j} \sum_{i=0}^{k-j}(-1)^{k-j+i}\sum_{\substack{\gamma\vdash k-j + (m-1)i\\ \beta\vdash i}}a^{\gamma/(k-j-i)}_{\beta',(m)}\cdot \left\langle\chi^{\varpi},\rho^{\lambda/\beta}_{m-1}\boxtimes\chi^\gamma\right\rangle \quad\text{by \eqref{eqn:a}}\\
	&=\sum_{i=0}^{k-j}(-1)^{k-j+i}\sum_{\substack{\gamma\vdash k-j + (m-1)i\\ \beta\vdash i}}a^{\gamma/(k-j-i)}_{\beta',(m)}\cdot \left\langle\chi^{\hat{\mu}/(1^j)},\rho^{\lambda/\beta}_{m-1}\boxtimes\chi^\gamma\right\rangle
	\quad\text{by \eqref{eqn:B_j}}\\
	&=\sum_{i=0}^{k-j}(-1)^{k-j+i}\sum_{\substack{\gamma\vdash k-j + (m-1)i\\ \beta\vdash i}} a^{\gamma/(k-j-i)}_{\beta',(m)}\cdot \left\langle\chi^{\hat{\mu}},\rho^{\lambda/\beta}_{m-1}\boxtimes\chi^\gamma\boxtimes\chi^{(1^j)}\right\rangle\\
	&=\sum_{i=0}^{k-j}(-1)^{k-j+i}\sum_{\substack{\gamma\vdash k-j + (m-1)i\\ \alpha\vdash k+(m-1)i\\ \beta\vdash i}} c^{\alpha}_{\gamma,(1^j)}\cdot a^{\gamma/(k-j-i)}_{\beta',(m)}\cdot \left\langle\chi^{\hat{\mu}},\rho^{\lambda/\beta}_{m-1}\boxtimes\chi^\alpha\right\rangle\\
	&=\sum_{i=0}^{k-j}(-1)^{k-j+i}\sum_{\substack{\gamma\vdash k-j + (m-1)i\\ \alpha\vdash k+(m-1)i\\ \beta\vdash i}} c^{\alpha}_{\gamma,(1^j)}\cdot a^{\gamma/(k-j-i)}_{\beta',(m)}\cdot a^{\hat{\mu}/\alpha}_{\lambda/\beta,(m-1)} \quad\text{by \eqref{eqn:a}}.
	\end{align*}
	It follows that
	\begin{align*}
	X &= -\sum_{j=1}^k \sum_{i=0}^{k-j}(-1)^{k-j+i}\sum_{\substack{\gamma\vdash k-j + (m-1)i\\ \alpha\vdash k+(m-1)i\\ \beta\vdash i}} c^{\alpha}_{\gamma,(1^j)}\cdot a^{\gamma/(k-j-i)}_{\beta',(m)}\cdot a^{\hat{\mu}/\alpha}_{\lambda/\beta,(m-1)}\\
	&= \sum_{i=0}^{k-1} \sum_{j=1}^{k-i} (-1)^{k+i} \cdot (-1)^{j+1} \sum_{\substack{\gamma\vdash k-j + (m-1)i\\ \alpha\vdash k+(m-1)i\\ \beta\vdash i}} c^{\alpha}_{\gamma,(1^j)}\cdot a^{\gamma/(k-j-i)}_{\beta',(m)}\cdot a^{\hat{\mu}/\alpha}_{\lambda/\beta,(m-1)}\\
	&= \sum_{i=0}^{k-1} (-1)^{k+i} \sum_{j=1}^{k-i} \sum_{\beta\vdash i} (-1)^{j+1}\cdot Y^\beta_{i,j},
	\end{align*}
	where we set
	\begin{align*}
	Y^\beta_{i,j}:=\sum_{\substack{\gamma\vdash k-j + (m-1)i\\ \alpha\vdash k+(m-1)i}} c^{\alpha}_{\gamma,(1^j)}\cdot a^{\gamma/(k-j-i)}_{\beta',(m)}\cdot a^{\hat{\mu}/\alpha}_{\lambda/\beta,(m-1)}.
	\end{align*}
	Next, we will simplify $Y^\beta_{i,j}$, for any fixed $\beta$, $i$ and $j$. To ease notation, in the rest of this proof we will abbreviate sums over all partitions of a given size. That is, we shorten $\sum_{\omega\vdash t}$ to $\sum_\omega$ (the size $t$ will always be clear from context). We use \eqref{eqn:skew-plethysm} to obtain
	\begin{align*}
	Y^\beta_{i,j}=\sum_{\gamma} \sum_\alpha c^\alpha_{\gamma,(1^j)}\cdot a^{\hat{\mu}/\alpha}_{\lambda/\beta,(m-1)} \cdot \sum_\varepsilon c^\gamma_{\varepsilon,(k-j-i)}\cdot a^\varepsilon_{\beta',(m)}.
	\end{align*}
	By \cite[(2.1)]{SLthesis}, we have that $\sum_\gamma c^\alpha_{\gamma,(1^j)}\cdot c^{\gamma}_{\varepsilon,(k-j-i)} = c^{\alpha}_{\varepsilon,(k-j-i),(1^j)} = \langle \chi^{\alpha/\varepsilon}, \chi^{k-j-i}\boxtimes \chi^{(1^j)}\rangle$. By Theorem~\ref{thm:LR}, $\chi^{k-j-i}\boxtimes \chi^{(1^j)} = \chi^{H(j)}+\chi^{H(j-1)}$ where $H(j):=(k-i-j,1^j)$, except if $j=k-i$ then $\chi^\emptyset\boxtimes \chi^{(1^{k-i})}=\chi^{H(k-i-1)}$ only, i.e.~we treat the $\chi^{H(k-i)}$ term as the zero character. Hence
	\[ Y^\beta_{i,j} = \sum_\alpha \sum_\varepsilon \langle \chi^{\alpha/\varepsilon}, \chi^{H(j)}+\chi^{H(j-1)}\rangle \cdot a^\varepsilon_{\beta',(m)}\cdot a^{\hat{\mu}/\alpha}_{\lambda/\beta,(m-1)} = \sum_\alpha \sum_\varepsilon (c^\alpha_{\varepsilon,H(j)} + c^\alpha_{\varepsilon,H(j-1)}) \cdot a^\varepsilon_{\beta',(m)}\cdot a^{\hat{\mu}/\alpha}_{\lambda/\beta,(m-1)} \]
	(where we omit the $c^\alpha_{\varepsilon,H(k-i)}$ term). Since $H(0)=(k-i)$, we obtain
	\[ \sum_{j=1}^{k-i}(-1)^{j+1} \cdot Y^\beta_{i,j} = \sum_\alpha \sum_\varepsilon c^\alpha_{\varepsilon,(k-i)}\cdot a^\varepsilon_{\beta',(m)}\cdot a^{\hat{\mu}/\alpha}_{\lambda/\beta,(m-1)} = \sum_\alpha a^{\alpha/(k-i)}_{\beta',(m)}\cdot a^{\hat{\mu}/\alpha}_{\lambda/\beta,(m-1)}.\]
	Thus, we finally obtain
	\[ X = \sum_{i=0}^{k-1}(-1)^{k+i} \cdot \sum_{\beta\vdash i} \sum_{j=1}^{k-i}(-1)^{j+1}\cdot Y^\beta_{i,j} = \sum_{i=0}^{k-1}(-1)^{k+i}\cdot \sum_{\beta\vdash i}\ \sum_{\alpha\vdash k+(m-1)i} a^{\alpha/(k-i)}_{\beta',(m)}\cdot a^{\hat{\mu}/\alpha}_{\lambda/\beta,(m-1)}, \]
	as desired.
\end{proof}

To prove Theorem~\ref{thm:14.6ii}, we first deal with the case of $m=1$. In this case, $a^{\nu/(1^k)}_{\lambda',(1)} = c^{\nu}_{\lambda',(1^k)}$, which takes value 1 if $\nu-\lambda'$ is a sequence of 0s and 1s containing exactly $k$ many 1s, and takes value 0 otherwise. On the other hand, $\sum_{\alpha,\beta\vdash k} \big(a^\alpha_{\beta',(1)}\cdot a^{\hat{\nu}/\alpha}_{\lambda/\beta,\emptyset}\big)=c^{\lambda'}_{\hat{\nu},(1^{n-k})}$, which takes value 1 if $\lambda'-\hat{\nu}$ is a sequence of 0s and 1s containing exactly $n-k$ many 1s, and takes value 0 otherwise. We see that these two quantities are equal since $\nu=\hat{\nu}+(1^n)$, and hence Theorem~\ref{thm:14.6ii} holds when $m=1$. 

Next, we introduce some lemmas in preparation for proving Theorem~\ref{thm:14.6ii} when $m\ge 2$.

\begin{lemma}\label{lem:16.2}
	Let $r\in\N$ and $n_1,\dotsc,n_r\in\N_0$. Let $\gamma$ be a partition and for each $i\in\{1,2,\dotsc,r\}$, let $\psi_i$ be a virtual character of $S_{n_i}$. Then
	\[ (\psi_1\boxtimes\cdots\boxtimes\psi_r)/\chi^\gamma = \sum_{\gamma_1,\dotsc,\gamma_r} c^\gamma_{\gamma_1,\dotsc,\gamma_r}\cdot (\psi_1/\chi^{\gamma_1})\boxtimes\cdots\boxtimes(\psi_r/\chi^{\gamma_r}), \]
	summed over all sequences of partitions $\gamma_1,\dotsc,\gamma_r$ such that $|\gamma_1|+\cdots+|\gamma_r|=|\gamma|$.
\end{lemma}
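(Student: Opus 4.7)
The plan is to reduce to the case $r=2$ and then induct on $r$, exploiting the associativity of $\boxtimes$ together with the Littlewood--Richardson associativity identity $\sum_\eta c^\gamma_{\gamma_1,\eta}\,c^\eta_{\gamma_2,\dotsc,\gamma_r} = c^\gamma_{\gamma_1,\gamma_2,\dotsc,\gamma_r}$. By the linearity of both $\boxtimes$ and the skew operation $\phi\mapsto\phi/\chi^\gamma$, one may assume $\psi_i=\chi^{\mu_i}$ for some $\mu_i\vdash n_i$. Setting $N:=n_1+\cdots+n_r$, it suffices to check that the two sides agree under the pairing $\langle -,\chi^\nu\rangle$ for every $\nu\vdash N-|\gamma|$.

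For the base case $r=2$, the defining property of skew characters gives $\langle \phi/\chi^\gamma,\chi^\nu\rangle = \langle \phi,\chi^\gamma\boxtimes\chi^\nu\rangle$ for any $\phi\in\Ch(S_N)$, so by Frobenius reciprocity
\[ \langle (\chi^{\mu_1}\boxtimes\chi^{\mu_2})/\chi^\gamma,\chi^\nu\rangle = \langle \chi^{\mu_1}\times\chi^{\mu_2},\ (\chi^\gamma\boxtimes\chi^\nu)\down_{S_{n_1}\times S_{n_2}}\rangle. \]
A Mackey decomposition applied to the subgroups $S_{|\gamma|}\times S_{|\nu|}$ and $S_{n_1}\times S_{n_2}$ of $S_N$ (with double cosets parameterised by $2\times 2$ matrices of non-negative integers of prescribed row and column sums) yields
\[ (\chi^\gamma\boxtimes\chi^\nu)\down_{S_{n_1}\times S_{n_2}} = \sum c^\gamma_{\gamma_1,\gamma_2}\,c^\nu_{\nu_1,\nu_2}\,(\chi^{\gamma_1}\boxtimes\chi^{\nu_1})\times(\chi^{\gamma_2}\boxtimes\chi^{\nu_2}), \]
summed over partitions with $|\gamma_i|+|\nu_i|=n_i$. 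Substituting, and using $\langle\chi^{\mu_i},\chi^{\gamma_i}\boxtimes\chi^{\nu_i}\rangle=\langle\chi^{\mu_i}/\chi^{\gamma_i},\chi^{\nu_i}\rangle$ together with the formula $\langle\phi_1\boxtimes\phi_2,\chi^\nu\rangle=\sum c^\nu_{\nu_1,\nu_2}\langle\phi_1,\chi^{\nu_1}\rangle\langle\phi_2,\chi^{\nu_2}\rangle$ to repackage the right-hand side, one arrives at $\langle \sum_{\gamma_1,\gamma_2} c^\gamma_{\gamma_1,\gamma_2}(\chi^{\mu_1}/\chi^{\gamma_1})\boxtimes(\chi^{\mu_2}/\chi^{\gamma_2}),\chi^\nu\rangle$, establishing the $r=2$ case.

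For $r\geq 3$, write $\psi_1\boxtimes\cdots\boxtimes\psi_r = \psi_1\boxtimes(\psi_2\boxtimes\cdots\boxtimes\psi_r)$, apply the two-factor identity with summation variables $\gamma_1,\eta$ on the outside, and then apply the inductive hypothesis to $(\psi_2\boxtimes\cdots\boxtimes\psi_r)/\chi^\eta$. This produces a double sum
\[ \sum_{\gamma_1,\eta,\gamma_2,\dotsc,\gamma_r} c^\gamma_{\gamma_1,\eta}\,c^\eta_{\gamma_2,\dotsc,\gamma_r}\,(\psi_1/\chi^{\gamma_1})\boxtimes(\psi_2/\chi^{\gamma_2})\boxtimes\cdots\boxtimes(\psi_r/\chi^{\gamma_r}), \]
which collapses via the LR associativity identity to exactly the right-hand side of the lemma.

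The only genuine obstacle is securing the Mackey expansion for $(\chi^\gamma\boxtimes\chi^\nu)\down_{S_{n_1}\times S_{n_2}}$; everything else is routine bookkeeping. An alternative worth mentioning is to carry the whole argument through the characteristic map, where the identity becomes $s_\gamma^\perp(f_1\cdots f_r)=\sum c^\gamma_{\gamma_1,\dotsc,\gamma_r} s_{\gamma_1}^\perp(f_1)\cdots s_{\gamma_r}^\perp(f_r)$, expressing that the skew operator is a Hopf-algebra derivation for the Schur coproduct on $\Lambda$; however, the Mackey argument above is entirely self-contained within the language already in use in the paper.
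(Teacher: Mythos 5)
Your proof is correct and takes essentially the same approach as the paper: reduce to $r=2$, pair against a test irreducible, and unravel via Mackey's theorem applied to the two Young subgroups $S_{n_1}\times S_{n_2}$ and $S_{|\gamma|}\times S_{|\nu|}$ of $S_N$, with double cosets parameterised by $2\times 2$ matrices of prescribed row and column sums. The two minor differences are cosmetic: you apply Mackey to restrict $(\chi^\gamma\boxtimes\chi^\nu)$ to $S_{n_1}\times S_{n_2}$ while the paper restricts $(\psi_1\times\psi_2)\up^{S_n}$ to $S_{n-k}\times S_k$ (the two are interchangeable via Frobenius reciprocity and give the identical bookkeeping); and you make the reduction from general $r$ explicit by induction using $\sum_\eta c^\gamma_{\gamma_1,\eta}\,c^\eta_{\gamma_2,\dotsc,\gamma_r}=c^\gamma_{\gamma_1,\dotsc,\gamma_r}$, whereas the paper simply states that general $r$ follows by an analogous argument.
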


\begin{proof}
	The case $r=1$ is trivial. For ease of notation we prove the statement for $r=2$; the case of general $r$ follows by an analogous argument. Define $n:=n_1+n_2$, $k:=|\gamma|$ and let $\delta\vdash n-k$. Then
	\begin{align*}
		\langle (\psi_1\boxtimes\psi_2)/\chi^\gamma,\chi^\delta\rangle &= \langle (\psi_1\times\psi_2)\up_{S_{n_1}\times S_{n_2}}^{S_n}, (\chi^\delta\times\chi^\gamma)\up_{S_{n-k}\times S_k}^{S_n}\rangle\\
		&= \langle (\psi_1\times\psi_2)\up_{S_{n_1}\times S_{n_2}}^{S_n}\down_{S_{n-k}\times S_k}, \chi^\delta\times\chi^\gamma\rangle,
	\end{align*}
	and applying Mackey's Theorem,
	\begin{align*}
		&= \sum_{\substack{0\le t_1,t_2\le k\\ t_1+t_2=k}} \langle (\psi_1\times\psi_2)\down^{S_{n_1}\times S_{n_2}}_{S_{n_1-t_1}\times S_{t_1}\times S_{n_2-t_2}\times S_{t_2}}\up^{S_{n-k}\times S_{k}}, \chi^\delta\times\chi^\gamma\rangle\\
		&= \sum_{t_1+t_2=k} \sum_{\substack{i=1,2\\\gamma_i\vdash t_i\\\delta_i\vdash n_i-t_i}} c^\gamma_{\gamma_1,\gamma_2}\cdot c^\delta_{\delta_1,\delta_2}\cdot \langle \psi_1\down_{S_{n_1-t_1}\times S_{t_1}} \times \psi_2\down_{S_{n_2-t_2}\times S_{t_2}}, \chi^{\delta_1}\times\chi^{\gamma_1}\times\chi^{\delta_2}\times\chi^{\gamma_2}\rangle\\
		&= \sum_{t_1+t_2=k} \sum_{\substack{i\\\gamma_i\vdash t_i\\\delta_i\vdash n_i-t_i}} c^\gamma_{\gamma_1,\gamma_2}\cdot c^\delta_{\delta_1,\delta_2}\cdot \langle \psi_1/\chi^{\gamma_1},\chi^{\delta_1}\rangle\cdot\langle \psi_2/\chi^{\gamma_2},\chi^{\delta_2}\rangle\\
		&= \sum_{t_1+t_2=k} \sum_{\substack{i\\\gamma_i\vdash t_i}} c^\gamma_{\gamma_1,\gamma_2}\cdot \langle (\psi_1/\chi^{\gamma_1})\boxtimes(\psi_2/\chi^{\gamma_2}), \chi^\delta\rangle,
	\end{align*}
	as claimed.
\end{proof}

\begin{corollary}\label{cor:16.2}
	Let $m,n\in\N$ and $\lambda=(\lambda_1,\dotsc,\lambda_r)\vdash n$. Let $k\in\{0,1,\dotsc,n-1\}$ and $\beta\vdash k$. Then
	\begin{enumerate}[label=\textup{(\roman*)}]
		\item $\big(\operatorname*{\boxtimes}_{i=1}^r \rho^{(\lambda_i)}_m \big)/\chi^{(1^{n-k})} = \sum_{\underline{t}}  \operatorname*{\boxtimes}_{i=1}^r \big( \rho^{(\lambda_i)}_m/\chi^{(1^{\lambda_i-t_i})} \big)$, and
		
		\item $\big( \operatorname*{\boxtimes}_{i=1}^r \chi^{(1^{\lambda_i})} \big)/\chi^{\beta'} = \sum_{\underline{t}} c^\beta_{(t_1),\dotsc,(t_r)}\cdot \operatorname*{\boxtimes}_{i=1}^r \chi^{(1^{\lambda_i-t_i})}$,
	\end{enumerate}
	summed over all compositions $\underline{t}=(t_1,\dotsc,t_r)$ of $k$ into $r$ parts. That is, $t_i\in\N_0$ for all $i$ and $t_1+\cdots+t_r=k$ (and we may further assume $t_i\le\lambda_i$ for all $i$).
\end{corollary}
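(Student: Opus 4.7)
The plan is to deduce both parts from Lemma~\ref{lem:16.2} by specialising the $\psi_i$ and $\gamma$ appropriately and then simplifying the resulting generalised Littlewood--Richardson coefficients.

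For (i), I would apply Lemma~\ref{lem:16.2} with $\psi_i = \rho^{(\lambda_i)}_m$ and $\gamma = (1^{n-k})$. The key point is that $c^{(1^{n-k})}_{\gamma_1,\dotsc,\gamma_r}$ vanishes unless each $\gamma_i$ is a single column: writing $\chi^{(1^{n-k})} = \sgn_{S_{n-k}}$ and applying Frobenius reciprocity gives
\[ c^{(1^{n-k})}_{\gamma_1,\dotsc,\gamma_r} = \langle \chi^{\gamma_1}\boxtimes\cdots\boxtimes\chi^{\gamma_r}, \sgn_{S_{n-k}}\rangle = \prod_{i=1}^r \langle \chi^{\gamma_i}, \sgn_{S_{|\gamma_i|}}\rangle, \]
which equals $1$ when each $\gamma_i$ has the form $(1^{s_i})$ with $s_1+\cdots+s_r = n-k$, and $0$ otherwise. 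Re-indexing by $t_i := \lambda_i - s_i$ (so that $t_i \ge 0$ and $\sum_i t_i = k$) produces the identity claimed in (i); terms in which $t_i > \lambda_i$ automatically drop out, since $\rho^{(\lambda_i)}_m/\chi^{(1^{s_i})} = 0$ whenever $s_i < 0$.

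For (ii), I would apply Lemma~\ref{lem:16.2} with $\psi_i = \chi^{(1^{\lambda_i})}$ and $\gamma = \beta'$. Since $\chi^{(1^{\lambda_i})}/\chi^{\gamma_i}$ vanishes unless $\gamma_i \subseteq (1^{\lambda_i})$, only column shapes $\gamma_i = (1^{u_i})$ with $0 \le u_i \le \lambda_i$ can contribute, and in that case $\chi^{(1^{\lambda_i})}/\chi^{(1^{u_i})} = \chi^{(1^{\lambda_i - u_i})}$. Setting $t_i := u_i$, what remains to verify is the conjugation identity
\[ c^{\beta'}_{(1^{u_1}),\dotsc,(1^{u_r})} = c^\beta_{(u_1),\dotsc,(u_r)}, \]
which follows by tensoring the inductions that define the left-hand side by the sign character at each factor, using $\chi^{\mu'} = \sgn\cdot\chi^\mu$ recalled in Section~\ref{sec:prelims-sn} (equivalently, this is the iterated form of the classical symmetry $c^{\lambda'}_{\mu',\nu'} = c^\lambda_{\mu,\nu}$).

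I do not anticipate any substantial obstacle: the entire argument is a short bookkeeping exercise on top of Lemma~\ref{lem:16.2}, and the only point that requires any care is the conjugation identity for multi-factor Littlewood--Richardson coefficients used in part (ii), which is routine.
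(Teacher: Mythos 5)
Your overall route is the same as the paper's: both parts are deduced by specialising Lemma~\ref{lem:16.2} and observing that only single-column partitions $\gamma_i$ can contribute. Part (ii) is correct as you have written it. In part (i), however, there is a genuine gap, and you have in fact argued the wrong direction of the boundary case.

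After applying Lemma~\ref{lem:16.2} with $\gamma=(1^{n-k})$, the left-hand side becomes a sum over all $(s_1,\dotsc,s_r)$ with each $s_i\ge 0$ and $\sum_i s_i=n-k$. Setting $t_i:=\lambda_i-s_i$ gives $\sum_i t_i = k$ and $t_i\le\lambda_i$ automatically, but it does \emph{not} give $t_i\ge 0$: nothing yet prevents $s_i>\lambda_i$. Your parenthetical ``so that $t_i\ge 0$'' is therefore unjustified, and the case you then dispatch ($t_i>\lambda_i$, i.e.~$s_i<0$) never occurs in the Lemma~\ref{lem:16.2} expansion, since those $\gamma_i$ are not partitions. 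The case that actually needs attention is $s_i>\lambda_i$, i.e.~$t_i<0$: one must show $\rho^{(\lambda_i)}_m/\chi^{(1^{s_i})}=0$ whenever $s_i>\lambda_i$, so that these terms may be discarded and the sum re-indexed over genuine compositions $\underline{t}$ of $k$. This vanishing is not formal: it is exactly where the paper invokes Lemma~\ref{lem:tall-pleth}, which guarantees that every irreducible constituent $\chi^\mu$ of $\rho^{(\lambda_i)}_m$ has $l(\mu)\le\lambda_i$, so that skewing by a column of length greater than $\lambda_i$ annihilates it. Without some argument of this kind your proof of part (i) is incomplete.
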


\begin{proof}
	\noindent\textbf{(i)} Applying Lemma~\ref{lem:16.2} with $\gamma=(1^{n-k})$, observe that $c^\gamma_{\gamma_1,\dotsc,\gamma_r}\in\{0,1\}$ and is non-zero only if each $\gamma_i=(1^{s_i})$ for some $s_i\in\N_0$. By Lemma~\ref{lem:tall-pleth}, $\rho^{(\lambda_i)}_m$ only has irreducible constituents $\chi^\mu$ where $l(\mu)\le \lambda_i$, so we may further assume that $s_i\le\lambda_i$. Writing $t_i=\lambda_i-s_i$ gives the result.
	
	\noindent\textbf{(ii)} Applying Lemma~\ref{lem:16.2} with $\psi_i=\chi^{(1^{\lambda_i})}$, observe that $\psi_i/\chi^{\gamma_i}\ne 0$ only if $\gamma_i=(1^{t_i})$ for some $0\le t_i\le \lambda_i$. Moreover, $c^{\beta'}_{(1^{t_1}),\dotsc,(1^{t_r})}=c^\beta_{(t_1),\dotsc,(t_r)}\in\{0,1\}$.
\end{proof}

\begin{lemma}\label{lem:16.3}
	Let $n\in\N$ and $k\in\{0,1,\dotsc,n\}$. Let $\nu$ be a partition with $l(\nu)=n$ and set $\hat{\nu}:=\nu-(1^n)$. Let $\delta\vdash|\nu|-k$ with $l(\delta)\le n$. Then
	\[ \langle\chi^\delta,\chi^{\nu/(1^k)} \rangle = \langle \chi^{\delta/(1^{n-k})},\chi^{\hat{\nu}}\rangle. \]
\end{lemma}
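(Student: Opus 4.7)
The plan is to unpack both inner products as Littlewood--Richardson coefficients and recognise that each one is governed by a Pieri-type rule, so that both sides collapse to a $\{0,1\}$-valued indicator. The two indicators will then be shown to detect the same combinatorial condition on $\delta$.

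First I would write, by the definition of skew characters and Frobenius reciprocity,
\[ \langle\chi^\delta,\chi^{\nu/(1^k)}\rangle = c^\nu_{\delta,(1^k)} \qquad\text{and}\qquad \langle\chi^{\delta/(1^{n-k})},\chi^{\hat{\nu}}\rangle = c^\delta_{\hat{\nu},(1^{n-k})}. \]
Applying the symmetry $c^\alpha_{\beta,\gamma}=c^{\alpha'}_{\beta',\gamma'}$ gives $c^\nu_{\delta,(1^k)} = c^{\nu'}_{\delta',(k)}$ and $c^\delta_{\hat{\nu},(1^{n-k})} = c^{\delta'}_{\hat{\nu}',(n-k)}$. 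By the Pieri rule for horizontal strips, each coefficient is $1$ if the relevant skew shape (namely $\nu'/\delta'$, respectively $\delta'/\hat{\nu}'$) is a horizontal strip of the correct size, and $0$ otherwise. Translated back, the first equals $1$ iff $\nu/\delta$ is a vertical strip of size $k$, and the second equals $1$ iff $\delta/\hat{\nu}$ is a vertical strip of size $n-k$.

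The main (and essentially only) step is then to verify that these two vertical-strip conditions are equivalent. Since $l(\nu)=n$, we have $\hat{\nu}_i = \nu_i - 1$ for $1\le i\le n$ and $\hat{\nu}_i=\nu_i=0$ for $i>n$; combined with $l(\delta)\le n$, both vertical-strip conditions are equivalent to the single requirement
\[ \hat{\nu}\subseteq\delta\subseteq\nu \quad\text{with}\quad \nu_i-\delta_i\in\{0,1\}\ \text{for every }i\in\{1,\dotsc,n\}. \]
Indeed, under this condition the set $S:=\{i\le n : \delta_i=\nu_i-1\}$ records precisely the rows contributing a box to $\nu/\delta$, while its complement in $\{1,\dotsc,n\}$ records precisely the rows contributing a box to $\delta/\hat{\nu}$; hence the two strips have complementary sizes $k$ and $n-k$.

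There is no real obstacle: the bookkeeping is entirely elementary once one has reduced to Pieri coefficients. The only mild care needed is in the rows $i>n$, where $\hat{\nu}_i=\delta_i=\nu_i=0$ makes the strip conditions trivial, and in the boundary cases $k\in\{0,n\}$, for which the identity degenerates to $\chi^\delta=\chi^\nu$, respectively $\chi^{\delta/(1^n)}=\chi^{\hat\nu}$, both of which are immediate.
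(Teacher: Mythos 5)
Your proof is correct, and it takes a genuinely different route from the paper's. The paper proves Lemma~\ref{lem:16.3} by expanding $\chi^{\nu'}$ along the first row of the Jacobi--Trudi determinant (writing $\chi^\nu=\sum_{j\ge0}(-1)^j\,\chi^{(1^{n+j})}\boxtimes\chi^{\hat\nu/(j)}$), applying the Mackey-type expansion of Lemma~\ref{lem:16.2}, and then collapsing an alternating sum; the argument is algebraic, with cancellation of signs doing the work. You instead recognise both inner products as Pieri coefficients: $\langle\chi^\delta,\chi^{\nu/(1^k)}\rangle=c^\nu_{\delta,(1^k)}$ equals $1$ precisely when $\nu/\delta$ is a vertical $k$-strip, and $\langle\chi^{\delta/(1^{n-k})},\chi^{\hat\nu}\rangle=c^\delta_{\hat\nu,(1^{n-k})}$ equals $1$ precisely when $\delta/\hat\nu$ is a vertical $(n-k)$-strip. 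Under the hypotheses $l(\nu)=n$, $\hat\nu=\nu-(1^n)$ and $l(\delta)\le n$, both conditions unwind to $\hat\nu_i\le\delta_i\le\nu_i$ for $1\le i\le n$, with the size constraints $|\nu/\delta|=k$ and $|\delta/\hat\nu|=n-k$ equivalent since $|\hat\nu|=|\nu|-n$. This is an honestly more elementary proof, and it yields strictly more information than the paper's argument: the common value of both sides lies in $\{0,1\}$, with an explicit combinatorial criterion for when it equals $1$. The only cosmetic point is that passing through conjugation to invoke the horizontal-strip Pieri rule is unnecessary — the $e_k$-Pieri rule gives the vertical-strip characterisation directly — but this does not affect correctness.
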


\begin{proof}
	The determinantal form of skew characters of symmetric groups (see e.g.~\cite[2.3.13]{JK}) gives $\chi^{\alpha/\beta} = \det\big(\chi^{(\alpha_i-i-\beta_j+j)}\big)$ whenever $\alpha$ and $\beta$ are partitions, where the multiplication of characters in expanding the determinant is given by the operation $\boxtimes$. Applying this to $\alpha=\nu'$ and $\beta=\emptyset$ and expanding the determinant with respect to the first row gives
	\[ \chi^{\nu'} = \sum_{j\ge 1} (-1)^{j-1}\cdot \chi^{(n+j-1)}\boxtimes \chi^{\hat{\nu}'/(1^{j-1})} = \sum_{j\ge 0} (-1)^j\cdot\chi^{(n+j)}\boxtimes \chi^{\hat{\nu}'/(1^j)}. \]
	Multiplying both sides by the sign representation then gives
	\[ \chi^\nu = \sum_{j\ge 0} (-1)^j\cdot \chi^{(1^{n+j})}\boxtimes \chi^{\hat{\nu}/(j)}. \]
	Then
	\begin{align*}
		\langle\chi^\delta,\chi^{\nu/(1^k)}\rangle &= \langle \chi^\delta\boxtimes\chi^{(1^k)},\chi^\nu\rangle \\
		&= \sum_{j\ge 0}(-1)^j\cdot\langle(\chi^\delta\boxtimes\chi^{(1^k)})/\chi^{(1^{n+j})}, \chi^{\hat{\nu}/(j)} \rangle\\
		&= \sum_{j=0}^k (-1)^j\cdot \left\langle \sum_{s=0}^{k-j}(\chi^\delta/\chi^{(1^{n-s})})\boxtimes(\chi^{(1^k)}/\chi^{(1^{s+j})}), \chi^{\hat{\nu}/(j)}\right\rangle\quad\text{by Lemma~\ref{lem:16.2}}\\
		&=\sum_{j=0}^k \sum_{s=0}^{k-j}(-1)^j \langle (\chi^\delta/\chi^{(1^{n-s})})\boxtimes \chi^{(1^{k-j-s})}, \chi^{\hat{\nu}/(j)}\rangle\\
		&= \sum_{s=0}^k \left\langle (\chi^\delta/\chi^{(1^{n-s})})\boxtimes \left(\sum_{j=0}^{k-s}(-1)^j\cdot\chi^{(1^{k-s-j})}\boxtimes\chi^{(j)}\right), \chi^{\hat{\nu}}\right\rangle\\
		&=\langle \chi^\delta/\chi^{(1^{n-k})}, \chi^{\hat{\nu}}\rangle,
	\end{align*}
	where the final equality follows since $\chi^{(1^{k-s-j})}\boxtimes\chi^{(j)}=\chi^{(j+1,1^{k-s-j-1})}+\chi^{(j,1^{k-s-j})}$
	, and so $\sum_{j=0}^{k-s}(-1)^j\cdot\chi^{(1^{k-s-j})}\boxtimes\chi^{(j)}$ equals zero if $k\ne s$, and equals $\chi^\emptyset$ if $k=s$.
\end{proof}

\begin{lemma}\label{lem:16.6}
	Let $m$, $u$ and $t$ be integers with $m\ge 2$ and $u\ge t\ge 0$. Then
	\[ \rho^{(u)}_m/\chi^{(1^{u-t})} = \rho^{(t)}_m\boxtimes \rho^{(1^{u-t})}_{m-1}. \]
\end{lemma}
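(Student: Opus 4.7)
The plan is to verify the stated identity by testing against an arbitrary irreducible $\chi^\delta$ of $S_{(m-1)u+t}$. By Frobenius reciprocity for $S_{(m-1)u+t}\times S_{u-t}\subset S_{mu}$, this reduces to showing
\[ \langle \rho^{(u)}_m,\ \chi^\delta\boxtimes\chi^{(1^{u-t})}\rangle \ =\ \langle \rho^{(t)}_m\boxtimes\rho^{(1^{u-t})}_{m-1},\ \chi^\delta\rangle \qquad \forall\ \delta\vdash (m-1)u+t. \]

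To compute the left-hand side I would use $\rho^{(u)}_m = \triv_{S_m\wr S_u}\up^{S_{mu}}$ and apply Mackey's Theorem to the subgroups $S_m\wr S_u$ and $K:=S_{(m-1)u+t}\times S_{u-t}$ of $S_{mu}$. Viewing $S_m\wr S_u$ as the stabiliser of a set partition of $[mu]$ into $u$ blocks of size $m$, the double cosets are indexed by partitions $\nu\vdash u-t$ with $l(\nu)\le u$ and $\nu_1\le m$: here $\nu$ records the multiset of sizes of intersections of the blocks with $\{(m-1)u+t+1,\dotsc,mu\}$. Writing $m_k$ for the multiplicity of $k$ in $\nu$, the stabiliser in $K$ of a type-$\nu$ representative is $\prod_{k=0}^m[(S_{m-k}\times S_k)\wr S_{m_k}]$, with each $S_{m_k}$ embedded diagonally between the $S_{m-k}\wr S_{m_k}$-part of $S_{(m-1)u+t}$ and the $S_k\wr S_{m_k}$-part of $S_{u-t}$. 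Using the classical identity $\triv_{S_{m_k}^{\mathrm{diag}}}\up^{S_{m_k}\times S_{m_k}}=\sum_{\eta\vdash m_k}\chi^\eta\boxtimes\chi^\eta$, together with the fact that inflating $\chi^\eta$ from $S_{m_k}$ to $S_{m-k}\wr S_{m_k}$ produces $\cX(\triv_{S_{m-k}};\chi^\eta)$, inducing to $K$ stage-by-stage shows that the type-$\nu$ contribution to $\rho^{(u)}_m\down_K$ is
\[ \sum_{(\eta_k)}\Bigl(\rho^{(m_0)}_m\boxtimes \operatorname*{\boxtimes}_{k=1}^{m-1}\rho^{\eta_k}_{m-k}\Bigr)\boxtimes\Bigl(\operatorname*{\boxtimes}_{k=1}^{m-1}\rho^{\eta_k}_k\boxtimes \rho^{(m_m)}_m\Bigr), \]
summed over tuples $(\eta_k)_{k=1}^{m-1}$ with $\eta_k\vdash m_k$; the cases $k\in\{0,m\}$ degenerate, since the wreath then sits entirely in a single factor of $K$ and no $\eta$-sum is produced.

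The final step pairs with $\chi^\delta\times\chi^{(1^{u-t})}$. Since $\chi^{(1^{u-t})}=\sgn_{S_{u-t}}$ restricts as a product of signs, the second-factor inner product splits as $\prod_{k\ge 1}\langle \rho^{\eta_k}_k,\sgn_{S_{km_k}}\rangle$ (with $\rho^{(m_m)}_m$ replacing $\rho^{\eta_m}_m$ in the $k=m$ slot). For $k=1$, $\rho^{\eta_1}_1=\chi^{\eta_1}$, and one must have $\eta_1=(1^{m_1})$. For $2\le k\le m-1$, a direct cycle-count yields $\sgn_{S_{km_k}}\down_{S_k\wr S_{m_k}}=\cX(\sgn_{S_k};\sgn^k_{S_{m_k}})$, which lies in a Gallagher block distinct from that of $\cX(\triv_{S_k};\chi^{\eta_k})$ because $\triv_{S_k}\ne\sgn_{S_k}$ when $k\ge 2$, forcing $m_k=0$. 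For $k=m$, Lemma~\ref{lem:tall-pleth} gives $\langle\rho^{(m_m)}_m,\sgn_{S_{m\cdot m_m}}\rangle = a^{(1^{m\cdot m_m})}_{(m_m),(m)}=0$ whenever $m\ge 2$ and $m_m>0$, forcing $m_m=0$. Hence the unique surviving type is $(m_0,m_1)=(t,u-t)$ with $\eta_1=(1^{u-t})$, and its contribution equals precisely $\langle \rho^{(t)}_m\boxtimes \rho^{(1^{u-t})}_{m-1},\chi^\delta\rangle$. The main obstacle will be the careful bookkeeping of the diagonal wreath-product structure of the double-coset stabilisers; once that is set up, the Gallagher-block vanishing for $k\ge 2$ cleans everything else away.
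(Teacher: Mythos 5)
Your proposal is correct and follows essentially the same route as the paper: both reduce to computing $\langle\rho^{(u)}_m,\chi^\delta\boxtimes\chi^{(1^{u-t})}\rangle$ by Mackey's theorem applied to the pair $S_m\wr S_u$, $K=S_{(m-1)u+t}\times S_{u-t}$, both parametrise the double cosets by partitions of $u-t$ into at most $u$ parts of size at most $m$, and both identify the unique coset that survives and verify that it contributes the stated right-hand side. The one point of genuine divergence is the elimination step. The paper's argument is a short upper bound: restrict $\chi^\delta\times\chi^{(1^{u-t})}$ past $K\cap H^\sigma\cong\prod_i(S_{m-i}\times S_i)\wr S_{\gamma_i}$ all the way down to the base $\prod_i(S_{m-i}\times S_i)^{\times\gamma_i}$, observe that the $\sgn$ factor then forces $\gamma_i=0$ for all $i\ge 2$, and never compute the Mackey summand explicitly. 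You instead expand each Mackey summand fully via the diagonal-induction identity $\triv\up^{S_{m_k}\times S_{m_k}}=\sum_{\eta}\chi^\eta\boxtimes\chi^\eta$, and then kill the unwanted $\eta$-pieces by a Gallagher-block argument for $2\le k\le m-1$ and by a separate appeal to Lemma~\ref{lem:tall-pleth} for $k=m$ (which you handle correctly: the diagonal picture degenerates there, so Gallagher is unavailable and you need the plethysm bound). Both elimination arguments are sound; the paper's is shorter because it never needs the explicit decomposition, whereas your expansion has the side benefit of making visible exactly what the surviving piece looks like, so the final identification is immediate rather than requiring the paper's last chain of equalities. Minor notational quibble: in your displayed formula for the type-$\nu$ contribution to $\rho^{(u)}_m\down_K$, the outermost $\boxtimes$ separating the two large parenthesised factors should be a plain $\times$, since this is a character of the direct product $K$, not an induced character of $S_{mu}$.
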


\begin{proof}
	Let $\delta\vdash mu-(u-t)$ be arbitrary. We show that $\langle \rho^{(u)}_m/\chi^{(1^{u-t})}, \chi^\delta\rangle = \langle\rho^{(t)}_m\boxtimes \rho^{(1^{u-t})}_{m-1}, \chi^\delta\rangle$. 
	Letting $H:=S_m\wr S_u$ and $K:=S_{|\delta|}\times S_{u-t}$, and substituting in the definition $\rho^{(u)}_m$ from Notation~\ref{not:rho}, we have by Mackey's theorem that
	\begin{align}\label{eqn:16.6}
		\langle \rho^{(u)}_m/\chi^{(1^{u-t})}, \chi^\delta\rangle &= \langle \rho^{(u)}_m, \chi^\delta\boxtimes\chi^{(1^{u-t})}\rangle = \langle \triv_H\up^{S_{mu}}, (\chi^\delta\times\chi^{(1^{u-t})})\up_K^{S_{mu}}\rangle \nonumber\\
		&= \sum_{\sigma\in K\setminus S_{mu}/H}\langle\triv_{H^\sigma}\down_{K\cap H^\sigma}\up^K,\chi^\delta\times\chi^{(1^{u-t})}\rangle\nonumber\\
        &=\sum_{\sigma\in K\setminus S_{mu}/H}\langle\triv_{K\cap H^\sigma},(\chi^\delta\times\chi^{(1^{u-t})})\down^K_{K\cap H^\sigma}\rangle,
	\end{align}
	where the final equality follows from Frobenius reciprocity.
	Here $\sigma$ runs over a set of representatives of double $(K,H)$-cosets in $S_{mu}$. Since $K\cap H^\sigma$ are the point stabilisers of the action of $K$ on the set of partitions of $\{1,2,\dotsc,mu\}$ into $u$ subsets of size $m$, the representatives $\sigma$ are parametrised by partitions of $u-t$ into exactly $u$ parts, including parts of size zero. 
	Fix one such partition $\sigma$ of $u-t$ and suppose that $\gamma_i$ is the number of parts of size $i$, for each $i\in\N_0$. Then $K\cap H^\sigma \cong \prod_{i\in\N_0} (S_{m-i}\times S_i)\wr S_{\gamma_i}$, and
	\[ \langle (\chi^\delta\times\chi^{(1^{u-t})})\down^K_{K\cap H^\sigma}, \triv\rangle \le \langle (\chi^\delta\times\chi^{(1^{u-t})})\down_{\prod_i (S_{m-i}\times S_i)^{\times \gamma_i}}, \triv\rangle = \langle \chi^\delta\down_{\prod_i S_{m-i}^{\times\gamma_i}},\triv\rangle\cdot \langle \chi^{(1^{u-t})}\down_{\prod_i S_i^{\times\gamma_i}},\triv\rangle. \]
	However, $\chi^{(1^{u-t})}$ is the sign representation, so $\langle \chi^{(1^{u-t})}\down_{\prod_i S_i^{\times\gamma_i}},\triv\rangle\ne 0$ if and only if $\gamma_i=0$ for all $i\ge 2$. Hence there is at most one $\sigma$ giving a non-zero contribution to the sum in \eqref{eqn:16.6}, namely $\sigma=(1^{u-t},0^t)$, and in this case $K\cap H^\sigma\cong (S_m\wr S_t)\times \big((S_{m-1}\times S_1)\wr S_{u-t}\big)$. Substituting into \eqref{eqn:16.6},
	\begin{align*}
		\langle \rho^{(u)}_m/\chi^{(1^{u-t})}, \chi^\delta\rangle &= \langle (\chi^\delta\times\chi^{(1^{u-t})})\down^K_{S_m\wr S_t\times (S_{m-1}\times S_1)\wr S_{u-t}},\ \triv\rangle\\
		&= \langle (\chi^\delta\times\chi^{(1^{u-t})})\down^{S_{|\delta|}\times S_{u-t}}_{S_m\wr S_t\times S_{m-1}\wr S_{u-t}\times S_1\wr S_{u-t}},\ \triv\up_{S_m\wr S_t\times (S_{m-1}\times S_1)\wr S_{u-t}}^{{S_m\wr S_t\times S_{m-1}\wr S_{u-t}\times S_1\wr S_{u-t}}}\rangle
	\end{align*}
	where the second equality follows from Frobenius reciprocity. Noting that $|\delta|=mu-(u-t)=mt+(m-1)(u-t)$ and $\cX(\triv_{S_1};\chi^\omega)=\chi^\omega$, and using Lemma~\ref{lem:X-induced} in the second equality below, we have
	\begin{align*}
		\langle \rho^{(u)}_m/\chi^{(1^{u-t})}, \chi^\delta\rangle &= \langle \chi^\delta\down^{S_{|\delta|}}_{S_m\wr S_t\times S_{m-1}\wr S_{u-t}} \times \chi^{(1^{u-t})},\ \triv_{S_m\wr S_t} \times \triv\up_{(S_{m-1}\times S_1)\wr S_{u-t}}^{S_{m-1}\wr S_{u-t}\times S_1\wr S_{u-t}}\rangle\\
		&= \left\langle \chi^\delta\down^{S_{|\delta|}}_{S_m\wr S_t\times S_{m-1}\wr S_{u-t}} \times \chi^{(1^{u-t})},\ \triv_{S_m\wr S_t} \times \sum_{\omega\vdash u-t}\cX(\triv_{S_{m-1}}; \chi^\omega)\cdot\cX(\triv_{S_1};\chi^\omega) \right\rangle\\
		&= \sum_{\omega\vdash u-t} \langle \chi^\delta\down_{S_m\wr S_t\times S_{m-1}\wr S_{u-t}},\ \triv_{S_m\wr S_t}\times \cX(\triv_{S_{m-1}};\chi^\omega)\rangle\cdot\langle \chi^{(1^{u-t})},\chi^\omega\rangle.
	\end{align*}
	Now $\langle \chi^{(1^{u-t})},\chi^\omega\rangle=1$ precisely when $\omega=(1^{u-t})$ and is 0 otherwise, so
	\[ \langle \rho^{(u)}_m/\chi^{(1^{u-t})}, \chi^\delta\rangle = \langle \chi^\delta\down_{S_m\wr S_t\times S_{m-1}\wr S_{u-t}},\ \triv_{S_m\wr S_t}\times \cX(\triv_{S_{m-1}};\chi^{(1^{u-t})})\rangle = \langle \chi^\delta,\ \rho^{(t)}_m\boxtimes \rho^{(1^{u-t})}_m\rangle \]
	by Frobenius reciprocity, recalling Notation~\ref{not:rho}(ii) and (iii).
	Since $\delta$ was arbitrary, then $\rho^{(u)}_m/\chi^{(1^{u-t})} = \rho^{(t)}_m\boxtimes \rho^{(1^{u-t})}_{m-1}$ as desired.
\end{proof}

\begin{remark}
	When $m=2$, we can see from Proposition~\ref{prop:thrall} that $\rho^{(u)}_2/\chi^{(1^{u-t})} = \sum_\delta \chi^\delta = \rho^{(t)}_2\boxtimes\chi^{(1^{u-t})}$ where the sum is over all $\delta\vdash u+t$ with exactly $u-t$ many odd parts.\hfill$\lozenge$
\end{remark}

Next, we generalise Lemma~\ref{lem:16.6} from the trivial partition $(u)$ to arbitrary partitions, giving Theorem B, after which it will be straightforward to deduce Theorem~\ref{thm:14.6ii}.

\begin{theorem}[Theorem B]\label{thm:16.7}
	Let $m,n\in\N$ with $m\ge 2$. Let $\lambda\vdash n$ and $k\in\{0,1,\dotsc,n-1\}$. Then
	\[ \rho^\lambda_m/\chi^{(1^{n-k})} = \sum_{\beta\vdash k} \rho^\beta_m\boxtimes \rho^{\lambda'/\beta'}_{m-1}. \]
\end{theorem}

\begin{proof}
	Following the notation for $\underline{t}$ in Corollary~\ref{cor:16.2} and letting $r=l(\lambda)$, observe that
	\begin{align*}
		\sum_{\gamma\vdash n} K_{\gamma,\lambda}\cdot \big( \rho^\gamma_m/\chi^{(1^{n-k})} \big) &= \Big( \sum_{\gamma\vdash n} K_{\gamma,\lambda}\cdot \rho^\gamma_m \Big)/\chi^{(1^{n-k})} = \Big( \operatorname*{\boxtimes}_{i=1}^r \rho^{(\lambda_i)}_m\Big)/\chi^{(1^{n-k})} \quad\text{by \eqref{eqn:15.2}}\\
		&= \sum_{\underline{t}} \operatorname*{\boxtimes}_{i=1}^r \big( \rho^{(\lambda_i)}_m/\chi^{(1^{\lambda_i-t_i})} \big) \quad\text{by Corollary~\ref{cor:16.2}(i)}\\
		&= \sum_{\underline{t}} \operatorname*{\boxtimes}_{i=1}^r \big( \rho^{(t_i)}_m \boxtimes \rho^{(1^{\lambda_i-t_i})}_{m-1} \big) \quad\text{by Lemma~\ref{lem:16.6}}\\
		&= \sum_{\underline{t}} \sum_{\beta\vdash k} c^\beta_{(t_1),\dotsc,(t_r)}\cdot\rho^\beta_m \boxtimes \big(\operatorname*{\boxtimes}_{i=1}^r \rho^{(1^{\lambda_i-t_i})}_{m-1}\big) \quad\text{by \eqref{eqn:15.2}}\\
		&= \sum_{\beta\vdash k} \rho^\beta_m \boxtimes \cX\Big(\triv_{S_{m-1}}; \sum_{\underline{t}} c^\beta_{(t_1),\dotsc,(t_r)}\cdot \operatorname*{\boxtimes}_{i=1}^r \chi^{(1^{\lambda_i-t_i})} \Big)\up_{S_{m-1}\wr S_{n-k}}^{S_{(m-1)(n-k)}}\quad\text{by Lemma~\ref{lem:cX}}\\
		&= \sum_{\beta\vdash k} \rho^\beta_m \boxtimes \cX \Big( \triv_{S_{m-1}}; \big( \operatorname*{\boxtimes}_{i=1}^r \chi^{(1^{\lambda_i})}\big) /\chi^{\beta'} \Big) \up_{S_{m-1}\wr S_{n-k}}^{S_{(m-1)(n-k)}} \quad\text{by Corollary~\ref{cor:16.2}(ii)}\\
		&= \sum_{\beta\vdash k} \rho^\beta_m \boxtimes \cX \Big( \triv_{S_{m-1}}; \big( \zeta^\lambda\cdot\sgn_{S_n} \big) /\chi^{\beta'} \Big) \up_{S_{m-1}\wr S_{n-k}}^{S_{(m-1)(n-k)}} \quad\text{by \eqref{eqn:sign}}\\
		&= \sum_{\beta\vdash k} \rho^\beta_m \boxtimes \cX \Big( \triv_{S_{m-1}}; \big( \sum_{\gamma\vdash n} K_{\gamma,\lambda}\cdot \chi^{\gamma'} \big) /\chi^{\beta'} \Big) \up_{S_{m-1}\wr S_{n-k}}^{S_{(m-1)(n-k)}} \quad\text{by \eqref{eqn:sign} and \eqref{eqn:kostka}}\\
		&= \sum_{\gamma\vdash n} K_{\gamma,\lambda} \cdot \Big( \sum_{\beta\vdash k} \rho^\beta_m \boxtimes \rho^{\gamma'/\beta'}_{m-1} \Big)\quad\text{since $\chi^{\gamma'}/\chi^{\beta'}=\chi^{\gamma'/\beta'}$ by Notation~\ref{not:rho}(i)}.\\
	\end{align*}
	Since the matrix $(K_{\gamma,\lambda})_{\gamma,\lambda\vdash n}$ is invertible (in fact unitriangular if the partitions are ordered lexicographically, see e.g.~\cite[Chapter 2]{JK}), we deduce that $\rho^\gamma_m/\chi^{(1^{n-k})} = \sum_{\beta\vdash k} \rho^\beta_m \boxtimes \rho^{\gamma'/\beta'}_{m-1}$ for each $\gamma\vdash n$.
\end{proof}

\begin{lemma}\label{lem:cX}
	Let $m,r,a_1,\dotsc,a_r\in\N$, and let $n=\sum_{i=1}^r a_i$. For each $i\in\{1,\dotsc,r\}$, let $\nu_i\vdash a_i$. Then $\boxtimes_{i=1}^r \rho_m^{\nu_i} = \cX(\triv_{S_m};\boxtimes_{i=1}^r \chi^{\nu_i} )\up_{S_m\wr S_n}^{S_{mn}}$.
\end{lemma}

\begin{proof}
	The case $r=1$ follows from Notation~\ref{not:rho}(ii). For each of notation we prove the statement for $r=2$; the case of general $r$ follows by an analogous argument. In fact, we can prove more generally that if $a,b\in\N$ and $\phi_1\in\Ch(S_a), \phi_2\in\Ch(S_b)$, then $\cX_{left}=\cX_{right}$ where
	\[ \cX_{left}:= \left[\cX(\triv_{S_m}; \phi_1)\up_{S_m\wr S_a}^{S_{ma}} \times \cX(\triv_{S_m}; \phi_2)\up_{S_m\wr S_b}^{S_{mb}}\right]\up_{S_{ma}\times S_{mb}}^{S_{m(a+b)}} \]
	and
	\[ \cX_{right}:= \cX\left(\triv_{S_m}; (\phi_1\times \phi_2)\up_{S_a\times S_b}^{S_{a+b}}\right)\up_{S_m\wr S_{a+b}}^{S_{m(a+b)}}, \]
	from which we recover the case of $r=2$ by setting $\phi_i=\chi^{\nu_i}$. 
	
	To prove that $\cX_{left}=\cX_{right}$, we first observe that $S_m\wr(S_a\times S_b) = S_m\wr S_a \times S_m\wr S_b$ (viewing $S_a\times S_b$ as a subgroup of $S_{a+b}$). Calling this group $U$, it is a subgroup of both $T_d:=S_{ma}\times S_{mb}$ and $T_w:=S_m\wr S_{a+b}$, and both $T_d$ and $T_w$ are subgroups of $S:=S_{m(a+b)}$. Now, by Lemma~\ref{lem:infl-ind} and the definition of $\cX(-;-)$,
	\begin{align*}
		\cX(\triv_{S_m}; (\phi_1\times \phi_2)\up_{S_a\times S_b}^{S_{a+b}}) &= \cX(\triv_{S_m}; \phi_1\times\phi_2)\up_U^{T_w} = \left(\Infl_{S_a\times S_b}^{T_w}(\phi_1\times\phi_2)\right)\up_U^{T_w}\\
		&= \left( \Infl_{S_a}^{S_m\wr S_a}(\phi_1)\times \Infl_{S_b}^{S_m\wr S_b}(\phi_2) \right)\up_U^{T_w} = \Big( \cX(\triv_{S_m};\phi_1)\times \cX(\triv_{S_m};\phi_2) \Big)\up_U^{T_w}.
	\end{align*}
	Therefore
	\[ \cX_{right} = \Big( \cX(\triv_{S_m};\phi_1)\times \cX(\triv_{S_m};\phi_2) \Big)\up_U^{T_w} \up_{T_w}^S = \Big( \cX(\triv_{S_m};\phi_1)\times \cX(\triv_{S_m};\phi_2) \Big)\up_U^{T_d} \up_{T_d}^S = \cX_{left}, \]
	where the second equality follows from the transitivity of induction.
\end{proof}
	
\begin{proof}[Proof of Theorem~\ref{thm:14.6ii} when $m\ge 2$]
	Take $\langle -, \chi^{\hat{\nu}}\rangle$ in Theorem~\ref{thm:16.7} to obtain
	\[ \langle \rho^\lambda_m/\chi^{(1^{n-k})}, \chi^{\hat{\nu}} \rangle = \left\langle \sum_{\beta\vdash k} \rho^\beta_m\boxtimes \rho^{\lambda'/\beta'}_{m-1}, \chi^{\hat{\nu}} \right\rangle. \]
	By Lemma~\ref{lem:16.3}, $\langle \rho^\lambda_m/\chi^{(1^{n-k})}, \chi^{\hat{\nu}} \rangle = \langle \rho^\lambda_m, \chi^{\nu/(1^k)}\rangle = a^{\nu/(1^k)}_{\lambda,(m)}$. On the other hand,
	\[ \left\langle \sum_{\beta\vdash k} \rho^\beta_m\boxtimes \rho^{\lambda'/\beta'}_{m-1}, \chi^{\hat{\nu}} \right\rangle = \sum_{\beta\vdash k} \sum_{\alpha\vdash mk} \langle \rho^\beta_m,\chi^\alpha\rangle \cdot \langle \rho^{\lambda'/\beta'}_{m-1}, \chi^{\hat{\nu}/\alpha}\rangle = \sum_{\substack{\alpha\vdash mk\\\beta\vdash k}} a^\alpha_{\beta,(m)}\cdot a^{\hat{\nu}/\alpha}_{\lambda'/\beta',(m-1)}, \]
	which concludes the proof.
\end{proof}

We conclude this section with a conjecture based on computational data in small cases, and which is motivated by Foulkes' Conjecture as described below. 
\begin{conjecture}\label{conj:18}
	Let $1\le a\le b$ be integers. Then
	\begin{enumerate}[label=\textup{(\roman*)}]
		\item $\rho^{(a)}_{b-1}\boxtimes \chi^{(a-1)} - \rho^{(a-1)}_b\boxtimes \chi^{(b-1)}\in \Ch(S_{ab-1})$, and
		\item $(\rho^{(b)}_a-\rho^{(a)}_b)/\chi^{(1)}\in \Ch(S_{ab-1})$.
	\end{enumerate}
\end{conjecture}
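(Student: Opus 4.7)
The plan is to reformulate both parts as positivity assertions about induced permutation characters of $S_{ab-1}$ and to attack by induction on $\min(a,b)$, using Theorem~\ref{thm:14.6i} together with Mackey's theorem. Unpacking $\rho^{(a)}_{b-1}=\triv_{S_{b-1}\wr S_a}\up^{S_{a(b-1)}}$ and using transitivity of induction, part~(i) becomes the statement that
\[ \triv_{(S_{b-1}\wr S_a)\times S_{a-1}}\up^{S_{ab-1}}-\triv_{(S_b\wr S_{a-1})\times S_{b-1}}\up^{S_{ab-1}}\in\Ch(S_{ab-1}). \]
For part~(ii), noting that $\phi/\chi^{(1)}=\phi\down_{S_{n-1}}$ by the branching rule and applying Mackey's theorem to $S_{ab-1}=\mathrm{Stab}_{S_{ab}}(ab)$ acting on set partitions of $[ab]$ yields
\[ \rho^{(b)}_a\down_{S_{ab-1}}=\triv_{S_{a-1}\times(S_a\wr S_{b-1})}\up^{S_{ab-1}},\qquad \rho^{(a)}_b\down_{S_{ab-1}}=\triv_{S_{b-1}\times(S_b\wr S_{a-1})}\up^{S_{ab-1}}, \]
so (ii) amounts to the properness of the difference of these two permutation characters, which is an immediate consequence of Foulkes' Conjecture when $a\le b$ (and hence known unconditionally for $\min(a,b)$ small).

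First I would handle the trivial cases: for $a=1$ both parts evaluate to $0$ since $h_1\circ h_k=h_k$, and for $a=b$ part~(i) factors as $(h_a\circ h_{a-1}-h_{a-1}\circ h_a)\cdot h_{a-1}$, whose Schur-positivity is an instance of Foulkes' Conjecture applied to the pair $(a-1,a)$. The inductive step for general $1<a<b$ would proceed coefficient-by-coefficient: by \eqref{eqn:skew-plethysm} and the Littlewood--Richardson rule,
\[ \langle\rho^{(a)}_{b-1}\boxtimes\chi^{(a-1)},\chi^\mu\rangle=a^{\mu/(a-1)}_{(a),(b-1)}=\sum_{\zeta\vdash a(b-1)}c^\mu_{\zeta,(a-1)}\cdot a^\zeta_{(a),(b-1)} \]
for each $\mu\vdash ab-1$, and analogously $\langle\rho^{(a-1)}_b\boxtimes\chi^{(b-1)},\chi^\mu\rangle=\sum_{\delta\vdash b(a-1)}c^\mu_{\delta,(b-1)}\cdot a^\delta_{(a-1),(b)}$. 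Applying Theorem~\ref{thm:14.6i} to each $a^\zeta_{(a),(b-1)}$ and $a^\delta_{(a-1),(b)}$ then expresses the comparison in terms of plethysm coefficients with strictly smaller inner part $m-1$, where the induction hypothesis applies. For part~(ii) one would analogously unroll $p_1^\perp$ through both plethysms by means of Proposition~\ref{prop:16.7}.

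The main obstacle is that both statements are of genuinely Foulkes type: part~(ii) is literally a restriction of Foulkes' inequality to $S_{ab-1}$, while the inductive unravelling of part~(i) always leaves residual comparisons between $a^\zeta_{(a),(b-1)}$ and $a^\delta_{(a-1),(b)}$ of apparently no less difficulty. A cleaner route would be to exhibit an explicit injective $S_{ab-1}$-equivariant map from the coset space of $(S_b\wr S_{a-1})\times S_{b-1}$ into that of $(S_{b-1}\wr S_a)\times S_{a-1}$, interpreted as two natural families of `mixed' set partitions of $[ab-1]$, thereby proving (i) bijectively; the computational evidence strongly suggests that such a map exists, but writing one down in closed form seems to be the principal difficulty, and a full proof of either part likely requires techniques going beyond those currently available for Foulkes' Conjecture itself.
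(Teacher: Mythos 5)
The statement you were asked to prove is stated in the paper as a \emph{conjecture}: the authors give no proof, and support it only by computational evidence in small cases together with some motivating reductions. Your attempt is therefore correctly calibrated — you end by acknowledging that you cannot supply a proof — and the right thing to do here is to assess the accuracy of your analysis rather than compare it to a (nonexistent) paper proof.

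Your reformulations are all correct. Unwinding the definitions does identify part~(i) with the Schur-positivity of $\triv_{(S_{b-1}\wr S_a)\times S_{a-1}}\up^{S_{ab-1}} - \triv_{(S_b\wr S_{a-1})\times S_{b-1}}\up^{S_{ab-1}}$, and your Mackey computation establishing
\[
\rho^{(b)}_a/\chi^{(1)} \;=\; \rho^{(b)}_a\big\downarrow_{S_{ab-1}} \;=\; \triv_{(S_a\wr S_{b-1})\times S_{a-1}}\up^{S_{ab-1}} \;=\; \rho^{(b-1)}_a\boxtimes\chi^{(a-1)}
\]
is precisely the $u-t=1$ case of the paper's Lemma~\ref{lem:16.6}. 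Your observation that part~(ii) is then literally the restriction to $S_{ab-1}$ of Foulkes' inequality $\rho^{(b)}_a\ge\rho^{(a)}_b$ is sound, and indeed shows that (ii) is a \emph{formal consequence} of Foulkes' Conjecture and hence holds unconditionally for all $a$ where Foulkes is known. The trivial case $a=1$ and the factorisation $(\rho^{(a)}_{a-1}-\rho^{(a-1)}_a)\boxtimes\chi^{(a-1)}$ for $a=b$ are also correct.

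The one place the paper is slightly sharper than your account: the paper does not merely note (ii) follows from Foulkes for $(a,b)$, but shows that (ii) follows from (i) together with the \emph{strictly smaller} case $\rho^{(b-1)}_a\ge\rho^{(a)}_{b-1}$ of Foulkes, via the chain $(\rho^{(b)}_a-\rho^{(a)}_b)/\chi^{(1)} = \rho^{(b-1)}_a\boxtimes\chi^{(a-1)} - \rho^{(a-1)}_b\boxtimes\chi^{(b-1)} \ge \rho^{(a)}_{b-1}\boxtimes\chi^{(a-1)} - \rho^{(a-1)}_b\boxtimes\chi^{(b-1)}$; the last expression is exactly the virtual character in (i). This positions (i) as the potentially more fundamental of the two statements and would make an induction on $b$ viable if (i) could be attacked independently. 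Your honest conclusion — that expanding coefficientwise via Theorem~\ref{thm:14.6i} always leaves residual comparisons of genuine Foulkes difficulty, and that an unconditional proof would require an explicit $S_{ab-1}$-equivariant injection between the two relevant families of set partitions — accurately reflects the current state of the problem, and indeed this is why the statement remains a conjecture in the paper.
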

In other words, we conjecture that the two virtual characters in (i) and (ii) are in fact genuine characters of $S_{ab-1}$, i.e.~the integer linear combinations of irreducible characters only have non-negative coefficients.

Conjecture~\ref{conj:18} is motivated by Foulkes' Conjecture, which in the present notation predicts that $\rho_a^{(b)}-\rho_b^{(a)}\in\Ch(S_{ab})$. We also write this as $\rho_a^{(b)}\ge\rho_b^{(a)}$, viewed in the representation ring of $S_{ab}$. 
Indeed, suppose $a<b$. 
Then part (ii) follows from part (i) assuming only smaller cases of Foulkes' Conjecture: assuming $\rho^{(b-1)}_a\ge\rho^{(a)}_{b-1}$, substituting into $(\rho^{(b)}_a-\rho^{(a)}_b)/\chi^{(1)} = \rho^{(b-1)}_a\boxtimes\chi^{(a-1)} - \rho^{(a-1)}_b\boxtimes\chi^{(b-1)}$ (from Lemma~\ref{lem:16.6}) then gives $(\rho^{(b)}_a-\rho^{(a)}_b)/\chi^{(1)} \ge \rho^{(a)}_{b-1}\boxtimes\chi^{(a-1)} - \rho^{(a-1)}_b\boxtimes\chi^{(b-1)}$.

\bigskip
\section{Applications to Sylow branching coefficients}\label{sec:13}
For the remainder of this article, we fix $p=2$ and again consider Sylow branching coefficients $Z^\lambda$ for the prime 2.
In this section, we present several applications of the results on plethysms from Section~\ref{sec:pre-recursive-formula} as well as our main theorems in Section~\ref{sec:14} to the computation of Sylow branching coefficients.
In particular, we make use of the connection between plethysms and Sylow branching coefficients via various wreath product groups: plethysms can be used to describe character restrictions from $S_{mn}$ to $S_m\wr S_n$, while the Sylow 2-subgroup $P_{mn}$ of $S_{mn}$ is isomorphic to $P_m\wr P_n$ whenever $m$ is a power of 2. (Again, we recall Notation~\ref{not:convention} and Remark~\ref{rem:convention} regarding wreath products involving $P_n$.)

\medskip

We first record a simplification of Theorem A when $m=2$. By observing that $a^\phi_{\theta,(1)}=\delta_{\phi,\theta}$ when $\phi$ and $\theta$ are partitions, substituting $m=2$ into Theorem A gives 
\begin{equation}\label{eqn:14.4}
	a^\mu_{\lambda',(2)} = \sum_{i=0}^k (-1)^{k+i} \sum_{\substack{\alpha\vdash k+i\\\beta\vdash i}} \left(\sum_{\sigma\vdash 2i}c^\alpha_{\sigma,(k-i)}\cdot a^\sigma_{\beta',(2)}\right)\cdot \left(\sum_{\tau\vdash n-i} c^{\hat{\mu}}_{\tau,\alpha}\cdot c^\lambda_{\tau,\beta}\right).
\end{equation}
In particular,
\begin{itemize}
	\item When $k=0$, \eqref{eqn:14.4} simplifies to 
	$a^\mu_{\lambda',(2)} = c^{\hat{\mu}}_{\lambda,\emptyset} = \delta_{\hat{\mu},\lambda}$ (cf.~Corollary~\ref{cor:10.1,2,4}(i) below).
	\item When $k=1$, \eqref{eqn:14.4} simplifies to
	\begin{small}
		\[ a^\mu_{\lambda',(2)} = \sum_{\tau\vdash n-1}c^{\hat{\mu}}_{\tau,(2)}\cdot c^\lambda_{\tau,(1)} - c^{\hat{\mu}}_{\lambda,(1)}. \]
	\end{small}
	\item When $k=2$, \eqref{eqn:14.4} simplifies to
	\begin{footnotesize}
		\[ a^\mu_{\lambda',(2)} = \sum_{\tau\vdash n-2}\left( c^{\hat{\mu}}_{\tau,(4)}\cdot c^\lambda_{\tau,(1^2)} + c^{\hat{\mu}}_{\tau,(3,1)}\cdot c^\lambda_{\tau,(2)} + c^{\hat{\mu}}_{\tau,(2,2)}\cdot c^\lambda_{\tau,(1^2)} \right)
		- \sum_{\tau\vdash n-1}\left( c^{\hat{\mu}}_{\tau,(3)}\cdot c^\lambda_{\tau,(1)} + c^{\hat{\mu}}_{\tau,(2,1)}\cdot c^\lambda_{\tau,(1)} \right) + c^{\hat{\mu}}_{\lambda,(2)}.\]
	\end{footnotesize}
\end{itemize}

\medskip

\subsection{Isotypical deflations}
Understanding isotypical deflations allows us to directly express certain Sylow branching coefficients in terms of those corresponding to smaller partitions.
\begin{lemma}\label{lem:isotypic}
	Fix $n\in\N$ and let $\mu\vdash 2n$. Then
	\begin{enumerate}[label=\textup{(\roman*)}]
		\item $Z^\mu=\sum_{\gamma\vdash n} a^\mu_{\gamma,(2)}\cdot Z^\gamma$.
		\item Suppose $\delta^\mu$ is isotypical, i.e.~$\delta^\mu=a\cdot\chi^\lambda$ for some $a\in\N$ and $\lambda\vdash n$. Then $a=a^\mu_{\lambda,(2)}$ and $Z^\mu=aZ^\lambda$. In particular, if $Z^\lambda=0$ then $Z^\mu=0$.
	\end{enumerate}
\end{lemma}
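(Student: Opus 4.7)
The plan is to exploit the identifications $P_{2n} \cong P_2 \wr P_n$ and $P_2 = S_2$, so that $P_{2n} = S_2 \wr P_n$ sits as a subgroup of $S_2 \wr S_n \leq S_{2n}$. The restriction $\chi^\mu\downarrow_{P_{2n}}$ then factors through the intermediate subgroup $S_2\wr S_n$, and the deflation $\delta^\mu$ captures precisely the piece of this restriction that will survive further restriction to $P_{2n}$.

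For part (i), I would apply Frobenius reciprocity twice to write
\[ Z^\mu = \langle \chi^\mu\down_{P_{2n}}, \triv_{P_{2n}}\rangle = \langle \chi^\mu\down_{S_2\wr S_n}, \triv_{S_2\wr P_n}\up^{S_2\wr S_n}\rangle, \]
and then compute the induced character on the right explicitly. The key observation is that the normal subgroup $N := S_2^{\times n}$ of $S_2\wr S_n$ is contained in $S_2\wr P_n$, with quotients $(S_2\wr S_n)/N = S_n$ and $(S_2\wr P_n)/N = P_n$. Because $\triv_{S_2\wr P_n}$ is inflated from $\triv_{P_n}$, its induction to $S_2\wr S_n$ is trivial on $N$ and descends to $\triv_{P_n}\up^{S_n} = \sum_{\gamma\vdash n} Z^\gamma\cdot \chi^\gamma$ on the quotient $S_n$. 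Now, the inflation of $\chi^\gamma$ from $S_n$ to $S_2\wr S_n$ is precisely $\cX(\triv_{S_2};\chi^\gamma)$, since $\widetilde{\triv_{S_2}^{\otimes n}}$ is the trivial character of $S_2\wr S_n$. Substituting yields
\[ Z^\mu \;=\; \sum_{\gamma\vdash n} Z^\gamma \cdot \langle \chi^\mu\down_{S_2\wr S_n}, \cX(\triv_{S_2};\chi^\gamma)\rangle \;=\; \sum_{\gamma\vdash n} Z^\gamma\cdot a^\mu_{\gamma,(2)}, \]
where the last equality is \eqref{eqn:a}.

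Part (ii) is then an immediate consequence of part (i): under the hypothesis $\delta^\mu = a\cdot\chi^\lambda$, the coefficient $a^\mu_{\gamma,(2)} = \langle \delta^\mu,\chi^\gamma\rangle$ equals $a$ when $\gamma = \lambda$ and vanishes otherwise, so the formula from (i) collapses to $Z^\mu = a\cdot Z^\lambda$. In particular $Z^\lambda = 0$ forces $Z^\mu = 0$.

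The only non-routine step is the identification $\triv_{S_2\wr P_n}\up^{S_2\wr S_n} = \sum_{\gamma\vdash n} Z^\gamma\cdot \cX(\triv_{S_2};\chi^\gamma)$, and I expect this to be the main (minor) obstacle; however, it is a standard instance of the fact that induction commutes with inflation whenever the intermediate subgroup contains the kernel of the relevant quotient map, and one can equivalently verify it by inspecting the trivial action of $\widetilde{\triv_{S_2}^{\otimes n}}$ directly from the definition of $\cX$ in Section~\ref{sec:prelims}. No further input from Theorem~A or the plethysm machinery of later sections is needed.
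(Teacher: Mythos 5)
Your argument is correct and amounts to the paper's proof read through Frobenius reciprocity, so this is essentially the same approach. The paper computes $Z^\mu=\langle \chi^\mu\down_{S_2\wr S_n}\down_{P_{2n}},\triv_{P_{2n}}\rangle$ by observing that $\triv_{P_{2n}}\down_{S_2^{\times n}}=(\triv_{S_2})^n$, so the only constituents of $\chi^\mu\down_{S_2\wr S_n}$ that survive the further restriction and pairing are those lying over $\triv_{S_2}^{\otimes n}$, namely the $\cX(\triv_{S_2};\chi^\gamma)$ with multiplicity $a^\mu_{\gamma,(2)}$; each restricts to $\cX(\triv_{S_2};\chi^\gamma\down_{P_n})$, contributing $Z^\gamma$. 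You instead push $\triv_{P_{2n}}$ upward and compute $\triv_{P_{2n}}\up^{S_2\wr S_n}=\sum_\gamma Z^\gamma\,\cX(\triv_{S_2};\chi^\gamma)$ from the commutativity of induction with inflation past the common normal subgroup $N=S_2^{\times n}$, then pair against $\chi^\mu\down_{S_2\wr S_n}$. These are dual phrasings of the same short calculation; both hinge on $P_{2n}\cong P_2\wr P_n$ with $P_2=S_2$ and on $\cX(\triv_{S_2};-)$ being inflation from $S_n$. Your reduction of part (ii) to part (i) matches the paper's.
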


\begin{proof}
	\noindent\textbf{(i)} Let $H:=P_2=S_2$ and note that $\triv_{P_{2n}}\down_{H^{\times n}} = (\triv_H)^n$ and $\Irr(S_2\wr S_n\mid (\triv_H)^n)=\{\cX(\triv_H;\chi^\gamma) \mid \gamma\vdash n \}$. Hence
	\[ Z^\mu = \langle \chi^\mu\down^{S_{2n}}_{S_2\wr S_n}\down_{P_2\wr P_n}, \triv_{P_{2n}}\rangle = \sum_{\gamma\vdash n} a^\mu_{\gamma,(2)}\cdot \langle \cX(\triv_H;\chi^\gamma\down^{S_n}_{P_n}), \cX(\triv_H;\triv_{P_n})\rangle = \sum_{\gamma\vdash n} a^\mu_{\gamma,(2)}\cdot Z^\gamma. 
	\]
	\noindent\textbf{(ii)} If $\delta^\mu$ is isotypical then $a^\mu_{\gamma,(2)}=0$ whenever $\gamma\ne\lambda$, and the assertions follow immediately from (i).
\end{proof}

\begin{corollary}\label{cor:10.1,2,4}
	Fix $n\in\N$. For the following partitions $\mu\vdash 2n$, the deflation $\delta^\mu$ (with respect to $S_n$) is irreducible and given as follows:
	\[ \begin{array}{rlp{0.3cm}l}
		\textup{(i)} & l(\mu)=n: && \delta^\mu=\chi^\lambda\text{ where } \lambda = (\mu-(1^n))';\\
		\textup{(ii)} & \mu=(2n-\ell,1^{\ell}),\ 0\le\ell\le n-1: && \delta^\mu = \chi^{(n-\ell,1^{\ell})};\\
		\textup{(iii)} & \mu\subseteq(3^n): && \delta^\mu=\chi^{\lambda'}\text{ where }\lambda=\square_{3,n}(\mu).
	\end{array} \]
\end{corollary}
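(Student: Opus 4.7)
The plan is to handle the three parts separately: (i) and (iii) follow immediately from results already established, while (ii) requires a direct application of Theorem A with telescoping sign cancellation.

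For (i), the key observation is that $l(\mu)=n$ forces $\mu=\hat{\mu}+(1^n)$. I would therefore apply Theorem~\ref{thm:4.4.3} with $\lambda:=\hat{\mu}$ and $m:=1$: since plethysm by $s_{(1)}$ is the identity, $a^{\hat{\mu}}_{\nu,(1)}=\delta_{\hat{\mu},\nu}$, and the theorem then gives $a^\mu_{\nu',(2)}=a^{\hat{\mu}+(1^n)}_{\nu',(2)}=a^{\hat{\mu}}_{\nu,(1)}=\delta_{\hat{\mu},\nu}$ for every $\nu\vdash n$. Hence $\delta^\mu=\chi^{\hat{\mu}'}$.

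For (iii), I would apply Proposition~\ref{prop:12.4} with $m_1=2$ and $m_2=1$. The hypothesis $\mu\subseteq(3^n)$ is exactly the needed containment $\mu\subseteq((m_1+m_2)^n)$, and $\mu_2=\square_{3,n}(\mu)=\lambda$ in the notation of the corollary. Since $m_1+m_2=3$ is odd, the proposition gives $\delta^\mu=\sgn_{S_n}\cdot\delta^\lambda$, where the latter deflation is computed with $m_2=1$. But when $m=1$ we have $S_1\wr S_n=S_n$ and $\chi^\lambda=\cX(\triv_{S_1};\chi^\lambda)$, so $\delta^\lambda=\chi^\lambda$. Therefore $\delta^\mu=\sgn_{S_n}\cdot\chi^\lambda=\chi^{\lambda'}$.

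For (ii), I would apply Theorem A with $\mu=(2n-\ell,1^\ell)$, $m=2$, $k=n-\ell-1$, and $\hat{\mu}=(2n-\ell-1)$, and systematically simplify each factor using the fact that $\hat{\mu}$ is a single row. First, in any nonzero term $a^{\hat{\mu}/\alpha}_{\lambda/\beta,(1)}=\langle\chi^{\lambda/\beta},\chi^{\hat{\mu}/\alpha}\rangle$, the shape $\hat{\mu}/\alpha$ must be a single row, forcing $\alpha=(n-\ell-1+i)$ and $\alpha/(k-i)=(2i)$. Second, Frobenius reciprocity together with Proposition~\ref{prop:thrall} (or a direct computation from $\triv_{S_{2i}}\down_{S_2\wr S_i}=\cX(\triv_{S_2};\chi^{(i)})$) gives $a^{(2i)}_{\beta',(2)}=\delta_{\beta,(1^i)}$. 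Third, $\chi^{\hat{\mu}/\alpha}=\chi^{(n-i)}$, so the remaining factor $a^{(n-i)}_{\lambda/(1^i),(1)}$ equals $1$ precisely when $\lambda/(1^i)$ is a horizontal strip, and a short Pieri-type check shows this restricts $\lambda$ to be either $(n-i,1^i)$ or (for $i\ge1$) $(n-i+1,1^{i-1})$. Fixing a hook $\lambda=(n-j,1^j)$ and examining which $i$ contribute to it, the two contributions at $i=j$ and $i=j+1$ carry signs $(-1)^{k+j}$ and $(-1)^{k+j+1}$ and cancel whenever both lie in $\{0,\dots,k\}$; only the boundary case $j=k$ survives, yielding $a^\mu_{(\ell+1,1^{n-\ell-1}),(2)}=1$ with all other coefficients zero, so conjugating back $\delta^\mu=\chi^{(n-\ell,1^\ell)}$.

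The main obstacle is the bookkeeping in part (ii): each individual simplification is routine, but one must coordinate the reductions on $\alpha$ and $\beta$ with the sign pattern $(-1)^{k+i}$ and the Pieri restriction on $\lambda$ to obtain the clean telescoping cancellation. The structural insight that the single-row shape of $\hat{\mu}$ propagates through the whole formula — forcing $\alpha$ to be a row, $\beta$ to be a column, and $\lambda$ into one of only two hook shapes for each $i$ — is what makes this bookkeeping tractable, and without it the raw Littlewood--Richardson computation would be unwieldy.
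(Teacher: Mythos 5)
Your parts (i) and (iii) match the paper's proof exactly: part (i) is precisely the $m=1$ case of Theorem~\ref{thm:4.4.3}, and part (iii) is the same application of Proposition~\ref{prop:12.4} with the roles of $m_1$ and $m_2$ swapped (the paper applies it as $\delta^\lambda = \sgn_{S_n}\cdot\delta^\mu$ and rearranges; you apply it as $\delta^\mu = \sgn_{S_n}\cdot\delta^\lambda$; both are equivalent).

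For part (ii) you take a genuinely different route. The paper does not apply the alternating-sum formula of Theorem~A directly; instead it uses Theorem~\ref{thm:14.6ii} (the $i=k$ boundary case) with $\nu=(2n-\ell,1^{n-1})$, $\hat\nu=(n+k)$ and $k=n-\ell-1$, obtaining for $1\le k\le n-1$ the two-term identity $\delta^{H(n-k)}+\delta^{H(n-k-1)}=\chi^{h(n-k)}+\chi^{h(n-k-1)}$ (in their notation $H(j)=(2n-j,1^j)$, $h(j)=(n-j,1^j)$) together with the $k=0$ base case $\delta^{H(n-1)}=\chi^{h(n-1)}$, and then telescopes these by an induction on $k$. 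You instead unfold the full alternating sum of Theorem~A for $\mu=(2n-\ell,1^\ell)$, $\hat\mu=(2n-\ell-1)$, and observe that the single-row shape of $\hat\mu$ forces $\alpha=(k+i)$ and $\beta=(1^i)$, reducing each term to $\pm\langle\chi^\lambda,\chi^{(n-i)}\boxtimes\chi^{(1^i)}\rangle$; the Pieri expansion $\chi^{(n-i)}\boxtimes\chi^{(1^i)}=\chi^{(n-i,1^i)}+\chi^{(n-i+1,1^{i-1})}$ then makes consecutive terms cancel, leaving only the boundary $i=k$. The two approaches do the same telescoping, yours internally within one application of Theorem~A and the paper's externally via an induction over $k$; your version is self-contained in a single formula, while the paper's is shorter on the page because Theorem~\ref{thm:14.6ii} already absorbs the sign bookkeeping. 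One small slip in your writeup: you conclude ``$a^\mu_{(\ell+1,1^{n-\ell-1}),(2)}=1$,'' but Theorem~A computes $a^\mu_{\lambda',(2)}$, and the surviving $\lambda$ is $(\ell+1,1^{n-\ell-1})$, so the coefficient that equals $1$ is $a^\mu_{(n-\ell,1^\ell),(2)}$ directly (no further conjugation needed). Your final conclusion $\delta^\mu=\chi^{(n-\ell,1^\ell)}$ is nonetheless correct.
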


\begin{proof}
	\noindent\textbf{(i)} 
	This is precisely the case of $m=1$ in Theorem~\ref{thm:4.4.3}.
	
	\medskip
	
	\noindent\textbf{(ii)} Let $H(j):=(2n-j,1^j)$ for $0\le j\le 2n-1$ and let $h(j):=(n-j,1^j)$ for $0\le j\le n-1$.
	We use Theorem~\ref{thm:14.6ii} with $m=2$ and $\nu=(2n-\ell,1^{n-1})\vdash 2n+k$ where $k:=n-\ell-1$, giving $\hat{\nu}=(n+k)$. When $1\le k\le n-1$, this gives for all $\lambda\vdash n$ that
	\[ a^{\nu/(1^k)}_{\lambda',(2)} = \sum_{\sigma\vdash 2n} c^\nu_{\sigma,(1^k)}\cdot a^\sigma_{\lambda',(2)} = \langle \chi^\lambda, \sgn_{S_n}\cdot\ (\delta^{H(n-k)}+\delta^{H(n-k-1)})\rangle \]
	is equal to
	\[ \sum_{\substack{\alpha\vdash 2k\\\beta\vdash k}} a^{\alpha}_{\beta',(2)}\cdot a^{\hat{\nu}/\alpha}_{\lambda/\beta,(1)} = c^\lambda_{(n-k),(1^k)} = \langle \chi^\lambda, \chi^{h(k)}+\chi^{h(k-1)}\rangle. \]
	Using \eqref{eqn:sign}, we hence deduce
	\[ \delta^{H(n-k)}+\delta^{H(n-k-1)}=\chi^{h(n-k)}+\chi^{h(n-k-1)}. \]
	When $k=0$, we similarly obtain $\delta^{H(n-1)}=\chi^{h(n-1)}$, so inductively we deduce that $\delta^{H(\ell)}=\chi^{h(\ell)}$ for all $0\le \ell\le n-1$.
	
	\medskip
	
	\noindent\textbf{(iii)} By Proposition~\ref{prop:12.4} with $m_1=1$ and $m_2=2$, we have that $\delta^{\lambda}=\sgn_{S_n}\cdot\ \delta^\mu$ where $\lambda:=\square_{3,n}(\mu)$. Hence $\delta^\mu=\sgn_{S_n}\cdot\ \delta^\lambda= \sgn_{S_n}\cdot\ \chi^\lambda=\chi^{\lambda'}$.
\end{proof}

\begin{remark}\label{rem:10}
	\begin{itemize}
		\item Corollary~\ref{cor:10.1,2,4}(ii) describes a special case of plethysms for hook shapes, which were computed more generally in \cite{LR}. For hooks $\mu=(2n-\ell,1^\ell)$ where $\ell\ge n$, we have that $\delta^\mu=0$ by Lemma~\ref{lem:tall-pleth}.
		\item Lemma~\ref{lem:isotypic} and Corollary~\ref{cor:10.1,2,4} allow us to determine $Z^\mu$ for $\mu\vdash 2n$ such that $l(\mu)=n$ or $\mu\subseteq(3^n)$ via observing that $Z^\mu=Z^\lambda$ for some $\lambda\vdash n$. We also recover $Z^\mu$ when $\mu$ is a hook of $2n$, which agrees with Proposition~\ref{prop:hook}.
		\item Lemmas~\ref{lem:tall-pleth} and~\ref{lem:isotypic}(i) together also allow us to recover Lemma~\ref{lem:tall} in the even case. \hfill$\lozenge$
	\end{itemize}
\end{remark}

In addition to those described in Corollary~\ref{cor:10.1,2,4}, the deflation $\delta^{(5,5)}$ (with respect to $S_5$) is also irreducible. It would be interesting to classify all of the partitions $\mu\vdash 2n$ such that the deflation $\delta^\mu$ is irreducible, and more generally to investigate whether isotypical deflations are always irreducible (as is the case for all $|\mu|\le 32$).

\bigskip
\subsection{Inside partitions}\label{sec:inside-partition}
In this section, we consider statistics $\sn_i(\mu)$ of partitions $\mu$ involving the removal of its rows and columns, and give sufficient conditions for $Z^\mu$ to be zero in terms of these statistics. First, we describe the special cases of $\sn_1(\mu)$ (which will turn out to equal $l(\mu)$) and $\sn_2(\mu)$, before introducing $\sn_i(\mu)$ in full in Definition~\ref{def:stats}.

\begin{definition}\label{def:inside}
	Let $\mu=(\mu_1,\mu_2,\dotsc,\mu_{l(\mu)})$ be a partition.
	\begin{enumerate}[label=\textup{(\roman*)}]
		\item Define $I(\mu):=(\mu_2-1,\mu_3-1,\dotsc,\mu_{l(\mu)}-1)$, where we remove any trailing zeros. In other words, $I(\mu)$ is obtained from $\mu$ by removing its first row and column, leaving only the `inside partition'.
		
		\item Define $\tilde{\mu}:= (\mu-(1^{l(\mu)}))'$. In other words, $\tilde{\mu}$ is obtained from $\mu$ by removing its first column and then taking its conjugate.
	\end{enumerate}
\end{definition}

\begin{remark}
	\begin{enumerate}[label=\textup{(\roman*)}]
		\item The partition $I(\mu)$ equals $\mu/H_{1,1}(\mu)$, where $H_{1,1}(\mu)$ denotes the largest hook of $\mu$. See \cite{JK,Olsson} for further background on hooks and the combinatorics of partitions.
		\item Using Definition~\ref{def:inside}, the deflation in Corollary~\ref{cor:10.1,2,4}(i) may be written as $\delta^\mu = \chi^{\tilde{\mu}}$. \hfill$\lozenge$
	\end{enumerate}
\end{remark}

Let $n\in\N$ and $\mu\vdash 2n$. In Lemma~\ref{lem:tall}, we showed that if the statistic $l(\mu)$ was sufficiently large (namely $l(\mu)>n$) then $Z^\mu=0$. The next statistic we consider is $l(\mu)-|I(\mu)|$: if this is sufficiently large, meaning $l(\mu)-|I(\mu)|>\frac{n}{2}$, then we again show that $Z^\mu=0$ (Corollary~\ref{cor:13}). If $n$ is even and $l(\mu)-|I(\mu)|=\frac{n}{2}$, then we use Theorem A to compute $Z^\mu$ recursively (Corollary~\ref{cor:13.3,4,5}).

\begin{corollary}\label{cor:13}
	Fix $n\in\N$ and let $\mu\vdash 2n$.
	\begin{enumerate}[label=\textup{(\roman*)}]
		\item For each $\lambda\vdash n$ such that $a^\mu_{\lambda,(2)}>0$, we have that $l(\lambda)\ge l(\mu)-|I(\mu)|$.
		\item Let $\varepsilon\in\{0,1\}$ such that $\varepsilon\equiv n$ (mod 2). If $l(\mu)-|I(\mu)|>\frac{n+\varepsilon}{2}$, then $Z^\mu=0$.
	\end{enumerate}
\end{corollary}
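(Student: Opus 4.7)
The plan is to establish (i) via Theorem~\ref{thm:14.6i} applied with parameter $k = n - l(\mu)$, and then to deduce (ii) as a direct consequence of (i) together with Lemmas~\ref{lem:isotypic}(i) and~\ref{lem:tall}.

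For (i), fix $\lambda \vdash n$ with $a^\mu_{\lambda,(2)} > 0$ and set $\eta := \lambda'$, so $l(\lambda) = \eta_1$. By Lemma~\ref{lem:tall-pleth}, $l(\mu) \le n$, and hence $k := n - l(\mu) \in \{0, 1, \dotsc, n-1\}$; moreover $\hat\mu := \mu - (1^{l(\mu)})$ has $\hat\mu_1 = \mu_1 - 1$. Applying Theorem~\ref{thm:14.6i} (taking the theorem's ``$\lambda$'' to be $\eta$) yields
\[ a^\mu_{\lambda,(2)} = \sum_{i=0}^{k} (-1)^{k+i} \sum_{\substack{\alpha \vdash k+i \\ \beta \vdash i}} a^{\alpha/(k-i)}_{\beta',(2)} \cdot a^{\hat\mu/\alpha}_{\eta/\beta,(1)}. \]
Expanding $a^{\hat\mu/\alpha}_{\eta/\beta,(1)} = \langle \chi^{\hat\mu/\alpha}, \chi^{\eta/\beta}\rangle = \sum_\tau c^{\hat\mu}_{\alpha,\tau}\,c^{\eta}_{\beta,\tau}$, every nonvanishing contribution to the sum requires the existence of some partition $\tau$ with $c^{\hat\mu}_{\alpha,\tau} > 0$ and $c^{\eta}_{\beta,\tau} > 0$.

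The technical core is then the Littlewood--Richardson first-row inequality $\tau_1 \ge \hat\mu_1 - \alpha_1$. This follows from unpacking Theorem~\ref{thm:LR}: the first row of the skew shape $\hat\mu/\alpha$ contains $\hat\mu_1 - \alpha_1$ cells, whose rightmost cell begins the reading word and hence must carry the entry $1$ in order to be good at its first letter, after which the weakly-increasing-along-rows condition forces every cell in that row of any content-$\tau$ LR filling to contain $1$. Combined with $\tau \subseteq \eta$ (a consequence of the symmetry $c^\eta_{\beta,\tau} = c^\eta_{\tau,\beta}$), we conclude that
\[ l(\lambda) = \eta_1 \;\ge\; \tau_1 \;\ge\; \hat\mu_1 - \alpha_1 \;\ge\; (\mu_1 - 1) - |\alpha| \;=\; \mu_1 - 1 - k - i \;\ge\; \mu_1 - 1 - 2k, \]
using $\alpha_1 \le |\alpha| = k+i$ and $i \le k$. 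Hence $l(\lambda) < \mu_1 - 1 - 2k$ would force every term in the displayed formula to vanish, contradicting $a^\mu_{\lambda,(2)} > 0$. The identity $\mu_1 - 1 - 2k = \mu_1 + 2l(\mu) - 2n - 1 = l(\mu) - |I(\mu)|$ is a direct calculation, completing (i).

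For (ii), Lemma~\ref{lem:isotypic}(i) gives $Z^\mu = \sum_{\gamma \vdash n} a^\mu_{\gamma,(2)}\,Z^\gamma$. Under the hypothesis $l(\mu) - |I(\mu)| > \tfrac{n+\varepsilon}{2}$, part (i) forces $l(\gamma) > \tfrac{n+\varepsilon}{2}$ for every $\gamma$ contributing to this sum, and Lemma~\ref{lem:tall} then yields $Z^\gamma = 0$; hence $Z^\mu = 0$. The only non-routine ingredient in the whole argument is the LR row bound $\tau_1 \ge \hat\mu_1 - \alpha_1$, which is itself an immediate consequence of the definition of a good sequence.
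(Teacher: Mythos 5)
Your proof is correct and follows essentially the same route as the paper's: apply Theorem~A (in its $m=2$ form) with $k=n-l(\mu)$, observe via the Littlewood--Richardson rule that any nonvanishing term forces $\lambda_1 \geq \hat\mu_1 - |\alpha| \geq l(\mu) - |I(\mu)|$ (the paper works directly with the first row of $\lambda'$ rather than introducing $\eta$, but the computation is identical), and then deduce~(ii) from Lemma~\ref{lem:isotypic}(i) and Lemma~\ref{lem:tall}. Your sharper intermediate bound $\tau_1 \geq \hat\mu_1 - \alpha_1$ via the reading-word argument is a clean refinement, though the paper gets away with the coarser and more immediate observation that $\tau \subseteq \hat\mu$ and $|\hat\mu| - |\tau| = |\alpha|$ already give $\tau_1 \geq \hat\mu_1 - |\alpha|$.
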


\begin{proof}
	\noindent\textbf{(i)} By Lemma~\ref{lem:tall-pleth}, if $l(\mu)>n$ then $a^\mu_{\lambda,(2)}=0$ for all $\lambda\vdash n$, so we may assume that $l(\mu)=n-k$ for some $k\ge 0$. Let $\hat{\mu}:=\mu-(1^{n-k})\vdash n+k$. We show that if $\lambda\vdash n$ satisfies $a^\mu_{\lambda',(2)}>0$, then $\lambda_1\ge l(\mu)-|I(\mu)|$.
	
	First note $\hat{\mu}_1=|\hat{\mu}|-|I(\mu)|=n+k-|I(\mu)|=l(\mu)-|I(\mu)|+2k$. From \eqref{eqn:14.4}, $a^\mu_{\lambda',(2)}>0$ implies that there exist $i\in\{0,\dotsc,k\}$ and $\tau\vdash n-i$ such that $c^{\hat{\mu}}_{\tau,\alpha}\cdot c^\lambda_{\tau,\beta}>0$ for some $\alpha\vdash k+i$ and $\beta\vdash i$. That is, $[\lambda]$ can be obtained by removing from $[\hat{\mu}]$ a skew shape with a Littlewood--Richardson filling of type $\alpha$ (to produce $[\tau]$), then adding on a skew shape with a Littlewood--Richardson filling of type $\beta$. In particular, $\lambda_1\ge \hat{\mu}_1-|\alpha|=l(\mu)-|I(\mu)|+2k-k-i\ge l(\mu)-|I(\mu)|$.
	
	\noindent\textbf{(ii)} This follows from part (i) of the present corollary, Lemma~\ref{lem:tall} and Lemma~\ref{lem:isotypic}(i).
\end{proof}

\begin{proposition}\label{prop:13.2}
	Let $n\in\N$ be even and $\mu\vdash 2n$. 
	Suppose $l(\mu)-|I(\mu)|=\frac{n}{2}$ and $l(\mu)=n-k$ for some $k\in\N_0$. Then for each $\lambda\vdash n$ such that $l(\lambda)=\frac{n}{2}$, 
	\[ a^\mu_{\lambda,(2)} = c^{\lambda-(1^{n/2})}_{I(\mu)',(k)}. \]
\end{proposition}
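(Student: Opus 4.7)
The plan is to apply Theorem~A in the case $m=2$ and show that only a single term of the resulting sum survives under the hypotheses. Set $\rho:=\lambda'\vdash n$, so $\rho_1 = l(\lambda) = n/2$ and $\rho'=\lambda$. Applying Theorem~A to compute $a^\mu_{\rho',(2)} = a^\mu_{\lambda,(2)}$ and expanding
\[ a^{\hat{\mu}/\alpha}_{\rho/\beta,(1)} = \langle\chi^{\hat{\mu}/\alpha},\chi^{\rho/\beta}\rangle = \sum_{\tau\vdash n-i} c^{\hat{\mu}}_{\tau,\alpha}\cdot c^{\rho}_{\tau,\beta} \]
via the Littlewood--Richardson rule, one obtains
\[ a^\mu_{\lambda,(2)} = \sum_{i=0}^k (-1)^{k+i} \sum_{\substack{\alpha\vdash k+i\\\beta\vdash i}} a^{\alpha/(k-i)}_{\beta',(2)}\cdot \sum_{\tau\vdash n-i} c^{\hat{\mu}}_{\tau,\alpha}\cdot c^{\rho}_{\tau,\beta}. \]

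First I would show that only the $i=k$ term contributes. The hypotheses give $|I(\mu)| = n/2-k$ and hence $\hat{\mu}_1 = \mu_1-1 = n/2+2k$. Since the top row of any skew LR filling consists entirely of $1$s, $c^{\hat{\mu}}_{\tau,\alpha}\neq 0$ forces $\hat{\mu}_1-\tau_1\leq\alpha_1\leq k+i$, i.e.\ $\tau_1\geq n/2+k-i$, while $c^{\rho}_{\tau,\beta}\neq 0$ requires $\tau\subseteq\rho$ and hence $\tau_1\leq\rho_1 = n/2$. Combining gives $i\geq k$, so $i=k$. The bounds then saturate to $\tau_1 = n/2$ and (since $\alpha_1\geq 2k = |\alpha|$) $\alpha = (2k)$. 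Using Frobenius reciprocity, the identity $\chi^{(2k)}\down_{S_2\wr S_k} = \cX(\chi^{(2)};\chi^{(k)})$, and the orthonormality of $\{\cX(\chi^{(2)};\chi^\psi) : \psi\vdash k\}\subseteq\Irr(S_2\wr S_k)$, one finds $a^{(2k)}_{\beta',(2)} = \delta_{\beta,(1^k)}$, so the formula collapses to
\[ a^\mu_{\lambda,(2)} = \sum_{\tau\vdash n-k} c^{\hat{\mu}}_{\tau,(2k)}\cdot c^{\rho}_{\tau,(1^k)}. \]

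Next I would pin down the unique contributing $\tau$ and evaluate the surviving coefficients. The coefficient $c^{\hat{\mu}}_{\tau,(2k)}$ equals $1$ exactly when $\hat{\mu}/\tau$ is a horizontal strip of size $2k$; combined with $\tau_1 = n/2 = \hat{\mu}_1 - 2k$ and $|\hat{\mu}/\tau| = 2k$, this skew shape must lie entirely in row $1$, uniquely determining $\tau = (n/2)\sqcup I(\mu)$, which is a valid partition since $I(\mu)_1\leq|I(\mu)| = n/2-k\leq n/2$. For this $\tau$, the LR condition of type $(1^k)$ forces the reading word to be the unique lattice word $1,2,\dots,k$; together with strictly increasing rows this permits at most one cell per row in the skew shape, so $c^{\rho}_{\tau,(1^k)}\in\{0,1\}$, equal to $1$ precisely when $\rho/\tau$ is a vertical strip of size $k$.

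Finally I would convert this to the claimed LR coefficient. Using $(\alpha\sqcup\gamma)' = \alpha'+\gamma'$, we have $\tau' = (1^{n/2}) + I(\mu)'$ and $\rho' = \lambda$, so $\rho/\tau$ is a vertical strip of size $k$ iff $\lambda/\big((1^{n/2})+I(\mu)'\big)$ is a horizontal strip of size $k$. As sets of cells, this latter skew shape differs from $(\lambda-(1^{n/2}))/I(\mu)'$ only by a uniform column shift of $1$, so the two are horizontal strips simultaneously, which is precisely the condition $c^{\lambda-(1^{n/2})}_{I(\mu)',(k)} = 1$. The main obstacle is the bookkeeping around conjugation: unwrapping Theorem~A (stated for $a^\mu_{\lambda',(m)}$) via $\rho = \lambda'$, and then matching the vertical strip condition on $\rho/\tau$ with the horizontal strip condition on $(\lambda-(1^{n/2}))/I(\mu)'$ through conjugation and a column shift.
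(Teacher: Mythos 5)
Your proof is correct and follows essentially the same route as the paper's: apply Theorem~A with $m=2$ and $\lambda$ replaced by $\lambda'$, use the bound $\hat\mu_1=\tfrac{n}{2}+2k\le\tau_1+\alpha_1\le\tfrac{n}{2}+(k+i)$ to collapse the alternating sum to the single term $i=k$, $\alpha=(2k)$, deduce $a^{(2k)}_{\beta',(2)}=\delta_{\beta,(1^k)}$, and pin down $\tau=(\tfrac{n}{2},I(\mu))$. The only cosmetic difference is the last conversion: the paper peels off the saturated first row of $\lambda'/(\tfrac{n}{2},I(\mu))$ and then conjugates, whereas you conjugate first and then observe the one-column translation between $\lambda/((1^{n/2})+I(\mu)')$ and $(\lambda-(1^{n/2}))/I(\mu)'$; these are the same manipulation in two orders.
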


\begin{proof}
	Let $\hat{\mu}:=\mu-(1^{n-k})=(\frac{n}{2}+2k,I(\mu))$. By Theorem A,
	\[ a^\mu_{\lambda,(2)} = \sum_{i=0}^k (-1)^{k+i}\cdot \sum_{\substack{\alpha\vdash k+i\\\beta\vdash i}} a^{\alpha/(k-i)}_{\beta',(2)}\cdot a^{\hat{\mu}/\alpha}_{\lambda'/\beta,(1)} = \sum_{i=0}^k (-1)^{k+i}\cdot \sum_{\substack{\alpha\vdash k+i\\\beta\vdash i}} a^{\alpha/(k-i)}_{\beta',(2)}\cdot\sum_{\varepsilon\vdash n-i}c^{\lambda'}_{\varepsilon,\beta}\cdot c^{\hat{\mu}}_{\varepsilon,\alpha}. \]
	Now if $c^{\lambda'}_{\varepsilon,\beta}>0$, then $\varepsilon_1\le\lambda'_1=\frac{n}{2}$. On the other hand, $c^{\hat{\mu}}_{\varepsilon,\alpha}>0$ implies $\frac{n}{2}+2k=\hat{\mu}_1\le\varepsilon_1+\alpha_1$. Since $\alpha\vdash k+i\le 2k$, then $c^{\lambda'}_{\varepsilon,\beta}\cdot c^{\hat{\mu}}_{\varepsilon,\alpha}>0$ only if $\varepsilon_1=\frac{n}{2}$ and $\alpha_1=2k$, i.e.~$i=k$ and $\alpha=(2k)$. Thus
	\[ a^\mu_{\lambda,(2)} = \sum_{\beta\vdash k}a^{(2k)}_{\beta',(2)}\cdot a^{\hat{\mu}/(2k)}_{\lambda'/\beta,(1)} = a^{\hat{\mu}/(2k)}_{\lambda'/(1^k),(1)} = \sum_{\varepsilon\vdash n-k} c^{\lambda'}_{\varepsilon,(1^k)}\cdot c^{\hat{\mu}}_{\varepsilon,(2k)} = c^{\lambda'}_{(\frac{n}{2},I(\mu)),(1^k)}, \]
	where the final equality holds since we must have $\varepsilon_1=\frac{n}{2}$ and $\hat{\mu}-(2k)=(\frac{n}{2},I(\mu))$. Finally, since $\lambda'_1=\frac{n}{2}$,
	\[ a^\mu_{\lambda,(2)} = c^{\lambda'}_{(\frac{n}{2},I(\mu)),(1^k)} = c^{(\lambda'_2,\lambda'_3,\dotsc)}_{I(\mu),(1^k)} = c^{\lambda-(1^{n/2})}_{I(\mu)',(k)} \]
	as desired.
\end{proof}

\begin{corollary}\label{cor:13.3,4,5}
	Let $n$, $\mu$, 
	and $k$ be as defined in Proposition~\ref{prop:13.2}. Then
	\[ Z^\mu=\sum_{\nu\vdash \frac{n}{2}} c^\nu_{I(\mu),(1^k)}\cdot Z^\nu. \]
	In particular, if $k>\lceil\frac{n}{4}\rceil$, then $Z^\mu=0$. Moreover,
	\begin{enumerate}[label=\textup{(\roman*)}]
		\item Suppose $I(\mu)=(1^{\frac{n}{2}-k})$. Then
		\[ Z^\mu = \begin{cases}
			1 & \text{if }k\in\{\lceil\frac{n}{4}\rceil,\lfloor\frac{n}{4}\rfloor\},\\ 0 & \text{otherwise}.
		\end{cases}\]
		
		\item Suppose $I(\mu)=(\frac{n}{2}-k)$. Then $Z^\mu=\binom{b}{k}$, where $b$ is the number of digits in the binary expansion of $\frac{n}{2}$ (i.e.~$\frac{n}{2}=2^{n_1}+\cdots+2^{n_b}$ for some $n_1>\cdots>n_b\ge 0$).
	\end{enumerate}
\end{corollary}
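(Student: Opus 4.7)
The plan is to chain together Lemma~\ref{lem:isotypic}, Proposition~\ref{prop:13.2}, and Corollary~\ref{cor:10.1,2,4}(i) applied at the \emph{half-scale} $n/2$. Starting from Lemma~\ref{lem:isotypic}(i), we have $Z^\mu = \sum_{\gamma\vdash n} a^\mu_{\gamma,(2)}\cdot Z^\gamma$. By Corollary~\ref{cor:13}(i), $a^\mu_{\gamma,(2)}>0$ forces $l(\gamma)\ge l(\mu)-|I(\mu)|=n/2$, while Lemma~\ref{lem:tall} (applied to $\gamma\vdash n$ with $n$ even) gives $Z^\gamma=0$ whenever $l(\gamma)>n/2$. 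So only $\gamma\vdash n$ with $l(\gamma)=n/2$ contribute, and the assignment $\gamma\mapsto \nu := (\gamma-(1^{n/2}))'$ is a bijection from this set onto $\cP(n/2)$.

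For such $\gamma$, Proposition~\ref{prop:13.2} yields $a^\mu_{\gamma,(2)} = c^{\nu'}_{I(\mu)',(k)}$, and by the standard conjugation symmetry of Littlewood--Richardson coefficients ($c^\alpha_{\beta,\delta}=c^{\alpha'}_{\beta',\delta'}$) this equals $c^\nu_{I(\mu),(1^k)}$. Meanwhile, Corollary~\ref{cor:10.1,2,4}(i) applied to $\gamma\vdash 2\cdot(n/2)$ with $l(\gamma)=n/2$ gives that the deflation $\delta^\gamma$ with respect to $S_{n/2}$ equals $\chi^\nu$, which is isotypical with multiplicity $1$. By Lemma~\ref{lem:isotypic}(ii) we conclude $Z^\gamma=Z^\nu$. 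Combining these, $Z^\mu = \sum_{\nu\vdash n/2} c^\nu_{I(\mu),(1^k)}\cdot Z^\nu$.

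For the bound $k>\lceil n/4\rceil$: if $c^\nu_{I(\mu),(1^k)}>0$ then by Pieri's rule $\nu/I(\mu)$ is a vertical $k$-strip, so the $k$ added nodes lie in $k$ distinct rows and $l(\nu)\ge k>\lceil n/4\rceil$. Since $\nu\vdash n/2$, Lemma~\ref{lem:tall} (whose threshold is $\lceil n/4\rceil$ regardless of the parity of $n/2$) gives $Z^\nu=0$, hence $Z^\mu=0$.

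For case (i), Pieri applied to $I(\mu)=(1^{n/2-k})$ shows that the only $\nu\vdash n/2$ with $c^\nu_{(1^{n/2-k}),(1^k)}>0$ (always equal to $1$) are $\nu=(2^j,1^{n/2-2j})$ for $0\le j\le\min(k,n/2-k)$. By Lemma~\ref{lem:2col}, only those with $n/2-2j\in\{0,1\}$ contribute, and then $Z^\nu=1$. A short parity check modulo $4$ shows that $j\le\min(k,n/2-k)$ forces $k\in\{\lfloor n/4\rfloor,\lceil n/4\rceil\}$, with a single valid $\nu$ in each case, giving $Z^\mu=1$. For case (ii), Pieri applied to $I(\mu)=(n/2-k)$ produces exactly the two hooks $\nu_1=(n/2-k+1,1^{k-1})$ and $\nu_2=(n/2-k,1^k)$, each with coefficient $1$. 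Proposition~\ref{prop:hook} evaluates these to $\binom{b-1}{k-1}$ and $\binom{b-1}{k}$ respectively, which sum to $\binom{b}{k}$ by Pascal's identity. The only mildly delicate step in the whole argument is lining up the bijection $\gamma\leftrightarrow\nu$ together with the conjugation symmetry so that the formula emerges in the stated form $c^\nu_{I(\mu),(1^k)}$ rather than $c^{\nu'}_{I(\mu)',(k)}$; everything else is a direct invocation of previously recorded facts.
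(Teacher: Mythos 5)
Your argument is correct and follows essentially the same route as the paper: Lemma~\ref{lem:isotypic}(i) together with Lemma~\ref{lem:tall} and Corollary~\ref{cor:13} restricts the sum to $\gamma\vdash n$ with $l(\gamma)=n/2$, Proposition~\ref{prop:13.2} and Corollary~\ref{cor:10.1,2,4}(i) (with conjugation symmetry of LR coefficients) convert this to the displayed sum over $\nu\vdash n/2$, and then Pieri plus Lemmas~\ref{lem:2col} and~\ref{prop:hook} handle the vanishing bound and cases (i), (ii). You spell out the Pieri computations and the Pascal step more explicitly than the paper does, but the structure is identical; everything checks out.
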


\begin{proof}
	By Lemma~\ref{lem:isotypic}(i), 
	Lemma~\ref{lem:tall} 
	and Corollary~\ref{cor:13}, 
	\[ Z^\mu = \sum_{\substack{\lambda\vdash n\\l(\lambda)=\frac{n}{2}}} a^\mu_{\lambda,(2)}\cdot Z^\lambda. \]
	Writing $\lambda\vdash n$ with $l(\lambda)=\tfrac{n}{2}$ as $\lambda=(1^{\tfrac{n}{2}})+\nu'$ for some $\nu\vdash\tfrac{n}{2}$, we find by Proposition~\ref{prop:13.2} that 
	\[ Z^\mu = \sum_{\nu\vdash\tfrac{n}{2}} c^{\nu'}_{I(\mu)',(k)}\cdot Z^{(1^{\frac{n}{2}})+\nu'}. \]
	But $\delta^\lambda=\chi^\nu$ by Corollary~\ref{cor:10.1,2,4}(i), and so $Z^\lambda=Z^\nu$ by Lemma~\ref{lem:isotypic}(ii). Combining with the well known property $c^{\gamma'}_{\alpha',\beta'} = c^\gamma_{\alpha,\beta}$ of Littlewood--Richardson coefficients, we therefore obtain
	\[ Z^\mu = \sum_{\nu\vdash\tfrac{n}{2}} c^{\nu}_{I(\mu),(1^k)}\cdot Z^{\nu}. \]
	We note that $Z^\nu=0$ if $l(\nu)>\lceil\frac{n}{4}\rceil$ by Lemma~\ref{lem:tall}, while $c^\nu_{I(\mu),(1^k)}>0$ implies $l(\nu)>k$. It follows that $Z^\mu=0$ if $k>\lceil\frac{n}{4}\rceil$.
	
	\noindent\textbf{(i)} If $I(\mu)_1=1$ then $c^\nu_{I(\mu),(1^k)}>0$ only if $\nu_1\le 2$. The assertion then follows from Lemma~\ref{lem:2col}.
	
	\noindent\textbf{(ii)} If $I(\mu)_2=0$ then $c^\nu_{I(\mu),(1^k)}>0$ only if $\nu$ is a hook. The assertion then follows from Proposition~\ref{prop:hook}.
\end{proof}

In fact, we can generalise from $l(\mu)$ and $l(\mu)-|I(\mu)|$ to a collection of statistics $\sn_i(\mu)$ as follows.

\begin{definition}\label{def:stats}
	\begin{enumerate}[label=\textup{(\roman*)}]
		\item For each $i\in\N$, define $\mathsf{m}_i:=\frac{4^i+8}{6}$. 
		
		\item Let $\mu$ be an arbitrary partition. 
		\begin{itemize}
			\item[$\circ$] Define $\sk(\mu):=\frac{|\mu|}{2}-l(\mu)$.
			\item[$\circ$] Let $\big(\sn_i(\mu)\big)_{i\in\N}$ be recursively defined by $\sn_0(\mu)=\frac{|\mu|}{2}$ and $\sn_i(\mu)=2\sn_{i-1}(\tilde{\mu}) - \sm_i\sk(\mu)$ for all $i\in\N$, where $\tilde{\mu}$ is as in Definition~\ref{def:inside}.
		\end{itemize}
	\end{enumerate}
\end{definition}

For example, we note that $\sn_1(\mu)=l(\mu)$ since $|\tilde{\mu}|=|\mu|-l(\mu)$, and $\sn_2(\mu)=2\big(l(\mu)-|I(\mu)|\big)$ since $|\mu|=l(\mu)+l(\tilde{\mu})+|I(\mu)|$. The statistics $\sn_i(\mu)$ can be calculated as a weighted sum of the sizes of successive columns, rows and inside partitions as illustrated in Figure~\ref{fig:stats}.

\begin{figure}
	\centering
	\begin{tikzpicture}[scale=0.6, every node/.style={scale=0.7}]
		\begin{scope}[xshift=0cm, yshift=4.5cm]
			\draw (-1,-1.25) node[] {$\mu:$};
			\draw (0,0) -- (2.5,0) -- (2.5,-0.5) -- (2,-0.5) -- (2,-1.5) -- (1,-1.5) -- (1,-2) -- (0.5,-2) -- (0.5,-2.5) -- (0,-2.5) -- (0,0);
		\end{scope}
		
		\begin{scope}[xshift=6cm, yshift=4.5cm]
			\draw (-1,-1.25) node[] {$2\sk(\mu):$};
			\draw (0,0) -- (2.5,0) -- (2.5,-0.5) -- (2,-0.5) -- (2,-1.5) -- (1,-1.5) -- (1,-2) -- (0.5,-2) -- (0.5,-2.5) -- (0,-2.5) -- (0,0);
			\draw (0.5,0) -- (0.5,-2);
			\draw (0.25,-1.25) node[] {$-1$};
			\draw (1.25,-0.75) node[] {1};
		\end{scope}
	
		\begin{scope}[yshift=1cm]
			\draw (-2.5,0) node[] {$\sn_i(\mu)\ = $};
			\draw (1.25,0) node[] {$\sn_{i-1}(\tilde{\mu})$};
			\draw (4.3,0) node[] {$-\ \tfrac{\sm_i}{2}\cdot 2\sk(\mu)$};
			\draw [dotted] (-4,-0.5) -- (12,-0.5);
			\draw [dotted] (-4,0.5) -- (12,0.5);
		\end{scope}
		
		\begin{scope}[xshift=0cm, yshift=0cm]
			\draw (-2.5,-1.25) node[] {$\sn_1(\mu)\ =$};
			\draw (0,0) -- (2.5,0) -- (2.5,-0.5) -- (2,-0.5) -- (2,-1.5) -- (1,-1.5) -- (1,-2) -- (0.5,-2) -- (0.5,-2.5) -- (0,-2.5) -- (0,0);
			\draw (0.5,0) -- (0.5,-2);
			\draw (0.25,-1.25) node[] {0};
			\draw (1.25,-0.75) node[] {1};
		\end{scope}
		
		\begin{scope}[xshift=4cm, yshift=0cm]
			\draw (-0.5,-1.25) node[] {$-1\ \cdot$};
			\draw (0,0) -- (2.5,0) -- (2.5,-0.5) -- (2,-0.5) -- (2,-1.5) -- (1,-1.5) -- (1,-2) -- (0.5,-2) -- (0.5,-2.5) -- (0,-2.5) -- (0,0);
			\draw (0.5,0) -- (0.5,-2);
			\draw (0.25,-1.25) node[] {$-1$};
			\draw (1.25,-0.75) node[] {1};
		\end{scope}
		
		\begin{scope}[xshift=8cm, yshift=0cm]
			\draw (-0.5,-1.25) node[] {$=$};
			\draw (0,0) -- (2.5,0) -- (2.5,-0.5) -- (2,-0.5) -- (2,-1.5) -- (1,-1.5) -- (1,-2) -- (0.5,-2) -- (0.5,-2.5) -- (0,-2.5) -- (0,0);
			\draw (0.5,0) -- (0.5,-2);
			\draw (0.25,-1.25) node[] {1};
			\draw (1.25,-0.75) node[] {0};
		\end{scope}
		
		\begin{scope}[xshift=0cm, yshift=-3.5cm]
			\draw (-2.5,-1.25) node[] {$\sn_2(\mu)\ =$};
			\draw (-0.5,-1.25) node[] {$2\ \cdot$};
			\draw (0,0) -- (2.5,0) -- (2.5,-0.5) -- (2,-0.5) -- (2,-1.5) -- (1,-1.5) -- (1,-2) -- (0.5,-2) -- (0.5,-2.5) -- (0,-2.5) -- (0,0);
			\draw (1,-3) node[] {};
			\draw (0.5,0) -- (0.5,-2);
			\draw (0.25,-1.25) node[] {0};
			\draw (0.5,-0.5) -- (2.5,-0.5);
			\draw (1.5,-0.25) node[] {1};
			\draw (1.25,-1) node[] {0};
		\end{scope}
		
		\begin{scope}[xshift=4cm, yshift=-3.5cm]
			\draw (-0.5,-1.25) node[] {$-2\ \cdot$};
			\draw (0,0) -- (2.5,0) -- (2.5,-0.5) -- (2,-0.5) -- (2,-1.5) -- (1,-1.5) -- (1,-2) -- (0.5,-2) -- (0.5,-2.5) -- (0,-2.5) -- (0,0);
			\draw (0.5,0) -- (0.5,-2);
			\draw (0.25,-1.25) node[] {$-1$};
			\draw (1.25,-0.75) node[] {1};
		\end{scope}
	
		\begin{scope}[xshift=8cm, yshift=-3.5cm]
			\draw (-0.5,-1.25) node[] {$=$};
			\draw (0,0) -- (2.5,0) -- (2.5,-0.5) -- (2,-0.5) -- (2,-1.5) -- (1,-1.5) -- (1,-2) -- (0.5,-2) -- (0.5,-2.5) -- (0,-2.5) -- (0,0);
			\draw (0.5,0) -- (0.5,-2);
			\draw (0.25,-1.25) node[] {2};
			\draw (0.5,-0.5) -- (2.5,-0.5);
			\draw (1.5,-0.25) node[] {0};
			\draw (1.25,-1) node[] {$-2$};
		\end{scope}
		
		\begin{scope}[xshift=0cm, yshift=-7cm]
			\draw (-2.5,-1.25) node[] {$\sn_3(\mu)\ =$};
			\draw (-0.5,-1.25) node[] {$2\ \cdot$};
			\draw (0,0) -- (2.5,0) -- (2.5,-0.5) -- (2,-0.5) -- (2,-1.5) -- (1,-1.5) -- (1,-2) -- (0.5,-2) -- (0.5,-2.5) -- (0,-2.5) -- (0,0);
			\draw (0.5,0) -- (0.5,-2);
			\draw (0.25,-1.25) node[] {0};
			\draw (0.5,-0.5) -- (2.5,-0.5);
			\draw (1.5,-0.25) node[] {$2$};
			\draw (1,-0.5) -- (1,-2);
			\draw (0.75,-1.25) node[] {$0$};
			\draw (1.5,-1.1) node[] {$-2$};
		\end{scope}
	
		\begin{scope}[xshift=4cm, yshift=-7cm]
			\draw (-0.5,-1.25) node[] {$-6\ \cdot$};
			\draw (0,0) -- (2.5,0) -- (2.5,-0.5) -- (2,-0.5) -- (2,-1.5) -- (1,-1.5) -- (1,-2) -- (0.5,-2) -- (0.5,-2.5) -- (0,-2.5) -- (0,0);
			\draw (0.5,0) -- (0.5,-2);
			\draw (0.25,-1.25) node[] {$-1$};
			\draw (1.25,-0.75) node[] {1};
		\end{scope}
	
		\begin{scope}[xshift=8cm, yshift=-7cm]
			\draw (-0.5,-1.25) node[] {$=$};
			\draw (0,0) -- (2.5,0) -- (2.5,-0.5) -- (2,-0.5) -- (2,-1.5) -- (1,-1.5) -- (1,-2) -- (0.5,-2) -- (0.5,-2.5) -- (0,-2.5) -- (0,0);
			\draw (0.5,0) -- (0.5,-2);
			\draw (0.25,-1.25) node[] {6};
			\draw (0.5,-0.5) -- (2.5,-0.5);
			\draw (1.5,-0.25) node[] {$-2$};
			\draw (1,-0.5) -- (1,-2);
			\draw (0.75,-1.25) node[] {$-6$};
			\draw (1.5,-1.1) node[] {$-10$};
		\end{scope}
	
		\begin{scope}[xshift=4cm, yshift=-10cm]
			\draw (0,0) node[] {\LARGE$\vdots$};
		\end{scope}
	\end{tikzpicture}
	\caption{Visualising $\sn_i(\mu)$ as a weighted sum of sizes of various portions of the partition $\mu$; in the diagrams, the weights are illustrated inside the corresponding portions. For example, $2\sk(\mu):=|\mu|-2l(\mu)$ is illustrated in the top right diagram with $-1$ in the first column, and 1 in the remaining part of the partition, corresponding to $-1\cdot l(\mu) + (|\mu|-l(\mu))$.}
	\label{fig:stats}
\end{figure}

\begin{proposition}\label{prop:19.2}
	Let $\mu$ be a partition. Suppose $i\in\N$ is such that $2^i\mid |\mu|$. If $\sn_i(\mu)>\frac{|\mu|}{2}$, then $Z^\mu=0$.
\end{proposition}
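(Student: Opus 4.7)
The plan is to prove Proposition~\ref{prop:19.2} by induction on $i$. The base case $i = 1$ is immediate: $\sn_1(\mu) = l(\mu)$, and $|\mu|$ is even, so Lemma~\ref{lem:tall} gives $Z^\mu = 0$. For the inductive step ($i \geq 2$), set $n := |\mu|/2$, so $2^{i-1} \mid n$. If $l(\mu) > n$, then $Z^\mu = 0$ by Lemma~\ref{lem:tall}, so we may assume $l(\mu) = n - k$ with $k = \sk(\mu) \geq 0$. By Lemma~\ref{lem:isotypic}(i),
\[ Z^\mu \;=\; \sum_{\gamma \vdash n} a^\mu_{\gamma,(2)} \cdot Z^\gamma, \]
so it suffices to show $Z^\gamma = 0$ for every $\gamma \vdash n$ with $a^\mu_{\gamma,(2)} > 0$. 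Since $2^{i-1} \mid |\gamma|$, the inductive hypothesis gives $Z^\gamma = 0$ as soon as one establishes the following \emph{Key inequality}: whenever $a^\mu_{\gamma,(2)} > 0$, one has $\sn_{i-1}(\gamma) \geq \sn_i(\mu)/2$. Granted this, $\sn_{i-1}(\gamma) \geq \sn_i(\mu)/2 > n/2 = |\gamma|/2$, closing the induction.

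Using the recursion $\sn_i(\mu) = 2\sn_{i-1}(\tilde\mu) - \sm_i k$, the Key inequality rewrites as
\[ \sn_{i-1}(\tilde\mu) - \sn_{i-1}(\gamma) \;\leq\; \tfrac{\sm_i}{2}\cdot k, \]
a quantitative statement that $\gamma \vdash n$ and $\tilde\mu \vdash n+k$ cannot differ too much in their $\sn_{i-1}$ statistic when $a^\mu_{\gamma,(2)} > 0$, with slack proportional to $k = |\tilde\mu| - |\gamma|$. The case $k = 0$ is a tautology: Corollary~\ref{cor:10.1,2,4}(i) forces $\gamma = \tilde\mu$. The model case $i = 2$ is precisely Corollary~\ref{cor:13}(i), which reads $\sn_1(\gamma) = l(\gamma) \geq l(\mu) - |I(\mu)| = \sn_2(\mu)/2 = \sn_1(\tilde\mu) - 2k$; and Proposition~\ref{prop:13.2} identifies exactly the $\gamma$ attaining equality at the boundary $\sn_2(\mu) = n$, which gives confidence that the inequality is tight.

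To prove the Key inequality in general, the plan is to apply equation~(14.4) (the $m = 2$ specialisation of Theorem~A) to parametrise the $\gamma$ with $a^\mu_{\gamma,(2)} > 0$ via LR-fillings relating $\gamma'$ to $\hat\mu = \tilde\mu'$, and then to unfold the recursive definition $\sn_{i-1}(\gamma) = 2\sn_{i-2}(\tilde\gamma) - \sm_{i-1}\sk(\gamma)$ one further step, controlling $\sk(\gamma)$ from above and $\sn_{i-2}(\tilde\gamma)$ from below in terms of the corresponding quantities for $\tilde{\tilde\mu}$. The main obstacle is the bookkeeping required to carry out this telescoping LR analysis through an arbitrary number of levels: one must check that the worst-case slack introduced at each unfolding is exactly absorbed by the recursive factor $\sm_i = 4(\sm_{i-1}-1)$. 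The structure of this recursion---namely the $4$ reflecting the doubling both of $\sn_{i-1}(\tilde\mu)$ in the definition and of the size difference $|\tilde\mu| - |\gamma|$ at the next level---is what makes the constants $\sm_i = (4^i+8)/6$ the correct ones, and verifying that the bounds from (14.4) close up at each level is the delicate step to be handled, likely by a double induction on $(i,|\mu|)$.
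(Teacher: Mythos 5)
Your inductive framework is exactly the paper's: induction on $i$ with base case via Lemma~\ref{lem:tall}, reduction via Lemma~\ref{lem:isotypic}(i) to showing $Z^\gamma=0$ for every $\gamma\vdash n$ with $a^\mu_{\gamma,(2)}>0$, and identification of the Key inequality $\sn_{i-1}(\gamma)\geq \sn_i(\mu)/2$ as the crux. Your checks of the cases $k=0$ and $i=2$ are also correct.

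However, the Key inequality for general $i$ and $k$ is exactly where the substance lies, and your proposal does not prove it --- you explicitly defer it (``the delicate step to be handled'') and sketch a route that differs from the paper's and is not shown to close. The paper does \emph{not} unfold $\sn_{i-1}(\gamma)=2\sn_{i-2}(\tilde\gamma)-\sm_{i-1}\sk(\gamma)$ a further step, and there is no double induction. Instead it uses Lemma~\ref{lem:19.3}: $\sn_{i-1}(\nu)=\sum_{(x,y)\in[\nu]}w(x,y)$ with node weights $w$ taking values in a weakly decreasing, eventually constant integer sequence $\big(a_{i-1}^{(j)}\big)_j$ whose extreme values are $a_{i-1}^{(1)}=\sm_{i-1}/2\geq 0$ and $a_{i-1}^{(\infty)}=-\sm_{i-1}+2\leq 0$. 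Since by~\eqref{eqn:14.4} any $\gamma$ with $a^\mu_{\gamma,(2)}>0$ is obtained from $\tilde\mu$ by removing $k+j$ nodes and adding $j$ nodes for some $0\leq j\leq k$, one then bounds the change in $\sn_{i-1}$ node-by-node:
\[ \sn_{i-1}(\gamma)\;\geq\;\sn_{i-1}(\tilde\mu)-(k+j)\,a_{i-1}^{(1)}+j\,a_{i-1}^{(\infty)}\;\geq\;\sn_{i-1}(\tilde\mu)-k\bigl(2a_{i-1}^{(1)}-a_{i-1}^{(\infty)}\bigr)\;=\;\sn_{i-1}(\tilde\mu)-\tfrac{\sm_i}{2}k\;=\;\tfrac{\sn_i(\mu)}{2}, \]
using $2a_{i-1}^{(1)}-a_{i-1}^{(\infty)}=2\sm_{i-1}-2=\sm_i/2$. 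Your alternative of ``controlling $\sk(\gamma)$ from above and $\sn_{i-2}(\tilde\gamma)$ from below in terms of the corresponding quantities for $\tilde{\tilde\mu}$'' would additionally require relating $\tilde\gamma$ to $\tilde{\tilde\mu}$, which is not directly supplied by~\eqref{eqn:14.4} and would need new combinatorial input. Without identifying the node-weight description of $\sn_{i-1}$ (or an equivalent), the proposal leaves a genuine gap at the one step that requires an idea.
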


\begin{remark}
	Since $\sn_1(\mu)=l(\mu)$, the $i=1$ case of Proposition~\ref{prop:19.2} recovers Lemma~\ref{lem:tall} when the partition has even size. Since $\sn_2(\mu) = 2\big(l(\mu)-|I(\mu)|\big)$, the $i=2$ case of Proposition~\ref{prop:19.2} recovers Corollary~\ref{cor:13} when $4\mid|\mu|$.\hfill$\lozenge$
\end{remark}

To prove Proposition~\ref{prop:19.2}, we first describe the weighting of columns and rows illustrated in Figure~\ref{fig:stats}. 

\begin{definition}\label{def:weight}
	We define a collection of sequences $(a_i^{(1)}, a_i^{(2)}, a_i^{(3)}, \dotsc)$ indexed by $i\in\N$ as follows:
	\[ (a_1^{(j)})_{j} := (1,0,0,\dotsc);\qquad a_i^{(1)}:=\tfrac{\sm_i}{2}\ \ \forall\ i\in\N;\quad\ \text{and}\quad\ a_i^{(j)}:=2a_{i-1}^{(j-1)} - \tfrac{\sm_i}{2}\ \ \forall\ i\in\N,\ j\in\N_{\ge 2}. \]
	For each $i\in\N$, since $\sm_i\in2\Z$ then clearly $(a_i^{(j)})_j$ is an integer sequence. We also define $w_i:\N^2\to\Z$ by
	\[ w_i(x,y) = \left\{
		\begin{array}{ll}
			a_i^{(2j-1)} & \forall\ (x,j)\ \text{with}\ x\ge j,\\
			a_i^{(2j)} & \forall\ (j,y)\ \text{with}\ y\ge j+1,
		\end{array}
		\text{ for all}\ j\in\N. \right. \]
	We may view $w_i(x,y)$ as a weight on the box $(x,y)$ of a Young diagram, that is, the box in row $x$ and column $y$. As illustrated in Figure~\ref{fig:weight}, $a^{(2j-1)}_i$ is the weight of a box in column $j$ which is in a sufficiently low row, while $a^{(2j)}_i$ is the weight of a box in row $j$ in a column sufficiently far to the right.
	\begin{figure}
		\centering
		\begin{tikzpicture}[scale=0.5, every node/.style={scale=0.7}]
			
			\draw (0,-5) -- (0,0) -- (1,0) -- (1,-5);
			\draw (0.5,-1.5) node[] {$a_i^{(1)}$};
			
			\draw (7,0) -- (1,0) -- (1,-1) -- (7,-1);
			\draw (3.5,-0.5) node[] {$a_i^{(2)}$};
			
			\draw (2,-1) -- (2,-5);
			\draw (1.5,-2.5) node[] {$a_i^{(3)}$};
			
			\draw (2,-2) -- (7,-2);
			\draw (4.5,-1.5) node[] {$a_i^{(4)}$};
			
			\draw (3,-2) -- (3,-5);
			\draw (2.5,-3.5) node[] {$a_i^{(5)}$};
			
			\draw (3,-3) -- (7,-3);
			\draw (5.5,-2.5) node[] {$a_i^{(6)}$};
			
			\draw (5,-4) node[] {\LARGE$\ddots$};
		\end{tikzpicture}
		\caption{The value $w_i(x,y)$ is filled into $(x,y)\in\N^2$, viewed as the box in row $x$ and column $y$ of a Young diagram. Each vertical (resp.~horizontal) rectangular strip depicted is one box wide (resp.~tall).}
		\label{fig:weight}
	\end{figure}
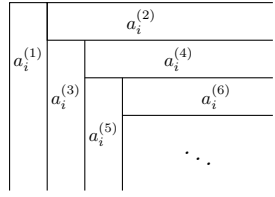
\end{definition}

The following lemma shows that we may compute $\sn_i(\mu)$ using the weights $w_i(x,y)$, whose values are independent of $\mu$ (see Figure~\ref{fig:stats} for examples when $i\in\{1,2,3\}$).
\begin{lemma}\label{lem:weights}
	For all $i\in\N$ and partitions $\mu$, we have $\sn_i(\mu) = \sum_{(x,y)\in[\mu]} w_i(x,y)$.
\end{lemma}

\begin{proof}
	We proceed by induction on $i$. Fix an arbitrary partition $\mu$.
	First, we have that $\sn_1(\mu)=l(\mu) = \sum_{(x,1)\in[\mu]}1 + \sum_{y\ge 2}\sum_{(x,y)\in[\mu]}0$. The assertion for $i=1$ then follows since $w_1(x,y)=\delta_{y,1}$.
	Next, suppose $i\ge 2$ and $\sn_{i-1}(\nu) = \sum_{(x,y)\in[\nu]} w_{i-1}(x,y)$ for all partitions $\nu$. Recalling Definitions~\ref{def:inside} and~\ref{def:stats}, then
	\begin{align*}
		\sn_i(\mu) &= 2\sn_{i-1}(\tilde{\mu}) - \sm_i\sk(\mu) = 2\sum_{(x,y)\in[\tilde{\mu}]} w_{i-1}(x,y) - \tfrac{\sm_i}{2}\cdot 2\sk(\mu)\\
		&=2\left[ \sum_{(x,1)\in[\mu]}0 + \sum_{y\ge 2}\sum_{(x,y)\in[\mu]}w_{i-1}(y-1,x)\right] - \tfrac{\sm_i}{2}\left[ \sum_{(x,1)\in[\mu]}(-1) + \sum_{y\ge 2}\sum_{(x,y)\in[\mu]}1 \right]\\
		&= \sum_{(x,1)\in[\mu]}\tfrac{\sm_i}{2} + \sum_{y\ge 2}\sum_{(x,y)\in[\mu]} \Big(2w_{i-1}(y-1,x)-\tfrac{m_i}{2}\Big).
	\end{align*}
	\begin{itemize}
		\item For $(x,1)\in\N^2$: notice $\tfrac{\sm_i}{2}=a_i^{(1)}=w_i(x,1)$.
		\item For $j\in\N$ and $(j,y)\in\N^2$ where $y\ge j+1$, we have $2w_{i-1}(y-1,j)-\tfrac{m_i}{2}=2a_{i-1}^{(2j-1)}-\tfrac{\sm_i}{2}=a_i^{(2j)}$. 
		\item For $j\in\N_{\ge 2}$ and $(x,j)\in\N^2$ where $x\ge j$, we have $2w_{i-1}(j-1,x)-\tfrac{\sm_i}{2}=2a_{i-1}^{(2j-2)}-\tfrac{\sm_i}{2}=a_i^{(2j-1)}$.
	\end{itemize}
	Hence we conclude $\sn_i(\mu) = \sum_{(x,y)\in[\mu]} w_i(x,y)$, as desired.
\end{proof}

\begin{lemma}\label{lem:19.3}
	For all $i\in\N$, the integer sequence $(a_i^{(j)})_{j\in\N}$ is weakly decreasing and eventually constant, with limit $a_i^{(\infty)} = -\sm_i+2$.
\end{lemma}

\begin{proof}
	It is clear from Definition~\ref{def:weight} and induction on $i$ that $(a_i^{(j)})_j$ is eventually constant. 
	To see that $(a_i^{(j)})_j$ is weakly decreasing, it suffices to show that $a_h^{(1)}\ge a_h^{(2)}$ for all $h\in\N$, since $(a_1^{(j)})_j$ is already weakly decreasing by definition.
	But this follows since $\frac{\sm_h}{2}\ge 2\cdot\frac{\sm_{h-1}}{2}-\frac{\sm_h}{2}$.
	Finally, we observe that $a_1^{(\infty)}=0$ and $a_i^{(\infty)} = 2a_{i-1}^{(\infty)}-\frac{\sm_i}{2}$ for all $i\ge 2$, which gives $a_i^{(\infty)}=-\sm_i+2$ by induction on $i$.
\end{proof}

We are now ready to prove Proposition~\ref{prop:19.2}: the ideas used in the proof extend those in the proof of Corollary~\ref{cor:13} (which can be viewed as the case of $i=2$).

\begin{proof}[Proof of Proposition~\ref{prop:19.2}]
	We proceed by induction on $i$, with base case $i=1$ given by Lemma~\ref{lem:tall}. Now suppose $i\ge 2$ and $2^i\mid|\mu|$. If $l(\mu)>\frac{|\mu|}{2}$ then $Z^\mu=0$ by Lemma~\ref{lem:tall}, so we may assume that $k:=\sk(\mu)\ge 0$.
	
	Suppose $\lambda\vdash\frac{|\mu|}{2}$ is such that $a^\mu_{\lambda,(2)}>0$. By Theorem A (see also \eqref{eqn:14.4}), there exist $j\in\{0,1,\dotsc,k\}$ and $\tau\vdash\frac{|\mu|}{2}-j$ such that $c^{\hat{\mu}}_{\tau,\alpha}\cdot c^{\lambda'}_{\tau,\beta}>0$ for some $\alpha\vdash k+j$ and $\beta\vdash i$. In other words, $[\lambda]$ can be obtained by removing from $[\tilde{\mu}]$ (recalling that $\tilde{\mu}$ is the conjugate of $\hat{\mu}$) a skew shape with a Littlewood--Richardson filling of type $\alpha$ (to produce $[\tau]$), then adding on a skew shape with a Littlewood--Richardson filling of type $\beta$. Hence
	\[ |\sn_{i-1}(\lambda) - \sn_{i-1}(\tilde{\mu})| \le (k+j)\cdot a_{i-1}^{(1)} - j\cdot a_{i-1}^{(\infty)} \]
	since $(a_{i-1}^{(t)})_t$ is weakly decreasing by Lemma~\ref{lem:19.3}. Moreover, since $a_{i-1}^{(1)}\ge 0$ and $a_{i-1}^{(\infty)}\le 0$, we obtain
	\[ \sn_{i-1}(\lambda) \ge \sn_{i-1}(\tilde{\mu}) -k(2a_{i-1}^{(1)} - a_{i-1}^{(\infty)}) = \sn_{i-1}(\tilde{\mu}) - \tfrac{\sm_i}{2}\cdot k =\tfrac{\sn_i(\mu)}{2}. \]
	From the assumption that $\sn_i(\mu)>\frac{|\mu|}{2}$, we obtain $\sn_{i-1}(\lambda)>\frac{|\lambda|}{2}$. Since $2^{i-1}\mid|\lambda|$, by the inductive hypothesis we deduce that $Z^\lambda=0$.
		
	Finally, using Lemma~\ref{lem:isotypic}(i) we conclude that $Z^\mu=\sum_{\lambda\vdash n}a^\mu_{\lambda,(2)}\cdot Z^\lambda=0$.
\end{proof}

\bigskip
\subsection{Near hook deflations}
In Example~\ref{ex:14.8} below, we use Theorem~\ref{thm:14.6ii} to compute deflations of partitions of the form $(a,2,1^b)$. First we introduce a useful piece of notation.

\begin{definition}\label{def:a-2-1b}
	For $n,l\in\N$ such that $n\ge 4$ and $2\le l\le n-2$, define $\lambda_{n,l}:=(n-l,2,1^{l-2})\vdash n$.
\end{definition}

\begin{example}\label{ex:14.8}
	Fix $n\in\N_{\ge 2}$ and suppose $\mu:=\lambda_{2n,l}$ where $2\le l\le 2n-2$. We wish to compute the deflation $\delta^\mu$; we may assume $n\ge 5$ since $\delta^\mu$ may be calculated directly for small $n$. 
	By Lemma~\ref{lem:tall-pleth}, $\delta^\mu=0$ if $l(\mu)>n$, so we may further assume that $l\le n$. Let $\nu:=\mu\sqcup(1^{n-l})=(2n-l,2,1^{n-2})$ and $\hat{\nu}=(2n-l-1,1)$. Applying Theorem~\ref{thm:14.6ii} with $k=n-l$ and $m=2$,
	\begin{equation}\label{14.8a}
		a^{\nu/(1^k)}_{\lambda',(2)} = \sum_{\substack{\alpha\vdash 2k\\\beta\vdash k}} a^{\alpha}_{\beta',(2)}\cdot a^{\hat{\nu}/\alpha}_{\lambda/\beta,(1)},
	\end{equation}
	for all $\lambda\vdash n$. Recall the relationship between (skew) plethysm coefficients and deflations from \eqref{eqn:plethysm-deflation-skew}. First, we deduce from \eqref{14.8a} that
	\begin{itemize}
		\item if $k=0$ then $\sgn_{S_n}\cdot\ \delta^{\lambda_{2n,n}} = \chi^{\hat{\nu}}=\chi^{(n-1,1)}$
		; and
		\item if $k=1$ then  $\sgn_{S_n}\cdot(\delta^{\lambda_{2n,n-1}}+\delta^{\lambda_{2n,n}}+\delta^{(n+1,1^{n-1})}) =\chi^{(n)}+2\chi^{(n-1,1)}+\chi^{(n-2,1^2)}+\chi^{(n-2,2)}$, which using Corollary~\ref{cor:10.1,2,4}(ii) simplifies to $\sgn_{S_n}\cdot\ \delta^{\lambda_{2n,n-1}}=\chi^{(n-1,1)}+\chi^{(n-2,1^2)}+\chi^{(n-2,2)}$.
	\end{itemize}
	Now assume $k\ge 2$. We have that $a^{\nu/(1^k)}_{\lambda',(2)} = \langle \delta^{\lambda_{2n,l}} + \delta^{\lambda_{2n,l+1}} + \delta^{(2n-l,1^l)} + \delta^{(2n-l-1,1^{l+1})}, \chi^{\lambda'}\rangle$ and
	\begin{align*}
		\sum_{\substack{\alpha\vdash 2k\\\beta\vdash k}} a^{\alpha}_{\beta',(2)}\cdot a^{\hat{\nu}/\alpha}_{\lambda/\beta,(1)} &= \sum_{\beta\vdash k} \big(a^{(2k)}_{\beta',(2)}\cdot\langle \chi^{\lambda/\beta},\chi^{(l)}+\chi^{(l-1,1)}\rangle + a^{(2k-1,1)}_{\beta',(2)}\cdot\langle\chi^{\lambda/\beta},\chi^{(l)}\rangle \big)\\
		&= \sum_{\beta\vdash k} \big(a^{(2k)}_{\beta',(2)}\cdot\langle \chi^{\lambda},(\chi^{(l)}+\chi^{(l-1,1)})\boxtimes\chi^\beta\rangle + a^{(2k-1,1)}_{\beta',(2)}\cdot\langle\chi^{\lambda},\chi^{(l)}\boxtimes\chi^\beta\rangle \big)
	\end{align*}
	since $\alpha\subseteq\hat{\nu}$ only if $\alpha=(2k)$ or $(2k-1,1)$. Furthermore, $\delta^{(2k)}=\chi^{(k)}$ and $\delta^{(2k-1,1)}=\chi^{(k-1,1)}$ by Corollary~\ref{cor:10.1,2,4}(ii) (the latter is what requires $k\ge 2$). Thus we obtain
	\begin{align}\label{eqn:14.8b}
		\sgn_{S_n}\cdot(&\delta^{\lambda_{2n,l}} + \delta^{\lambda_{2n,l+1}} + \delta^{(2n-l,1^l)} + \delta^{(2n-l-1,1^{l+1})}) = (\chi^{(l)}+\chi^{(l-1,1)})\boxtimes\chi^{(1^k)} + \chi^{(l)}\boxtimes\chi^{(2,1^{k-2})}\nonumber\\
		&= \chi^{(l+2,1^{k-2})} + 2\chi^{(l+1,1^{k-1})} + 2\chi^{(l,1^k)} + \chi^{(l-1,1^{k+1})}+\chi^{(l+1,2,1^{k-3})} + 2\chi^{(l,2,1^{k-2})}+\chi^{(l-1,2,1^{k-1})}
	\end{align}
	(we omit the $\chi^{(l+1,2,1^{k-3})}$ term if $k=2$, and the $\chi^{(l-1,2,1^{k-1})}$ term if $l=2$), giving
	\[ \sgn_{S_n}\cdot\ \delta^{\lambda_{2n,l}} = \chi^{(l,1^k)}+\chi^{(l-1,1^{k+1})} + \chi^{(l,2,1^{k-2})} + \chi^{(l-1,2,1^{k-1})}\qquad\forall\ 3\le l\le n-2, \]
	and $\sgn_{S_n}\cdot\ \delta^{\lambda_{2n,2}} = \chi^{(1^n)}+\chi^{(2,1^{n-2})}+\chi^{(2,2,1^{n-4})}$. Hence	
	\[ \delta^{\lambda_{2n,l}} = \begin{cases}
		\chi^{(2,1^{n-2})} & \text{ if } l=n,\\
		\chi^{(2,1^{n-2})} + \chi^{(3,1^{n-3})} + \chi^{(2,2,1^{n-4})} & \text{ if } l=n-1,\\
		\chi^{(n-l+1,1^{l-1})} + \chi^{(n-l+2,1^{l-2})} + \chi^{(n-l,2,1^{l-2})} + \chi^{(n-l+1,2,1^{l-3})} & \text{ if }3\le l\le n-2,\\
		\chi^{(n)} + \chi^{(n-1,1)} + \chi^{(n-2,2)} & \text{ if }l=2.
	\end{cases} \]
	\hfill$\lozenge$
\end{example}

Using Theorem~\ref{thm:14.6ii} and Example~\ref{ex:14.8} we are able to give an alternative method for calculating Sylow branching coefficients for the partitions $\lambda_{2^r,l}=(2^r-l,2,1^{l-2})$ (cf.~Lemma~\ref{lem:a-2-1b}).

\begin{corollary}\label{cor:14.9}
	Let $r,l\ge 2$ be natural numbers with $l\le 2^r-2$. Then $Z^{(2^r-l,2,1^{l-2})} = \binom{r-1}{l-1}$.
\end{corollary}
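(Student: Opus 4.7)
\medskip

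\noindent\textbf{Proof plan.} I would proceed by induction on $r$, with base case $r=2$: here the only admissible $l$ is $l=2$, and $\lambda_{4,2}=(2,2)$, so $Z^{(2,2)}=1=\binom{1}{1}$ by Lemma~\ref{lem:2col}. For the inductive step, fix $r\ge 3$ and set $n:=2^{r-1}$. Write $\mu:=\lambda_{2n,l}=(2^r-l,2,1^{l-2})\vdash 2n$. If $l>n$, then $l(\mu)>n$, so $Z^\mu=0$ by Lemma~\ref{lem:tall}, and also $\binom{r-1}{l-1}=0$ since $l-1\ge n\ge r$ (as $2^{r-1}\ge r$). Hence we may assume $2\le l\le n$.

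The central identity is Lemma~\ref{lem:isotypic}(i), which gives $Z^\mu=\sum_{\gamma\vdash n}\langle\delta^\mu,\chi^\gamma\rangle\cdot Z^\gamma$. I would then substitute the explicit expression for $\delta^\mu=\delta^{\lambda_{2n,l}}$ from Example~\ref{ex:14.8}. The key simplification is that $n=2^{r-1}$ has a single term in its binary expansion, so Proposition~\ref{prop:hook} yields $Z^{(n-t,1^t)}=\binom{0}{t}$, which is $1$ if $t=0$ and $0$ otherwise; all nontrivial hook contributions thus drop out. In the generic range $3\le l\le n-2$, Example~\ref{ex:14.8} gives
\[ \delta^\mu=\chi^{(n-l+1,1^{l-1})}+\chi^{(n-l+2,1^{l-2})}+\chi^{\lambda_{n,l}}+\chi^{\lambda_{n,l-1}}, \]
the first two (hook) summands contribute $0$ to $Z^\mu$, and the inductive hypothesis applied to the remaining two (which lie in the range $2\le l',l'-1\le n-2$) combined with Pascal's identity yields
\[ Z^\mu=\binom{r-2}{l-1}+\binom{r-2}{l-2}=\binom{r-1}{l-1}. \]

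The remaining work is to handle the boundary values $l\in\{2,n-1,n\}$ using the corresponding formulas from Example~\ref{ex:14.8}, together with Lemma~\ref{lem:2col} for the lone two-column summand $\chi^{(2,2,1^{n-4})}$ that appears when $l=n-1$. For $l=2$, the formula reads $\delta^\mu=\chi^{(n)}+\chi^{(n-1,1)}+\chi^{\lambda_{n,2}}$, giving $Z^\mu=1+0+(r-2)=\binom{r-1}{1}$ by the inductive hypothesis. For $l=n$, $\delta^\mu=\chi^{(2,1^{n-2})}$ is a nontrivial hook of $n$, so $Z^\mu=0=\binom{r-1}{n-1}$ (the binomial vanishes since $n-1=2^{r-1}-1>r-1$ for $r\ge 3$). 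For $l=n-1$ with $r\ge 4$ (so $n-4\ge 4$), all three terms in $\delta^\mu$ vanish under $Z$: the two hooks by Proposition~\ref{prop:hook}, and $(2,2,1^{n-4})$ by Lemma~\ref{lem:2col} since it is not of the form $(2,\ldots,2,\varepsilon)$; again $\binom{r-1}{n-2}=0$. The only residual case is $r=3$, $n=4$: here I would verify $Z^{\lambda_{8,3}}=1=\binom{2}{2}$ and $Z^{\lambda_{8,4}}=0=\binom{2}{3}$ by direct computation using the small-$n$ deflations from Example~\ref{ex:14.8} (noting that in this case $(2,2,1^{n-4})=(2,2)$ does contribute $Z^{(2,2)}=1$, matching the formula exactly).

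The main obstacle is really just the bookkeeping of boundary cases, in particular the $l=n-1$ row where the two-column partition $(2,2,1^{n-4})$ behaves differently at $r=3$ than at $r\ge 4$; the inductive core itself is a one-line application of Pascal's identity once the hook contributions have been eliminated using the single-digit binary expansion of $n$.
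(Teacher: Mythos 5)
Your proof is correct and follows essentially the same route as the paper's: induction on $r$, the central input being Lemma~\ref{lem:isotypic}(i) together with the explicit deflations of $\lambda_{2n,l}$ from Example~\ref{ex:14.8}, followed by Pascal's identity; the only cosmetic difference is that you step from $r-1$ to $r$ (setting $n=2^{r-1}$) while the paper steps from $r$ to $r+1$. The one place you add something beyond the paper is in the $l=n-1$ boundary, where you use Lemma~\ref{lem:2col} to kill $Z^{(2,2,1^{n-4})}$ directly rather than invoking the inductive hypothesis on $\lambda_{2^{r-1},2^{r-1}-2}$ as the paper does; both work. A small suggestion: since Example~\ref{ex:14.8} is stated under the hypothesis $n\ge 5$, it would be cleaner to absorb $r=3$ (i.e.\ $n=4$) into the base cases by direct computation, exactly as the paper does with its phrase ``small $r$'', rather than treating it as a residual case inside the inductive step.
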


\begin{proof}
	The assertion holds for small $r$ by direct computation, so now assume $r\ge 3$ and consider $\mu=\lambda_{2^{r+1},l}$ for some $2\le l\le 2^{r+1}-2$. By Lemma~\ref{lem:isotypic}(i), $Z^\mu = \sum_{\gamma\vdash n}\langle\delta^\mu,\chi^\gamma\rangle\cdot Z^\gamma$. Therefore from Example~\ref{ex:14.8} and Proposition~\ref{prop:hook},
	\[ Z^{\mu} = \begin{cases}
		0 & \text{ if }l\ge 2^r,\\
		Z^{\lambda_{2^r,2^r-2}} & \text{ if }l=2^r-1,\\
		Z^{\lambda_{2^r,l}}+Z^{\lambda_{2^r,l-1}} & \text{ if }3\le l\le 2^r-2,\\
		1+Z^{\lambda_{2^r,2}} & \text{ if }l=2.
	\end{cases} \]
	By the inductive hypothesis, we obtain $Z^{\lambda_{2^{r+1},l}} = \binom{r}{l-1}$ in all cases (noting that $\binom{r}{l-1}=0$ if $l-1>r$).
\end{proof}

\begin{remark}\label{rem:general-a-2-1b}
	We generalise some of the ideas from the case of $n=2^r$ in Corollary~\ref{cor:14.9} to arbitrary $n\in\N$.
	
	Let $n\in\N$ with $n\ge 4$, and suppose $n$ has $t$ digits in its binary expansion (i.e.~$n=2^{n_1}+\cdots+2^{n_t}$ for some $n_1>\cdots>n_t\ge 0$). Let $2\le l\le 2n-2$ and set $\mu=\lambda_{2n,l}$. If $l>n$ then $Z^\mu=0$ from Lemma~\ref{lem:tall}. By Example~\ref{ex:14.8}, Proposition~\ref{prop:hook} and Lemma~\ref{lem:isotypic}(i),
	\[ Z^\mu = \begin{cases}
		\binom{t-1}{n-2} & \text{ if }l=n,\\
		\binom{t-1}{n-2}+\binom{t-1}{n-3} + Z^{\lambda_{n,n-2}} & \text{ if }l=n-1,\\
		\binom{t-1}{l-1}+\binom{t-1}{l-2} + Z^{\lambda_{n,l}} + Z^{\lambda_{n,l-1}} & \text{ if }3\le l\le n-2,\\
		\binom{t-1}{1}+\binom{t-1}{0} + Z^{\lambda_{n,2}} & \text{ if }l=2.
	\end{cases} \]
	Notice $\binom{t-1}{n-2}=\binom{t}{n-1}=0$ since $n\ge 2^0+2^1+\cdots+2^{t-1}=2^t-1$, and $t\le 2^t-2$ for all $t\ge 2$ with equality only at $t=2$ (but $n\ge 4$ by assumption). Hence
	\[ Z^{\lambda_{2n,l}} = \binom{t}{l-1} + Z^{\lambda_{n,l}} + Z^{\lambda_{n,l-1}} \]
	for all $2\le l\le 2n-2$, where we set $Z^{\lambda_{n,l}}:=0$ if $l>n-2$. \hfill$\lozenge$ 
\end{remark}

\begin{example}\label{ex:32-2}
	Using the results in Section~\ref{sec:13}, we extend the description of those partitions $\mu\vdash 32$ such that $Z^\mu=0$ begun in Example~\ref{ex:32-1}. Recall 
	that $|\{\mu\vdash 32\mid Z^\mu=0\}|=879$. 
	
	\bigskip
	
	\begin{small}
		\begin{tabular}{p{4.8cm}|p{0.5cm}p{5.2cm}|p{3.1cm}}
			\textit{Property} & \multicolumn{2}{l|}{\textit{\# of such $\mu\vdash 32$}} & \textit{$Z^\mu=0$} \\
			\hline
			$l(\mu)=16$ & 77 & $\mu=(16,\lambda)'$ s.t.~$Z^\lambda=0$ & from Corollary~\ref{cor:10.1,2,4}(i)\\
			$\mu\subseteq(3^{16})$ & 2 & $\mu=(3,3,2^{13})$, $(3,2^{14},1)$ & from Corollary~\ref{cor:10.1,2,4}(iii)\\
			$\sn_2(\mu)=16$ and $\sk(\mu)\ge 0$: & & &\\
			$\quad k>4$ & 7 & & from Corollary~\ref{cor:13.3,4,5}\\
			$\quad I(\mu)=(1^{8-k})$ & 8 & $\mu=(25-2a,2^a,1^7)$, $0\le a\le 8$, $a\ne 4$ & from Corollary~\ref{cor:13.3,4,5}(i) \\
			$\quad I(\mu)=(8-k)$ & 6 & $\mu=(27-2a,a,1^{a+5})$, $2\le a\le 7$ & from Corollary~\ref{cor:13.3,4,5}(ii) \\
			$\sn_i(\mu)>16$:$\quad i=1, 2, 3, 4, 5$ & \multicolumn{2}{l|}{$684,\ 640,\ 702,\ 724,\ 734\ $} & from Proposition~\ref{prop:19.2}\\
		\end{tabular}
	\end{small}
	
	\bigskip
	
	\noindent In Example~\ref{ex:32-1}, we had identified 710 out of the 879 partitions 
	using results from Section~\ref{sec:sbc}. 
	Together with the results from Section~\ref{sec:13} listed in the above table, in total we are able to identify 868 out of the 879 partitions $\mu\vdash 32$ such that $Z^\mu$ equals zero\footnote{The eleven remaining partitions are $(23,2,2,1^5)$, $(22,3,1^7)$, $(22,2,2,1^6)$, $(20,4,1^8)$, $(17,4,2,1^9)$, $(17,3,2,2,1^8)$, $(13,4,2^3,1^9)$, $(13,3^3,1^{10})$, $(11,2^8,1^5)$, $(10,9,1^{13})$ and $(8,2^{10},1^4)$.}. \hfill$\lozenge$
\end{example}

\begin{example}
	The proportion of Sylow branching coefficients of $S_{2^k}$ for the prime 2 which have value zero is tabulated for small $k$ below.
	\[ \begin{array}{c|c|c|c}
		n & |\cP(n)| & |\{\mu\vdash n\mid Z^\mu=0\}| & \frac{|\{\mu\vdash n\mid Z^\mu=0\}|}{|\cP(n)|}\\
		\hline
		4 & 5 & 3 & 0.6\\
		8 & 22 & 15 & 0.682\\
		16 & 231 & 77 & 0.333\\
		32 & 8349 & 879 & 0.105\\
		64 & 1741630 & 38531 & 0.022\\
	\end{array} \]
	
	For comparison, we also investigate $\mu\vdash 64$ such that $Z^\mu=0$. In particular, $|\cP(64)|=1741630$ but $|\{\mu\vdash 64\mid Z^\mu=0\}|=38531$, and we are able to explain 38386 of these (leaving 145) using our results as follows:
	
	\bigskip
	
	\begin{small}
		\begin{tabular}{p{4.8cm}|p{6cm}|p{3.1cm}}
			\textit{Property} & \textit{\# of such $\mu\vdash 64$} & \textit{$Z^\mu=0$} \\
			\hline
			$l(\mu)=32$ & 879 & from Corollary~\ref{cor:10.1,2,4}(i)\\
			$\mu\subseteq(3^{32})$ & 2 & from Corollary~\ref{cor:10.1,2,4}(iii)\\
			$\mu$ is a non-trivial hook & 63 & from Proposition~\ref{prop:hook}\\
			$\mu=(64-i,2,1^{i-2})$, $7\le i\le 62$ & 56 & \\
			$\sn_2(\mu)=32$ and $\sk(\mu)\ge 0$: & & from Corollary~\ref{cor:13.3,4,5}\\
			$\quad \sk(\mu)>8$ & 45 & \\
			$\quad I(\mu)=(1^{|I(\mu)|})$ & 16 & \\
			$\quad I(\mu)=(|I(\mu)|)$ & 14 & \\
			$\sn_i(\mu)>32$:$\quad i=1, 2, 3, 4, 5, 6$ & $35471,\ 21751,\ 22216,\ 22937,\ 23513,\ 23722$ & from Proposition~\ref{prop:19.2}\\
		\end{tabular}
	\end{small}

	\bigskip

	\hfill$\lozenge$
\end{example}

Finally, we conclude with a conjecture.
\begin{conjecture}\label{conj:2c}
	Let $k\in\N$ and suppose $\lambda\vdash 2^k$. If $\lambda_{l(\lambda)}\ge 2$, then $Z^\lambda>0$ unless $\lambda=(5,3)$, or $k\ge 3$ and $\lambda=(3,3,2^{2^{k-1}-3})$.
\end{conjecture}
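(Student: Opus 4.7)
The plan is to induct on $k$. The base cases $k \in \{1,2,3\}$ are handled by direct computation: listing the partitions of $2^k$ with smallest part at least $2$ and applying the results of Section~\ref{sec:sbc} (notably Corollaries~\ref{cor:10.1,2,4} and~\ref{cor:14.9}) determines $Z^\lambda$ in each case, confirming that $Z^{(5,3)} = Z^{(3,3,2)} = 0$ while every other such $\lambda$ has $Z^\lambda > 0$. The engine of the induction is Lemma~\ref{lem:isotypic}(i):
\[ Z^\lambda = \sum_{\gamma \vdash 2^{k-1}} a^\lambda_{\gamma,(2)} \cdot Z^\gamma, \]
so it suffices to exhibit a single $\gamma \vdash 2^{k-1}$ with $a^\lambda_{\gamma,(2)} > 0$ and $Z^\gamma > 0$.

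If $\lambda$ is even, then Proposition~\ref{prop:thrall}(i) gives $a^\lambda_{(2^{k-1}),(2)} = 1$, and since $Z^{(2^{k-1})} = 1$ we conclude $Z^\lambda \geq 1$. Otherwise $\lambda$ has at least one odd part, and the strategy is to locate a constituent $\chi^\gamma$ of $\delta^\lambda$ with $\gamma_{l(\gamma)} \geq 2$ and $\gamma$ not an exception at level $k-1$; the inductive hypothesis then provides $Z^\gamma > 0$. To produce such a $\gamma$, one feeds $\lambda$ into Theorem~A via the simplified $m = 2$ form of equation~\eqref{eqn:14.4}, seeking an index $i$ whose contribution to $a^\lambda_{\gamma,(2)}$ is unambiguously positive for a target $\gamma$ whose smallest part is at least $2$ and whose length is bounded by $2^{k-2}$ (so as to avoid the exclusion in Corollary~\ref{cor:13}).

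The exceptions have $Z^\lambda = 0$ by explicit deflation. The partition $(5,3)$ is covered by the base step. For $\lambda = (3,3,2^{2^{k-1}-3})$ with $k \geq 3$, set $n = 2^{k-1}$, so $l(\lambda) = n-1$ and $\hat{\lambda} := \lambda - (1^{n-1}) = (2,2,1^{n-3})$. Specialising Theorem~A with its $k$-parameter equal to $1$, and using $a^{(1,1)}_{(1),(2)} = 0$ together with $a^{(2)}_{(1),(2)} = 1$, one obtains
\[ a^\lambda_{\gamma',(2)} = \langle \chi^{\hat{\lambda}/(2)},\chi^{\gamma/(1)}\rangle - c^{\hat{\lambda}}_{\gamma,(1)} \qquad \text{for all } \gamma \vdash n. \]
A short Littlewood--Richardson computation identifies $\chi^{\hat{\lambda}/(2)} = \chi^{(2,1^{n-3})}$ and then shows that the right-hand side is nonzero only at $\gamma = (3,1^{n-3})$, where it equals $1$. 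Hence $\delta^\lambda = \chi^{(n-2,1,1)}$, and since $n = 2^{k-1}$ has a single binary digit, Proposition~\ref{prop:hook} gives $Z^{(n-2,1,1)} = \binom{0}{2} = 0$; Lemma~\ref{lem:isotypic}(i) then forces $Z^\lambda = 0$.

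The main obstacle is the existence step in the non-even case: establishing, for every such $\lambda$ outside the exceptional list, that $\delta^\lambda$ carries a constituent $\chi^\gamma$ with $\gamma_{l(\gamma)} \geq 2$ and $\gamma$ not itself exceptional. Theorem~A expresses $a^\lambda_{\gamma,(2)}$ as a signed alternating sum of Littlewood--Richardson-type numbers, so positivity is not immediate, and one must guard against partitions $\lambda$ close in dominance to the exceptional ones, where the natural candidate $\gamma$ might collide with an exception at level $k-1$. A promising refinement is to stratify by the statistics $\sk(\lambda)$ and $|I(\lambda)|$ from Section~\ref{sec:inside-partition}: when both are small, the isotypical and near-isotypical deflation results of Corollary~\ref{cor:10.1,2,4} and Proposition~\ref{prop:13.2} reduce the problem to smaller partitions and make the exceptional locus easy to isolate, while larger values leave enough room in Theorem~A's formula to find multiple viable choices of $\gamma$.
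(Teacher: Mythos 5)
This statement is a Conjecture in the paper, and the paper provides no proof of it: the accompanying discussion only verifies that the listed exceptions do have $Z^\lambda=0$, not that every non-exceptional $\lambda\vdash 2^k$ with $\lambda_{l(\lambda)}\ge 2$ has $Z^\lambda>0$. Your proposal is in the same position. You too verify only the exceptions and explicitly flag the positive direction as unresolved (``the main obstacle is the existence step in the non-even case''). Nothing in the proposal closes the conjecture, and there is nothing in the paper to compare against for that direction either; a complete argument would be a new result.

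On the part both you and the paper actually establish, the exceptions, your route differs from the paper's. For $\lambda=(3,3,2^{2^{k-1}-3})$ with $k\ge 3$ and $n:=2^{k-1}$, the paper observes $\lambda\subseteq(3^n)$ and applies Corollary~\ref{cor:10.1,2,4}(iii) (the $\square_{3,n}$ complement trick) to obtain an irreducible hook deflation in one step; $Z^\lambda=0$ then follows from Proposition~\ref{prop:hook} and Lemma~\ref{lem:isotypic}. You instead specialise Theorem~A (the $k$-parameter-$1$ instance of equation~\eqref{eqn:14.4}) and run a short Littlewood--Richardson cancellation to reach the same $\delta^\lambda=\chi^{(n-2,1,1)}$. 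Both are correct; the paper's route is shorter because the partition happens to fit a $3\times n$ box, while yours is more generic but requires checking that terms cancel. Your sketch of the positive direction (even $\lambda$ via Thrall; odd-part $\lambda$ by locating a non-exceptional constituent $\chi^\gamma$ of $\delta^\lambda$ with $\gamma_{l(\gamma)}\ge 2$ and $l(\gamma)\le 2^{k-2}$) is a sensible plan, but the alternating signs in Theorem~A make the required positivity delicate, and, as you acknowledge, that step is genuinely unfinished.
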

Indeed, we saw in Example~\ref{ex:32-2} that when $k=5$ then $Z^{(3,3,2^{13})}=0$. More generally, suppose $k\ge 3$ and let $n=2^{k-1}$ and $\mu=(3,3,2^{2^{k-1}-3})\vdash 2n$. By Corollary~\ref{cor:10.1,2,4}(iii), $\delta^\mu=\chi^\lambda$ where $\lambda=(3,1^{n-3})$ and the deflation of $\mu$ is with respect to $S_n$. Hence $Z^\mu=0$ by Lemma~\ref{lem:isotypic} and Proposition~\ref{prop:hook}, explaining the exceptions in the statement of Conjecture~\ref{conj:2c}.

\bigskip


\begin{thebibliography}{99}
	\bibitem[BBP20]{BBP}
	{\sc C.~Bessenrodt, C.~Bowman and R.~Paget,}
	\newblock The classification of multiplicity-free plethysms of Schur functions,
	\newblock \textit{Trans.~Amer.~Math.~Soc.}, to appear.
	
	\bibitem[BCI11]{BCI}
	{\sc P.~B\"urgisser, M.~Christandl and C.~Ikenmeyer,}
	\newblock Even partitions in plethysms,
	\newblock \textit{J.~Algebra} \textbf{328} (2011), 322--329.
	
	\bibitem[BOR15]{BOR}
	{\sc E.~Briand, R.~Orellana and M.~Rosas,}	
	\newblock Rectangular Symmetries for Coefficients of Symmetric Functions,
	\newblock \textit{Electron.~J.~Comb.} \textbf{22} (3) (2015), \#P3.15.
	
	\bibitem[Bri93]{Brion}
	{\sc M.~Brion,}
	\newblock Stable properties of plethysm: on two conjectures of Foulkes,
	\newblock \textit{Manuscripta Math.} \textbf{80} (4) (1993), 347--371.
	
	\bibitem[BCV13]{BCV}
	{\sc W.~Bruns, A.~Conca and M.~Varbaro, }
	\newblock Relations between the minors of a generic matrix, 
	\newblock \textit{Adv.~Math.} \textbf{244} (2013), 171--206.
	
	\bibitem[dB15]{dB-thesis}
	{\sc M.~de Boeck,}
	\newblock \textit{On the structure of	Foulkes modules for	the symmetric group}, Ph.D. thesis, University of Kent, 2015.
	
	\bibitem[dBPW21]{dBPW}
	{\sc M.~de Boeck, R.~Paget and M.~Wildon, }
	\newblock Plethysms of symmetric functions and highest weight representations,
	\newblock \textit{Trans.~Amer.~Math.~Soc.} \textbf{374} (2021), 8013--8043.
	
	\bibitem[EL41]{EL}
	{\sc P.~Erd\H{o}s and J.~Lehner, }
	\newblock The distribution of the number of summands in the partitions of a positive integer,
	\newblock \textit{Duke Math.~J.} \textbf{8} (1941), 335--345.
	
	\bibitem[EPW14]{EPW}
	{\sc A.~Evseev, R.~Paget and M.~Wildon,}
	\newblock Character deflations and a generalization of the Murnaghan--Nakayama rule,
	\newblock \textit{J.~Group Theory} \textbf{17} (2014), 1034--1070.
	
	\bibitem[F50]{Foulkes}
	{\sc H. O. Foulkes,}
	\newblock Concomitants of the quintic and sextic up to degree four in the coefficients of the ground form,
	\newblock \textit{J.~London Math.~Soc.} \textbf{25} (1950), 205--209.
	
	\bibitem[G17]{G}
	{\sc E. Giannelli,}
	\newblock Characters of odd degree of symmetric groups, 	
	\newblock \textit{J.~London Math.~Soc.} \textbf{96} (1) (2017), 1--14.
	
	\bibitem[GKNT17]{GKNT}
	{\sc  E. Giannelli, A. Kleshchev, G. Navarro and P.H. Tiep,}
	\newblock Restriction of odd degree characters and natural correspondences,
	\newblock \textit{Int.~Math.~Res.~Not.} (2017), no. 20, 6089--6118.
	
	\bibitem[GL18]{GL1}
	{\sc E.~Giannelli and S.~Law,}
	\newblock On permutation characters and Sylow $p$-subgroups of $\mathfrak{S}_n$,
	\newblock \textit{J.~Algebra} \textbf{506} (2018), 409--428.
	
	\bibitem[GL21]{GL2}
	{\sc E.~Giannelli and S.~Law,}
	\newblock Sylow branching coefficients for symmetric groups,
	\newblock \textit{J.~London Math.~Soc.} (2) \textbf{103} (2021), 697--728.
	
	\bibitem[GLLV21]{GLLV}
	{\sc E.~Giannelli, S.~Law, J.~Long and C.~Vallejo,}
	\newblock Sylow branching coefficients and a conjecture of Malle and Navarro,
	\newblock \textit{Bull.~London Math.~Soc.} \textbf{54} (2022), 552--567.
	
	\bibitem[GN18]{GN}
	{\sc E.~Giannelli and G.~Navarro,}
	\newblock Restricting irreducible characters to Sylow $p$-subgroups, 
	\newblock \textit{Proc.~Amer.~Math.~Soc.} \textbf{146} (2018), no. 5, 1963--1976.
	
	\bibitem[I76]{I}
	{\sc I.~M.~Isaacs,}
	\newblock \textit{Character theory of finite groups},
	\newblock Dover, New York, 1976.
	
	\bibitem[INOT17]{INOT}
	{\sc I.~M.~Isaacs, G.~Navarro, J.~B.~Olsson and P.~H.~Tiep,}
	\newblock Character restriction and multiplicities in symmetric groups,
	\newblock \textit{J.~Algebra} \textbf{478} (2017), 271--282.
	
	\bibitem[J78]{J}
	{\sc G.~D.~James,}
	\newblock \textit{The Representation Theory of the Symmetric Groups}, 
	\newblock Lecture Notes in Mathematics \textbf{682}, Springer, 1978.
	
	\bibitem[JK81]{JK}
	{\sc  G.~James and A.~Kerber,}
	\newblock \textit{The representation theory of the symmetric group}, 
	\newblock Encyclopedia of Mathematics and its Applications, vol.~16, Addison-Wesley Publishing Co., Reading, Mass., 1981.
	
	\bibitem[LR04]{LR}
	{\sc T.~M.~Langley and J.~B.~Remmel,}
	\newblock The plethysm $s_\lambda[s_\mu]$ at hook and near-hook shapes,
	\newblock \textit{Electron.~J.~Comb.} \textbf{11} (2004), \#R11.
	
	\bibitem[L19]{SLthesis}
	{\sc S.~Law, }
	\newblock \textit{On problems in the representation theory of symmetric groups},
	\newblock Ph.D.~thesis, University of Cambridge, 2019. 	
	
	\bibitem[Mac95]{Mac}
	{\sc I.~G.~Macdonald,}
	\newblock \textit{Symmetric functions and Hall polynomials}, second ed., 
	\newblock Oxford Classic Texts in the Physical Sciences, The Clarendon Press, Oxford University Press, New York, 1995.
	
	\bibitem[Man98]{Man}
	{\sc L.~Manivel,}
	\newblock Gaussian maps and plethysm, in \textit{Algebraic Geometry} (Catania, 1993/Barcelona, 1994), Lect.~Notes Pure Appl.~Math., vol.~200, Dekker, New York (1998), 91--117.
	
	\bibitem[MM14]{MM}
	{\sc L.~Manivel and M~Micha\l ek,}
	\newblock Effective Constructions in Plethysms and Weintraub's Conjecture,
	\newblock \textit{Algebr.~Represent.~Theory} \textbf{17} (2014), 433--443.
	
	\bibitem[N18]{N}
	{\sc G.~Navarro,}
	\newblock Character tables and Sylow subgroups revisited, in \textit{Group theory and computation} (N.~S.~Narasimha Sastry and M.~K. Yadav, eds.), Springer, Singapore (2018), 197--206.
	
	\bibitem[O94]{Olsson}
	{\sc J.~B.~Olsson,}
	\newblock \textit{Combinatorics and representations of finite groups},
	\newblock Vorlesungen aus dem Fachbereich Mathematik der Universit\"at Essen, Heft 20, 1994.
	
	\bibitem[PW19]{PW}
	{\sc R.~Paget and M.~Wildon,}
	\newblock Generalized Foulkes modules and maximal and minimal constituents of plethysms of Schur functions,
	\newblock \textit{Proc.~London Math.~Soc.} \textbf{118} (2019), 1153--1187.
	
	\bibitem[S99]{Stanley}
	{\sc R.~P.~Stanley,}
	\newblock \textit{Enumerative combinatorics}, 
	\newblock Volume 2, Cambridge Studies in Advanced Mathematics \textbf{62}, Cambridge University Press, 1999.
	
	\bibitem[S00]{Sta-OP}
	{\sc R.~P.~Stanley,}
	\newblock Positivity problems and conjectures in algebraic combinatorics,
	\newblock in \textit{Mathematics: Frontiers and Perspectives} (V.~Arnold, M.~Atiyah, P.~Lax, and B.~Mazur, eds.), American Mathematical Society, Providence, RI (2000), 295--319.
	
	\bibitem[T42]{T}
	{\sc R.~M.~Thrall,}
	\newblock On symmetrized Kronecker powers and the structure of the free Lie	ring,
	\newblock \textit{Amer.~J.~Math.} \textbf{64} (1942), 371--388.
	
	\bibitem[W90]{W}
	{\sc S.~Weintraub,}
	\newblock Some observations on plethysms,
	\newblock \textit{J.~Algebra} \textbf{129} (1990), 103--114.
\end{thebibliography}
\end{document}